\newtheorem{example}{Example}[section]							
\newtheorem{theorem}{Theorem}[section]
\newtheorem{proposition}{Proposition}[section]
\newtheorem{lemma}{Lemma}[section]
\newtheorem{remark}{Remark}[section]
\newtheorem{corollary}{Corollary}[section]
\newtheorem{conjecture}{Conjecture}
\def\rd{\mathrm d}
\def\be{\begin{equation}}
\def\ee{\end{equation}}
\title[On $r$-neutralized entropy]{On $r$-neutralized entropy: entropy formula and existence of measures attaining the supremum}
\author{Changguang Dong}
\address{Chern Institute of Mathematics and LPMC, Nankai University, Tianjin 300071, China}
\email{dongchg@nankai.edu.cn}
\author{Qiujie Qiao}
\address{Chern Institute of Mathematics and LPMC, Nankai University, Tianjin 300071, China}
\email{qiujieqiao@mail.nankai.edu.cn}
\date{\today}
\subjclass[2020]{Primary 37A35; Secondary 37C15, 37C40}
\keywords{Entropy, pointwise dimension, Lyapunov exponents.}
\begin{document}
\maketitle
\begin{abstract}
In this article we study $r$-neutralized local entropy 
and derive some entropy formulas. 
For an ergodic hyperbolic measure of a smooth system, we  show that
the $r$-neutralized local entropy equals the Brin-Katok local entropy plus
 $r$ times the pointwise dimension of the measure. 
We further establish the existence of ergodic measures  that maximize the $r$-neutralized entropy for certain hyperbolic systems. 
Moreover, we construct a uniformly hyperbolic system, for which such measures are not  unique. Finally, we present some rigidity results related to these ergodic measures. 
\end{abstract}

\tableofcontents

\section{Introduction and results}

A central topic in studying a given dynamical system is characterizing its complexity. 
From topological, ergodic and dimensional perspectives, 
various important invariants are introduced and extensively studied. 
These include 
topological entropy \cite{AKM, Bo73, [FH12],HHW17}, 
metric entropy \cite{Kolmogorov,[BK83], HHW17}, 
Lyapunov exponents \cite{Oseledec,LY-85-annals-1} and 
pointwise dimension \cite{LY-85-annals-2,BPS-99-annals, BW03,BW06} etc. 

Pointwise dimension reflects local fractal properties of invariant measures in dynamical systems. 
For a given measure $\mu$ and a point $x$, the pointwise dimension is defined as
\[ 
d_\mu(x) = \lim_{r \to 0} \frac{\log \mu(B(x,r))}{\log r}, 
\]
where $B(x, r)$ denotes a ball centered at $x$ with radius $r$. 
However, in many cases, the limit defining the pointwise dimension as $r\to 0$ does not exist. 
To address this issue,  
 unstable pointwise dimension $d^u_\mu(x)$ and 
stable pointwise dimension $d^s_\mu(x)$ are introduced and studied \cite{LY-85-annals-2,BPS-99-annals,ORH23} 
(see also Section \ref{differentiable dynamics} for definitions).  
Barreira, Pesin, and Schmeling established the existence of the pointwise dimension $d_\mu(x)$
in the case when $\mu$ is hyperbolic. 
These notions allow for a more detailed analysis of the chaotic behavior 
and complexity of dynamical systems.

In this work, we are interested in both the metric entropy and pointwise dimensions. 
As we know, the metric entropy is a global invariant which measures the rate at which 
information is produced in the system over time. 
Conversely, the pointwise dimension is a local quantity which 
measures the local fractal dimension at that point. 
Another huge difference between these two is that
 the Hausdorff dimension of the invariant measure equals to the 
essential supremum of the pointwise dimension of the measures 
in an ergodic decomposition\cite{BW06}, 
 rather than the integral of pointwise dimension
(as is the case with the metric entropy). 
Therefore, we wish to introduce a new type of entropy, 
which aims to establish a balance between the metric entropy and 
the integral of pointwise dimensions, analogous to the pressure.

Recently, some progress has been made in this direction. 
In \cite{ORH23}, 
to overcome phasing-out sub-exponential effects, 
Ben Ovadia and Rodriguez Hertz introduced the 
neutralized local entropy, defined as
\[
  \mathcal{E}_{\mu}(f,x)=\lim _{r \rightarrow 0} \varlimsup_{n \rightarrow \infty} -\frac{1}{n} \log \mu\left(B\left(x, n, e^{-nr}\right)\right).
\]
It coincides with the Brin-Katok local entropy almost everywhere for smooth systems. 
They provided a lower bound for the pointwise dimension of invariant measures in \cite{ORH23} 
and 
proved a dichotomy theorem for ``almost'' exponentially mixing volume in smooth systems in \cite{ORH23-2}. 
Subsequently, 
Ben Ovadia developed a bound on the gap, related to the volume growth, 
between any two consecutive conditional entropies\cite{O24}. 
We  observe that the neutralized local entropy may contain more information if not taking limit on $r$. 
This observation motivates us to study the dependence of the limit on $r$.

In this regard, 
we introduce the $r$-neutralized local entropy and  related definitions. 
We find that, the $r$-neutralized local entropy is indeed a linear combination of  
the metric entropy and pointwise dimensions. 




\subsection{$r$-neutralized local entropy formula}

Let us first introduce some definitions.

Let $(X,d)$ be a compact metric space, $f:X\to X$ a homeomorphism. 
 Let $h_{\text{top}}(f)$ be the topological entropy of $(X,d,f)$
and $\mathcal{M}(f, X)$ the set of $f$-invariant Borel probability measures on $X$. 
For each $\nu\in\mathcal{M}(f,X)$, we use $h_\nu(f)$ to denote its measure-theoretic entropy (namely metric entropy). 
The variational principle (c.f. \cite{[PW]}) states that
\begin{align*}
  h_{\text{top}}(f)=\sup\left\{h_{\nu}(f): \nu \in \mathcal{M}(f,X)\right\}.
\end{align*}
A measure $\nu\in \mathcal{M}(f,X)$ is called the {\bf measure of maximal entropy}, or {\bf MME} if $h_\mu(f)=h_{\text{top}}(f)$. 
For an $f$-invariant measure $\nu$, 
 we use $\operatorname{dim}_H\nu$ to denote its Hausdorff dimension (see subsection \ref{Pressure and some facts} for the definition).
Similarly, a measure $\mu\in \mathcal{M}(f,X)$ is called the {\bf measure of maximal Hausdorff dimension}, or {\bf MMHD}, if
\begin{align*}
  \operatorname{dim}_H\mu=\sup\{\operatorname{dim}_H\nu: \nu\in \mathcal{M}(f,X)\}.
\end{align*}
 Given $n\in \mathbb{N}$, $r>0$ and $x\in X$, 
the {\bf $r$-neutralized Bowen ball} $B(x,n,e^{-nr})$ and 
the Bowen ball $B(x,n,r)$ are defined respectively by 
  \begin{align*}
    B\left(x, n, e^{-nr}\right)&:=\left\{y \in X: d_n^f\left(x, y\right) < e^{-nr}, \forall\, 0 \leq i < n \right\},\\
    B\left(x, n, r\right)&:=\left\{y \in X: d\left(f^{i}(y), f^{i}(x)\right) < r, \forall\, 0 \leq i < n \right\}. 
  \end{align*}
Fix $\mu\in \mathcal{M}(f,X)$, according to the main theorem in \cite{[BK83]}, for $\mu$-a.e. $x\in X$, 
\begin{align*}
  \lim_{\epsilon\to 0}\varlimsup_{n\to \infty}-\frac{1}{n}\log \mu(B(x,n,\epsilon))=\lim_{\epsilon\to 0}\varliminf_{n\to \infty}-\frac{1}{n}\log \mu(B(x,n,\epsilon)).
\end{align*}
We use {\bf Brin-Katok local entropy} $h_\mu(f,x)$ to denote the above value. 
Additionally, 
the function $h_\mu(f,\cdot)$ is $f$-invariant and $\int_X h_\mu(f,x)\,\rd\mu=h_\mu(f)$. 

 For each $\mu\in \mathcal{M}(f,X)$, 
the {\bf $r$-neutralized local entropy} $h_{\mu,d}^{r}(f,x)$ and 
  the {\bf lower $r$-neutralized local entropy} $\underline{h}_{\mu,d}^{r}(f,x)$ 
  are defined respectively by
  \begin{align*}
    h_{\mu,d}^{r}(f,x):=\varlimsup_{n\to\infty}-\frac{1}{n}\log \mu (B(x,n,e^{-nr})), \quad \underline{h}_{\mu,d}^{r}(f,x):=\varliminf_{n\to\infty}-\frac{1}{n}\log \mu (B(x,n,e^{-nr})). 
  \end{align*}
Obviously, they are $f$-invariant, and are constant $\mu$-a.e. if $\mu$ is ergodic. 
Then we define the {\bf $r$-neutralized entropy} $h_{\mu,d}^{r}(f)$ and
the {\bf lower $r$-neutralized entropy} $\underline{h}_{\mu,d}^{r}(f)$ by
\[
  h_{\mu,d}^{r}(f):=\int h_{\mu,d}^{r}(f,x) \rd \mu, \quad \underline{h}_{\mu,d}^{r}(f):=\int \underline{h}_{\mu,d}^{r}(f,x) \rd \mu.
\]
Motivated by the notions of MME and MMHD,
a measure $\mu\in\mathcal{M}(f,X)$ is called the {\bf measure maximizing $r$-neutralized entropy},
or {\bf MM}\boldmath$r$\unboldmath{\bf NE}, if
\begin{align*}
  h_{\mu,d}^{r}(f)=\sup\left\{h_{\nu,d}^{r}(f): \nu \in \mathcal{M}(f,X)\right\}.
\end{align*}

\begin{remark}\label{metric}
  Unlike the Brin-Katok local entropy, $h_{\mu, d}^r(f,x)$ and $\underline{h}_{\mu,d}^r(f,x)$ depend on the metric $d$, 
for example, see Proposition \ref{Symbolic-proposition}. 
  However, they are independent on the Riemannian metric of smooth systems, 
since the Brin-Katok local entropy and pointwise dimensions are independent of the Riemannian metrics; 
see Theorem \ref{BPS-theorem-1} for details. 
\end{remark}

Our first result is the following entropy formula for smooth dynamical systems.

\begin{theorem}\label{LY-theorem}
Let $ f: M \rightarrow M $ be a $ C^{1+\alpha} $ diffeomorphism on a closed Riemannian manifold $ M $
and $ \mu $ an $f$-invariant Borel probability measure.
Given $r>0$, let $d^c(x)$ be the multiplicity of zero Lyapunov exponents at $x$, 
then for $\mu$-a.e. $x\in M$,
  \begin{align*}
    h^r_{\mu,d}(f,x) \leq h_\mu(f,x)+r(d^u_\mu(x)+d^s_\mu(x)+d^c(x)).
  \end{align*}
\end{theorem}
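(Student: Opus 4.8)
**Planning the proof of Theorem \ref{LY-theorem} (the inequality $h^r_{\mu,d}(f,x) \leq h_\mu(f,x) + r(d^u_\mu(x) + d^s_\mu(x) + d^c(x))$).**

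The strategy: I need to estimate $\mu(B(x,n,e^{-nr}))$ from below (to get an upper bound on $-\frac{1}{n}\log$ of it). The key is that the $r$-neutralized Bowen ball $B(x,n,e^{-nr})$ contains an ordinary Bowen ball $B(x,n,\epsilon)$ of small fixed radius *intersected* with a genuine metric ball of radius $\sim e^{-nr}$ — but more precisely, along stable/unstable/center directions the dynamics contracts, so a small metric ball in $M$ of appropriately chosen radius, after taking into account $n$ iterates with the right Lyapunov behavior, sits inside $B(x,n,e^{-nr})$. Concretely, I'd want to show that for any $\delta > 0$, for $\mu$-a.e. $x$ and all large $n$,
\[
 \mu(B(x,n,e^{-nr})) \;\geq\; \mu(B(x,n,\epsilon_0)) \cdot \mu\big(B(x, e^{-n(r+\delta)})\big)^{-?}
\]
is the wrong shape; instead the clean approach is to directly build a product-like neighborhood.

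First I would fix Pesin/Lyapunov charts (Oseledec + Pesin theory, available since $f$ is $C^{1+\alpha}$): on a set of measure $>1-\delta$ one has local stable manifolds $W^s_{loc}$, unstable $W^u_{loc}$, and a center-tracking estimate, with uniform constants on each Pesin block $\Lambda_\ell$. On such a block, a point $y$ lies in $B(x,n,e^{-nr})$ provided $y$ is within $\sim e^{-nr}$ of $x$ in a chart and the hyperbolic contraction/expansion keeps all $n$ iterates within $e^{-nr}$: along the stable direction the forward orbit only contracts (good); along the unstable direction I need the *initial* displacement to be as small as $e^{-nr}\cdot e^{-n\lambda^u}$-ish so that after $n$ expanding steps it is still $<e^{-nr}$; along the center direction, subexponential growth means I lose only an $e^{-n\delta}$ factor. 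So $B(x, n, e^{-nr})$ contains (up to chart distortion) a box that is $\sim e^{-nr}$ in the stable directions, $\sim e^{-nr}e^{-n(\chi^u+\delta)}$ in the unstable directions (where $\chi^u$ is the sum of positive exponents, appearing with their multiplicities hence $d^u$... ), and $\sim e^{-n(r+\delta)}$ in the center directions. Actually for the dimension count I want to invoke the stable/unstable pointwise dimensions directly rather than redo Ledrappier–Young, so the cleanest route is:

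Second, use the conditional measures. Write $\mu$ locally as an integral of conditionals $\mu^u_x$ on unstable manifolds against a transverse (stable $\times$ center) quotient. By the definitions of $d^u_\mu, d^s_\mu$ and the Brin–Katok entropy along unstable manifolds (the "unstable entropy" in the sense of Ledrappier–Young / Ben Ovadia–Rodriguez Hertz \cite{ORH23}), I get $\mu^u_x(B^u(x,n,\rho)) \gtrsim e^{-n h^u} \rho^{d^u_\mu(x)+o(1)}$ for the unstable Bowen ball of radius $\rho = e^{-nr}$; similarly for the stable conditionals with $\rho = e^{-nr}$ (and here the stable Bowen ball of radius $e^{-nr}$ just equals a metric ball of radius $e^{-nr}$ up to a fixed-size restriction, since $f$ contracts stably); and the center contributes $r\,d^c(x)$ through the subexponential tracking, giving a factor $e^{-nr d^c(x)+o(n)}$. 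Fubini-type assembly of these three estimates over the product-neighborhood yields
\[
 \mu(B(x,n,e^{-nr})) \;\gtrsim\; e^{-n(h^u+h^s)}\,e^{-nr(d^u_\mu(x)+d^s_\mu(x)+d^c(x)) + o(n)},
\]
and since $h^u + h^s = h_\mu(f,x)$ (Brin–Katok entropy splits as unstable plus stable entropy for hyperbolic-type measures, a standard consequence of the Ledrappier–Young framework; at worst I only need $\geq$ here to get the desired $\leq$), taking $-\frac{1}{n}\log$, $\limsup_{n}$, and then $\delta \to 0$ gives the theorem.

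The main obstacle is making the "product neighborhood" argument honest in the *nonuniformly* hyperbolic, non-invertible-on-center setting: the local product structure of $\mu$ (absolute continuity of the holonomies, or at least a Fubini decomposition adequate for the inequality) must be controlled on Pesin blocks with distortion bounds that are only subexponential in $n$, and the center direction requires a separate argument (there is no stable/unstable manifold there) — I'd handle it by a covering/counting estimate showing that the center-direction contribution to $-\frac{1}{n}\log\mu(B(x,n,e^{-nr}))$ is at most $r\,d^c(x)$ up to $o(1)$, invoking that the pointwise dimension $d_\mu(x) = d^u_\mu(x)+d^s_\mu(x)+d^c(x)$ exists for hyperbolic $\mu$ by Theorem \ref{BPS-theorem-1} (Barreira–Pesin–Schmeling). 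A secondary technical point is that only an inequality is claimed, which is a genuine simplification: I never need the sharp lower bounds on conditional measures, only that $\mu$ of the relevant Bowen ball is *not too small*, so crude volume-type lower bounds on each factor (radius raised to the respective pointwise dimension, times the entropy exponential) suffice, and all $\limsup$/$\liminf$ subtleties are absorbed into the $o(n)$ error since we pass to $\limsup_n$ at the end.
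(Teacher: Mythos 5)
Your overall direction (lower-bounding $\mu(B(x,n,e^{-nr}))$ in order to upper-bound the limsup) is right, and the exponents you assign to the three factors --- $\mu^u_x(B^u(x,n,e^{-nr}))\approx e^{-n(h_\mu(f,x)+rd^u_\mu(x))}$ as in Theorem \ref{O-u}, a stable factor $\approx e^{-nrd^s_\mu(x)}$, and a center factor $\approx e^{-nrd^c(x)}$ --- are the correct ones. But the step you defer, the ``Fubini-type assembly'' of these conditional estimates into a lower bound for $\mu$ of the Bowen ball, is not a distortion-control technicality: it is the entire difficulty, and for a general invariant measure it is not available. An invariant measure need not have local product structure with respect to its stable/unstable/center conditionals; the asymptotic product structure you would want to quote (Barreira--Pesin--Schmeling, i.e.\ Theorem \ref{BPS-theorem} here) is itself proved by exactly the kind of counting argument you are trying to avoid, and it holds only for \emph{hyperbolic} measures, whereas Theorem \ref{LY-theorem} explicitly allows zero exponents --- that is what the $d^c(x)$ term is for. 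Invoking Theorem \ref{BPS-theorem-1} to control the center contribution is therefore circular and inapplicable.

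The paper sidesteps the product structure of $\mu$ entirely by a Ledrappier--Young--style \emph{atom count}: Lemma \ref{LY-Lemma 12.2.1} produces a partition $\mathcal{L}_n$ refining $\mathcal{P}^{anr+n}_{bnr+n}$ whose atoms lie in $B(x,n,2e^{-nr})$ and satisfy $\mu(q_n(x))\geq C^{-1}\mu(\mathcal{P}^{anr+n}_{bnr+n}(x))e^{-n\epsilon}e^{-n(r+\epsilon)d^c}$ (this is where $d^c$ enters, as the number of central subdivisions of each atom); the number $N_n$ of such atoms meeting $\Gamma\cap B(x,n,2e^{-nr})$ is then bounded above by $\mu(B(x,n,4e^{-nr}))$ divided by the minimal atom measure, and below by (number of stable atoms) times (number of unstable atoms per stable atom). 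That lower bound is purely combinatorial --- each pair consisting of a stable atom and an unstable atom determines a distinct atom of the join --- so no regularity of holonomies is needed; the conditional-measure estimates \eqref{P-(10)} and \eqref{P-(11)} enter only through these counts via Shannon--McMillan--Breiman bounds. Finally, your assertion that ``$h^u+h^s=h_\mu(f,x)$'' as a splitting of Brin--Katok entropy is not the standard fact: unstable entropy \emph{equals} $h_\mu(f,x)$ (and likewise the stable entropy for $f^{-1}$), so taken literally your product heuristic would produce $2h_\mu(f,x)$ and a strictly weaker inequality. What actually saves the exponent count is that the stable factor of the neutralized Bowen ball is essentially a metric ball of radius $e^{-nr}$ (forward iteration contracts stable leaves), so it carries dimension $rd^s_\mu(x)$ but no entropy; this needs to be said correctly rather than attributed to an entropy ``splitting.''
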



We could strengthen the entropy formula of Theorem \ref{LY-theorem} when the invariant measure $\mu$ is hyperbolic. 
Recall that an invariant measure $\mu$ is hyperbolic if all Lyapunov exponents 
are nonzero at $\mu$-almost every point. 

\begin{theorem}\label{BPS-theorem-1}
  Let $ f: M \rightarrow M $ be a $ C^{1+\alpha} $ diffeomorphism on a closed Riemannian manifold $ M $
  and $ \mu $ a hyperbolic $f$-invariant Borel probability measure.
  Given $r>0$, then for $\mu$-a.e. $x\in M$,
  \begin{align*}
    h^r_{\mu,d}(f,x)=\underline{h}^r_{\mu,d}(f,x) =h_\mu(f,x)+r(d^u_\mu(x)+d^s_\mu(x))=h_\mu(f,x)+rd_\mu(x). 
  \end{align*}
  Moreover, if $\mu$ is ergodic, then for $\mu$-a.e. $x\in M$,
  \begin{align*}
    h^r_{\mu,d}(f)=\underline{h}^r_{\mu,d}(f)=h^r_{\mu,d}(f,x)=\underline{h}^r_{\mu,d}(f,x)=h_\mu(f)+r\operatorname{dim}_H\mu.
  \end{align*}
\end{theorem}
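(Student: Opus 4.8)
The plan is to sandwich $h^r_{\mu,d}(f,x)$ between the lower bound coming from Theorem \ref{LY-theorem} (which gives an upper bound for the $\limsup$) and a matching lower bound for the $\liminf$, using the fact that for hyperbolic measures the pointwise dimension exists (Barreira--Pesin--Schmeling) so that $d_\mu(x)=d^u_\mu(x)+d^s_\mu(x)$ for $\mu$-a.e. $x$ and moreover $d^c(x)=0$ since $\mu$ is hyperbolic. First I would record the trivial inequality $\underline{h}^r_{\mu,d}(f,x)\le h^r_{\mu,d}(f,x)$, and then invoke Theorem \ref{LY-theorem} (with $d^c\equiv 0$) to get
\[
h^r_{\mu,d}(f,x)\le h_\mu(f,x)+r\bigl(d^u_\mu(x)+d^s_\mu(x)\bigr).
\]
So everything reduces to the lower bound $\underline{h}^r_{\mu,d}(f,x)\ge h_\mu(f,x)+r(d^u_\mu(x)+d^s_\mu(x))$.

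For the lower bound I would compare the $r$-neutralized Bowen ball $B(x,n,e^{-nr})$ with an ordinary Bowen ball $B(x,n,\epsilon)$ intersected with an ordinary metric ball. The key geometric point is that, along the Pesin (Lyapunov--Perron) chart at $x$, the dynamics expands the unstable directions and contracts the stable directions at definite exponential rates; consequently a set of the form $B(x,n,e^{-nr})$ is comparable, up to subexponential distortion, to the intersection of a classical Bowen ball $B(x,n,\epsilon)$ with a metric ball $B(x, c\,e^{-nr})$ of radius roughly $e^{-nr}$ (the stable/unstable local product structure converts the ``all iterates stay within $e^{-nr}$'' condition into a present-time condition of size $e^{-nr}$ in the stable$\times$unstable product, since forward iterates control the stable part and backward iterates — or the $f$-invariance of all quantities, allowing us to also use $f^{-1}$ — control the unstable part). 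Using the Brin--Katok theorem to write $\mu(B(x,n,\epsilon))\approx e^{-n h_\mu(f,x)}$ up to $e^{\pm n\eta}$, together with the existence of the pointwise dimension to write $\mu(B(x,e^{-nr}))\approx e^{-nr\,d_\mu(x)}$ up to $e^{\pm n\eta}$, and a Fubini-type estimate for the measure of the product neighborhood (controlled by $d^u_\mu$ and $d^s_\mu$ separately, as in \cite{LY-85-annals-2,BPS-99-annals}), I would obtain $\mu(B(x,n,e^{-nr}))\le e^{-n(h_\mu(f,x)+r\,d_\mu(x)) + n\eta}$ for all large $n$, hence $\underline{h}^r_{\mu,d}(f,x)\ge h_\mu(f,x)+r\,d_\mu(x)-\eta$; letting $\eta\to 0$ finishes the chain of inequalities, forcing equality throughout and giving the first displayed line. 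The identity $d^u_\mu(x)+d^s_\mu(x)=d_\mu(x)$ is exactly the Barreira--Pesin--Schmeling theorem quoted in the introduction.

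For the ``moreover'' part, assume $\mu$ ergodic. Then $h_\mu(f,x)=h_\mu(f)$, $d^u_\mu(x)$, $d^s_\mu(x)$, $d_\mu(x)$ are all $\mu$-a.e. constant, and $h^r_{\mu,d}(f,\cdot)$, $\underline h^r_{\mu,d}(f,\cdot)$ are $f$-invariant hence also a.e.\ constant; integrating the a.e.\ identity of the first part gives $h^r_{\mu,d}(f)=\underline h^r_{\mu,d}(f)=h_\mu(f)+r\,d_\mu(x)$ for a.e. $x$. It remains to identify the constant $d_\mu(x)$ with $\operatorname{dim}_H\mu$: by \cite{BW06}, $\operatorname{dim}_H\mu$ equals the essential supremum of the pointwise dimensions over an ergodic decomposition, which for an ergodic $\mu$ is just the a.e.\ constant value $d_\mu(x)$. (Alternatively one can cite directly that for an ergodic hyperbolic measure $\dim_H\mu=d_\mu(x)$ a.e.) This yields the second displayed line.

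The main obstacle I anticipate is making the geometric comparison in the lower bound fully rigorous: one must control the non-uniform (Pesin) constants, i.e.\ restrict to a Pesin block of large measure, track how the Lyapunov charts distort the Bowen balls into approximate stable$\times$unstable products, and absorb all the accumulated distortion — chart size, Hölder constants of the chart change maps, the $\epsilon$ in the Bowen ball, and the error in the pointwise-dimension limit — into a single subexponential factor $e^{n\eta}$. This is where the $C^{1+\alpha}$ hypothesis and the full strength of Ledrappier--Young / Barreira--Pesin--Schmeling theory are used, and it is essentially the technical heart of the proof; the rest is bookkeeping with $f$-invariance and integration.
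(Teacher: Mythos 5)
Your overall architecture is reasonable and partially overlaps the paper's: the upper bound does follow from Theorem \ref{LY-theorem} with $d^c\equiv 0$ (a legitimate shortcut the paper does not actually take), the ``moreover'' part is correct bookkeeping, and you correctly locate the difficulty in the lower bound $\underline{h}^r_{\mu,d}(f,x)\ge h_\mu(f,x)+r(d^u_\mu(x)+d^s_\mu(x))$. But that lower bound, as you present it, has a genuine gap. You enclose $B(x,n,e^{-nr})$ in $B(x,n,\epsilon)\cap B(x,ce^{-nr})$ and then estimate the measure of this intersection by multiplying the Brin--Katok asymptotic $e^{-nh}$ with the pointwise-dimension asymptotic $e^{-nrd_\mu(x)}$. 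The measure of an intersection is not the product of the measures: a priori $\mu\left(B(x,n,\epsilon)\cap B(x,e^{-nr})\right)$ could be as large as $\min\{e^{-nh},e^{-nrd_\mu(x)}\}$, which is far too big. What is needed is a quasi-independence statement between the dynamical refinement (Bowen balls) and the metric refinement (balls of radius $e^{-nr}$), and this is a theorem, not a Fubini computation. The references you invoke (\cite{LY-85-annals-2,BPS-99-annals}) establish the asymptotic product structure only for ordinary metric balls; extending it to $r$-neutralized Bowen balls is exactly the content of the paper's Theorem \ref{BPS-theorem}, whose proof (the rectangle-counting Lemmas \ref{Lemma 1}--\ref{Lemma 7}) occupies most of Section \ref{Proof of 1.3}.

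Concretely, the two inputs missing from your sketch are: (i) the product estimate $\mu(B(x,n,e^{-nr}))\approx \mu^s_x(B^s(x,e^{-nr}))\,\mu^u_x(B^u(x,n,e^{-nr}))$ up to factors $e^{\pm tn\epsilon}$ (Theorem \ref{BPS-theorem}); and (ii) the unstable-leaf formula $\mu^u_x(B^u(x,n,e^{-nr}))\approx e^{-n(h_\mu(f,x)+rd^u_\mu(x))}$, which is Ben Ovadia's Theorem \ref{O-u} and is precisely where the entropy and the unstable dimension are shown to combine additively along a single leaf. Your remark that ``backward iterates or $f$-invariance control the unstable part'' is the right heuristic for (ii) --- one pushes forward by $f^{n-1}$ and pays a Jacobian $\approx e^{-nh}$ against the unstable conditional measure --- but converting this into a rigorous subexponential-error estimate is the content of \cite{O24}, which you never cite and cannot replace by Brin--Katok plus the existence of $d_\mu$. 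With (i), (ii), and the defining asymptotics of $d^s_\mu$, both bounds follow as in the paper; without them, the key multiplicative step in your lower bound is unjustified.
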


In fact, we will prove a slightly more precise result, 
Theorem \ref{BPS-theorem}, of which Theorem \ref{BPS-theorem-1} is a corollary.
We will present relevant discussions and proofs in subsection \ref{Proof of 1.3}.

\begin{remark}\label{conjugation}
Obviously, $r$-neutralized local entropy is invariant under smooth conjugation. 
However, it is not necessarily invariant under topological conjugation. 
\end{remark}

\begin{example}\label{Anosov-example}
Let $g:(\mathbb{T}^2,d)\to (\mathbb{T}^2,d)$ 
be an Anosov diffeomorphism whose MME $\mu$ is not absolutely continuous with respect to 
 the normalized Haar measure $m$ of $\mathbb{T}^2$. 
By a well-known result of Franks \cite{Franks}, there exists 
 a linear hyperbolic diffeomorphism $T:(\mathbb{T}^2,d)\to (\mathbb{T}^2,d)$ 
 which is topologically conjugate to $g$. The MME of $(T,d,\mathbb{T}^2)$ is exactly 
 the measure $m$ and $\operatorname{dim}_H m=2$. Let $h$ be the conjugacy. We have $m=h_*\mu$.

We claim that $\operatorname{dim}_H\mu<2$. Otherwise, 
by  Ledrappier-Young  \cite{LY-85-annals-1,LY-85-annals-2}, 
$\mu$ has absolutely continuous conditional measures on unstable manifolds and stable manifolds, 
and hence $\mu$ is absolutely continuous. 
This contradicts the initial assumption.
Then we have 
\begin{align*}
  h_{\mu,d}^r(g)=h_{\mu}(g)+r\operatorname{dim}_H\mu<h_{m}(T)+r\operatorname{dim}_H m=h_{m,d}^r(T). 
\end{align*}
Therefore the $r$-neutralized entropy is not invariant under topological conjugation. 
\end{example}
 
\begin{remark}
  For a hyperbolic $f$-invariant measure $\mu$,
we have $h_{\mu,d}(f,x)=\underline{h}_{\mu,d}(f,x)$ for $\mu$-almost every point.
However, the following example shows that this may be false for $f$-invariant measures in smooth dynamical systems.
\end{remark}

\begin{example}
  According to \cite[Theorem 1]{LM85}, there are constants $a,b$ with $b>a>0$ and a $C^2$ map $f:[0,1]\to [0,1]$ preserving an $f$-invariant ergodic probability measure $\mu$
  with zero Lyapunov exponent
  such that for $\mu$-a.e. $x\in [0,1]$,
  \begin{align*}
    \varliminf_{\eta\to 0}\frac{\log \mu([x-\eta,x+\eta])}{\log \eta}=a,\quad \varlimsup_{\eta\to 0}\frac{\log \mu([x-\eta,x+\eta])}{\log \eta}=b.
  \end{align*}
  Obviously, there exist constants $C, N$ such that for $\mu$-a.e. $x\in [0,1]$ and $n\geq N$,
  \[
    B(x,Ce^{-rn-\epsilon n}) \subset B(x,n,e^{-rn}),\quad \text{ where } \epsilon=\frac{r}{3}\frac{b-a}{a}>0.
  \]
  Then for $\mu$-a.e. $x\in [0,1]$, we derive
  \begin{align*}
    \underline{h}_{\mu,d}^{r}(f,x) & = \varliminf_{n\to \infty}-\frac{1}{n}\log \mu(B(x,n,e^{-rn}))\leq \varliminf_{n\to \infty}-\frac{1}{n}\log \mu(B(x,Ce^{-rn-\epsilon n})) =a(r+\epsilon); \\
    h_{\mu,d}^{r}(f,x)             & =\varlimsup_{n\to \infty}-\frac{1}{n}\log \mu(B(x,n,e^{-rn}))  \geq \varlimsup_{n\to \infty}-\frac{1}{n}\log \mu(B(x,e^{-rn}))=br>a(r+\epsilon).
  \end{align*}
  Therefore we conclude that $h_{\mu,d}^{r}(f,x)\neq \underline{h}_{\mu,d}^{r}(f,x)$ for $\mu$-a.e. $x\in [0,1]$.
\end{example}

The following is an immediate corollary of the previous theorems. 

\begin{corollary}
  Let $ f: M \rightarrow M $ be a $ C^{1+\alpha} $ diffeomorphism on a closed Riemannian manifold $ M $
and $ \mu $ an $f$-invariant Borel probability measure.
Given $r>0$, then 
\begin{align*}
  h^r_{\mu,d}(f) \leq h_\mu(f)+r\int_M \left( d^u_\mu(x)+d^s_\mu(x)+d^c(x)\right) \,\rd \mu. 
\end{align*}
Moreover, if $\mu$ is hyperbolic, then 
\begin{align*}
  h^r_{\mu,d}(f)= h_\mu(f)+r\int_M \left( d^u_\mu(x)+d^s_\mu(x) \right) \,\rd \mu=h_\mu(f)+r\int_M d_\mu(x) \,\rd \mu.
\end{align*}
\end{corollary}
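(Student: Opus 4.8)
The plan is to obtain the corollary by integrating the pointwise statements of Theorem \ref{LY-theorem} and Theorem \ref{BPS-theorem-1} over $M$ with respect to $\mu$, using that the $r$-neutralized entropy $h^r_{\mu,d}(f)$ is by definition $\int_M h^r_{\mu,d}(f,x)\,\rd\mu$, that the Brin-Katok entropy satisfies $\int_M h_\mu(f,x)\,\rd\mu = h_\mu(f)$, and that the pointwise dimensions $d^u_\mu$, $d^s_\mu$, $d^c$, $d_\mu$ are all measurable and $\mu$-integrable (being bounded above by $\dim M$). Since these quantities are $f$-invariant and nonnegative, integrability poses no difficulty, and the inequality (resp.\ equality) passes directly under the integral.

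First I would treat the general case. By Theorem \ref{LY-theorem}, for $\mu$-a.e.\ $x\in M$ we have $h^r_{\mu,d}(f,x)\le h_\mu(f,x)+r\bigl(d^u_\mu(x)+d^s_\mu(x)+d^c(x)\bigr)$. Integrating both sides against $\mu$ and using linearity of the integral together with $\int_M h_\mu(f,x)\,\rd\mu = h_\mu(f)$ yields
\begin{align*}
  h^r_{\mu,d}(f) \le h_\mu(f)+r\int_M\bigl(d^u_\mu(x)+d^s_\mu(x)+d^c(x)\bigr)\,\rd\mu,
\end{align*}
which is the first assertion. For the second assertion, assume $\mu$ is hyperbolic. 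Then Theorem \ref{BPS-theorem-1} gives, for $\mu$-a.e.\ $x$, the equality $h^r_{\mu,d}(f,x)=h_\mu(f,x)+r\bigl(d^u_\mu(x)+d^s_\mu(x)\bigr)=h_\mu(f,x)+r\,d_\mu(x)$; integrating again against $\mu$ and using $\int_M h_\mu(f,x)\,\rd\mu=h_\mu(f)$ produces
\begin{align*}
  h^r_{\mu,d}(f)=h_\mu(f)+r\int_M\bigl(d^u_\mu(x)+d^s_\mu(x)\bigr)\,\rd\mu=h_\mu(f)+r\int_M d_\mu(x)\,\rd\mu,
\end{align*}
as claimed; note that in the hyperbolic case $d^c\equiv 0$, so the two formulas are consistent with the general inequality becoming an equality.

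There is essentially no obstacle here: the only point requiring a word of care is measurability and integrability of the dimension functions, which follows since they are defined as limits (or lim sup/lim inf) of measurable functions and are uniformly bounded by $\dim M$, hence the integrals are finite and the almost-everywhere pointwise relations integrate term by term. This is why the statement is phrased as "an immediate corollary of the previous theorems."
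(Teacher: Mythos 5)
Your proposal is correct and is exactly the argument the paper intends: the corollary is obtained by integrating the $\mu$-a.e.\ pointwise inequality of Theorem \ref{LY-theorem} (resp.\ the pointwise equalities of Theorem \ref{BPS-theorem-1}) against $\mu$, using the definition $h^r_{\mu,d}(f)=\int_M h^r_{\mu,d}(f,x)\,\rd\mu$ and the Brin--Katok identity $\int_M h_\mu(f,x)\,\rd\mu=h_\mu(f)$, with measurability and boundedness of the dimension functions making the integration routine. The paper gives no further detail precisely because this is immediate, so there is nothing to add.
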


The above  results are contributions to our understanding of the notion of metric entropy in smooth systems.
On one hand, metric entropy measures the exponential rates of growth of $n$-orbits from an ergodic perspective. 
Our work shows the dependence of metric entropy on $\epsilon_n$, where $\epsilon_n$ 
in the Bowen ball $B(x,n,\epsilon_n)$ depends on $n$ at an appropriate scale. 
On the other hand, 
the $r$-neutralized entropy combines the global complexity, as measured by metric entropy, with the local fractal property, given by pointwise dimensions of the measure.

\subsection{Extremal measures}

Measures with certain special properties, which we refer to as {\bf extremal measures},
 play a crucial role in dynamical systems.
Equilibrium states constitute a large class of extremal measures. 
Particularly, the MME mentioned earlier is a special case, in the sense that the potential function $\phi$ is constant 
(see subsection \ref{Pressure and some facts} for more details). 

The study of extremal measures has a rich history. 
 In thermodynamic formalism, 
 a key question is determining the conditions under which 
 a dynamical system $(X,f,\phi)$ has a unique equilibrium measure. 
Bowen established the existence and uniqueness of the equilibrium state for uniformly hyperbolic systems with Hölder continuous potentials \cite{Bo75}. 
Following Bowen's foundational work, subsequent research has partially extended these results
(mostly on SRB measures and MME) from uniformly hyperbolic systems 
to various contexts, including
geodesic flows \cite{BCFT18, CKW20, Kn98}, 
non-uniformly hyperbolic systems \cite{CT-advance, Wan}, and partially hyperbolic systems \cite{CPZ-JMD, RHRHTU12}, among others.

In contrast, 
 the study of MMHD appears to be more challenging, 
even for uniformly hyperbolic systems. 
For hyperbolic sets of surface diffeomorphisms, Barreira and Wolf established the existence of ergodic MMHD in \cite{BW03}. 
We emphasize that MMHD may not be unique, even in the case of linear horseshoes\cite{Rams}. 
For further information on pointwise dimensions in smooth dynamics, we refer the reader to \cite{BW03,BW06,BG11, FCZ20}.

Our next result establishes the existence of ergodic MM$r$NE in the case of two-dimensional uniformly hyperbolic systems, for any positive $r$. 
Throughout this paper, the space of measures is equipped with the weak* topology.

\begin{theorem}\label{CMP}
Let $ f $ be a $ C^{1+\alpha} $ surface diffeomorphism, and let $ \Lambda $ be a compact locally maximal hyperbolic set of $ f$  such that $ f|_\Lambda $ is topologically mixing. 
 Given $r>0$, there exists an ergodic measure $\mu_r$ maximizing $r$-neutralized  entropy  on $\Lambda$. 

Moreover, 
suppose $r_n$ is a sequence of positive real numbers,
\begin{itemize}
\item[(1)]   $\mu_{r_n}$ converge to the measure of maximal entropy of $(f|_\Lambda, \Lambda)$ if $r_n\to 0$; 
\item[(2)] any limit point of $\mu_{r_n}$  is the measure of maximal Hausdorff dimension if $r_n\to \infty$. 
\end{itemize}
\end{theorem}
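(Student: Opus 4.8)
The plan is to reduce the problem to thermodynamic formalism on the Hölder-continuous geometric potential, using Theorem~\ref{BPS-theorem-1} to turn the variational quantity $h^r_{\nu,d}(f)$ into something classical. Indeed, since $\Lambda$ is a locally maximal hyperbolic set of a surface diffeomorphism, every ergodic $\nu\in\mathcal M(f,\Lambda)$ is hyperbolic with one positive Lyapunov exponent $\lambda^+(\nu)$ and one negative exponent $-\lambda^-(\nu)$ (with $\lambda^\pm(\nu)>0$). By Theorem~\ref{BPS-theorem-1} together with the Ledrappier--Young / Young dimension formula in dimension two, for ergodic $\nu$ we have
\[
  h^r_{\nu,d}(f)=h_\nu(f)+r\dim_H\nu=h_\nu(f)+r\Bigl(\frac{h_\nu(f)}{\lambda^+(\nu)}+\frac{h_\nu(f)}{\lambda^-(\nu)}\Bigr).
\]
First I would record that both sides are affine (resp.\ upper semicontinuous in an appropriate sense) under ergodic decomposition, so the supremum over all invariant measures is attained at an ergodic one provided we can produce \emph{some} maximizer; this is the standard reduction that also appears in Barreira--Wolf \cite{BW03}. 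So the real content is existence of a maximizing measure for the functional $\nu\mapsto h_\nu(f)+r\dim_H\nu$ on $\mathcal M(f,\Lambda)$.

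The key step is to realize the maximizer as an equilibrium state. Write $\varphi^u(x)=-\log\|Df|_{E^u_x}\|$ and $\varphi^s(x)=\log\|Df|_{E^s_x}\|$, both Hölder continuous on $\Lambda$ by the $C^{1+\alpha}$ hypothesis and hyperbolicity. For an ergodic $\nu$ one has $\lambda^+(\nu)=\int -\varphi^u\,\rd\nu$ and $\lambda^-(\nu)=\int -\varphi^s\,\rd\nu$ by Birkhoff, so the dimension of $\nu$ is $\frac{h_\nu(f)}{\int(-\varphi^u)\rd\nu}+\frac{h_\nu(f)}{\int(-\varphi^s)\rd\nu}$. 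The unstable and stable pointwise dimensions $d^u_\nu=h_\nu/\lambda^+$ and $d^s_\nu=h_\nu/\lambda^-$ are exactly the zeros of the pressure functions $t\mapsto P(f,t\varphi^u)$ and $t\mapsto P(f,t\varphi^s)$ restricted to $\nu$, i.e.\ $h_\nu(f)+d^u_\nu\int\varphi^u\rd\nu=0$ and similarly for $s$. I would then run the standard variational machinery: define $F(\nu)=h_\nu(f)+r\dim_H\nu$; using upper semicontinuity of metric entropy (expansiveness of $f|_\Lambda$), continuity of $\nu\mapsto\int\varphi^{u,s}\rd\nu$, and the uniform bound $\lambda^\pm(\nu)\ge c>0$ away from zero on the compact hyperbolic set, show $F$ is upper semicontinuous on the weak* compact space $\mathcal M(f,\Lambda)$; hence a maximizer exists, and by the affinity-under-decomposition argument it can be taken ergodic. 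The main obstacle here is that $F$ is \emph{not} affine (it involves $h_\nu$ divided by integrals of potentials), so one cannot directly invoke the extreme-point/ergodic-decomposition dichotomy; the workaround, following Barreira--Wolf, is to show that along an ergodic decomposition the map $\nu\mapsto\dim_H\nu=\esssup$ of the component dimensions is attained, so replacing $\nu$ by the best ergodic component does not decrease $F$ — this is exactly where we use $\dim_H\nu=\esssup d_{\nu_\omega}$ from \cite{BW06} cited in the introduction.

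For the two limiting statements I would argue by compactness and semicontinuity. For $r_n\to 0$: from $F(\mu_{r_n})\ge F(\mu_{\mathrm{MME}})$ we get $h_{\mu_{r_n}}(f)\ge h_{\mathrm{top}}(f|_\Lambda)-r_n\bigl(\dim_H\mu_{\mathrm{MME}}-\dim_H\mu_{r_n}\bigr)\ge h_{\mathrm{top}}(f|_\Lambda)-r_n D$, where $D=\sup_\nu\dim_H\nu\le 2$ is finite; letting $n\to\infty$ and using upper semicontinuity of entropy, any weak* limit point $\mu_0$ of $\mu_{r_n}$ has $h_{\mu_0}(f)\ge h_{\mathrm{top}}(f|_\Lambda)$, hence is an MME; since $f|_\Lambda$ is topologically mixing and Hölder expansive, the MME is unique (Bowen \cite{Bo75}), so $\mu_{r_n}\to\mu_{\mathrm{MME}}$. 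For $r_n\to\infty$: from $F(\mu_{r_n})\ge F(\nu)$ for every $\nu$, dividing by $r_n$ gives $\dim_H\mu_{r_n}+\frac{1}{r_n}h_{\mu_{r_n}}(f)\ge \dim_H\nu+\frac{1}{r_n}h_\nu(f)$; taking $n\to\infty$ along a convergent subsequence $\mu_{r_n}\to\mu_\infty$ and using that $\nu\mapsto\dim_H\nu$ is upper semicontinuous (again via $\lambda^\pm\ge c>0$ and u.s.c.\ of entropy, since $\dim_H$ of ergodic measures is a ratio of such quantities, extended to all of $\mathcal M$ by the esssup formula), we obtain $\dim_H\mu_\infty\ge\dim_H\nu$ for all $\nu$, i.e.\ $\mu_\infty$ is an MMHD. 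The existence of an MMHD on such $\Lambda$ is guaranteed by Barreira--Wolf \cite{BW03}. The one delicate point in these limits is justifying upper semicontinuity of $\nu\mapsto\dim_H\nu$ on the full (non-ergodic) simplex; I expect this to follow from writing $\dim_H\nu$ as the essential supremum over ergodic components together with the u.s.c.\ bound $\dim_H\nu\le \frac{h_\nu(f)}{\lambda_{\min}}+\frac{h_\nu(f)}{\lambda_{\min}}$ type estimates and a compactness argument on the space of ergodic decompositions, which is again the technique of \cite{BW03}.
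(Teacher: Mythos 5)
Your reduction via Theorem \ref{BPS-theorem-1} and Young's dimension formula, the reduction to ergodic measures, and your treatment of the two limits $r_n\to 0$ and $r_n\to\infty$ all match the paper. The gap is in the central existence step. You assert that $F(\nu)=h_\nu(f)+r\operatorname{dim}_H\nu$ is upper semicontinuous on all of $\mathcal M(f,\Lambda)$ and that one can then pass to ``the best ergodic component''. Neither half works as stated. For a non-ergodic weak* limit $m$ of a maximizing sequence of ergodic measures $\nu_n$, upper semicontinuity of entropy and continuity of the exponents give only
\begin{align*}
\varlimsup_{n\to\infty}\bigl(h_{\nu_n}(f)+r\operatorname{dim}_H\nu_n\bigr)\le h_m(f)+r\,d(m),\qquad d(m)=h_m(f)\Bigl(\tfrac{1}{\lambda_u(m)}-\tfrac{1}{\lambda_s(m)}\Bigr),
\end{align*}
where $d(m)$ is the \emph{formula} evaluated at the averaged entropy and exponents. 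This is not $\operatorname{dim}_H m$, which for non-ergodic $m$ is the essential supremum of the component dimensions \cite{BW06}. Since $(h,\lambda)\mapsto h/\lambda$ is neither convex nor concave, $d(m)$ need not be dominated by $\operatorname{ess\,sup}_\tau d(m_\tau)$: the mediant bound controls $h_m/\lambda_u(m)$ and $-h_m/\lambda_s(m)$ separately by the maxima of the corresponding component ratios, but these maxima may be attained on different components. Hence replacing $m$ by its best ergodic component does not recover the value $h_m(f)+r\,d(m)$, and $F$ itself fails to be upper semicontinuous at non-ergodic points. The ``standard variational machinery'' therefore does not produce the required ergodic maximizer.

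This is precisely the difficulty the paper's Section \ref{sec:pro} is built to overcome, and it is the part you announce (``realize the maximizer as an equilibrium state'') but never carry out. The paper's argument runs through the two-parameter pressure $Q(p,q)=P(-p\phi_u+q\phi_s)$ and the analyticity/monotonicity properties of $\lambda_u(p,q)$, $\lambda_s(p,q)$, $h(p,q)$ (Propositions \ref{CMP-Proposition 4} and \ref{CMP-Proposition 5}): if $\lambda_s(m)\in I_s$ or $\lambda_u(m)\in I_u$, an intermediate-value argument along the curves $\gamma_s,\gamma_u$ produces $(p,q)$ with matching exponents, and Lemma \ref{CMP-Lemma 3} then forces $m=\nu_{p,q}$, which is ergodic; otherwise a case analysis (Claims 1--5 of Lemma \ref{CMP-Lemma 7}) rules out every corner of the exponent spectrum except $\lambda_u(m)=\lambda_u^{\min}$, $\lambda_s(m)=\lambda_s^{\max}$, and only in that extremal situation does the ergodic decomposition argument work, because almost every component is then forced to have the same exponents and the same entropy as $m$. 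Without this analysis your existence proof is incomplete.
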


\begin{remark}\label{non-equilibrium}
It is possible that, the ergodic measure $\mu_r$ obtained in Theorem \ref{CMP} is NOT an equilibrium state. See subsection \ref{sec:pro} for more details.
\end{remark}

\begin{remark}
By applying a method in \cite{Rams},
we show the existence of a two-dimensional linear horseshoe that has 
at least two MM$r$NEs for any $r\ge 3$, 
see subsection \ref{multiple subsection}, Theorem \ref{multiple} for more details. 
\end{remark}

The following result is a consequence of Theorem \ref{BPS-theorem-1} and Theorem \ref{CMP}. 

\begin{corollary}\label{corollary}
Let $ f$  be a $ C^{1+\alpha} $ surface diffeomorphism, and let $ \Lambda $ be a compact locally maximal hyperbolic set of $ f$  such that $ f|_\Lambda $ is topologically mixing.
Given $r>0$, there exists an ergodic measure $\mu_r$ on $\Lambda$ such that
  \begin{align*}
    h_{\mu_r}(f)+r\operatorname{dim}_H\mu_r=\sup\{h_{\nu}(f)+r\operatorname{dim}_H\nu: \nu \text{ is an ergodic measure on } \Lambda\}. 
  \end{align*}
\end{corollary}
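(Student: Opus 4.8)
The plan is to derive this directly from Theorem \ref{BPS-theorem-1} and Theorem \ref{CMP}, with no new analysis required. The key observation is that since $\Lambda$ is a compact locally maximal hyperbolic set, every $f$-invariant Borel probability measure supported on $\Lambda$ is hyperbolic; in particular every ergodic measure $\nu$ on $\Lambda$ is ergodic and hyperbolic, so Theorem \ref{BPS-theorem-1} applies and gives
\[
  h^r_{\nu,d}(f) = h_\nu(f) + r\operatorname{dim}_H\nu .
\]
Thus, on the class of ergodic measures on $\Lambda$, the functional $\nu\mapsto h^r_{\nu,d}(f)$ coincides with $\nu\mapsto h_\nu(f)+r\operatorname{dim}_H\nu$. (Here we also use, from the definitions preceding Theorem \ref{LY-theorem}, that for an ergodic measure $\nu$ the integral defining $h^r_{\nu,d}(f)$ reduces to its $\mu$-a.e.\ constant value $h^r_{\nu,d}(f,x)$, which is exactly the quantity computed in Theorem \ref{BPS-theorem-1}.)

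First I would invoke Theorem \ref{CMP} to obtain an ergodic measure $\mu_r$ on $\Lambda$ with
\[
  h^r_{\mu_r,d}(f) = \sup\bigl\{ h^r_{\nu,d}(f) : \nu\in\mathcal{M}(f,\Lambda) \bigr\} ,
\]
so that in particular $h^r_{\mu_r,d}(f)\ge h^r_{\nu,d}(f)$ for every ergodic $\nu$ on $\Lambda$. Next, applying the identity above both to $\mu_r$ (which is itself ergodic and hyperbolic) and to an arbitrary ergodic measure $\nu$ on $\Lambda$, we get
\[
  h_{\mu_r}(f)+r\operatorname{dim}_H\mu_r
  = h^r_{\mu_r,d}(f)
  \ge h^r_{\nu,d}(f)
  = h_\nu(f)+r\operatorname{dim}_H\nu .
\]
Since $\mu_r$ is ergodic, it lies in the class over which the supremum in the statement is taken, and the displayed inequality shows it dominates every competitor; taking the supremum over all ergodic $\nu$ on $\Lambda$ therefore yields
\[
  h_{\mu_r}(f)+r\operatorname{dim}_H\mu_r = \sup\bigl\{ h_\nu(f)+r\operatorname{dim}_H\nu : \nu \text{ ergodic on } \Lambda \bigr\},
\]
which is the claimed identity.

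There is essentially no obstacle in this argument; it is a formal combination of the two theorems. The only point worth a sentence of care is that Theorem \ref{CMP} furnishes a maximizer of $h^r_{\cdot,d}(f)$ over \emph{all} invariant measures on $\Lambda$, whereas the supremum in Corollary \ref{corollary} ranges only over ergodic measures. This mismatch is harmless: the ergodic maximizer $\mu_r$ automatically dominates every ergodic measure (as shown above), and being ergodic it realizes the ergodic supremum, irrespective of how the ``all invariant measures'' supremum compares to the ``ergodic measures'' one. One should also note in passing that all the quantities involved are finite (entropy is bounded by $h_{\mathrm{top}}(f|_\Lambda)<\infty$ and $\operatorname{dim}_H\nu\le\dim M=2$), so the suprema are genuine real numbers and the equalities make sense.
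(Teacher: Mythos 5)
Your proposal is correct and follows essentially the same route as the paper: the paper also derives the corollary by combining the ergodic maximizer $\mu_r$ of Theorem \ref{CMP} with the identity $h^r_{\nu,d}(f)=h_\nu(f)+r\operatorname{dim}_H\nu$ from Theorem \ref{BPS-theorem-1}, valid since every invariant measure on the hyperbolic set $\Lambda$ is hyperbolic. The only (harmless) difference is that the paper passes through the equality \eqref{nonergodic-ergodic} between the supremum over all invariant measures and the supremum over ergodic ones (which uses the ergodic decomposition and the nonergodic case of Theorem \ref{BPS-theorem-1}), whereas you sidestep that by comparing the ergodic maximizer only against ergodic competitors, which is all the statement requires.
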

The $r$-neutralized entropy provides a balance between the metric entropy and
the integral of the pointwise dimension. 
By characterizing the limit of the measures $\mu_r$ as $ r $ approaches $0$ or infinity, 
we establish the relation among MME, MMHD, and MM$r$NE. 
In subsection \ref{futher discussion}, we present some rigidity results related to MM$r$NE. 

In general, the existence of ergodic MM$r$NE is an extremely challenging problem.
For smooth systems exhibiting some hyperoblicity, the study of
 Hausdorff dimension of invariant measures is generally less studied 
 (and even much more difficult) compared to that of (metric/topological) entropy. There are various reasons.
For instance, the map $\nu\mapsto h_\nu(f)$ is upper-semicontinuous, while
the map $\nu\mapsto \operatorname{dim}_H \nu$ is neither upper-semicontinuous 
nor lower-semicontinuous. 
Therefore, when considering ergodic MM$r$NE, it is particularly important to take into account the impact of the Hausdorff dimension.

On one hand, it is possible to construct an example that does not have 
ergodic MM$r$NE for any positive $r$. In \cite{UW08}, Urb\'anski and Wolf considered a two-dimensional horseshoe map that is uniformly hyperbolic except at a parabolic point. 
It is proven that this horseshoe map has no ergodic MMHD. 
We can employ a similar method to demonstrate that this kind of map does not possess an ergodic MM$r$NE for any positive $r$.
This indicates that systems with an ergodic MM$r$NE shall exhibit strong hyperbolicity at every point. 

On the other hand, even with full hyperbolicity ( i.e., Anosov systems), 
there are technical difficulties in proving the existence of ergodic MM$r$NE 
in high-dimensional case, due to the complicated structures of both the stable and unstable manifolds.


\subsection{Outline of the paper}
In \S $2$,  we present some preliminaries  in smooth systems and complete the proofs of Theorem \ref{LY-theorem} and  Theorem \ref{BPS-theorem-1}. 
In \S \ref{sec:pro}, we prove Theorem \ref{CMP}, 
as well as Corollary \ref{corollary}. 
Additionally in \S \ref{multiple subsection}, we construct a linear horseshoe having at least two ergodic MM$r$NEs (Theorem \ref{multiple}). 
Furthermore, in \S \ref{futher discussion} we include some rigidity results.
In the Appendix, we provide some auxiliary results and discussions on variational principles for $r$-neutralized entropy. 

\medskip

{\bf Acknowledgement:} C. Dong was supported by Nankai Zhide Foundation and “the Fundamental Research Funds for the Central Universities” No. 100-63233106 and 100-63243066. 
Q. Qiao was supported by Nankai Zhide Foundation.

\section{$r$-neutralized local entropy for smooth systems}\label{differentiable dynamics}

Let $f$ be a $ C^{1+\alpha}$ diffeomorphism (with $\alpha>0$) on a closed Riemannian manifold $M$ and  $\mu $ an $ f$-invariant Borel probability measure on $ M $. 
We use $d$ to denote the Riemannian metric on $M$. 
A point $x$ is called regular if there exist numbers $\lambda_1(x)> \cdots>\lambda_{r(x)}(x)$ and a decomposition of the tangent space $T_x M$ at $x$ into $T_x M=E_1(x) \oplus \cdots \oplus E_{r(x)}(x)$ such that for every tangent vector $v \in E_i(x)$ with $v \neq 0$,
\begin{align}\label{LY-0}
  \lim _{n \rightarrow \pm \infty} \frac{1}{n} \log \left\|D f_x^n v\right\|=\lambda_i(x).
\end{align}
The numbers $\lambda_i(x), i=1, \ldots, r(x)$, are called the Lyapunov exponents of $f$ at $x$ and $\operatorname{dim} E_i(x)$ is called the multiplicity of $\lambda_i(x)$. 
According to the Multiplicative Ergodic Theorem \cite{Oseledec}, 
the set $\Gamma^{\prime}$ of regular points has full measure, 
and the functions $ r(x), \lambda_i(x)$ and 
$\operatorname{dim} E_i(x)$ are measurable and $ f $-invariant.

For $x\in \Gamma'$, we set $\lambda^+(x)=\min\{\lambda_i, \lambda_i>0\}$, $\lambda^-(x)=\max\{\lambda_i, \lambda_i<0\}$ and
\begin{align*}
    E^s(x)  =\underset{\lambda_i(x)<0}{\bigoplus} E_i(x), \quad E^u(x)  =\underset{\lambda_i(x)>0}{\bigoplus} E_i(x), \quad E^c(x) & =E_{i_0}(x) \text{ with } \lambda_{i_0}(x)=0.
   \end{align*}
There exists a measurable function $ r(x)>0 $ such that for $ \mu $-a.e. $ x \in M $, 
the stable and unstable local manifolds at $ x $, defined as follows, 
are immersed local manifolds. 
\begin{align*}
  W^{s}(x)&=\left\{y \in B(x, r(x)): \varlimsup\limits_{n \rightarrow +\infty} \frac{1}{n} \log d\left(f^{n} x, f^{n} y\right)<0\right\}, \\
  W^{u}(x)&=\left\{y \in B(x, r(x)): \varliminf\limits_{n \rightarrow -\infty} \frac{1}{n} \log d\left(f^{n} x, f^{n} y\right)>0\right\}. 
\end{align*}
For any $0<r<r(x) $, the set $ B^{s}(x, r) \subset W^{s}(x) $ is the ball 
centered at $ x $
with respect to the induced distances on $ W^{s}(x) $. 
Similarly, we can define the set $ B^{u}(x, r) \subset   W^{u}(x) $. 

Next, let us recall some facts about Lyapunov charts (see \cite{BPS-99-annals,LY-85-annals-1,LY-85-annals-2,BP-nonuniform} for more details).
 For any $  (x, y, z) \in \mathbb{R}^u \times \mathbb{R}^c \times \mathbb{R}^s$,
we define 
\[
|(x, y, z)|=\max \left\{|x|_u,|y|_c,|z|_s\right\}
\]
where $|\cdot|_u,|\cdot|_c$ and $|\cdot|_s$ are the Euclidean norms on $\mathbb{R}^u, \mathbb{R}^c$ and $\mathbb{R}^s$ respectively.

For $x\in \Gamma'$ and $0<\epsilon<\min\{\frac{1}{100}\lambda^{+}(x),-\frac{1}{100}\lambda^{-}(x)\}$, 
there exists a measurable function $l: \Gamma^{\prime} \rightarrow[1, \infty)$ with $l\left(f^{ \pm} x\right) \leq$ $e^{\epsilon} l(x)$
and an embedding $\Phi_x: R\left(l(x)^{-1}\right) \rightarrow M$ 
such that 
\begin{itemize}
  \item[(i)] $\Phi_x 0=x $, $D \Phi_x(0)$ takes $\mathbb{R}^u, \mathbb{R}^c$ and $\mathbb{R}^s$ to $E^u(x), E^c(x)$ and $E^s(x)$ respectively.

  \item[(ii)] 
We define the connecting map $\tilde{f}_x=\Phi_{f(x)}^{-1} \circ f \circ \Phi_x$ whenever it make sense. 
Similarly, we define the function $\tilde{f}_x^{-1}=$ $\Phi_{f^{-1} x}^{-1} \circ f^{-1} \circ \Phi_x$. 
Then  
\begin{align*}
        e^{\lambda^{+}-\epsilon}|v| & \leq\left|D \tilde{f}_x(0) v\right| ,       &&             \text { for } v \in \mathbb{R}^u; \\
        e^{-\epsilon}|v|           & \leq\left|D \tilde{f}_x(0) v\right| \leq e^{\epsilon}|v|,  &&\text { for } v \in \mathbb{R}^c; \\
      \left|D \tilde{f}_x(0) v\right|& \leq e^{\lambda^{-}+\epsilon}|v|,  && \text { for } v \in \mathbb{R}^s .
\end{align*}

  \item[(iii)] If $L(g)$ denotes the Lipschitz constant of the function $g$, then
    \begin{align*}
      L\left(\tilde{f}_x-D \tilde{f}_x(0)\right) \leq \epsilon,\quad L\left(\tilde{f}_x^{-1}-D \tilde{f}_x^{-1}(0)\right) \leq \epsilon,\quad   L\left(D \tilde{f}_x\right), L\left(D \tilde{f}_x^{-1}\right) \leq l(x) .
    \end{align*}

  \item[(iv)] For any $z, z^{\prime} \in R\left(l(x)^{-1}\right)$, there exists a constant $K$ such that
    $$
      K^{-1} d\left(\Phi_x z, \Phi_x z^{\prime}\right) \leq\left|z-z^{\prime}\right| \leq l(x) d\left(\Phi_x z, \Phi_x z^{\prime}\right).
    $$
\end{itemize}

In \cite{LY-85-annals-1, LY-85-annals-2}, Ledrappier and Young constructed measurable partitions $ \xi^{s} $ and $\xi^{u}$ of $ M $ such that
for $ \mu $-a.e. $ x \in M $, $ \xi^{s}(x) \subset W^{s}(x) $ and $ \xi^{u}(x) \subset W^{u}(x) $. 
Furthermore, $ \xi^{s}(x) $ and $ \xi^{u}(x) $ contain the intersection of an open neighborhood 
of $ x $ with $ W^{s}(x) $ and $ W^{u}(x) $, respectively.
We denote by $ \mu_{x}^{s} $ and $ \mu_{x}^{u} $ the conditional measures of $ \mu $ 
with respect to the partitions $ \xi^{s} $ and $ \xi^{u} $, respectively. 

For $\mu\in \mathcal{M}(f,M)$ and $x\in M$, 
the stable pointwise dimensions $d^{s}_\mu(x)$ and 
unstable pointwise dimensions $d^{u}_\mu(x)$ are defined by 
\begin{align*}
  d^{s}_\mu(x) := \lim _{r \rightarrow 0} \frac{\log \mu_{x}^{s}\left(B^{s}(x, r)\right)}{\log r}, \quad d^{u}_\mu(x) := \lim _{r \rightarrow 0} \frac{\log \mu_{x}^{s}\left(B^{u}(x, r)\right)}{\log r}. 
\end{align*}
In \cite{LY-85-annals-2}, Ledrappier and Young proved the above limits exist for $\mu$-a.e. $x\in M$. 

In \cite{BPS-99-annals}, Barreira, Pesin, and Schmeling 
proved the following theorem, 
which existence of the pointwise dimension $d_\mu(s)$ for $\mu$-a.e. $x\in M$ 
when $\mu$ is hyperbolic. 

\begin{theorem}[\cite{BPS-99-annals}]
Let $ f: M \rightarrow M $ be a $ C^{1+\alpha} $ diffeomorphism on a closed Riemannian manifold $ M $
and $ \mu $ a hyperbolic $f$-invariant Borel probability measure. Then for $\mu$-a.e. $x\in M$, 
$d_\mu(x)=d_\mu^u(x)+d_\mu^s(x)$. 
\end{theorem}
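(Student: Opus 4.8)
The plan is to deduce the existence and the value of $d_\mu(x)$ from an \emph{almost-product structure} of $\mu$ along the stable and unstable foliations; this is the Eckmann--Ruelle problem. Since \cite{LY-85-annals-2} already provides that the limits $d^s_\mu(x)$ and $d^u_\mu(x)$ exist for $\mu$-a.e.\ $x$, and since the lower pointwise dimension never exceeds the upper one, it suffices to prove the two inequalities
\begin{align*}
  \varlimsup_{r\to0}\frac{\log\mu(B(x,r))}{\log r}\le d^u_\mu(x)+d^s_\mu(x),
  \qquad
  \varliminf_{r\to0}\frac{\log\mu(B(x,r))}{\log r}\ge d^u_\mu(x)+d^s_\mu(x)
\end{align*}
for $\mu$-a.e.\ $x$. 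Fix $\epsilon>0$. Using the Lyapunov charts recalled above together with Lusin/Egorov arguments, I would first fix a compact \emph{regular set} $\Lambda$ with $\mu(\Lambda)>1-\epsilon$ on which all the following hold uniformly: the local manifolds $W^s(x),W^u(x)$ have size bounded below by some $\rho_0>0$ and vary continuously with $x$; the chart data $l(x)$ and the constant $K$ of item (iv) are uniformly bounded, so that $E^u(x)$ and $E^s(x)$ are uniformly transverse; and the conditional measures obey uniform two-sided power bounds $\rho^{\,d^s_\mu(x)+\epsilon}\le\mu^s_x(B^s(x,\rho))\le\rho^{\,d^s_\mu(x)-\epsilon}$, and likewise for $\mu^u_x$, for all $0<\rho<\rho_0$. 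Poincar\'e recurrence then guarantees that $\mu$-a.e.\ $x$ meets $\Lambda$ along iterates of positive density, which is what lets one transport the uniform estimates to a neighbourhood of such $x$ at arbitrarily small scales.

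For the first inequality --- an upper bound on the upper pointwise dimension, i.e.\ a lower bound for $\mu(B(x,r))$ --- the point is that for $x\in\Lambda$ and small $r$ the ball $B(x,r)$ contains a ``rectangle''
\[
  R(x,r)=\bigl\{[y,z]:y\in B^u(x,cr),\ z\in B^s(x,cr)\bigr\},
\]
where $[\cdot,\cdot]$ denotes the local intersection of an unstable and a stable leaf and $c>0$ is uniform; this uses only the uniform transversality on $\Lambda$. Disintegrating $\mu$ along the measurable partition $\xi^u$, and invoking the (H\"older) absolute continuity of the stable holonomies to compare the transverse (quotient) measure with $\mu^s_x$, one gets $\mu(R(x,r))\ge \mu^u_x(B^u(x,cr))\cdot\mu^s_x(B^s(x,cr))\cdot e^{-o(|\log r|)}$, and then the uniform conditional bounds give $\mu(B(x,r))\ge r^{\,d^u_\mu(x)+d^s_\mu(x)+C\epsilon}$ for $r$ small. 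Letting $\epsilon\to0$ along an exhausting sequence of regular sets yields the first inequality.

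The reverse inequality --- an upper bound $\mu(B(x,r))\le r^{\,d^u_\mu(x)+d^s_\mu(x)-C\epsilon}$ --- is the real content of the problem, and is where I expect the main difficulty. The natural scheme is a Fubini/slicing estimate: decompose $B(x,r)\cap\Lambda$ over the unstable partition, bound each stable slice $B(x,r)\cap W^s(y)$ by a stable ball of radius $\le Cr$ (contributing $\le(Cr)^{d^s_\mu(y)-\epsilon}$ to the relevant conditional measure), and bound the transverse unstable spread by $\le(Cr)^{d^u_\mu(x)-\epsilon}$. The obstruction is that \emph{none} of this is uniform once one leaves $\Lambda$: the atoms of $\xi^s$ and $\xi^u$ have widely varying diameters, the holonomies are only H\"older so they could a priori distort the exponent, and a stable slice of $B(x,r)$ need not be a single well-shaped ball when $y\notin\Lambda$. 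Resolving this requires (i) building an \emph{adapted} countable partition refining $\xi^s\vee\xi^u$ whose atoms meeting $\Lambda$ have comparable diameters at each scale, (ii) the Borel density / martingale convergence theorem to certify that conditional measures of small atoms behave like the appropriate power of their size, and (iii) a Besicovitch-type covering of $B(x,r)$ by boundedly many rectangles of comparable size, so that summation does not cost an exponential factor. Controlling the H\"older holonomies while carrying out (iii) is precisely the step that forces the ``asymptotically almost local product structure'' of \cite{BPS-99-annals}, and I regard it as the crux of the argument.
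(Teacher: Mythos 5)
This theorem is quoted by the paper from \cite{BPS-99-annals} rather than reproved, but the paper's own Theorem \ref{BPS-theorem} (the $r$-neutralized analogue) reproduces the BPS architecture in detail, so your sketch can be checked against that. Your overall reduction is right: by \cite{LY-85-annals-2} the leafwise dimensions $d^s_\mu,d^u_\mu$ exist a.e., so everything comes down to a two-sided comparison of $\mu(B(x,r))$ with $\mu^u_x(B^u(x,cr))\,\mu^s_x(B^s(x,cr))$ up to factors $r^{\pm C\epsilon}$, and your description of the hard direction (adapted countable partitions, a Besicovitch-type covering on leaves, and a mass-distribution argument to control the count of rectangles) is the correct shape of the BPS crux, which appears in the paper as Lemmas \ref{Lemma 4}--\ref{Lemma 5} via the classes $\mathcal{R}(n)$, $\mathcal{F}(n)$ and the leafwise Carath\'eodory (Frostman-type) estimate.

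The genuine gap is in what you call the first, ``easier'' inequality. You claim $\mu(R(x,r))\ge \mu^u_x(B^u(x,cr))\,\mu^s_x(B^s(x,cr))\,e^{-o(|\log r|)}$ by disintegrating over $\xi^u$ and ``invoking the (H\"older) absolute continuity of the stable holonomies to compare the transverse (quotient) measure with $\mu^s_x$.'' For a general hyperbolic measure this step fails: the conditional measures $\mu^s_x,\mu^u_x$ are typically singular fractal measures that are \emph{not} quasi-invariant under holonomy, and the factor measure of $\xi^u$ on a transversal has no a priori pointwise relation to $\mu^s_x$. If such an absolute continuity held, $\mu$ would have genuine local product structure, which is exactly what fails in general and why the Eckmann--Ruelle problem was hard. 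In BPS (and in the paper's Lemma \ref{Lemma 6}) the lower bound on $\mu(B(x,r))$ is \emph{also} obtained from the rectangle-counting machinery: one needs $N^{s}\cdot N^{u}\le N\cdot e^{Cn\epsilon}$, which rests on the same key comparison $\widehat{N}^{s/u}\le N^{s/u}e^{Cn\epsilon}$ (Lemma \ref{Lemma 5}) that you reserve for the other direction. So both inequalities require the covering/counting argument; neither follows from holonomy regularity, and as written your first inequality is unproved.
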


Building upon the notion of leaf pointwise dimensions in \cite{LY-85-annals-2}, 
Ben Ovadia developed a significant entropy formula for the $r$-neutralized Bowen ball 
on the unstable manifold. 
This formula provides a key tool for our investigation of $r$-neutralized local entropy. 

\begin{theorem}[\cite{O24}]\label{O-u}
  Let $f$ be a $ C^{1+\alpha} $ diffeomorphism on a closed Riemannian manifold $M$ and  $\mu $ an $ f$-invariant Borel probability measure on $ M $, given $r>0$, then
  \begin{align*}
    \lim_{n\to\infty}-\frac{1}{n}\log \mu^u_x(B^u(x,n,e^{-nr}))=h_\mu(f,x)+rd^u(x).
  \end{align*}
\end{theorem}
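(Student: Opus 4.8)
The plan is to transport the $r$-neutralized unstable Bowen ball into a Lyapunov chart at $x$, to recognize it there as a flag-adapted box, and to estimate its $\mu^u_x$-measure by the leafwise Ledrappier--Young analysis, arranging every scale to be \emph{synchronized} with the dynamics so that only almost-everywhere (not uniform) convergence is invoked. Throughout, $d^u(x)$ denotes the unstable pointwise dimension $d^u_\mu(x)$, $\epsilon>0$ is the Lyapunov-chart parameter (to be sent to $0$ at the very end), and every ``$o(n)$'' below means $o(n)$ after first letting $n\to\infty$ and then $\epsilon\to0$.

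\emph{Step 1 (shape of the ball).} First I would pin down the geometry of $B^u(x,n,e^{-nr})$. For $\mu$-a.e.\ $x$ and $y\in W^u(x)$ near $x$, backward contraction of $W^u$ makes $i\mapsto d(f^ix,f^iy)$ nondecreasing on $0\le i\le n-1$ up to a factor $e^{O(\epsilon n)}$, so $B^u(x,n,e^{-nr})$ agrees with $f^{-(n-1)}\big(B^u(f^{n-1}x,e^{-nr})\big)$ up to such an error. Writing $y=\Phi_x(v)$ and feeding this through the chart estimates (ii)--(iv) together with temperedness of $l(f^ix)$, one finds that in the chart the ball is comparable, up to $e^{\pm o(n)}$ in each coordinate, to the box
\[
  Q_n(x)\ :=\ \Big\{v\in\!\!\bigoplus_{\lambda_i(x)>0}\!\!E_i(x)\ :\ |v_i|<e^{-n(\lambda_i(x)+r)}\ \text{ for each }i\Big\},
\]
where $\lambda_1(x)>\dots>\lambda_u(x)>0$ are the distinct positive Lyapunov exponents at $x$ and $v_i$ is the $E_i(x)$-component of $v$. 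Equivalently, $Q_n(x)$ is the ordinary fixed-scale unstable Bowen ball $B^u(x,n,\rho_0)$ (whose chart image has axes $\asymp\rho_0e^{-n\lambda_i(x)}$) dilated by the single factor $e^{-nr}$ in every direction.

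\emph{Step 2 (measure of $Q_n(x)$ via the flag).} To compute $\mu^u_x(Q_n(x))$ I would invoke the Ledrappier--Young flag $W^{(1)}(x)\subset\dots\subset W^{(u)}(x)=W^u(x)$ of intermediate unstable manifolds, carrying the transverse (entropy) dimensions $\gamma_i(x)\ge0$ of \cite{LY-85-annals-2}, so that $d^u_\mu(x)=\sum_i\gamma_i(x)$ and $h_\mu(f,x)=\sum_i\lambda_i(x)\gamma_i(x)$. Disintegrating $\mu^u_x$ along this flag expresses $\mu^u_x(Q_n(x))$, up to $e^{\pm o(n)}$, as a product of $u$ transverse contributions, the $i$-th being the conditional transverse measure (in the $W^{(i)}/W^{(i-1)}$ family) of a radius-$e^{-n(\lambda_i(x)+r)}$ transverse ball. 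Here is the synchronization: $e^{-n(\lambda_i(x)+r)}=e^{-n_i\lambda_i(x)}$ with $n_i:=\lfloor n(1+r/\lambda_i(x))\rfloor$, and $e^{-n_i\lambda_i(x)}$ is, up to a bounded factor, the transverse size of the atom $\big(\bigvee_{j=0}^{n_i-1}f^{-j}\xi^u\big)(x)$ in the $W^{(i)}/W^{(i-1)}$ direction. Hence the Shannon--McMillan--Breiman theorem for the transverse conditional measures \cite{LY-85-annals-2}, applied at depth $n_i$ --- an a.e.\ statement requiring no rate --- yields that the $i$-th contribution decays like $e^{-n_i\lambda_i(x)\gamma_i(x)}=e^{-n(\lambda_i(x)+r)\gamma_i(x)}$, the passage from partition atoms to metric balls being the usual Besicovitch covering argument on leaves. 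Multiplying the $u$ contributions and using Step 1,
\[
  -\frac1n\log\mu^u_x\big(B^u(x,n,e^{-nr})\big)\ \longrightarrow\ \sum_i\big(\lambda_i(x)+r\big)\gamma_i(x)\ =\ h_\mu(f,x)+r\,d^u_\mu(x)
\]
as $n\to\infty$, for $\mu$-a.e.\ $x$, which is the assertion. (When $u=1$, $Q_n(x)$ is a genuine ball of radius $\asymp\rho_0e^{-n_1\lambda_1(x)}$, i.e.\ a $\big(\bigvee_{j<n_1}f^{-j}\xi^u\big)$-atom, so everything collapses to the single leafwise SMB identity $-\tfrac1{n_1}\log\mu^u_x\big((\bigvee_{j<n_1}f^{-j}\xi^u)(x)\big)\to h_\mu(f,x)$ together with $h_\mu(f,x)=\lambda_1(x)d^u_\mu(x)$; this already exhibits the mechanism by which the shrinking scale $e^{-nr}$ is absorbed by reading it off at the later dynamical time $n_1$.)

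\emph{Main obstacle.} The hardest part will be the flag factorization of Step 2 at these highly anisotropic scales: $Q_n(x)$ has aspect ratio $e^{n(\lambda_1(x)-\lambda_u(x))}$, so one must compare $\mu^u_x$ with a product of transverse conditional measures resolved at the mutually very different depths $n_1<\dots<n_u$, while keeping all errors $o(n)$ after $-\tfrac1n\log$. This forces one to run the Ledrappier--Young control of consecutive conditional entropies along the flag with care; in \cite{O24} this is achieved through a recursive bound on the gaps between successive conditional entropies. The remaining ingredients --- the expansion estimate of Step 1, the temperedness bookkeeping for $l(f^ix)$ and the chart nonlinearities, and the per-direction synchronization that reduces the shrinking scale to genuine Birkhoff/SMB a.e.\ convergence --- are routine within nonuniform hyperbolicity theory \cite{BP-nonuniform}.
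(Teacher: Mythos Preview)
The paper does not supply its own proof of this statement; it is quoted from \cite{O24} and used as a black box (with a remark that the ergodic proof there extends verbatim to the nonergodic case). From the paper's description of \cite{O24} --- the ``asymptotic local product structure for conditional measures on intermediate foliations of unstable leaves'' and the bounds on gaps ``between any two consecutive conditional entropies'' --- the argument there is exactly the flag factorization you outline in Step~2, with the Ledrappier--Young transverse dimensions $\gamma_i$ producing $\sum_i(\lambda_i(x)+r)\gamma_i(x)=h_\mu(f,x)+rd^u_\mu(x)$. Your sketch is therefore aligned with the cited source, and you have correctly identified the anisotropic product estimate at the mutually different depths $n_i$ as the substantive step that \cite{O24} supplies.
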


\begin{remark}
While the author initially proved this theorem for ergodic measures, 
the argument can be naturally extended to nonergodic $f$-invariant measures. 
The formula involves local quantities such as Brin-Katok local entropy and unstable pointwise dimension,
which are defined pointwise and do not require ergodicity. 
Thus, the proof technique applies directly to the nonergodic case without modification. 
\end{remark}

Furthermore, Ben Ovadia established the asymptotic local product structure for conditional measures
on intermediate foliations of unstable leaves in smooth systems\cite{O24}. 

We are now ready to prove Theorem \ref{LY-theorem} and Theorem \ref{BPS-theorem-1}. 
Throughout this section, we fix the parameter $r>0$.

\subsection{Proof of Theorem \ref{LY-theorem}}\label{partition}

  Motivated by \cite[Theorem F]{LY-85-annals-2},
  our strategy of proving Theorem \ref{LY-theorem} is to use the properties of special partitions. 
We begin by introducing some notions used in \cite{LY-85-annals-1, LY-85-annals-2}.

Fix $\epsilon>0$. For integers $u_0, s_0$, real numbers $d^u, d^s$, $h$, $\lambda^{+}$ 
and $\lambda^{-}$ with $\lambda^{+},-\lambda^{-}>100 \epsilon$, 
we denote by $\Gamma\left(\epsilon, u_0, s_0, d^u, d^s, h, \lambda^{+}, \lambda^{-}\right)$ 
the set of the point $x \in \Gamma^{\prime}$ satisfying the following properties: 
\begin{itemize}
  \item[(i)] $\operatorname{dim} E^u(x)=u_0$, $\operatorname{dim} E^s(x)=s_0$;

  \item[(ii)] $\min\limits_{\lambda_i(x)>0} \lambda_i(x) \geq \lambda^{+}$, $\max\limits_{\lambda_i(x)<0} \lambda_i(x) \leq \lambda^{-}$;

  \item[(iii)] $d^u-\epsilon \leq d^u(x) \leq d^u$, $d^s-\epsilon \leq d^s(x) \leq d^s$;

  \item[(iv)] $h \leq h_\mu(f,x) \leq h+\epsilon$.
\end{itemize}
Without loss of generality, we assume that for some 
$\epsilon$, $u_0$, $s_0$, $\lambda^{+}$, $\lambda^{-}, d^u, d^s$ and $h$, 
\[
\mu\left(\Gamma(\epsilon, u_0, s_0, d^u, d^s, h, \lambda^{+}, \lambda^{-})\right)=1.
\]
For the splitting $T_xM=E^u(x) \oplus E^c(x) \oplus E^s(x)$, 
we use $\left(v_x^u, v_x^c, v_x^s\right)$ to represent the coordinates of the 
tangent space $T_x M$. 
By utilizing the exponential mapping at $x$, 
this establishes a coordinate system in a neighborhood of $x$. 
According to Lusin's theorem, there exists a compact set $\Lambda$ with 
$\mu( \Lambda)>1-\frac{1}{3}\epsilon$ such that the functions 
$E^u(x), E^c(x), E^s(x)$ and $l(x)$ are continuous on $\Lambda$. 
Let $L=\max \left\{l(x), x \in \Lambda\right\}$. 
There exists $\delta>0$ such that if $d\left(x, x^{\prime}\right)<\delta$, then 
\begin{itemize}
\item[(i)] $\exp _{x^{\prime}}^{-1} \exp _x(v)$ is defined whenever $\|v\| \leq 3 \delta$, 

\item[(ii)] The slope of $\exp _{x^{\prime}}^{-1} \exp _x\left\{v_x^\alpha=\text{ constant } \right\}$ is less than $(4KL)^{-1}$ relative to $E^\alpha\left(x^{\prime}\right)$ for $\alpha=u$, $c$ and $s$. 
\end{itemize}
Let $\mathcal{L}_0$ be a partition of $\Lambda$ into the sets of diameter smaller than $\delta$.  
For every $q \in \mathcal{L}_0$, we choose a point $z(q) \in q$. 
For any $x \in \Lambda$, we write 
\begin{align*}
\bar{x}=\bar{x}^u+\bar{x}^c+\bar{x}^s, \quad \text{ where } \bar{x}^\alpha=\left(\exp _{z(q(x))}^{-1} x\right)_{z(q(x))}^\alpha, \alpha=u, c, s. 
\end{align*}
Clearly, we could assume that 
$2^{-1}d\left(x, x^{\prime}\right) \leq$ $\left|\bar{x}-\bar{x}^{\prime}\right| \leq 2 d\left(x, x^{\prime}\right)$ for any $x^{\prime} \in q(x)$. 

The following lemma will play an important role in the proof of Theorem \ref{LY-theorem}. 

\begin{lemma}[\cite{LY-85-annals-2}]\label{LY-Sublemma 12.2.2} 
Let $\delta$ be sufficiently small. For $x \in \Lambda$, we define the set
$$
  V(x, n)=\left\{y \in q(x): d\left(f^j x, f^j y\right) \leq \frac{\delta}{2} l\left(f^j x\right)^{-2},\quad -n b \leq j \leq n a\right\} .
$$
If $y_1, y_2 \in V(x, n)$ satisfy $\left|\bar{y}_1^c-\bar{y}_2^c\right| \leq e^{-n}(12 K L)^{-1}$, then $\left|\bar{y}_1-\bar{y}_2\right| \leq e^{-n}$.
\end{lemma}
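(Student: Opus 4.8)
\textbf{Proof plan for Lemma \ref{LY-Sublemma 12.2.2}.}

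The strategy is to work in the Lyapunov charts and exploit the exponential contraction/expansion along the stable and unstable directions, combined with the near-isometry between the chart coordinates $\bar y$ and the metric $d$ furnished by property (iv) of the charts and the slope estimate coming from our choice of $\delta$. Write $\bar y_i = \bar y_i^u + \bar y_i^c + \bar y_i^s$ for $i=1,2$, and set $w^\alpha = \bar y_1^\alpha - \bar y_2^\alpha$ for $\alpha = u,c,s$. The hypothesis gives $|w^c| \le e^{-n}(12KL)^{-1}$ directly, so the whole point is to bound $|w^u|$ and $|w^s|$ by $e^{-n}$. These two estimates are symmetric under time reversal (the unstable direction for $f$ is the stable direction for $f^{-1}$), so it suffices to treat, say, the stable component $w^s$, using forward iterates $f^j$ with $0 \le j \le na$, and then apply the same argument to $f^{-1}$ with iterates $-nb \le j \le 0$ for $w^u$.

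For the stable estimate, first I would transport $y_1, y_2$ into the Lyapunov chart at $x$: since both points lie in $V(x,n)$, the condition $d(f^j x, f^j y_i) \le \frac{\delta}{2} l(f^j x)^{-2}$ keeps the chart-transported points $z_i^{(j)} := \Phi_{f^j x}^{-1}(f^j y_i)$ inside the domain $R(l(f^j x)^{-1})$ for all $0 \le j \le na$, so the connecting maps $\tilde f_{f^j x}$ are defined along the whole orbit segment. The Lipschitz bound (iii), $L(\tilde f_x - D\tilde f_x(0)) \le \epsilon$, together with the linear estimates (ii) on $D\tilde f_x(0)$, shows that the stable component of $z_1^{(j)} - z_2^{(j)}$ contracts at rate roughly $e^{\lambda^- + 2\epsilon}$ while the coupling from the center and unstable components is controlled by $\epsilon$-small off-diagonal terms; a standard graph-transform / cone-estimate then shows that if $|z_1^{(na)} - z_2^{(na)}|$ stays bounded (which it does, by the $V(x,n)$ condition and (iv)) while $|w^c|$ is as small as assumed, then necessarily $|w^s| = |z_1^{(0),s} - z_2^{(0),s}|$ is forced to be at most $e^{-n}$ up to the constants $K, L$. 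The factor $12KL$ in the hypothesis is exactly what absorbs: the $K, L$ from comparing $d$ with $|\cdot|$ via (iv) and via the $\bar x \leftrightarrow \exp_x$ comparison, the slope bound $(4KL)^{-1}$ from our choice of $\delta$ that lets us pass between the $\exp_{z(q(x))}$-coordinates and the Lyapunov-chart coordinates, and a small combinatorial slack. Converting back from chart coordinates to the $\bar y$-coordinates via property (iv) and the slope estimate then yields $|w^u|, |w^s| \le e^{-n}$, and hence $|\bar y_1 - \bar y_2| = \max\{|w^u|, |w^c|, |w^s|\} \le e^{-n}$.

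The main obstacle I anticipate is the bookkeeping of constants: one must verify that the accumulated factors from (a) the change of coordinates between $\exp_{z(q(x))}$ and the Lyapunov chart $\Phi_x$ (where the $(4KL)^{-1}$ slope bound enters), (b) the comparison $K^{-1} d \le |\cdot| \le l d$ in (iv), and (c) the $e^\epsilon$-tempered growth of $l(f^j x)$ along the orbit, all fit inside the single clean factor $12KL$, uniformly in $n$ and in $x \in \Lambda$. The dynamical heart of the argument — that a bounded orbit segment in the chart with a tiny center displacement must have tiny stable and unstable displacements — is the usual hyperbolic estimate and is not where the difficulty lies; rather, it is ensuring the charts are actually defined along the full segment $-nb \le j \le na$ (this is why the $l(f^j x)^{-2}$ rather than $l(f^j x)^{-1}$ appears in the definition of $V(x,n)$, giving room to spare) and that "sufficiently small $\delta$" can be chosen once and for all to make every inequality go through. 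Since this is quoted verbatim from \cite{LY-85-annals-2} (Sublemma 12.2.2), I would keep the proof brief and refer the reader there for the detailed constant-chasing, reproducing only the chart setup and the one-step contraction estimate.
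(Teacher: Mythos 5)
The paper gives no proof of this lemma at all: it is quoted directly from \cite{LY-85-annals-2} (Sublemma 12.2.2), so your decision to keep the argument brief and defer the constant-chasing to that reference matches what the authors do. Your overall strategy --- pass to the Lyapunov charts, use the estimates (ii)--(iii) to propagate the separation vector $w=\bar y_1-\bar y_2$ along the orbit segment on which the $V(x,n)$ condition keeps both points inside the chart domains, and absorb the comparison constants from (iv) and the slope bound into the single factor $12KL$ --- is indeed the structure of the Ledrappier--Young argument.

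There is, however, a genuine directional error at the dynamical heart of your sketch. You propose to control the stable component $w^s$ using the forward iterates $0\le j\le na$, arguing that since $w^s$ contracts forward at rate roughly $e^{\lambda^-+2\epsilon}$ and the total separation stays bounded at time $na$, the time-zero quantity $|w^s(0)|$ is forced to be at most $e^{-n}$. This implication is false: forward contraction means a large $|w^s(0)|$ produces an even smaller $|w^s(na)|$, so boundedness at time $na$ imposes no constraint on $|w^s(0)|$ (for instance $|w^s(0)|=\delta$ still yields $|w^s(na)|\approx e^{na\lambda^-}\delta$, comfortably within the allowed bound). The correct pairing is the opposite of the one you set up: the \emph{unstable} component is the one controlled by the forward window $0\le j\le na$ --- if $|w^u(0)|>e^{-n}$ dominates $|w^c(0)|$ and $|w^s(0)|$, then $w$ lies in an unstable cone, the cone is preserved and $|w^u|$ grows at rate at least $e^{\lambda^+-2\epsilon}$ per forward step, so after $na$ steps the separation violates the bound $|w(j)|\le \delta\, l(f^jx)^{-1}$ coming from the definition of $V(x,n)$ together with (iv) --- while the \emph{stable} component is controlled by the backward window $-nb\le j\le 0$, where it expands under $f^{-1}$. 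Once the roles are swapped, your sketch becomes the standard argument; the hypothesis $|\bar y_1^c-\bar y_2^c|\le e^{-n}(12KL)^{-1}$ is precisely what guarantees that a separation exceeding $e^{-n}$ must be carried by the $u$- or $s$-component and hence places $w$ in one of the two hyperbolic cones to begin with.
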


For integers $ k, l \geq 1 $ and partition $ \eta $, 
we define the partition $ \eta_{k}^{l}=\bigvee_{n=-k}^{l} f^{-n} \eta $.  
Naturally, we have $ \eta_{k}^{0}(x) \cap \eta_{0}^{l}(x)=\eta_{k}^{l}(x) $. 
{\bf Throughout this section, without confusions, numbers like $anr$ and $bnr$ of the partition $\eta_{bnr}^{anr}$ shall be considered as 
 the corresponding integer parts of original values.}
 
Following the proof of\cite[Lemma 12.2.1]{LY-85-annals-2}, 
we prove the following lemma. 
\begin{lemma}\label{LY-Lemma 12.2.1}
Let $a=(\lambda^{+})^{-1}-2 \epsilon, b=-(\lambda^{-})^{-1}-2 \epsilon$. There exist $a$ set $\Lambda_2$ with $m\left(\Lambda_2\right)>1-\epsilon$, an integer $N_0$, a constant $C$ and a finite entropy partition $\mathcal{P}$ 
with the following property: for every $n \geq N_0$, there exists a constant $C$ and a partition $\mathcal{L}_n$ 
refining $\mathcal{P}_{nbr+n}^{n ar+n}$ such that 
if $x \in \Lambda_2$ and $q_n(x)$ is the atom of $\mathcal{L}_n$ containing $x$, then:
\begin{itemize}
\item[(i)] $\operatorname{diam} q_n(x) \leq 2 e^{-nr}$ and $q_n(x)\subset B(x,n,2e^{-nr})$,

\item[(ii)] $\mu( q_n(x)) \geq C^{-1} \mu( \mathcal{P}_{nbr+n}^{n ar+n}(x)) e^{-n \epsilon} e^{-n(r+\epsilon) d^c}$.
\end{itemize}
\end{lemma}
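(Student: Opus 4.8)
The plan is to follow the construction of special partitions from Ledrappier–Young \cite[Lemma 12.2.1]{LY-85-annals-2}, adapting all the scales from the fixed radius $e^{-n}$ used there to the $r$-dependent radius $e^{-nr}$, and keeping careful track of the central direction $E^c$. First I would fix the compact set $\Lambda$, the Lyapunov chart data, the partition $\mathcal{L}_0$ into pieces of diameter $<\delta$, and the coordinate representation $\bar x = \bar x^u + \bar x^c + \bar x^s$ already set up before the statement. The idea is to build, for each large $n$, a partition $\mathcal{L}_n$ that refines $\mathcal{P}_{nbr+n}^{nar+n}$ (where $\mathcal{P}$ is a fixed finite-entropy partition subordinate to $\mathcal{L}_0$ and a generating-type partition in the stable and unstable directions) by further cutting each atom in the neutral/central coordinate $\bar x^c$ into boxes of side roughly $e^{-nr}$. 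Since $\dim E^c = d^c$, this extra cutting introduces at most $O\!\left(e^{n(r+\epsilon)d^c}\right)$ many pieces inside a given atom of $\mathcal{P}_{nbr+n}^{nar+n}$, which is exactly the source of the factor $e^{-n(r+\epsilon)d^c}$ in the measure lower bound (ii), via a pigeonhole/averaging argument against $\mu$ restricted to that atom.

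Next I would verify property (i), namely $\operatorname{diam} q_n(x) \le 2e^{-nr}$ and $q_n(x)\subset B(x,n,2e^{-nr})$. Here the key input is Lemma \ref{LY-Sublemma 12.2.2}: on the set $V(x,n)$, control of the central coordinate difference $|\bar y_1^c - \bar y_2^c| \le e^{-nr}(12KL)^{-1}$ upgrades (using the Lyapunov chart contraction/expansion estimates in the $u$ and $s$ directions over the time window $-nbr \le j \le nar$) to full control $|\bar y_1 - \bar y_2| \le e^{-nr}$ of the whole coordinate, hence $d(y_1,y_2)\le 2e^{-nr}$. So I must choose $\mathcal{L}_n$ so that each atom lies in some $V(x,n)$ and has central-coordinate diameter $\le e^{-nr}(12KL)^{-1}$: the first is arranged by intersecting with preimages $f^{-j}\Lambda$ over the relevant window (controlled by a Birkhoff/Borel–Cantelli argument producing the set $\Lambda_2$ with $m(\Lambda_2)>1-\epsilon$ and the threshold $N_0$), and the second by the explicit cutting in $\bar x^c$. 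For the Bowen-ball inclusion $q_n(x)\subset B(x,n,2e^{-nr})$ I would check the orbit stays in a single chart and apply the same chart estimates forward from time $0$ to time $n$; the choice $a=(\lambda^+)^{-1}-2\epsilon$, $b=-(\lambda^-)^{-1}-2\epsilon$ is exactly what makes $e^{-nr}$ at time $\approx nar$ in the unstable past and $e^{-nr}$ at time $\approx -nbr$ in the stable past propagate to something $\le 2e^{-nr}$ over $0\le j< n$.

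The measure estimate (ii) is where I would spend most of the care. Given an atom $P$ of $\mathcal{P}_{nbr+n}^{nar+n}$ with $P\cap\Lambda_2\neq\emptyset$, the partition $\mathcal{L}_n$ subdivides $P$ into $q_n$-atoms; I bound the number of subdividing pieces that meet $\Lambda_2$ by a covering count in the $d^c$-dimensional central coordinate at scale $e^{-nr}(12KL)^{-1}$, which is $\le C e^{n(r+\epsilon)d^c}$ for a uniform constant $C$ once $n\ge N_0$ (the extra $e^{n\epsilon d^c}$ slack absorbs distortion of the chart coordinates and the uniformization over $\Lambda$). Then at least one piece $q_n(x)$ carries a proportion $\ge \left(Ce^{n(r+\epsilon)d^c}\right)^{-1}$ of $\mu(P)$; to get the extra $e^{-n\epsilon}$ factor and to handle points $x$ for which this "good" piece is not the one containing $x$, I would pass to the $f$-invariant function $x\mapsto$ (local proportion) and use that it is $\ge e^{-n\epsilon}$ off a set of small measure, shrinking $\Lambda_2$ if necessary. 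The main obstacle, I expect, is precisely this bookkeeping: ensuring a single set $\Lambda_2$ with $m(\Lambda_2)>1-\epsilon$ and a single $N_0$ work simultaneously for the chart-validity, the $V(x,n)$-membership over the whole window $-nbr\le j\le nar$, and the measure-proportion bound, uniformly in $n\ge N_0$ — this requires combining the subadditive/Birkhoff convergence for $-\frac1n\log\mu(\mathcal{P}_{nbr+n}^{nar+n}(x))$ with the Lyapunov chart regularity in a way that does not let the exceptional sets accumulate. Once Lemma \ref{LY-Lemma 12.2.1} is in place, Theorem \ref{LY-theorem} follows by estimating $-\frac1n\log\mu(B(x,n,2e^{-nr}))\le -\frac1n\log\mu(q_n(x))$ and letting $n\to\infty$ then $\epsilon\to0$, recognizing $\lim -\frac1n\log\mu(\mathcal{P}_{nbr+n}^{nar+n}(x))$ as $h_\mu(f,x)(a+b)r\cdot(\text{entropy part}) + r(d^u+d^s)(\text{dimension part})$ in the manner of \cite[Theorem F]{LY-85-annals-2}.
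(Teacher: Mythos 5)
Your proposal matches the paper's proof in all essentials: take $\mathcal{P}$ as in Ledrappier--Young, refine each atom of $\mathcal{P}_{nbr+n}^{nar+n}$ by cutting along the central coordinate, apply Lemma \ref{LY-Sublemma 12.2.2} to obtain the diameter bound and the inclusion $q_n(x)\subset B(x,n,2e^{-nr})$, count at most $(24KL)^{d^c}e^{n(r+\epsilon)d^c}$ pieces per atom, and discard a bad set $A_n$ of summably small measure to get (ii). The one adjustment the paper makes that you should incorporate is to cut the central coordinate at the finer scale $e^{-nr-n\epsilon}(12KL)^{-1}$ rather than $e^{-nr}(12KL)^{-1}$, so that the (at most $e^{j\epsilon}$) growth along $E^c$ over $0\le j\le n-1$ still leaves the hypothesis of Lemma \ref{LY-Sublemma 12.2.2} satisfied at every intermediate time --- this is precisely the source of the exponent $(r+\epsilon)d^c$ instead of $rd^c$.
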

\begin{proof}
The partition $\mathcal{P}$ we used 
is identical to the construction of \cite[Lemma 12.2.1]{LY-85-annals-2}. 
Hence we only need to show the derivation of the partition $\mathcal{L}_n$ 
from the partition $\mathcal{P}$. 

Let $\mathcal{L}_n|_{\mathcal{P}_{nbr+n}^{n ar+n}(x)}$ be a partition such that 
if $y_1$ and $y_2$ are in the same element of $\mathcal{L}_n$, then $\left|\bar{y}_1^c-\bar{y}_2^c\right| \leq e^{-nr-n\epsilon} (12 K L)^{-1}$. 
Fix an atom $q_n(z)$ of $\mathcal{L}_n$, 
since the Lyapunov exponents along the central direction are zero, 
there exists a constant $N_0>0$ such that for any points $y_1,y_2$ in $q_n(z)$ and $n\geq N_0$,
\[
\max_{0\leq j\leq n-1}\left|\overline{f^jy}_1^c-\overline{f^jy}_2^c\right| \leq \max_{0\leq j\leq n-1} e^{j\epsilon}e^{-nr-n\epsilon} (12 K L)^{-1}\leq e^{-nr} (12 K L)^{-1}.
\]
  According to Lemma \ref{LY-Sublemma 12.2.2}, 
we conclude that $ \operatorname{diam} q_n(f^j x)\leq 2e^{-nr} $ for $ 0 \leq j \leq n-1 $. 
This implies $q_n(x)\subset B(x,n,2e^{-nr})$. 
Obviously, under proper arrangement, 
the cardinality of $\left.\mathcal{L}_n\right|_{\mathcal{P}_{nbr+n}^{nar+n}(x)}$ could be less than $(24 K L)^{d^c}e^{n(r+\epsilon) d^c}$. 
Let us introduce the set below. 
$$
  A_n=\left\{x: \mu\left(q_n(x) \cap \mathcal{P}_{nbr+n}^{nar+n}(x)\right) \leq \frac{e^{-n(r+\epsilon) d^c}}{(24 K L)^{d^c}} \frac{\epsilon^{\prime}}{3} e^{-n \epsilon}\left(1-e^{-\epsilon}\right) \mu\left( \mathcal{P}_{nbr+n}^{n ar+n}(x)\right)\right\} .
$$
Therefore we have
\begin{align*}
\mu( A_n) \leq \frac{e^{-n(r+\epsilon) d^c}}{(24 K L)^{d^c}} \frac{\epsilon^{\prime}}{3} e^{-n \epsilon}\left(1-e^{-\epsilon}\right) \cdot (24 K L)^{d^c} e^{n(r+\epsilon) d^c}= \frac{\epsilon^{\prime}}{3} e^{-n \epsilon}\left(1-e^{-\epsilon}\right).
\end{align*}
 By removing the set $A_n$ of small positive measure and choosing an appropriate constant $C$, 
we complete the proof of this lemma.
\end{proof}

We present a slightly modified version of the Bowen-Lebesgue density lemma\cite{ORH23} for our purposes here. 

\begin{lemma}[Bowen-Lebesgue density lemma\cite{ORH23}]\label{Proposition 3}
  Let $f$ be a $ C^{1+\alpha} $ diffeomorphism on a closed Riemannian manifold $M$ and  $\mu $ an $ f$-invariant Borel probability measure on $ M $. 
Let $ A $ be a measurable set such that $\mu(A)>0 $.
  Then for $ \mu $-a.e $ x \in A $,
  \[
    \lim_{n \rightarrow \infty} -\frac{1}{n} \log \frac{\mu\left(B\left(x, n, e^{-nr}\right) \cap A\right)}{\mu\left(B\left(x, n, e^{-nr}\right)\right)}=0.
  \]
\end{lemma}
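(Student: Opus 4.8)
The plan is to adapt the classical Bowen–Lebesgue density argument to the $r$-neutralized Bowen balls $B(x,n,e^{-nr})$, replacing the fixed scale $\epsilon$ by the shrinking scale $e^{-nr}$ throughout. First I would fix the measurable set $A$ with $\mu(A)>0$ and define, for each $n$ and each $x\in A$, the quantity
\[
\phi_n(x)=-\frac1n\log\frac{\mu\bigl(B(x,n,e^{-nr})\cap A\bigr)}{\mu\bigl(B(x,n,e^{-nr})\bigr)}\ \ge 0.
\]
The goal is to show $\limsup_{n\to\infty}\phi_n(x)\le 0$ for $\mu$-a.e.\ $x\in A$, i.e.\ that $\phi_n(x)\to 0$. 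The core idea is a covering/Vitali-type argument: for a threshold $\gamma>0$ and a large $n$, consider the ``bad'' set $A_n^\gamma$ of points $x\in A$ with $\phi_n(x)\ge\gamma$, which means $\mu(B(x,n,e^{-nr})\cap A)\le e^{-n\gamma}\mu(B(x,n,e^{-nr}))$. One extracts from the cover $\{B(x,n,e^{-nr}):x\in A_n^\gamma\}$ a subfamily with bounded overlap — here the bounded multiplicity of the $r$-neutralized Bowen balls plays the role that the Besicovitch/Vitali covering lemma plays for genuine metric balls; on a compact manifold the $n$-th dynamical metric $d_n^f$ is still a metric, so one gets a subcover with overlap multiplicity bounded by a constant depending only on the dimension (or one just uses a maximal disjoint subfamily and the $5r$-covering lemma with respect to $d_n^f$). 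Summing $\mu$ over this subfamily yields $\mu(A_n^\gamma)\le C e^{-n\gamma}$, which is summable in $n$; Borel–Cantelli then gives that $\mu$-a.e.\ $x\in A$ lies in only finitely many $A_n^\gamma$, hence $\limsup_n\phi_n(x)\le\gamma$, and letting $\gamma\downarrow 0$ along a sequence finishes the proof.

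The organization of the steps is: (1) reduce to showing $\mu(A_n^\gamma)\le C e^{-n\gamma}$ for a uniform constant $C$; (2) establish a covering lemma for the metric $d_n^f$ with a multiplicity constant independent of $n$ — this is where one must be careful, since the relevant covering constant for $(X,d_n^f)$ could a priori blow up with $n$; (3) carry out the summation of measures over the extracted subfamily using the defining inequality of $A_n^\gamma$; (4) apply Borel–Cantelli and conclude. I would note that the paper cites this as a modification of a lemma already in \cite{ORH23}, so the substantive work in \cite{ORH23} can be invoked; the modification here is purely that the radius $\epsilon$ is replaced by $e^{-nr}$, and the argument goes through verbatim because at each fixed $n$ the set $B(x,n,\epsilon_n)$ with $\epsilon_n=e^{-nr}$ is still a ball in the metric $d_n^f$, so the covering step is unaffected.

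The main obstacle is controlling the covering multiplicity uniformly in $n$: a priori, balls in the metric $d_n^f$ on a manifold could require an $n$-dependent number of overlaps in a Vitali selection, which would destroy the summability. The resolution is standard but must be stated: because $f$ is a diffeomorphism of a compact manifold, the derivative cocycle is uniformly bounded, so each ball $B(x,n,e^{-nr})$ in $d_n^f$ is comparable (after applying $f^j$ for the worst $j$) to a genuine Riemannian ball of some small radius, and the Besicovitch covering constant of a fixed compact Riemannian manifold is a fixed number $N=N(M)$; alternatively one simply uses a maximal $e^{-nr}$-separated (in $d_n^f$) subset of $A_n^\gamma$ and the triangle inequality for $d_n^f$, which has overlap bounded by the Besicovitch constant of $(X,d_n^f)$ — and that constant is bounded by $N(M)$ uniformly. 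Since \cite{ORH23} already handles this, I would cite it and only indicate the one-line change ($\epsilon\rightsquigarrow e^{-nr}$), keeping the proof short: invoke the Bowen–Lebesgue density lemma of \cite{ORH23} and observe that the shrinking of the scale with $n$ does not affect the covering argument, since everything is done at a fixed $n$ with $e^{-nr}$ treated as a constant radius in $d_n^f$.
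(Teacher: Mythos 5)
Your proposal is correct and takes essentially the same approach as the paper: the paper disposes of this lemma by citing the original Bowen--Lebesgue density lemma of \cite{ORH23} and remarking that its main tool, the covering lemma \cite[Lemma 2.2]{ORH23}, remains valid for any fixed positive $r$, so the argument carries over with $\epsilon$ replaced by $e^{-nr}$. Your fuller sketch of the underlying covering/Borel--Cantelli argument is sound in outline; the one caveat is that the uniform-in-$n$ covering multiplicity for ($r$-neutralized) Bowen balls is precisely the nontrivial content of \cite[Lemma 2.2]{ORH23} rather than a consequence of Bowen balls being comparable to Riemannian balls as your heuristic suggests, but since you ultimately defer to that citation --- exactly as the paper does --- the proof stands.
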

\begin{remark}
      In fact, the original version of the lemma is established under the limit $r\to 0$. 
Since the main tool \cite[Lemma 2.2]{ORH23} is still valid for any fixed positive $r$, 
the original lemma can be extended to positive $r$ naturally. 
\end{remark}

Now, we proceed with the main proof of Theorem \ref{LY-theorem}. 

Based on previous discussions, given $ 0<\epsilon<1 $, there exists a set $ \Gamma \subset M $ of measure $ \mu(\Gamma)>1-\frac{\epsilon}{4} $, an integer $n_{0} \geq 1 $,
and a constant $  C>1 $ such that for every $ x \in \Gamma $ and any integer $ n \geq n_{0} $, 
the following statements hold
(if there is no confusion, we will use $h, d^c, d^u, d^s$ for simplicity instead of $h_\mu(f,x), d^c(x), d^u(x), d^s(x)$ respectively): 
\begin{itemize}
  \item[(a)] By Shannon-McMillan-Bremian theorem, for all integers $ k, l \geq 1 $ we have
    \begin{align}
      C^{-1} e^{-(l+k) (h+\epsilon)} & \leq \mu\left(\mathcal{P}_{k}^{l}(x)\right) \leq C e^{-(l+k) (h-\epsilon)}, \label{LY-(a)} \\
      C^{-1} e^{-k h-k \epsilon}         & \leq \mu_{x}^{s}\left(\mathcal{P}_{k}^{0}(x)\right) \leq C e^{-k h+k \epsilon}, \label{LY-(b)} \\
      C^{-1} e^{-l h-l \epsilon}         & \leq \mu_{x}^{u}\left(\mathcal{P}_{0}^{l}(x)\right) \leq C e^{-l h+l \epsilon}. \label{LY-(c)}
    \end{align}
    \item[(b)]
\begin{align}
  \xi^{s}(x) \cap \bigcap_{n \geq 0} \mathcal{P}_{0}^{n}(x) \supset B^{s}\left(x, e^{-n_{0}r}\right),  \label{P-(8)}\\ 
  \xi^{u}(x) \cap \bigcap_{n \geq 0} \mathcal{P}_{n}^{0}(x) \supset B^{u}\left(x, e^{-n_{0}r}\right). \label{P-(9)}
\end{align}
    \item[(c)] Based on lemma \ref{LY-Lemma 12.2.1}, for every $n\geq n_0$, there exists a partition $\mathcal{L}_n$ refining $\mathcal{P}^{anr+n}_{bnr+n}$ such that if $q_n(x)$ is the atom of $\mathcal{L}_n$ containing $x$, then
    \begin{align}
      q_n(x) &\subset B(x,n, 2e^{-nr}) \label{LY-(d)} \\
      \mu(q_n(x))&\geq C^{-1}e^{-n\epsilon}e^{-(nr+n\epsilon)d^c}e^{-n(ar+br+2)(h+2\epsilon)}. \label{LY-(e)}
    \end{align}
  \item[(d)] By the definition of the stable pointwise dimension $d^s$ and Theorem \ref{O-u}, we have
    \begin{align}
      e^{-d^{s} rn-nr \epsilon}               & \leq \mu_{x}^{s}\left(B^{s}\left(x, e^{-nr}\right)\right) \leq e^{-d^{s} nr+nr \epsilon},       \label{P-(10)}           \\
      e^{-(rd^{u}+h)n-nr \epsilon} & \leq \mu_{x}^{u}\left(B^{u}\left(x, n, e^{-nr}\right)\right) \leq e^{-(rd^{u}+h)n+nr \epsilon}. \label{P-(11)}
    \end{align}
    \item[(e)] According to Lemma \ref{Proposition 3}, we obtain
\begin{align}
  \mu_x^u\left(\Gamma \cap B^u\left(x, n, e^{-nr}\right)\right) & \geq e^{-n\epsilon}\mu_x^u\left( B^u\left(x, n, e^{-nr}\right)\right), \label{LY-(f)} \\
 \mu_x^s(B(x, n, e^{-nr}) \cap \Gamma)                             & \geq e^{-n\epsilon} \mu_x^s(B(x, n, e^{-nr})), \label{LY-f-s}         \\
  \mu(B(x, n, e^{-nr}) \cap \Gamma)                             & \geq e^{-n\epsilon} \mu(B(x, n, e^{-nr})), \label{P-(19)}   \\
  \mu\left(B(x, n, 4 e^{-nr}) \right)                           & \leq e^{-n(h^r_{\mu,d}(f)-\epsilon)} .\label{LY-(h)}
\end{align}
\end{itemize}
  Given $n\in \mathbb{N}$ with $n \geq n_0$ and $4 C \leq e^{n \epsilon}$, 
  we consider the following number,
$$
  N_n=\#\left\{\text { atoms of } q_n \text { intersecting } \Gamma \cap B\left(x, n, 2 e^{-nr}\right)\right\}.
$$
By combining \eqref{LY-(d)}, \eqref{LY-(e)} and \eqref{LY-(h)}, we obtain
\begin{align}\label{LY-(*)}
  N_n \leq C e^{n \epsilon} e^{n(r+\epsilon) d^c} e^{n(ar+br+2)(h+2 \epsilon)} e^{-n(h_{\mu,d}^r-\epsilon)} .
\end{align}

On the other hand, for $x \in \Gamma$ and $y\in \xi^s(x) \cap \Gamma \cap B\left(x, e^{-n}\right)$,  the property \eqref{LY-(b)} implies 
$$
  \mu_x^s\left(\mathcal{P}_{nr b+n}^0(y)\right)=\mu_y^s\left(\mathcal{P}_{nr b+n}^0(y)\right) \leq Ce^{-n(br+1)(h- \epsilon)} .
$$
Recall that the Lyapunov exponents along the central direction are zero, 
we assume $B(x,n,e^{-nr})\subset B(x,e^{-nr-n\epsilon})$ for any $n\geq n_0$
(increasing $ n_{0} $ if necessary). 
Let $N^s_n$ be the cardinality of the atoms of the partition $\mathcal{P}_{nrb+n}^0 $ 
intersecting the set $\Gamma \cap B\left(x, n, e^{-nr}\right)$. 
Combining \eqref{LY-(b)}, \eqref{P-(10)} and \eqref{LY-f-s}, we obtain
\begin{align}
N^s_n &\geq \frac{\mu^s_x\left(\Gamma\cap B(x,n,e^{-nr})\right)}{Ce^{-n(br+1)(h-\epsilon)}}\geq C^{-1}\mu^s_x( B(x,n,e^{-nr}))e^{-n\epsilon+n(br+1)(h-\epsilon)}\nonumber\\
&\geq C^{-1}\mu^s_x( B(x,e^{-nr-n\epsilon}))e^{-n\epsilon+n(br+1)(h-\epsilon)}\geq C^{-1} e^{-d^snr-nr\epsilon}e^{-n\epsilon+n(br+1)(h-\epsilon)}.\label{LY-1-1}
\end{align}
For a fixed atom $p_s$ counted in $N^s_n$ and $y \in p_s \cap \Gamma \cap B\left(x, n,e^{-nr}\right)$, 
the property \eqref{LY-(c)} implies that
 for any $z\in \xi^u(y)\cap \mathcal{P}_{-\infty}^0(y) \cap \Gamma \cap B\left(y,n, e^{-nr}\right)$, 
\begin{align}\label{LY-2-0}
  \mu^u_y\left(\mathcal{P}_0^{nra+n}(z)\right)=\mu^u_z\left(\mathcal{P}_0^{nra+n}(z)\right) \leq Ce^{-n(ar+1)(h-\epsilon)} .
\end{align}
For a measurable subset $X$, we use $n(X)$ to denote the number of atoms in $\mathcal{P}_0^{nra+n}$ 
 intersecting $X \cap \Gamma \cap B\left(y, n, e^{-nr}\right)$. By applying \eqref{P-(8)}, \eqref{P-(11)}, \eqref{LY-(f)} and \eqref{LY-2-0}, we get
\begin{align}\label{LY-2-1}
  n\left(\xi^u(y)\cap \mathcal{P}_{-\infty}^0\right) \geq \frac{\mu^u_y\left( \xi^u(y)\cap \Gamma\cap B(y,n,e^{-nr}) \right)}{Ce^{-n(ar+1)(h-\epsilon)}}
  \geq C^{-1}e^{-n\epsilon} e^{-n(h+rd^u+\epsilon r)} e^{n (ar+1)(h-\epsilon)} .
\end{align}
For any atom $p_u$ in this count, $p_u \cap p_s$ is an atom of the partition $\mathcal{P}_{n rb+n}^{nr a+n}$ 
which intersects $\Gamma \cap B\left(y,n, e^{-nr}\right)$ for some $y$ with $\max\limits_{0\leq i\leq n-1}d(f^i(y), f^i(x)) \leq e^{-nr}$. 
Since $\mathcal{P}_{-\infty}^0(y) \subset p_u$, 
we have $  n\left(p_u\right) \geq n\left(\xi^u(y)\cap \mathcal{P}_{-\infty}^0\right) $. 
Applying \eqref{LY-1-1} and \eqref{LY-2-1}, we have
\begin{align}\label{LY-(**)}
  N  \geq \sum_{\left\{p_u:\, p_u \cap \Gamma \cap B\left(x,n, e^{-nr}\right) \neq \varnothing\right\}}n(p_u)
\geq  N^s_n \cdot C^{-1}e^{-n\epsilon} e^{-n(h+rd^u+\epsilon r)} e^{n (ar+1)(h-\epsilon)}. 
\end{align}
The proof of Theorem \ref{LY-theorem} follows from \eqref{LY-(*)}, \eqref{LY-(**)} and the arbitrariness of $\epsilon$.

\subsection{Proof of Theorem \ref{BPS-theorem-1}}\label{Proof of 1.3}

  In this subsection, we will prove Theorem \ref{BPS-theorem} in the following, 
which implies Theorem \ref{BPS-theorem-1} as previously mentioned.

\begin{theorem}\label{BPS-theorem}
  Let $ f: M \rightarrow M $ be a $ C^{1+\alpha} $ diffeomorphism on a closed Riemannian manifold $ M $
  and $ \mu $ a hyperbolic $f$-invariant Borel probability measure.
  Given $r>0$, for any $ \epsilon>0 $, there exist a subset $ \Lambda \subset M $ with $ \mu(\Lambda)>1-\epsilon $
  and constants  $b=b(r)$, $t=t(r)$, $N\in \mathbb{N}$ such that for every $ y \in \Lambda $ and $n\geq N$,
  \begin{align*}
    \mu_{y}^{s}\left(B^{s}(y, e^{-nr})\right) \mu_{y}^{u}\left(B^{u}(y, n,e^{-nr})\right) \leq \mu\left(B(y, n, 4 e^{-nr})\right) e^{tn \epsilon}, \\
    \mu\left(B(y, n+b,e^{-r(n+b)})\right) \leq \mu_{y}^{s}\left(B^{s}(y, 4 e^{-nr})\right) \mu_{y}^{u}\left(B^{u}(y, n,4 e^{-nr})\right) e^{tn \epsilon}.
  \end{align*}
\end{theorem}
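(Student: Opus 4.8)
The plan is to prove the two-sided estimate of Theorem~\ref{BPS-theorem} by exploiting the \emph{asymptotic local product structure} of the hyperbolic measure $\mu$ in Lyapunov (Pesin) charts, following the template of Ledrappier--Young and Barreira--Pesin--Schmeling but adapting the scales to the $r$-neutralized Bowen balls $B(y,n,e^{-nr})$. First I would fix $\epsilon>0$ and, via Lusin's theorem together with the Oseledets and Pesin theory recalled in Section~\ref{differentiable dynamics}, pass to a compact set $\Lambda$ with $\mu(\Lambda)>1-\epsilon$ on which the chart maps $\Phi_x$, the chart norms $l(x)$, the splitting $E^s(x)\oplus E^u(x)$, the local stable/unstable manifolds, and the conditional measures $\mu^s_x,\mu^u_x$ all vary continuously; on $\Lambda$ the distortion constants $K$ and $L=\sup_{\Lambda}l$ are uniform. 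Hyperbolicity (no central direction) is what makes this clean: $B(y,n,e^{-nr})$ is comparable, up to a bounded multiplicative change of radius, to a product $B^s(y,c\,e^{-nr})\times B^u(y,n,c\,e^{-nr})$ read in the chart, because forward iterates contract the stable part so that the stable constraint at time $n$ is no worse than the stable constraint at time $0$ (up to the uniform chart distortion), while the unstable part genuinely accumulates $n$ constraints.

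The first (upper) inequality is the easier direction. Using the product-like geometry in the chart one gets an inclusion of the form $B^s(y,e^{-nr})\times_{\mathrm{loc}} B^u(y,n,e^{-nr})\subset B(y,n,4e^{-nr})$ after adjusting radii by the uniform constants $K,L$; then I would apply Fubini/disintegration of $\mu$ along the measurable partitions $\xi^s,\xi^u$ of Ledrappier--Young, together with the fact that on $\Lambda$ the conditional measures of points in a common chart are uniformly comparable (this is where a factor $e^{tn\epsilon}$, with $t$ depending only on the number of charts / the $\epsilon$-slowly-varying constants, is absorbed). This yields $\mu^s_y(B^s(y,e^{-nr}))\,\mu^u_y(B^u(y,n,e^{-nr}))\le \mu(B(y,n,4e^{-nr}))e^{tn\epsilon}$ for $y\in\Lambda$, $n\ge N$.

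For the second (lower) inequality I would argue in the reverse direction: a Bowen ball $B(y,n+b,e^{-r(n+b)})$ sits, in the chart, inside a product $B^s(y,4e^{-nr})\times_{\mathrm{loc}}B^u(y,n,4e^{-nr})$, where the shift by $b=b(r)$ absorbs (i) the gap between $e^{-r(n+b)}$ and $e^{-rn}$, (ii) the uniform chart distortion $K,L$, and (iii) the finitely many ``bad return'' corrections needed to re-enter $\Lambda$. Then disintegration again, controlling $\mu$ of a product set from above by the product of the conditional masses up to the comparability factor $e^{tn\epsilon}$; here one also uses the Bowen--Lebesgue density Lemma~\ref{Proposition 3} (extended to fixed $r>0$ as noted) to replace $B(y,n,e^{-nr})$ by $B(y,n,e^{-nr})\cap\Lambda$ at sub-exponential cost, so the disintegration sees only the ``good'' part where the comparability holds. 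Combining the two inequalities and then feeding in the asymptotics $\mu^s_y(B^s(y,e^{-nr}))\approx e^{-(h_\mu(f,y)+rd^s(y))n}$ (definition of $d^s$) and $\mu^u_y(B^u(y,n,e^{-nr}))\approx e^{-(h_\mu(f,y)+rd^u(y))n}$ (Theorem~\ref{O-u}), followed by $\epsilon\to 0$, one recovers Theorem~\ref{BPS-theorem-1} and the identity $d_\mu=d^u_\mu+d^s_\mu$.

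The main obstacle I expect is the precise bookkeeping of the radius and time shifts in the product comparison: making the inclusions $B(y,n,e^{-nr})\supset(\text{stable}\times\text{unstable})$ and $B(y,n+b,e^{-r(n+b)})\subset(\text{stable}\times\text{unstable})$ genuinely hold with \emph{uniform} constants $K,L$ and a \emph{fixed} shift $b=b(r)$ independent of $n$ and $y\in\Lambda$, while keeping every correction from ``falling off $\Lambda$'' at sub-exponential scale $e^{tn\epsilon}$. This is exactly the place where the asymptotic local product structure for conditional measures established in \cite{O24} (and the original Ledrappier--Young chart estimates, Lemma~\ref{LY-Sublemma 12.2.2}) does the heavy lifting; the rest is careful but routine distortion control and disintegration.
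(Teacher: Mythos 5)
Your proposal has a genuine gap at its central step, and the gap is precisely where the real content of the theorem lives. Both of your inequalities are reduced to the claim that, after passing to a Lusin set, ``the conditional measures of points in a common chart are uniformly comparable,'' so that $\mu$ of a product-like set can be compared to $\mu^s_y(\cdot)\,\mu^u_y(\cdot)$ by Fubini/disintegration. This is not a consequence of Lusin's theorem or of continuity of the charts: continuity of $x\mapsto\mu^s_x$ on a compact set does not give multiplicative comparability of the masses that different conditionals assign to \emph{exponentially small} corresponding pieces, and a general hyperbolic measure has no exact local product structure --- the stable/unstable holonomies need not be absolutely continuous with respect to the conditional measures. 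The reference to \cite{O24} does not fill this hole either; the asymptotic product structure proved there is for conditionals on intermediate foliations \emph{inside} an unstable leaf, not for the transverse product of $\mu^s_y$ and $\mu^u_y$ reconstructing $\mu$. In short, you are assuming (an exact form of) the asymptotic product structure that Theorem~\ref{BPS-theorem} is designed to establish. Note also that your ``easier direction'' is not easier: in the paper both inequalities require the same hard input.

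What the paper actually does is a rectangle-counting argument in the style of Barreira--Pesin--Schmeling, adapted to the scales $e^{-nr}$. One builds partitions $\mathcal{P}_{anr}^{an(r+\chi_1)}$ whose atoms have SMB-controlled measure, and compares three counts over the set $Q_n(y)$: the number $N$ of atoms meeting $\Gamma$, the numbers $N^{s},N^{u}$ of atoms meeting $\Gamma$ along the stable/unstable leaf of $y$, and the numbers $\widehat N^{s},\widehat N^{u}$ of atoms whose stable and unstable ``shadows'' meet $\Gamma$. The product inequality is obtained combinatorially (an injection from atoms into pairs of shadow-atoms, Lemma~\ref{Lemma 3}), and the crucial step --- which replaces your unjustified ``uniform comparability'' --- is Lemma~\ref{Lemma 5}: $\widehat N^{s/u}\le N^{s/u}e^{O(n\epsilon)}$ for a.e.\ $y$ and large $n$, proved by contradiction using the $r$-neutralized Bowen topological entropy, the Frostman-type Lemma~\ref{Billingsley type}, the covering Lemma~\ref{puck covers}, and Theorem~\ref{O-u}. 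Without an argument playing the role of Lemma~\ref{Lemma 5}, the product of conditional masses can genuinely overcount or undercount $\mu$ of the Bowen ball by more than $e^{tn\epsilon}$, so your scheme as written does not close.
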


At first, we provide the proof of Theorem \ref{BPS-theorem} for the ergodic case. 
Then we explain how to modify the proof for the nonergodic case. 
We adapt the method of \cite{BPS-99-annals} to make the properties of partitions suitable 
for the $r$-neutralized Bowen ball. 

{\bf The case of ergodic measures: }

Let $\mu$ be a hyperbolic ergodic $f$-invariant Borel probability measure on $M$. 

Given $ 0<\epsilon<1 $, there are a set $ \Gamma \subset M $ with $ \mu(\Gamma)>1-\frac{\epsilon}{4} $, an integer $n_{0} \geq 1 $, 
and a constant $  C>1 $ such that for every $ x \in \Gamma $ and any integer $ n \geq n_{0} $, the properties (a),(c),(d),(e) and the following statements hold:
\begin{itemize}
\item[(f)]Based on the properties \eqref{P-(8)} and \eqref{P-(9)}, we have
\begin{align*}
\mathcal{P}_{a nr}^{a n(r+\chi_1)}(x) &\subset B\left(x, n, e^{-nr}\right) \subset \mathcal{P}(x), \\ 
\mathcal{P}_{a nr}^{0}(x) \cap \xi^{s}(x) &\subset B^{s}\left(x, e^{-nr}\right) \subset \mathcal{P}(x) \cap \xi^{s}(x), \\
\mathcal{P}_{0}^{a n(r+\chi_1)}(x) \cap \xi^{u}(x) &\subset B^{u}\left(x,n, e^{-nr}\right) \subset \mathcal{P}(x) \cap \xi^{u}(x),  
\end{align*}
where $ a$ is the integer part of $ 2(1+\epsilon) \max \left\{\lambda_{1},-\lambda_{m}, 1\right\} $ and $\chi_1$ is the integer part of $\max\{\lambda_1+1,-\lambda_m+1\}$. 
\item[(g)] We define
$Q_{n}(x):=\bigcup \mathcal{P}_{a nr}^{a n(r+\chi)}(y)$, 
where the union is taken over $ y \in \Gamma $ for which
\[
\mathcal{P}_{0}^{a n (r+\chi_1)}(y) \cap B^{u}\left(x, n, 2 e^{-nr}\right) \neq \varnothing \text { and } \mathcal{P}_{a nr}^{0}(y) \cap B^{s}\left(x,2 e^{-nr}\right) \neq \varnothing .
\]
Then by the continuous dependence of stable and unstable manifolds in the $C^{1+\alpha}$ topology on the base point, 
we obtain  
\begin{align}\label{P-(16)}
B\left(x, n,e^{-nr}\right) \cap \Gamma \subset Q_{n}(x) \subset B\left(x,n,4 e^{-nr}\right) ,
\end{align}
and for each $ y \in Q_{n}(x) $, we have 
\[
\mathcal{P}_{anr}^{an(r+\chi_1)}(y) \subset Q_{n}(x).
\]
\item[(h)] For every $ x \in \Gamma $ and $ n \geq n_{0} $ (increasing $ n_{0} $ if necessary), we have
\begin{align}
B^{s}\left(x,e^{-nr}\right) \cap \Gamma \subset Q_{n}(x) \cap \xi^{s}(x) \subset B^{s}\left(x,4 e^{-nr}\right), \label{P-(17)}\\
B^{u}\left(x,n, e^{-nr}\right) \cap \Gamma \subset Q_{n}(x) \cap \xi^{u}(x) \subset B^{u}\left(x,n,4 e^{-nr}\right). \label{P-(18)}
\end{align}
\end{itemize}
Since $\mu$ is ergodic, the number $ h $ of property (a) is the  measure-theoretic entropy $h_\mu(f)$. 

We now introduce some propositions and notations from \cite{BPS-99-annals}. 

\begin{proposition}[\cite{BPS-99-annals}]\label{Proposition 4}
  There exists a positive constant $ D=D(\Gamma_0)<1 $ such that for every $ k \geq 1 $ and $ x \in \Gamma $, we have 
    \begin{align*}
      \mu_{x}^{s}\left(\mathcal{P}_{0}^{k}(x) \cap \Gamma\right) \geq D, \quad
      \mu_{x}^{u}\left(\mathcal{P}_{k}^{0}(x) \cap \Gamma\right) \geq D.
    \end{align*}
\end{proposition}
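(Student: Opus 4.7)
The plan is to bound $\mu_x^s(\mathcal{P}_0^k(x) \cap \Gamma)$ from below by combining, on a slightly shrunken subset of $\Gamma$, a uniform-in-$k$ lower bound on $\mu_x^s(\mathcal{P}_0^k(x))$ with a uniform lower bound on $\mu_x^s(\Gamma)$. The two bounds will both be of order $D_1$, so taking $D = D_1/2$ and subtracting will give the claim.

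From the containment \eqref{P-(8)} one has, for every $x \in \Gamma$ and every $k \geq 1$,
\[
\xi^s(x) \cap \mathcal{P}_0^k(x) \supset \bigcap_{n \geq 0} \xi^s(x) \cap \mathcal{P}_0^n(x) \supset B^s(x, e^{-n_0 r}),
\]
hence $\mu_x^s(\mathcal{P}_0^k(x)) \geq \mu_x^s(B^s(x, e^{-n_0 r}))$ with the right side independent of $k$. The measurable function $x \mapsto \mu_x^s(B^s(x, e^{-n_0 r}))$ is strictly positive $\mu$-a.e.\ on $\Gamma$, so a Lusin/Egorov extraction yields a subset $\Gamma_1 \subset \Gamma$ with $\mu(\Gamma_1) \geq \mu(\Gamma) - \epsilon/8$ on which this quantity is bounded below by some constant $D_1 > 0$. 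Independently, the disintegration identity $\mu(\Gamma) = \int \mu_x^s(\Gamma)\,\rd\mu$ together with Chebyshev's inequality produces $\Gamma_2 \subset \Gamma$ with $\mu(\Gamma_2) \geq \mu(\Gamma) - \epsilon/8$ on which $\mu_x^s(\Gamma) \geq 1 - D_1/2$.

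Setting $\Gamma_0 := \Gamma_1 \cap \Gamma_2$ (still a set of measure $> 1-\epsilon$) and replacing $\Gamma$ by $\Gamma_0$ in the usual abuse of notation, we conclude for every $x \in \Gamma_0$ and every $k \geq 1$
\[
\mu_x^s(\mathcal{P}_0^k(x) \cap \Gamma) \geq \mu_x^s(\mathcal{P}_0^k(x)) - \mu_x^s(\xi^s(x) \setminus \Gamma) \geq D_1 - D_1/2 =: D.
\]
The estimate for $\mu_x^u(\mathcal{P}_k^0(x) \cap \Gamma)$ is obtained by the symmetric argument, replacing \eqref{P-(8)} by \eqref{P-(9)} and interchanging the roles of $\xi^s$ and $\xi^u$. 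I expect the principal difficulty to be the Lusin/Egorov step producing the uniform lower bound $D_1$: the conditional measures $\mu_x^s$ do not vary continuously with $x$ in general, so one must exploit the continuous dependence of the Pesin charts (through the tempered function $l(x)$ and the continuity of $E^s, E^u, E^c$) on a Pesin block of large measure to make the extraction rigorous; this is exactly the kind of argument already used to construct $\Lambda$ in the proof of Theorem \ref{LY-theorem} above.
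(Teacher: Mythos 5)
The paper itself does not prove this proposition; it is imported verbatim from \cite{BPS-99-annals}, so your attempt must be judged against the argument there. The first half of your proposal is sound: the uniform-in-$k$ bound $\mu_x^s(\mathcal{P}_0^k(x)) \geq \mu_x^s(B^s(x,e^{-n_0 r}))$ via \eqref{P-(8)}, and the extraction of a set $\Gamma_1$ on which this is $\geq D_1>0$, are both correct (and for the extraction you only need measurability of $x\mapsto \mu_x^s(B^s(x,e^{-n_0 r}))$, not continuity, so the Lusin/Egorov difficulty you flag at the end is a non-issue). The genuine gap is the Chebyshev step producing $\Gamma_2$. From $\int \mu_x^s(\Gamma^c)\,\rd\mu=\mu(\Gamma^c)<\epsilon/4$, Markov's inequality gives $\mu\{x:\mu_x^s(\Gamma^c)>D_1/2\}<\epsilon/(2D_1)$; since $D_1\leq 1$, this bound is never below $\epsilon/2$, and it blows up precisely when $D_1$ is small --- and $D_1$ is not under your control, it is whatever positive constant the extraction happens to yield. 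Worse, $\Gamma_2$ can be empty: if $\mu_x^s(\Gamma^c)\equiv\epsilon/4$ for every $x$ (perfectly consistent with the disintegration identity) and the extracted $D_1$ is less than $\epsilon/2$, then no point satisfies $\mu_x^s(\Gamma)\geq 1-D_1/2$. The underlying problem is structural: the subtraction $\mu_x^s(\mathcal{P}_0^k(x))-\mu_x^s(\xi^s(x)\setminus\Gamma)$ needs the conditional mass of the complement of $\Gamma$ to be smaller than the conditional mass of a single atom, and nothing in the global smallness of $\mu(\Gamma^c)$ forces such a pointwise comparison.

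The repair, which is the route of \cite{BPS-99-annals}, is to bound the intersection directly rather than by subtraction. The sequence $k\mapsto\mu_x^s(\mathcal{P}_0^k(x)\cap\Gamma)$ is nonincreasing, so its infimum equals $\mu_x^s\bigl(\bigcap_{k\geq 0}\mathcal{P}_0^k(x)\cap\Gamma\bigr)\geq\mu_x^s\bigl(B^s(x,e^{-n_0 r})\cap\Gamma\bigr)$ by \eqref{P-(8)}. One then shows this last quantity is strictly positive for $\mu$-a.e.\ $x\in\Gamma$ by a density argument for the conditional measures: for fixed $\rho>0$ the set $A_\rho=\{x\in\Gamma:\mu_x^s(B^s(x,\rho)\cap\Gamma)=0\}$ is, on each leaf $\xi^s(y)$, covered by countably many balls $B^s(x,\rho)$ each meeting $\Gamma$ (hence $A_\rho$) in a $\mu_y^s$-null set, so $\mu_y^s(A_\rho)=0$ and, integrating, $\mu(A_\rho)=0$. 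Having a positive measurable function $x\mapsto\inf_k\mu_x^s(\mathcal{P}_0^k(x)\cap\Gamma)$ a.e.\ on $\Gamma$, one extracts the uniform constant $D=D(\Gamma_0)$ on a subset $\Gamma_0$ of nearly full measure exactly as in your first step, and the unstable estimate follows symmetrically from \eqref{P-(9)}. With this replacement your outline becomes correct; as written, the proof does not go through.
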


\begin{proposition}[\cite{BPS-99-annals}]\label{Proposition 5}
  For every $ x \in \Gamma $ and $ n \geq n_{0}r^{-1} $, we have
  \begin{align*}
    \mathcal{P}_{anr}^{an(r+\chi_1)}(x) \cap \xi^{s}(x)=\mathcal{P}_{anr}^{0}(x) \cap \xi^{s}(x), \\
    \mathcal{P}_{anr}^{an(r+\chi_1)}(x) \cap \xi^{u}(x)=\mathcal{P}_{0}^{an(r+\chi_1)}(x) \cap \xi^{u}(x).
  \end{align*}
\end{proposition}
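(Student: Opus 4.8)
The plan is to establish the two equalities separately; the unstable identity follows from the stable one by the symmetry between $f$ and $f^{-1}$ (equivalently, by the symmetric roles of \eqref{P-(8)}, \eqref{P-(9)} and of the two relevant lines of property (f)), so I only sketch
\[
\mathcal{P}_{anr}^{an(r+\chi_1)}(x)\cap\xi^{s}(x)=\mathcal{P}_{anr}^{0}(x)\cap\xi^{s}(x)\qquad\text{for }x\in\Gamma,\ n\ge n_0 r^{-1}.
\]
One inclusion is immediate: since $an(r+\chi_1)\ge 0$, the partition $\mathcal{P}_{anr}^{an(r+\chi_1)}$ refines $\mathcal{P}_{anr}^{0}$, so the left-hand atom is contained in the right-hand one.

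For the reverse inclusion I would take a point $y\in\mathcal{P}_{anr}^{0}(x)\cap\xi^{s}(x)$ and argue that in fact $y\in\mathcal{P}_{0}^{an(r+\chi_1)}(x)$; combining this with $y\in\mathcal{P}_{anr}^{0}(x)$ and the elementary identity $\eta_{k}^{0}(x)\cap\eta_{0}^{l}(x)=\eta_{k}^{l}(x)$ recalled earlier in this section then yields $y\in\mathcal{P}_{anr}^{an(r+\chi_1)}(x)$, as wanted. The mechanism is that the past window of length $anr$ together with membership in $\xi^{s}(x)$ already forces $y$ very close to $x$ along the stable leaf: by the second line of property (f), $\mathcal{P}_{anr}^{0}(x)\cap\xi^{s}(x)\subset B^{s}(x,e^{-nr})$, so that $y\in B^{s}(x,e^{-nr})$. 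Since $n\ge n_0 r^{-1}$ gives $nr\ge n_0$, this ball is contained in the one appearing in \eqref{P-(8)} (enlarging $n_0$ if necessary, as elsewhere in this section); therefore $y\in\xi^{s}(x)\cap\bigcap_{m\ge 0}\mathcal{P}_{0}^{m}(x)$, and in particular $y\in\mathcal{P}_{0}^{an(r+\chi_1)}(x)$.

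The unstable identity is handled in the same way: the third line of property (f) replaces the second (now it is the common \emph{superscript} $an(r+\chi_1)$, a forward window, that pins a point of $\xi^{u}(x)$ within $e^{-nr}$ of $x$), and \eqref{P-(9)} replaces \eqref{P-(8)} (closeness along the unstable leaf forces the entire backward itinerary of $y$ to coincide with that of $x$), giving $y\in\bigcap_{m\ge 0}\mathcal{P}_{m}^{0}(x)\subset\mathcal{P}_{anr}^{0}(x)$ and hence $y\in\mathcal{P}_{anr}^{an(r+\chi_1)}(x)$.

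I do not anticipate a substantive obstacle, only two bookkeeping points. First, \eqref{P-(8)} and \eqref{P-(9)} are used at the single base point $x\in\Gamma$, which is exactly why the forward (resp.\ backward) agreement of itineraries is obtained for \emph{all} times at once rather than only up to index $an(r+\chi_1)$ — this is what allows the finite window to be enlarged for free. Second, one must verify that the scale $e^{-nr}$ produced by property (f) actually reaches the scale at which \eqref{P-(8)} and \eqref{P-(9)} apply, which is the purpose of the hypothesis $n\ge n_0 r^{-1}$. Both are routine once the construction of $\mathcal{P}$ and of the set $\Gamma$ is matched with that of \cite{BPS-99-annals}.
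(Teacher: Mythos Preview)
Your proposal is correct and is precisely the standard argument from \cite{BPS-99-annals}; the paper itself does not supply a proof but simply cites that reference, so there is nothing further to compare. Your handling of the two bookkeeping points (the use of \eqref{P-(8)}--\eqref{P-(9)} at the single base point $x\in\Gamma$, and the scale-matching via the threshold on $n$) is exactly what is needed.
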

 
Fix $ x \in \Gamma $ and an integer $ n \geq n_{0} $. 
  We introduce two classes $\mathcal{R}(n)$ and $\mathcal{F}(n)$ as follows. 
\begin{align*}
\mathcal{R}(n)&:=\left\{\mathcal{P}_{anr}^{an(r+\chi_1)}(y) \subset \mathcal{P}(x): \mathcal{P}_{anr}^{an(r+\chi_1)}(y) \cap \Gamma \neq \varnothing\right\}; \\
\mathcal{F}(n)&:=\left\{\mathcal{P}_{anr}^{an(r+\chi_1)}(y) \subset \mathcal{P}(x): \mathcal{P}_{anr}^{0}(y) \cap \Gamma \neq \varnothing \text { and } \mathcal{P}_{0}^{an(r+\chi_1)}(y) \cap \Gamma \neq \varnothing\right\}. 
\end{align*}
  We call the elements of these classes  ``rectangles''. 
  
For each set $ A \subset \mathcal{P}(x) $, we have the following definitions. 
\begin{align*}
N(n, A)&:=\operatorname{Card}\{R \in \mathcal{R}(n): R \cap A \neq \varnothing\}; \\
N^{s}(n, y, A)&:=\operatorname{Card}\left\{R \in \mathcal{R}(n): R \cap \xi^{s}(y) \cap \Gamma \cap A \neq \varnothing\right\}; \\
N^{u}(n, y, A)&:=\operatorname{Card}\left\{R \in \mathcal{R}(n): R \cap \xi^{u}(y) \cap \Gamma \cap A \neq \varnothing\right\}; \\
\widehat{N}^{s}(n, y, A)&:=\operatorname{Card}\left\{R \in \mathcal{F}(n): R \cap \xi^{s}(y) \cap A \neq \varnothing\right\}; \\
\widehat{N}^{u}(n, y, A)&:=\operatorname{Card}\left\{R \in \mathcal{F}(n): R \cap \xi^{u}(y) \cap A \neq \varnothing\right\}. 
\end{align*}

The next lemma is a corollary of Proposition \ref{Proposition 5}.

\begin{lemma}\label{Lemma 1}
For each $ y \in \mathcal{P}(x) \cap \Gamma $ and integer $ n \geq n_{0} $, we have:
\begin{align*}
N^{s}\left(n, y, Q_{n}(y)\right) \leq C\mu_{y}^{s}\left(B^{s}\left(y, 4 e^{-nr}\right)\right) \exp(anr (h+ \epsilon)); \\
N^{u}\left(n, y, Q_{n}(y)\right) \leq C \mu_{y}^{u}\left(B^{u}\left(y, n, 4 e^{-nr}\right)\right)\exp(a(r+\chi_1) n (h+ \epsilon)). 
\end{align*}
\end{lemma}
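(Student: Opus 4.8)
The plan is to prove both inequalities in Lemma \ref{Lemma 1} by counting rectangles from $\mathcal{R}(n)$ inside the stable (resp. unstable) slice of $Q_n(y)$, using the Shannon–McMillan–Breiman estimates on conditional measures together with the inclusions relating Bowen balls to partition elements. I will only spell out the stable case, since the unstable case is entirely symmetric after replacing $\mathcal{P}_{anr}^{0}$ by $\mathcal{P}_{0}^{an(r+\chi_1)}$ and the exponent $anr$ by $a(r+\chi_1)n$.

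First I would invoke Proposition \ref{Proposition 5}: for $y\in\Gamma$ and $n\ge n_0 r^{-1}$ (which, absorbing constants, we may assume holds for $n\ge n_0$ after increasing $n_0$), each rectangle $R=\mathcal{P}_{anr}^{an(r+\chi_1)}(z)\in\mathcal{R}(n)$ intersected with $\xi^s(y)$ coincides with $\mathcal{P}_{anr}^{0}(z)\cap\xi^s(y)$. Hence the rectangles counted by $N^s(n,y,Q_n(y))$ are in bijective correspondence with the distinct atoms of $\mathcal{P}_{anr}^{0}$ that meet $\xi^s(y)\cap\Gamma\cap Q_n(y)$. By the inclusion \eqref{P-(17)} (property (h)), $Q_n(y)\cap\xi^s(y)\subset B^s(y,4e^{-nr})$, so all such atoms lie inside the stable ball $B^s(y,4e^{-nr})$, and their conditional $\mu_y^s$-measures are pairwise disjoint pieces of it. Therefore
\begin{align*}
N^s(n,y,Q_n(y))\cdot\min_{R}\mu_y^s\bigl(\mathcal{P}_{anr}^{0}(z)\cap\xi^s(y)\bigr)\le \mu_y^s\bigl(B^s(y,4e^{-nr})\bigr),
\end{align*}
where the minimum is over the finitely many atoms in the count. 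Then I would bound the conditional measure of a single atom from below: for $z$ with $\mathcal{P}_{anr}^{0}(z)\cap\xi^s(y)\cap\Gamma\neq\varnothing$, pick $w$ in this intersection; since conditional measures along $\xi^s$ depend only on the atom, $\mu_y^s(\mathcal{P}_{anr}^{0}(z)\cap\xi^s(y))=\mu_w^s(\mathcal{P}_{anr}^{0}(w))$, and the SMB estimate \eqref{LY-(b)} with $k=anr$ gives $\mu_w^s(\mathcal{P}_{anr}^{0}(w))\ge C^{-1}e^{-anr(h+\epsilon)}$ (using $h=h_\mu(f)$ in the ergodic case, and after rewriting $e^{-kh-k\epsilon}$ as $e^{-k(h+\epsilon)}$). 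Combining the last two displays yields exactly $N^s(n,y,Q_n(y))\le C\mu_y^s(B^s(y,4e^{-nr}))\exp(anr(h+\epsilon))$.

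For the unstable inequality I would run the mirror argument: by the second identity of Proposition \ref{Proposition 5}, $R\cap\xi^u(y)=\mathcal{P}_{0}^{an(r+\chi_1)}(z)\cap\xi^u(y)$, the inclusion \eqref{P-(18)} confines these to $B^u(y,n,4e^{-nr})$, and \eqref{LY-(c)} with $l=an(r+\chi_1)$ lower-bounds a single atom's conditional measure by $C^{-1}e^{-a(r+\chi_1)n(h+\epsilon)}$; disjointness then gives the stated bound. The main obstacle I anticipate is purely bookkeeping rather than conceptual: making sure that the conditional measure of a partition atom along $\xi^s(y)$ really is controlled by the SMB bound \eqref{LY-(b)} uniformly — this requires that the atom meets $\Gamma$ (so a good point $w$ exists) and that $n$ is large enough for the ranges $anr$, $an(r+\chi_1)$ (which must be read as integer parts, per the section's convention) to fall in the regime where \eqref{LY-(b)}, \eqref{LY-(c)} hold; one also has to be slightly careful that $\mathcal{R}(n)$-rectangles restricted to $\xi^s(y)$ remain distinct as atoms of $\mathcal{P}_{anr}^{0}$, which is where Proposition \ref{Proposition 5} is doing the real work. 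All constants can be absorbed into the single constant $C$ from property (a).
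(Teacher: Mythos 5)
Your argument is correct and is essentially the paper's own proof: identify each counted rectangle's stable (resp.\ unstable) slice with an atom of $\mathcal{P}_{anr}^{0}$ (resp.\ $\mathcal{P}_{0}^{an(r+\chi_1)}$) via Proposition \ref{Proposition 5}, lower-bound its conditional measure by \eqref{LY-(b)} (resp.\ \eqref{LY-(c)}) at a point of $\Gamma$ in the intersection, and use disjointness together with the inclusion \eqref{P-(17)} (resp.\ \eqref{P-(18)}) to compare with $\mu_y^{s}(B^{s}(y,4e^{-nr}))$ (resp.\ $\mu_y^{u}(B^{u}(y,n,4e^{-nr}))$). The only cosmetic difference is that you spell out the stable case while the paper spells out the unstable one.
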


\begin{proof}
We will prove the second inequality, the proof of the first inequality is similar.

Let $ z \in   R \cap \xi^{s}(y) \cap Q_{n}(y) \cap \Gamma $ for some $ R \in \mathcal{R}(n) $. 
By applying Proposition \ref{Proposition 5} and \eqref{LY-(c)}, we have 
\[
  \mu_{y}^{u}(R)=\mu_{y}^{u}\left(\mathcal{P}_{0}^{a n(r+\chi_1)}(z)\right)=\mu_{z}^{u}\left(\mathcal{P}^{a n(r+\chi_1)}_{0}(z)\right)\geq C^{-1}e^{-a(r+\chi_1)n(h+\epsilon)}.
\]
Since \eqref{P-(18)} and $ R \cap \xi^{u}(y) \cap Q_{n}(y) \neq \varnothing $ implies $ R \in \mathcal{R}(n) $, we obtain
\begin{align*}
&\mu_{y}^{u}\left(B^{u}\left(y,n, 4 e^{-nr}\right)\right) \geq \mu_{y}^{u}\left(Q_{n}(y)\right) \\
\geq& N^{u}\left(n, y, Q_{n}(y)\right) \cdot \min \left\{\mu_{y}^{u}(R): R \in \mathcal{R}(n) \text { and } R \cap \xi^{u}(y) \cap Q_{n}(y) \cap \Gamma \neq \varnothing\right\}\\
\geq & N^{u}\left(n, y, Q_{n}(y)\right) \cdot C^{-1}\exp(-a(r+\chi_1)n(h+\epsilon)). 
\end{align*}
 Thus the proof of the second inequality follows. 
\end{proof}

\begin{lemma}\label{Lemma 2}
For each $ y \in \mathcal{P}(x) \cap \Gamma $ and integer $ n \geq n_{0} $, we have
\[
\mu\left(B\left(y, n,e^{-nr}\right)\right) \leq C  N\left(n, Q_{n}(y)\right) \exp(n\epsilon- a n(2r+\chi_1) (h- \epsilon)). 
\]
\end{lemma}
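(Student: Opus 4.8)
The plan is to bound $\mu(B(y,n,e^{-nr}))$ from above by counting the rectangles of $\mathcal{R}(n)$ that meet $Q_n(y)$ and controlling the measure of each such rectangle. First I would use the inclusion $B(y,n,e^{-nr})\cap\Gamma\subset Q_n(y)$ from \eqref{P-(16)}, together with $\mu(B(y,n,e^{-nr})\cap\Gamma)\geq e^{-n\epsilon}\mu(B(y,n,e^{-nr}))$ from property (e) (the analogue of \eqref{P-(19)}), to reduce the problem to estimating $\mu(Q_n(y))$:
\[
\mu\left(B(y,n,e^{-nr})\right)\leq e^{n\epsilon}\,\mu\left(B(y,n,e^{-nr})\cap\Gamma\right)\leq e^{n\epsilon}\,\mu\left(Q_n(y)\right).
\]
Next, since $Q_n(y)$ is by construction a union of atoms of $\mathcal{P}_{anr}^{an(r+\chi_1)}$, each of which meets $\Gamma$ (by the definition of $Q_n$ via points of $\Gamma$), it is covered by the rectangles $R\in\mathcal{R}(n)$ with $R\cap Q_n(y)\neq\varnothing$, and there are exactly $N(n,Q_n(y))$ of them. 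Hence
\[
\mu\left(Q_n(y)\right)\leq N\left(n,Q_n(y)\right)\cdot\max\left\{\mu(R):R\in\mathcal{R}(n),\ R\cap Q_n(y)\neq\varnothing\right\}.
\]

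The remaining step is to bound $\mu(R)$ for such a rectangle. Each $R=\mathcal{P}_{anr}^{an(r+\chi_1)}(z)$ with $z\in\Gamma$, so $R=\mathcal{P}_k^l(z)$ with $k=anr$ and $l=an(r+\chi_1)$; applying the Shannon-McMillan-Breiman estimate \eqref{LY-(a)} gives
\[
\mu(R)\leq C e^{-(k+l)(h-\epsilon)}=C\exp\left(-an(2r+\chi_1)(h-\epsilon)\right),
\]
using $k+l=anr+an(r+\chi_1)=an(2r+\chi_1)$. Combining the three displays yields
\[
\mu\left(B(y,n,e^{-nr})\right)\leq C\,N\left(n,Q_n(y)\right)\exp\left(n\epsilon-an(2r+\chi_1)(h-\epsilon)\right),
\]
which is exactly the claimed inequality.

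The only subtle point—and the step I expect to require the most care—is justifying that every atom of $\mathcal{P}_{anr}^{an(r+\chi_1)}$ making up $Q_n(y)$ lies in $\mathcal{R}(n)$, i.e. that it genuinely meets $\Gamma$ and sits inside $\mathcal{P}(x)$; this follows from the defining property of $Q_n(y)$ (the union is taken over $y'\in\Gamma$) and from \eqref{P-(16)}, but one must check the indices $k,l$ are the integer parts used consistently with the convention fixed earlier in the section, and that $n\geq n_0$ suffices for \eqref{LY-(a)} to apply with these particular $k,l$. Everything else is bookkeeping with the constant $C$, which may be enlarged finitely many times without affecting the statement.
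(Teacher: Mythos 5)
Your proof is correct and follows essentially the same route as the paper's: both pass from $\mu(B(y,n,e^{-nr}))$ to $e^{n\epsilon}\mu(Q_n(y))$ via \eqref{P-(19)} and \eqref{P-(16)}, then count the rectangles of $\mathcal{R}(n)$ meeting $Q_n(y)$ and bound each one's measure by the Shannon--McMillan--Breiman estimate \eqref{LY-(a)} with $k+l=an(2r+\chi_1)$. The subtlety you flag (that every atom constituting $Q_n(y)$ meets $\Gamma$ and hence lies in $\mathcal{R}(n)$) is exactly the point the paper's proof opens with.
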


\begin{proof}
  Since $ R \cap Q_{n}(y) \neq \varnothing $ implies $ R \in \mathcal{R}(n)$, 
    by \eqref{LY-(a)}, \eqref{P-(19)} and \eqref{P-(16)}, we derive
\begin{align*}
\mu\left(B\left(y, n,e^{-nr}\right)\right)&\leq e^{n\epsilon} \mu\left(B\left(y, n, e^{-nr}\right) \cap \Gamma\right) \leq e^{n\epsilon} \mu\left(Q_{n}(y) \cap \Gamma\right) \\
&\leq e^{n\epsilon} N\left(n, Q_{n}(y)\right) \cdot \max \left\{\mu(R): R \in \mathcal{R}(n) \text { and } R \cap Q_{n}(y) \neq \varnothing\right\}\\
&\leq  N\left(n, Q_{n}(y)\right) \cdot \exp (n\epsilon- a n(2r+\chi_1) (h- \epsilon)).
\end{align*}
\end{proof}

Let $b$ be the integer part of $r^{-1}\log 4 +1$. 
Next, we estimate the number of elements in the classes $ \mathcal{R}(n) $ and $ \mathcal{F}(n) $. 

\begin{lemma}\label{Lemma 3}
For $ \mu $-a.e. $ y \in \mathcal{P}(x) \cap \Gamma $ and $n\geq n_0$, 
there exist a constant $C_1$ such that 
\[
N\left(n+b, Q_{n+b}(y)\right) \leq C_1 \widehat{N}^{s}\left(n, y, Q_{n}(y)\right) \cdot \widehat{N}^{u}\left(n, y, Q_{n}(y)\right) \exp(n\epsilon+2an(2r+\chi_1)\epsilon). 
\]
\end{lemma}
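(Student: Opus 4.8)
\textbf{Proof plan for Lemma \ref{Lemma 3}.}
The plan is to bound $N(n+b,Q_{n+b}(y))$ by counting rectangles in $\mathcal{R}(n+b)$ through their ``stable'' and ``unstable'' coordinates separately, using the product-like structure of the partition $\mathcal{P}_{a nr}^{a n(r+\chi_1)}$ inside a single atom $\mathcal{P}(x)$. First I would observe that any rectangle $R\in\mathcal{R}(n+b)$ meeting $Q_{n+b}(y)$ sits inside $Q_{n+b}(y)\subset B(y,n+b,4e^{-(n+b)r})$, and by the choice $b=\lfloor r^{-1}\log 4+1\rfloor$ we have $4e^{-(n+b)r}\le e^{-nr}$, so $Q_{n+b}(y)\subset B(y,n,e^{-nr})$ and hence (by \eqref{P-(17)}--\eqref{P-(18)} and \eqref{P-(16)}) every such $R$ is captured by the two families $\mathcal{F}(n)$ of rectangles meeting $\xi^{s}(y)$ and $\xi^{u}(y)$ respectively inside $Q_n(y)$. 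The point is that a rectangle of $\mathcal{R}(n+b)$ is determined (up to boundedly many choices coming from the gap between the scales $n$ and $n+b$ and from passing between $\mathcal{R}$ and $\mathcal{F}$) by the pair consisting of its image under the holonomy onto $\xi^{s}(y)$ and its image onto $\xi^{u}(y)$.

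Concretely, I would argue as follows. Fix $y\in\mathcal{P}(x)\cap\Gamma$ a density point. For $R\in\mathcal{R}(n+b)$ with $R\cap Q_{n+b}(y)\neq\varnothing$, pick $z\in R\cap\Gamma$; then $z\in B(y,n,e^{-nr})$, so by \eqref{P-(16)} we have $z\in Q_n(y)$, and by the defining property of $Q_n(y)$ there are $y_1,y_2\in\Gamma$ with $\mathcal{P}_{0}^{an(r+\chi_1)}(y_1)\cap B^u(y,n,2e^{-nr})\neq\varnothing$ and $\mathcal{P}_{anr}^{0}(y_2)\cap B^s(y,2e^{-nr})\neq\varnothing$ while $z\in\mathcal{P}_{anr}^{an(r+\chi_1)}(y_1)=\mathcal{P}_{anr}^{an(r+\chi_1)}(y_2)$. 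Using Proposition \ref{Proposition 5}, the ``unstable coordinate'' $\mathcal{P}_{0}^{an(r+\chi_1)}(z)\cap\xi^u(y)$ of $R$ is an element counted by $\widehat{N}^u(n,y,Q_n(y))$ and the ``stable coordinate'' $\mathcal{P}_{anr}^{0}(z)\cap\xi^s(y)$ is counted by $\widehat{N}^s(n,y,Q_n(y))$, and the atom $R=\mathcal{P}_{a nr}^{a n(r+\chi_1)}(z)$ is recovered from this pair together with a bounded amount of extra combinatorial data: first, there are at most $\exp(a(2r+\chi_1)n\epsilon)$-many atoms of $\mathcal{P}_{a nr}^{a n(r+\chi_1)}$ refining a given atom of $\mathcal{P}_{a(n-\text{shift})r}^{\cdots}$ coming from the level shift by $b$ (controlled by \eqref{LY-(a)} since the entropy of $\mathcal{P}$ is finite and the number of indices changes by $O(b)$, absorbed into $e^{n\epsilon}$); second, a factor $\exp(an(2r+\chi_1)\epsilon)$ from comparing $\mathcal{F}(n)$-rectangles with $\mathcal{R}(n)$-rectangles via \eqref{LY-(a)} and Proposition \ref{Proposition 4}. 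Multiplying the number of stable coordinates, the number of unstable coordinates, and these bounded overcounting factors yields the claimed inequality with $C_1$ depending only on $C$ and $b$.

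For the bookkeeping I would make the overcounting explicit: define a map from $\{R\in\mathcal{R}(n+b):R\cap Q_{n+b}(y)\neq\varnothing\}$ to (stable coordinate)$\times$(unstable coordinate)$\times$(finite label set), sending $R$ to the pair of its coordinates on $\xi^s(y)$, $\xi^u(y)$ together with an index recording which atom of the finer partition $\mathcal{P}_{a(n+b)r}^{a(n+b)(r+\chi_1)}$ it is among those refining the product of its two coordinate atoms; injectivity of this map is immediate, and the cardinality of the label set is bounded using \eqref{LY-(a)} (applied to the partition $\mathcal{P}$ whose atoms at consecutive levels differ in measure by a factor $e^{\pm(h+\epsilon)}$, so the number of sublevel atoms is at most $C^2 e^{a(b+ \text{const})(r+\chi_1)(h+\epsilon)}$, which is $\le e^{n\epsilon}$ for $n\ge n_0$ after increasing $n_0$) together with the fact that the ``rectangle'' $\mathcal{P}_{anr}^{an(r+\chi_1)}(z)$ is literally the intersection of $\mathcal{P}_{anr}^{0}(z)$ and $\mathcal{P}_{0}^{an(r+\chi_1)}(z)$, which are the two coordinate atoms.

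\textbf{Main obstacle.} The genuine difficulty is not the counting identity but controlling the transition between the two scales $n$ and $n+b$ and between $\mathcal{R}(n)$ and $\mathcal{F}(n)$ uniformly: one must check that enlarging the Bowen-ball scale from $e^{-(n+b)r}$ to the coarser scale $e^{-nr}$ (so that $Q_{n+b}(y)\subset B(y,n,e^{-nr})$) costs only a sub-exponential factor $e^{O(n\epsilon)}$, and that $\mathcal F(n)$-rectangles, which a priori may fail to meet $\Gamma$ in their full product, are related to the $\Gamma$-visible $\mathcal R(n)$-rectangles by the uniform positive lower bound $D$ of Proposition \ref{Proposition 4}. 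Both are handled by the Shannon--McMillan--Breiman estimates \eqref{LY-(a)}--\eqref{LY-(c)} and the hyperbolicity-induced uniformity on $\Gamma$, exactly as in \cite{BPS-99-annals}, but keeping every exponent linear in $n\epsilon$ (rather than, say, in $n$) is the delicate part and is where the specific choice of $b$, $a$, and $\chi_1$ is used.
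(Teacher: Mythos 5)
Your argument is correct and follows essentially the same route as the paper: both proofs combine a Shannon--McMillan--Breiman comparison between the partition scales $n$ and $n+b$ (made possible by the inclusion $Q_{n+b}(y)\subset B(y,n,e^{-nr})$ forced by the choice of $b$) with the injective map sending each rectangle of $\mathcal{R}(n)$ inside $Q_{n}(y)$ meeting $\Gamma$ to its pair of $\mathcal{F}(n)$-coordinates on $\xi^{s}(y)$ and $\xi^{u}(y)$. The only cosmetic difference is that the paper routes the scale comparison through the measure inequality $\mu(Q_{n+b}(y))\le e^{n\epsilon}\mu(Q_{n}(y)\cap\Gamma)$ (via the Bowen--Lebesgue density lemma) before converting to counts, whereas you count the fine atoms inside each coarse atom directly; note only that the $n$-dependent factor $\exp(2an(2r+\chi_1)\epsilon)$ comes from the $\pm\epsilon$ errors in \eqref{LY-(a)} at the two scales (so your label set is not of constant size), not from the passage between $\mathcal{F}(n)$ and $\mathcal{R}(n)$, which is a pure injection.
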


\begin{proof}
It follows from \eqref{P-(19)} and \eqref{P-(16)} that for any $ n \geq n_0 $,
\begin{align}
e^{n\epsilon} \mu\left(Q_{n}(y) \cap \Gamma\right) & \geq e^{n\epsilon}  \mu\left(B\left(y, n,e^{-nr}\right) \cap \Gamma\right) \geq \mu\left(B\left(y, n,e^{-nr}\right)\right)\nonumber \\
& \geq \mu\left(B\left(y, n, 4 e^{-r(n+b)}\right)\right) \geq \mu\left(Q_{n+b}(y)\right). \label{P-(22)}
\end{align}
According to \eqref{LY-(a)}, we derive
\begin{align}
\mu\left(Q_{m}(y)\right)&=\sum_{\mathcal{P}_{a mr}^{a m(r+\chi_1)}(z) \subset Q_{m}(y)} \mu\left(\mathcal{P}_{a mr}^{a m(r+\chi_1)}(z)\right) \nonumber\\
&\geq N\left(m, Q_{m}(y)\right)  C^{-1} \exp(- a m(2r+\chi_1) (h+\epsilon));\label{L-1}\\
\mu\left(Q_{m}(y) \cap \Gamma\right)&=\sum_{\substack{\mathcal{P}_{amr}^{a m(r+\chi_1)}(z) \subset Q_{n}(y)}} \mu\left(\mathcal{P}_{amr}^{am(r+\chi_1)}(z) \cap \Gamma\right) \nonumber\\
&\leq N_{m} C \exp(-am (2r+\chi_1) (h-\epsilon)),\label{L-2}
\end{align}
where $ N_{m} $ is the number of rectangles $ \mathcal{P}_{amr}^{am(r+\chi_1)}(z) \in \mathcal{R}(m)$  with $\mathcal{P}_{amr}^{am(r+\chi_1)}(z)\cap \Gamma \neq\varnothing$.
Setting $ m=n+b$ and using \eqref{P-(22)}, \eqref{L-1}, \eqref{L-2}, we obtain
\begin{align}
  N\left(n+b, Q_{n+b}(y)\right) &\leq \mu\left(Q_{n+b}(y)\right) \cdot C\exp(a (n+b)(2r+\chi_1) (h+\epsilon))\nonumber \\
&\leq  \mu\left(Q_{n}(y) \cap \Gamma\right) \cdot C\exp(n\epsilon+a (n+b)(2r+\chi_1) (h+\epsilon))\nonumber \\
  &\leq N_{n} \cdot   C^{2}\exp(n\epsilon+2an(2r+\chi_1)\epsilon+ab(2r+\chi_1) (h+\epsilon))\nonumber\\
&\leq N_{n} \cdot   C_1\exp(n\epsilon+2an(2r+\chi_1)\epsilon), \label{P-(23)}
\end{align}
where we choose $C_1=C^2\exp(ab(2r+\chi_1) (h+\epsilon))$. For any $ y \in \Gamma$, we have
\begin{align*}
\mathcal{P}_{0}^{a n(r+\chi_1)}(y) \cap \xi^{u}(y) \cap \Gamma \neq \varnothing \text{ and } \mathcal{P}_{a nr}^{0}(y) \cap \xi^{s}(y) \cap \Gamma \neq \varnothing.
\end{align*}

Consider a rectangle $ \mathcal{P}_{a nr}^{an(r+\chi_1)}(v) \subset Q_{n}(y)$ with $\mathcal{P}_{a nr}^{an(r+\chi_1)}(v)\cap \Gamma \neq \varnothing$. 
Therefore, the rectangles $ \mathcal{P}_{anr}^{0}(v) \cap \mathcal{P}_{0}^{an(r+\chi_1)}(y) $ and 
$ \mathcal{P}_{anr}^{0}(y) \cap \mathcal{P}_{0}^{an(r+\chi_1)}(v) $ are in $ \mathcal{F}(n) $ and 
they intersect the stable and unstable local manifolds at $ y $ respectively. 
Then, for any rectangle $ \mathcal{P}_{anr}^{an(r+\chi_1)}(v) \subset Q_{n}(y) $ with $ \mathcal{P}_{anr}^{an(r+\chi_1)}(v) \cap \Gamma \neq \varnothing$, 
we can associate it with the pair of rectangles $ \left(\mathcal{P}_{anr}^{0}(v) \cap \mathcal{P}_{0}^{an(r+\chi_1)}(y), \mathcal{P}_{anr}^{0}(y) \cap \mathcal{P}_{0}^{an(r+\chi_1)}(v)\right) $ in
\[
\left\{R \in \mathcal{F}(n): R \cap \xi^{s}(y) \cap Q_{n}(y) \neq \varnothing\right\} \times\left\{R \in \mathcal{F}(n): R \cap \xi^{u}(y) \cap Q_{n}(y) \neq \varnothing\right\}. 
\]
Obviously, this correspondence is injective, 
 as each rectangle in $Q_n(y)$ intersecting $\Gamma$
 will correspond to a unique pair of rectangles in $\mathcal{F}(n)$. 
Thus we conclude 
\begin{align}\label{L-3}
\widehat{N}^{s}\left(n, y, Q_{n}(y)\right) \cdot \widehat{N}^{u}\left(n, y, Q_{n}(y)\right) \geq N_{n}. 
\end{align}
Combining \eqref{P-(23)} and \eqref{L-3}, we obtain the desired inequality, 
thus completing the proof of the lemma.
\end{proof}

In the following lemma, we give upper bounds for the number of rectangles in $\mathcal{F}(n)$.

\begin{lemma}\label{Lemma 4}
For each $ x \in \Gamma $, there exists a constant $C_2$ such that for any $n\geq n_0$, 
\begin{align*}
   \widehat{N}^{s}(n, x, \mathcal{P}(x)) \leq C_2 \exp(a n (rh+3r\epsilon+2\chi_1\epsilon));\\
 \widehat{N}^{u}(n, x, \mathcal{P}(x)) \leq  C_2 \exp(a n (rh+3r\epsilon+\chi_1\epsilon+\chi_1 h)). 
\end{align*}
\end{lemma}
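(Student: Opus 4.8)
The plan is to bound $\widehat{N}^s(n,x,\mathcal P(x))$ by the number of atoms meeting $\Gamma$ of the one-sided ``stable slab'' partition $\mathcal P^0_{anr}$, and $\widehat{N}^u(n,x,\mathcal P(x))$ by the number of atoms meeting $\Gamma$ of the ``unstable slab'' partition $\mathcal P^{an(r+\chi_1)}_0$; Shannon--McMillan--Breiman then caps each of these slab counts by $C e^{anr(h+\epsilon)}$ and $C e^{an(r+\chi_1)(h+\epsilon)}$ respectively, which is exactly what is needed. The mechanism converting ``rectangle count'' into ``slab count'' is Proposition \ref{Proposition 5}: inside a stable leaf, refining a rectangle forward past time $anr$ produces nothing new, so a rectangle's trace on $\xi^s(x)$ is a function of --- indeed injectively determined by --- its stable slab, and dually on the unstable side.

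Concretely, let $R=\mathcal P^{an(r+\chi_1)}_{anr}(v)\in\mathcal F(n)$ meet $\xi^s(x)$. Picking a point of $R\cap\xi^s(x)$ and applying Proposition \ref{Proposition 5} there (legitimate once $n\geq n_0 r^{-1}$) gives $R\cap\xi^s(x)=\mathcal P^0_{anr}(v)\cap\xi^s(x)$. Hence if two rectangles of $\mathcal F(n)$ that meet $\xi^s(x)$ shared a stable slab, their traces on $\xi^s(x)$ would coincide, be nonempty, and thus force the rectangles to intersect --- so they are equal. Therefore $R\mapsto\mathcal P^0_{anr}(v)$ is injective on the rectangles counted by $\widehat{N}^s(n,x,\mathcal P(x))$, and symmetrically $R\mapsto\mathcal P^{an(r+\chi_1)}_0(v)$ is injective on those counted by $\widehat{N}^u(n,x,\mathcal P(x))$. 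The images are families of pairwise distinct --- hence pairwise disjoint --- atoms of a single partition, and by the very definition of $\mathcal F(n)$ each stable slab $\mathcal P^0_{anr}(v)$ occurring meets $\Gamma$; choosing $w\in\mathcal P^0_{anr}(v)\cap\Gamma$ and feeding it into \eqref{LY-(a)} yields $\mu(\mathcal P^0_{anr}(v))\geq C^{-1}e^{-anr(h+\epsilon)}$. Summing over the counted rectangles and using $\mu(M)=1$ gives $\widehat{N}^s(n,x,\mathcal P(x))\leq C e^{anr(h+\epsilon)}$; the identical argument with the unstable slabs (each again meeting $\Gamma$ by the definition of $\mathcal F(n)$) gives $\widehat{N}^u(n,x,\mathcal P(x))\leq C e^{an(r+\chi_1)(h+\epsilon)}$.

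What remains is arithmetic: $anr(h+\epsilon)\leq an(rh+3r\epsilon+2\chi_1\epsilon)$ and $an(r+\chi_1)(h+\epsilon)=anrh+an\chi_1 h+anr\epsilon+an\chi_1\epsilon\leq an(rh+3r\epsilon+\chi_1\epsilon+\chi_1 h)$, so the two desired bounds hold with $C_2=C$. I expect the one genuinely delicate point to be the injectivity step: one must make sure that within the stable (resp.\ unstable) leaf the extra forward (resp.\ backward) refinement really collapses the atom, i.e.\ that Proposition \ref{Proposition 5} can be applied at an appropriate regular point of the trace --- this is where the leafwise density of $\Gamma$ (Proposition \ref{Proposition 4}) enters. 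A minor bookkeeping issue is that \eqref{LY-(a)} is written for two-sided cylinders $\mathcal P^l_k$ with $k,l\geq1$; for the one-sided slabs the same lower bound holds by Shannon--McMillan--Breiman applied directly to $\mathcal P^0_{anr}$ and $\mathcal P^{an(r+\chi_1)}_0$, possibly after enlarging $C$, which the slack in the stated exponents absorbs.
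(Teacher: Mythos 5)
Your proof is correct and in fact reaches sharper bounds than the lemma asks for, but it takes a genuinely different route from the paper's. The paper argues indirectly: it decomposes $\mathcal{P}(x)$ into the pairwise disjoint stable slabs $\mathcal{P}^0_{anr}(y_i)$, bounds the total rectangle count from below by
\[
N(n,\mathcal{P}(x))\;\geq\;\sum_{i\in\tilde I} N^u\bigl(n,y_i,\mathcal{P}^0_{anr}(y_i)\bigr)\;\geq\;\widehat{N}^s(n,x,\mathcal{P}(x))\cdot DC^{-1}e^{an(r+\chi_1)(h-\epsilon)},
\]
which is where Proposition \ref{Proposition 4} (the leafwise density constant $D$) and the conditional-measure estimate \eqref{LY-(c)} enter, and from above by $N(n,\mathcal{P}(x))\leq Ce^{an(2r+\chi_1)(h+\epsilon)}$ via \eqref{LY-(a)}, and then divides; this is the origin of the constant $C_2=D^{-1}C^2$ and of the extra $2r\epsilon+\chi_1\epsilon$ of slack in the stated exponents. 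You instead map each counted rectangle injectively to its slab and sum the $\mu$-measures of these disjoint slabs against $\mu(M)=1$, which bypasses Proposition \ref{Proposition 4} and the unstable conditional measures entirely and yields the cleaner bounds $Ce^{anr(h+\epsilon)}$ and $Ce^{an(r+\chi_1)(h+\epsilon)}$. Both arguments rest on the same pivot, namely $\widehat{N}^s(n,x,\mathcal{P}(x))\leq\operatorname{Card}\{i:\mathcal{P}^0_{anr}(y_i)\cap\Gamma\neq\varnothing\}$: the paper asserts this as an equality ``by the definition,'' while you derive the needed inequality from Proposition \ref{Proposition 5}. The one soft spot you correctly flag --- Proposition \ref{Proposition 5} is stated at points of $\Gamma$, whereas the trace $R\cap\xi^s(x)$ witnessing membership in the count need not contain such a point --- is present in exactly the same form in the paper's proof (it is inherited from the counting framework of \cite{BPS-99-annals}), so it is not a defect of your approach relative to the paper's. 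Your closing arithmetic and the remark that \eqref{LY-(a)} extends to one-sided cylinders by monotonicity, at the cost of enlarging $C$, are both fine.
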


\begin{proof}
We will prove the first inequality, the proof of the second inequality is similar. 

 Since the partition $ \mathcal{P} $ is countable, 
 there exists a sequence of points $\{ y_{i} \}_i$ such that 
$\bigcup_i \mathcal{P}^{0}_{anr}\left(y_{i}\right)= \mathcal{P}(x) $, 
and these rectangles are mutually disjoint. 
Without loss of generality, we assume that 
for any $i$ where $ \mathcal{P}^{0}_{anr}\left(y_{i}\right) \cap \Gamma \neq \varnothing $, 
we have $ y_{i} \in \Gamma $. 
  Let $\tilde{I}$ be the set of indices $i$ for which 
  $\mathcal{P}^{0}_{anr}\left(y_{i}\right) \cap \Gamma \neq \varnothing$, then
\begin{align}
N(n, \mathcal{P}(x)) &\geq \sum_{i} N^{u}\left(n, y_{i}, \mathcal{P}^{0}_{anr}\left(y_{i}\right)\right)\geq \sum_{i\in \tilde{I}} N^{u}\left(n, y_{i}, \mathcal{P}^{0}_{anr}\left(y_{i}\right)\right). \label{P-(24)} 
\end{align}
For any $y_i\in \Gamma$, by Proposition \ref{Proposition 4}, Proposition \ref{Proposition 5} 
and \eqref{LY-(b)}, we conclude
\begin{align}
N^{u}\left(n, y_{i}, \mathcal{P}^{0}_{anr}\left(y_{i}\right)\right) & \geq \frac{\mu_{y_{i}}^{u}\left(\mathcal{P}^{0}_{anr}\left(y_{i}\right) \cap \Gamma\right)}{\max \left\{\mu_{z}^{u}\left(\mathcal{P}_{anr}^{an(r+\chi_1)}(z)\right): z \in \xi^{u}\left(y_{i}\right) \cap \mathcal{P}(x) \cap \Gamma\right\}} \nonumber\\
& \geq \frac{D}{\max \left\{\mu_{z}^{u}\left(\mathcal{P}_{anr}^{an(r+\chi_1)}(z)\right): z \in \xi^{u}\left(y_{i}\right) \cap \mathcal{P}(x) \cap \Gamma\right\}} \nonumber\\
& =\frac{D}{\max \left\{\mu_{z}^{u}\left(\mathcal{P}_{0}^{an(r+\chi_1)}(z)\right): z \in \xi^{u}\left(y_{i}\right) \cap \mathcal{P}(x) \cap \Gamma\right\}} \nonumber\\
& \geq D C^{-1} e^{a n(r+\chi_1)( h- \epsilon)}.  \label{P-(25)}
\end{align}
Through \eqref{LY-(a)}, we obtain
\begin{align}\label{P-(26)}
N(n, \mathcal{P}(x)) \leq \frac{\mu(\mathcal{P}(x))}{\min \left\{\mu\left(\mathcal{P}_{anr}^{an(r+\chi_1)}(z)\right): z \in \mathcal{P}(x) \cap \Gamma\right\}} \leq  C e^{ a n(2r+\chi_1) (h+\epsilon)}. 
\end{align}
By the definition of $\hat{N}^{s}(n, x, \mathcal{P}(x))$, we have
\begin{align*}
\widehat{N}^{s}(n, x, \mathcal{P}(x))=\operatorname{Card}\left\{i: \mathcal{P}^{0}_{anr}\left(y_{i}\right) \cap \Gamma \neq \varnothing\right\}. 
\end{align*}
Combining \eqref{P-(24)}, \eqref{P-(25)} and \eqref{P-(26)}, we conclude
$$
\begin{aligned}
  C e^{ a n(2r+\chi_1) (h+\epsilon)} & \geq N(n, \mathcal{P}(x))  \geq \sum_{i: \mathcal{P}^{0}_{anr}\left(y_{i}\right) \cap \hat{\Gamma} \neq \varnothing} N^{u}\left(n, y_{i}, \mathcal{P}^{0}_{anr}\left(y_{i}\right)\right) \\
& \geq \hat{N}^{s}(n, x, \mathcal{P}(x)) \cdot D C^{-1} \exp(a n(r+\chi_1)(h-\epsilon)). 
\end{aligned}
$$
Thus the first inequality follows by choosing $C_2=D^{-1}C^2$. 
\end{proof}

We continue with a covering lemma proved in \cite{O24}. 

\begin{lemma}[\cite{O24}]\label{puck covers}
  Let $f$ be a $ C^{1+\alpha} $ diffeomorphism on a closed Riemannian manifold $M$ with $m=\operatorname{dim}M\geq 2$ and  $\mu $ an $ f$-invariant Borel probability measure on $ M $.
  There exists a constant $C_m$ which depends only on $m$, such that
  for $\mu$-a.e. $x\in M$ and any $n\in \mathbb{N}$, every set $A\subset \xi^u(x)$ 
  can be covered by $r$-neutralized Bowen balls $\{B(x_i,n, e^{-nr})\}$ 
  with the property that no point in $A$ is contained in more than $C_m$ of these balls.
\end{lemma}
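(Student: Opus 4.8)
The idea is that, even though an $r$-neutralized Bowen ball $B(x_i,n,e^{-nr})$ is highly anisotropic, its intersection with an unstable leaf becomes an honest metric ball once one pushes it forward by $f^{n-1}$; this reduces the assertion to the classical Besicovitch covering theorem on a $u$-dimensional leaf, whose overlap constant depends only on $u\le m$. Throughout I work at a $\mu$-generic point $x$, so that $W^u(x)$, the partition element $\xi^u(x)$, and the Lyapunov charts along the forward orbit segment $x,fx,\dots,f^{n-1}x$ are all available.

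The plan proceeds in four steps. \emph{(1) Straightening.} Because $f$ expands on local unstable manifolds, for $z\in\xi^u(x)$ the constraint defining $B(z,n,e^{-nr})\cap\xi^u(x)$ is dominated by its last term, so $f^{n-1}$ maps $B(z,n,e^{-nr})\cap\xi^u(x)$ into the leafwise metric ball $B^u(f^{n-1}z,e^{-nr})\subset W^u(f^{n-1}x)$; conversely, using that the diameters of the iterates $f^t\big(B(z,n,e^{-nr})\cap\xi^u(x)\big)$ decay geometrically as $t$ runs down from $n-1$, the accumulated $C^{1+\alpha}$ distortion telescopes to a bound independent of $n$, $x$ and $z$, so this image also \emph{contains} $B^u(f^{n-1}z,c\,e^{-nr})$ for a constant $c>0$ that is uniform. \emph{(2) Reduction.} Since $f^{n-1}$ restricted to $\xi^u(x)$ is a homeomorphism onto its image, covering $A$ by Bowen balls with a given overlap is equivalent to covering $f^{n-1}(A)$ by the corresponding images in $W^u(f^{n-1}x)$ with the same overlap. \emph{(3) Besicovitch.} Consider the family $\{B^u(f^{n-1}z,c\,e^{-nr}):z\in A\}$ of equal-radius leafwise balls centered at \emph{all} points of $f^{n-1}(A)$; since $W^u(f^{n-1}x)$ is a $C^{1+\alpha}$ submanifold that is uniformly close to flat $\mathbb{R}^u$ at this scale, the Besicovitch covering theorem extracts a countable subfamily $\{B^u(f^{n-1}z_j,c\,e^{-nr})\}$ covering $f^{n-1}(A)$ with overlap at most a constant $C_u$ depending only on $u$. \emph{(4) Pulling back.} A packing estimate turns the overlap bound for the small balls into a bound $C_u(1+1/c)^u$ for the concentric balls of radius $e^{-nr}$, hence for the images $f^{n-1}\big(B(z_j,n,e^{-nr})\cap\xi^u(x)\big)$ sandwiched between them; pulling back by $(f^{n-1})^{-1}$ along $\xi^u(x)$ yields the cover of $A$ by the Bowen balls $\{B(z_j,n,e^{-nr})\}$, with overlap bounded by $C_m:=\max_{1\le u\le m}C_u(1+1/c)^u$, which depends only on $m$ (and the fixed data of the system).

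I expect the genuine difficulty to lie in Step (1): making precise, in the nonuniformly hyperbolic setting, both the ``last constraint dominates'' principle and the uniform lower bound on $c$. This requires passing to the Lyapunov charts along the orbit segment, where $f$ is uniformly expanding on the unstable coordinate, controlling the chart distortion constants $K$ and $l(f^tx)$ against the radius $e^{-nr}$, and summing the Hölder variation of $Df$ along the (geometrically shrinking) pieces. A secondary technical point is verifying that the Besicovitch constant can be chosen uniformly over leaves and scales, which follows from the uniform $C^{1+\alpha}$ geometry of unstable leaves on the compact manifold $M$; the only regime needing a separate compactness argument is that of small $n$, where $e^{-nr}$ is not small, and this regime is not used in the applications of the lemma.
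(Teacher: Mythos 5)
The paper itself does not prove this lemma; it is quoted from \cite{O24}, so I am judging your argument on its own terms. Your overall strategy --- push forward by $f^{n-1}$, reduce to a Besicovitch-type covering on the unstable leaf, pull back --- is the natural one and is the expected route. You also correctly identify Step (1) as the crux. But the claim you make there, that $f^{n-1}\bigl(B(z,n,e^{-nr})\cap\xi^u(x)\bigr)$ \emph{contains} a round leafwise ball $B^u(f^{n-1}z,c\,e^{-nr})$ with $c$ uniform in $n$, is false in the nonuniformly hyperbolic setting, and the telescoping of the $C^{1+\alpha}$ distortion does not repair it. Bounded distortion controls the ratio $\|Df^{n-1}|E^u_{y_1}\|/\|Df^{n-1}|E^u_{y_2}\|$ across the Bowen ball (indeed $\sum_t d(f^ty_1,f^ty_2)^\alpha\le n e^{-n r\alpha}\to 0$), but the inner-ball containment is a statement about \emph{backward contraction}: one needs that $d^u(w,f^{n-1}z)<c e^{-nr}$ forces $d(f^{-k}w,f^{-k}f^{n-1}z)<e^{-nr}$ for all $1\le k\le n-1$. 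The contraction of $f^{-1}$ along unstable leaves starting at $f^{n-1}x$ is governed by tempered quantities such as $l(f^{n-1}x)\le e^{\epsilon(n-1)}l(x)$; working through the Lyapunov charts one only gets $c=c_n\gtrsim l(x)^{-1}e^{-\epsilon n}$, and this is essentially sharp for a fixed generic $x$ along subsequences of $n$. Your Step (4) then yields a multiplicity bound of order $(1+1/c_n)^u\approx e^{\epsilon n u}$, which is subexponential in $n$ but not a constant $C_m$. (A subexponential factor would in fact still suffice where the lemma is invoked in Lemma \ref{Lemma 5}, since it is absorbed by the factors $e^{-4aqr\epsilon}$ in the convergent series, but it does not prove the statement as written.)

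To get a constant depending only on the dimension one must avoid sandwiching the pushed-forward Bowen ball between two round balls of comparable radii. The standard repair is a doubling/volume argument intrinsic to the Bowen metric on the leaf: up to the telescoping $C^{1+\alpha}$ errors you already control, $f^{n-1}\bigl(B^u(z,n,e^{-nr})\bigr)=\bigcap_{k=0}^{n-1}f^{k}\bigl(B^u(f^{n-1-k}z,e^{-nr})\bigr)$ is an intersection of nearly ellipsoidal sets with a common center, hence comparable to a centrally symmetric convex body $K$; since $\operatorname{Vol}(K)/\operatorname{Vol}(\tfrac12K)=2^u$ for every such body, the metric-measure space $(\xi^u(x),d_n,\operatorname{Vol}^u)$ is doubling with a constant depending only on $u\le m$, and the greedy maximal-separated-net construction then gives the bounded-multiplicity cover directly, with no inner round ball needed. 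Two smaller points: your Step (3) for equal radii is really the elementary separated-net argument rather than full Besicovitch, which is fine; and the small-$n$ regime cannot simply be discarded, since the lemma is asserted for all $n\in\mathbb{N}$, though it is handled by a crude compactness bound.
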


We are now ready to prove the following lemma. 
It provides a comparison between the number of rectangles
 in $ \mathcal{F}(n) $ and the number of rectangles in $ \mathcal{R}(n) $.

\begin{lemma}\label{Lemma 5}
For $ \mu $-almost every $ y \in \mathcal{P}(x) \cap \Gamma $, we have
\begin{align*}
\varlimsup\limits_{n \rightarrow+\infty} \dfrac{\widehat{N}^{s}\left(n, y, Q_{n}(y)\right)}{N^{s}\left(n, y, Q_{n}(y)\right)} \exp(-7 a nr \epsilon-2an\chi_1\epsilon)<1; \\
\varlimsup\limits_{n \rightarrow+\infty} \dfrac{\hat{N}^{u}\left(n, y, Q_{n}(y)\right)}{N^{u}\left(n, y, Q_{n}(y)\right)} \exp(-7 a nr \epsilon-2an\chi_1\epsilon)<1. 
\end{align*}
\end{lemma}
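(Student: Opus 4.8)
The plan is to bound the two ratios by showing that rectangles in $\mathcal{F}(n)$ that fail to lie in $\mathcal{R}(n)$ — i.e.\ rectangles $R=\mathcal{P}_{anr}^{an(r+\chi_1)}(y)$ for which $\mathcal{P}_{anr}^{0}(y)\cap\Gamma\neq\varnothing$ and $\mathcal{P}_{0}^{an(r+\chi_1)}(y)\cap\Gamma\neq\varnothing$ but $R\cap\Gamma=\varnothing$ — contribute only a subexponentially small proportion of the count. The mechanism is a Fubini / local-product argument on the rectangle $Q_n(y)$: for a ``bad'' rectangle $R$ the conditional measures of $\Gamma$ on its stable and unstable slices are both bounded below (Proposition~\ref{Proposition 4}), yet the product of these conditional measures cannot exceed $\mu(R)/\mu(R\cap\Gamma)$-type ratios that are forced small when $R\cap\Gamma=\varnothing$; summing over the bad rectangles and using the Borel--Lebesgue density estimate \eqref{P-(19)} (equivalently Lemma~\ref{Proposition 3}) to control $\mu(B(y,n,e^{-nr})\setminus\Gamma)$ shows the total bad mass is at most $e^{-n\epsilon}$ times the good mass for $\mu$-a.e.\ $y$ and all large $n$.

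First I would fix $x\in\Gamma$ and a $\mu$-generic $y\in\mathcal{P}(x)\cap\Gamma$ and rewrite $\widehat N^{s}(n,y,Q_n(y))$ as $N^{s}(n,y,Q_n(y))$ plus the count $B^{s}_n$ of rectangles in $\mathcal{F}(n)\setminus\mathcal{R}(n)$ meeting $\xi^{s}(y)\cap Q_n(y)$; it suffices to prove $B^{s}_n\le e^{(7anr+2an\chi_1)\epsilon}N^{s}(n,y,Q_n(y))$ eventually, and symmetrically for the unstable count. Next I would translate ``$R\cap\Gamma=\varnothing$'' into an estimate on $\mu(R)$: combining Lemma~\ref{Lemma 2}-style bounds (the $N$-to-$\mu$ conversion via \eqref{LY-(a)}) with the covering Lemma~\ref{puck covers}, each bad rectangle sits inside $Q_n(y)\setminus\Gamma$, whose $\mu_y^{s}$-conditional mass is, by \eqref{LY-f-s} applied along the stable leaf, at most $e^{-n\epsilon}\mu_y^{s}(B^{s}(y,4e^{-nr}))$. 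Then I would bound the number of bad rectangles that can fit inside a set of that conditional measure from above, using the lower bound \eqref{LY-(b)} on $\mu_y^{s}(\mathcal{P}_{anr}^{0}(\cdot))$ for each such rectangle; dividing by the lower bound for $N^{s}(n,y,Q_n(y))$ coming from Proposition~\ref{Proposition 4} and \eqref{P-(17)} yields the ratio bound, with the exponent $7anr\epsilon+2an\chi_1\epsilon$ absorbing the accumulated $e^{\pm n\epsilon}$, $C^{\pm1}$ and $D^{-1}$ factors (here one uses that $an(2r+\chi_1)$ is the natural scale so all error terms are of the form $e^{O(anr\epsilon)}e^{O(an\chi_1\epsilon)}$).

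The main obstacle I expect is making the local-product/Fubini step rigorous: the partitions $\xi^{s}$, $\xi^{u}$ are only measurable, not topological, so ``$R\cap\Gamma=\varnothing$ yet both slice-intersections with $\Gamma$ nonempty'' has to be reconciled with the fact that $\Gamma$ need not be a product set, and one must invoke the absolute continuity of the holonomies (implicit in the Ledrappier--Young partition machinery and the $C^{1+\alpha}$-continuous dependence used in property (g)) to pass between a conditional-measure statement on a stable slice and a statement about the ambient $\mu$. A second, more bookkeeping, difficulty is that the $\limsup$ must be shown strictly less than $1$ for a full-measure set of $y$ simultaneously for the chosen small $\epsilon$; this requires that the exceptional $n$'s (where the density lemma has not yet kicked in) depend measurably on $y$ and that the constant buried in ``eventually'' can be beaten by the gap between $7anr\epsilon+2an\chi_1\epsilon$ in the statement and the strictly smaller $O(anr\epsilon)$ actually produced — i.e.\ the inequality is deliberately lossy, which is what lets us drop the constants $C,C_1,C_2,D$ into the exponential and still win. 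I would finish by recording that the same argument, with $\xi^{s}$ and $\xi^{u}$ interchanged and \eqref{LY-(c)}, \eqref{P-(18)} in place of \eqref{LY-(b)}, \eqref{P-(17)}, gives the unstable inequality.
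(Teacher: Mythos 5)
Your proposal takes a genuinely different route from the paper, and unfortunately the route has a circularity that I do not think can be repaired with the tools you cite.

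The paper proves the lemma by contradiction using a dimension-theoretic mechanism: it defines the bad set $F$ of points where the $\limsup$ is at least $1$, assumes $\mu(F)>0$, extracts for each $y\in F$ a sequence of times $m_j(y)$ along which $\widehat N^{u}\ge \frac12 N^{u}\exp(7am_jr\epsilon+2am_j\chi_1\epsilon)$, and then uses the covering Lemma~\ref{puck covers} together with the global upper bound on $\widehat N^{u}(q,y,\mathcal{P}(y))$ from Lemma~\ref{Lemma 4} to build a cover of $F'\cap\xi^{u}(y)$ by $r$-neutralized Bowen balls whose weighted sum $\sum_i e^{-t_i(h+rd^u-\epsilon)}$ converges. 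This forces $h^{B}_{r,d}(f,F'\cap\xi^{u}(y))\le h+rd^{u}-\epsilon$, contradicting the Frostman-type lower bound of Lemma~\ref{Billingsley type} combined with the leafwise formula of Theorem~\ref{O-u}. None of this machinery appears in your argument.

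The gap in your approach is the step where you bound the \emph{number} of rectangles of $\mathcal{F}(n)\setminus\mathcal{R}(n)$ inside $Q_n(y)\setminus\Gamma$ by the \emph{measure} of that set. To convert a measure bound into a cardinality bound you need a lower bound on $\mu(R)$ (or on $\mu_y^{s}(R\cap\xi^s(y))$) for each bad rectangle $R$; but the Shannon--McMillan--Breiman estimates \eqref{LY-(a)}--\eqref{LY-(c)} are only guaranteed at points of $\Gamma$, and a rectangle in $\mathcal{F}(n)\setminus\mathcal{R}(n)$ by definition contains no point of $\Gamma$. The only available information is that its stable and unstable \emph{shadows} $\mathcal{P}_{anr}^{0}(v)$ and $\mathcal{P}_{0}^{an(r+\chi_1)}(v)$ meet $\Gamma$, which gives conditional-measure lower bounds on those shadows along \emph{other} leaves; passing from the product of shadow measures to a lower bound on $\mu(R)$ itself is precisely the (asymptotic) local product structure that Theorem~\ref{BPS-theorem} is in the process of proving. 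Invoking absolute continuity of holonomies does not rescue this: for a general hyperbolic measure $\mu$ the conditionals are typically singular under holonomy, and the product structure is only asymptotic — that is the whole point of the theorem. A secondary inaccuracy: $\widehat N^{s}-N^{s}$ is not just the count of $\mathcal{F}(n)\setminus\mathcal{R}(n)$ rectangles, since a rectangle of $\mathcal{R}(n)$ can meet $\xi^{s}(y)\cap Q_n(y)$ without meeting $\xi^{s}(y)\cap\Gamma\cap Q_n(y)$ and is then counted in $\widehat N^{s}$ but not in $N^{s}$; your decomposition misses these.
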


\begin{proof}
We will only prove the second inequality, as the
 proof of the first inequality is almost identical to the proof of \cite[Lemma 5]{BPS-99-annals}.

At first, let us define the set $F$ in the following, 
\[
  F:=\left\{y \in \Gamma: \varlimsup_{n \rightarrow+\infty} \frac{\hat{N}^{u}\left(n, y, Q_{n}(y)\right)}{N^{u}\left(n, y, Q_{n}(y)\right)} \exp(-7 a nr \epsilon-2an\chi_1\epsilon) \geq 1\right\}.
\]
To prove the lemma, it suffices to show that $\mu(F)=0$. 
We proceed by contradiction, assuming that $ \mu(F)>0 $. 
Combining \eqref{LY-(f)}and \eqref{P-(18)}, for each $ n \geq n_{0} $ and $ y \in \Gamma $, we have
\begin{align}\label{L-4}
\mu_{y}^{u}\left(Q_{n}(y)\right) \geq \mu_{y}^{u}\left(B^{u}\left(y,n, e^{-nr}\right) \cap \Gamma\right) \geq e^{-n\epsilon}\mu_{y}^{u}\left(B^{u}\left(y,n, e^{-nr}\right)\right). 
\end{align}
By applying \eqref{LY-(c)}, \eqref{P-(11)}, \eqref{L-4} and Proposition \ref{Proposition 5}, we conclude
\begin{align}
N^{u}\left(n, y, Q_{n}(y)\right) & \geq \frac{\mu_{y}^{u}\left(Q_{n}(y)\right)}{\max \left\{\mu_{z}^{u}\left(\mathcal{P}_{anr}^{an(r+\chi_1)}(z)\right): z \in \xi^{u}(y) \cap \mathcal{P}(x) \cap \Gamma\right\}}\nonumber \\
& \geq e^{-n\epsilon} \frac{\mu_{y}^{u}\left(B^{u}\left(y, n, e^{-nr}\right)\right)}{\max \left\{\mu_{z}^{u}\left(\mathcal{P}_{0}^{an(r+\chi_1)}(z)\right): z \in \xi^{u}(y) \cap \mathcal{P}(x) \cap \Gamma\right\}} \nonumber \\
& \geq C^{-1}\exp(-n\epsilon-d^{u} nr-hn-nr \epsilon+a n(r+\chi_1) (h+\epsilon)). \label{P-(28)}
\end{align}
For each $ y \in F $, there exists an increasing sequence $ \left\{m_{j}\right\}_{j=1}^{\infty}=\left\{m_{j}(y)\right\}_{j=1}^{\infty} $ of positive integers such that for any $j$, 
\begin{align}
\widehat{N}^{u}\left(m_j, y, Q_{m_j}(y)\right)\geq \frac{1}{2} N^{u}\left(m_{j}, y, Q_{m_j}(y)\right) \exp(7 a m_jr \epsilon+2am_j\chi_1\epsilon) .\label{P-(29)}
\end{align}
Let $ F^{\prime} \subset F $ be the set of points $ y \in F $ for which the following limit exists 
\[
\lim _{n \rightarrow \infty} \frac{\log \mu_{y}^{u}\left(B^{u}(y,n, e^{-nr})\right)}{n}=rd^{u}+h. 
\]
Obviously, we have $ \mu\left(F^{\prime}\right)=\mu(F)>0 $. 
Then we can find $ y \in F $ with 
\begin{align*}
\mu_{y}^{s}(F)=\mu_{y}^{s}\left(F^{\prime}\right)=\mu_{y}^{s}\left(F^{\prime} \cap \mathcal{P}(y) \cap \xi^{s}(y)\right)>0,\\
h^r_{\mu^u_y,d}(f,x)=rd^u+h,\quad \text{ for any } x\in F^{\prime} \cap \xi^{u}(y). 
\end{align*}

To finish the proof of this lemma, we need to use the dimension theory of $r$-neutralized local entropy. 
We refer the reader to the Appendix for the definition and properties of the $r$-neutralized Bowen topological entropy $h^{B}_{r,d}\left(f,\cdot\right)$. 
By Lemma \ref{Billingsley type}, we obtain
\begin{align}\label{P-(30)}
h^{B}_{r,d}\left(f,F^{\prime} \cap \xi^{u}(y)\right)\geq h+rd^{u}. 
\end{align}
Let us consider the countable collection of balls
\[
  \mathfrak{B}=\left\{B\left(z,m_{j}(z), 4 e^{-m_{j}(z)r}\right): z \in F^{\prime} \cap \xi^{u}(y),\quad j=1,2, \ldots\right\}.
\]
Applying Lemma \ref{puck covers}, for any $ L>0 $, there exists a sequence of points $ \left\{z_{i} \in F^{\prime} \cap \xi^{s}(y)\right\}_{i=1}^{\infty} $ and
a sequence of integers $ \left\{t_{i}\right\}_{i=1}^{\infty} $, where $ t_{i} \in\left\{m_{j}\left(z_{i}\right)\right\}_{j=1}^{\infty} $ and
$t_{i}>L $ for each $ i $ such that we can find a subcover $\mathfrak{C}\subset \mathfrak{B}$ of $ F^{\prime} \cap \xi^{s}(y) $,
\[
  \mathfrak{C}=\left\{B\left(z_{i}, t_i, 4 e^{-t_{i}r}\right): i=1,2, \ldots\right\},
\]
and each set $ Q_{t_i}(z_i) $ appears in the sum $ \sum\limits_{i: t_{i}=q} \widehat{N}^{s}\left(t_i, z_{i}, Q_{t_i}(z_i)\right) $ at most $C_m$ times, then
\begin{align}\label{L-5}
  \sum_{i: t_{i}=q} \hat{N}^{s}\left(q, z_{i}, Q_{t_i}(z_i)\right) \leq C_m \hat{N}^{s}(q, y, \mathcal{P}(y)).
\end{align}
Based on \eqref{P-(29)}, \eqref{L-5} and Lemma \ref{Lemma 4}, we obtain
\begin{align*}
 &M^{h+rd^u-\epsilon}_{L,r,d}\left(f,F^{\prime} \cap \xi^{u}(y)\right)\leq  \sum_{i=1}^{\infty} \exp(-t_{i}\left(h+rd^{u}-\epsilon\right)r) \\
\leq &\sum_{i=1}^{\infty} \hat{N}^{u}\left(t_{i}, z_{i}, Q_{t_i}(z_i)\right) \cdot 2 C \exp(t_i\epsilon+2t_ir\epsilon-a t_{i}(r+\chi_1) h-7 a t_{i}r \epsilon-2at_{i}\chi_1\epsilon)   \\
\leq & 2 C \sum_{q=1}^{\infty} \exp(q\epsilon+2qr\epsilon-a q(r+\chi_1) h-7 a qr \epsilon-2aq\chi_1\epsilon) \sum_{i: t_{i}=q} \widehat{N}^{u}\left(q, z_{i}, Q_{t_i}(z_i)\right) \\
\leq& 2 C \sum_{q=1}^{\infty} \exp(q\epsilon+2qr\epsilon-a q(r+\chi_1) h-7 a qr \epsilon-2aq\chi_1\epsilon) \cdot C_m \widehat{N}^{u}\left(q, y, \mathcal{P}(y)\right) \\
\leq& 2 C C_m C_2 \sum_{q=1}^{\infty} \exp(q\epsilon+2qr\epsilon-4a qr\epsilon- a q\chi_1\epsilon) <\infty. 
\end{align*}
Since $ L $ and $ t_{i} $ can be chosen sufficiently large, we conclude that
\[
h^{B}_{r,d}\left(f,F^{\prime} \cap \xi^{u}(y)\right) \leq h+rd^{u}-\epsilon<h+rd^{u},
\]
which contradicts \eqref{P-(30)}. 
Therefore, we have $ \mu(F)=0 $, proving the second inequality.
\end{proof}

By Lemma \ref{Lemma 5}, for $ \mu $-a.e. $y \in \mathcal{P}(x) \cap \Gamma $, 
there exists an integer $ n_{1}(y) \geq n_{0} $ such that 
for all $ n \geq n_{1}(y) $, we have
\begin{align}
\widehat{N}^{s}\left(n, y, Q_{n}(y)\right)<N^{s}\left(n, y, Q_{n}(y)\right) \exp(7 a nr \epsilon+2an\chi_1\epsilon); \label{P-(31)}\\
\widehat{N}^{u}\left(n, y, Q_{n}(y)\right)<N^{u}\left(n, y, Q_{n}(y)\right) \exp(7 a nr \epsilon+2an\chi_1\epsilon). \label{P-(32)}
\end{align}
Applying Lusin's theorem, for any $ \epsilon>0 $, there exists a subset $ \Gamma_{\epsilon} \subset \Gamma $ satisfying
\[
\mu\left(\Gamma_{\epsilon}\right)>\mu(\Gamma)-\frac{\epsilon}{2}>1-\epsilon, \quad \text { and }\quad n_{\epsilon} \stackrel{\text { def }}{=} \sup \left\{n_{0}, n_{1}(y): y \in \Gamma_{\epsilon}\right\}<\infty,
\]
such that the inequalities \eqref{P-(31)} and \eqref{P-(32)} hold for every $ n \geq n_{\epsilon} $.

We are ready to prove Theorem \ref{BPS-theorem} in the following.

\begin{lemma}\label{Lemma 6}
 For every $ \epsilon>0 $, if $ y \in \Gamma_{\epsilon} $ and $ n \geq n_{\epsilon} $, then
\begin{align*}
\mu_{y}^{s}\left(B^{s}\left(y, e^{-nr}\right)\right) \mu_{y}^{u}\left(B^{u}\left(y,n, e^{-nr}\right)\right) \leq \mu\left(B\left(y, n, 4 e^{-nr}\right)\right) \cdot 4 C^{3} \exp( a n (11r+3\chi_1) \epsilon). 
\end{align*}
\end{lemma}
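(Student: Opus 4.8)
The plan is to combine the counting estimates established in Lemmas \ref{Lemma 1}--\ref{Lemma 5} to sandwich $\mu(B(y,n,4e^{-nr}))$ between products of the stable and unstable conditional measures. First I would recall that by \eqref{P-(16}} we have $B(y,n,e^{-nr})\cap\Gamma\subset Q_n(y)\subset B(y,n,4e^{-nr})$, so it suffices to bound $\mu(Q_n(y))$ from below. The counting classes are related by: $N^s$ and $N^u$ count rectangles in $\mathcal{R}(n)$ meeting the stable/unstable manifolds of $y$ inside $Q_n(y)$; since every rectangle in $Q_n(y)$ meeting $\Gamma$ has local product structure (it decomposes via the pair $(\mathcal{P}^0_{anr}(v)\cap\mathcal{P}_0^{an(r+\chi_1)}(y),\,\mathcal{P}_{anr}^0(y)\cap\mathcal{P}_0^{an(r+\chi_1)}(v))$ as in the proof of Lemma \ref{Lemma 3}), the total number $N(n,Q_n(y))$ of such rectangles is comparable to the product $\widehat{N}^s(n,y,Q_n(y))\cdot\widehat{N}^u(n,y,Q_n(y))$; more precisely the injective correspondence there gives $N_n\le \widehat{N}^s\cdot\widehat{N}^u$, and in fact one also gets a matching lower bound $N(n,Q_n(y))\gtrsim N^s(n,y,Q_n(y))\cdot N^u(n,y,Q_n(y))$ up to the measure-ratio factor $e^{an(2r+\chi_1)\epsilon}$ coming from \eqref{LY-(a)}.

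The second step is to convert counts into measures. By Lemma \ref{Lemma 2} applied with the ball of radius $4e^{-nr}$ (equivalently by the lower bound $\mu(Q_n(y))\ge N(n,Q_n(y))\,C^{-1}\exp(-an(2r+\chi_1)(h+\epsilon))$ from \eqref{L-1}), we have
\[
\mu(B(y,n,4e^{-nr}))\ge \mu(Q_n(y))\ge C^{-1} N(n,Q_n(y))\exp(-an(2r+\chi_1)(h+\epsilon)).
\]
On the other side, Lemma \ref{Lemma 1} gives
\[
N^s(n,y,Q_n(y))\le C\,\mu_y^s(B^s(y,4e^{-nr}))\exp(anr(h+\epsilon)),
\]
\[
N^u(n,y,Q_n(y))\le C\,\mu_y^u(B^u(y,n,4e^{-nr}))\exp(a(r+\chi_1)n(h+\epsilon)),
\]
and Lemma \ref{Lemma 5} (in the form \eqref{P-(31)}--\eqref{P-(32)}) lets me replace $\widehat{N}^s,\widehat{N}^u$ by $N^s,N^u$ at the cost of $\exp(7anr\epsilon+2an\chi_1\epsilon)$ each. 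The point is that the exponential growth rates $\exp(\pm an(2r+\chi_1)h)$ appearing in the various bounds are designed to cancel: the product of the exponents $anr\,h$ and $a(r+\chi_1)n\,h$ from the two $N^s,N^u$ upper bounds exactly matches $an(2r+\chi_1)h$ in the $N(n,Q_n(y))$-to-$\mu(Q_n(y))$ conversion. After this cancellation, the only surviving terms are the error exponentials in $\epsilon$, which I collect into a single factor $\exp(an(11r+3\chi_1)\epsilon)$ — tracking: roughly $2r\epsilon+\chi_1\epsilon$ from the measure conversion at the $\mu(Q_n(y))$ step, $r\epsilon$ from each $N^{s,u}$ bound, and $7r\epsilon+2\chi_1\epsilon$ from each application of \eqref{P-(31)}--\eqref{P-(32)}, together with the accumulated constant $4C^3$.

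Assembling: write $N(n,Q_n(y))\ge c\,\widehat N^s\widehat N^u$, bound $\widehat N^s\ge$ something in terms of $N^s$? — no, actually I need the reverse: I want a \emph{lower} bound on $\mu(B(y,n,4e^{-nr}))$ in terms of $\mu_y^s\mu_y^u$, so I use $N(n,Q_n(y))\ge N^s(n,y,Q_n(y))$ trivially is not enough; rather I use the product structure to get $N(n,Q_n(y))\gtrsim N^s(n,y,Q_n(y))N^u(n,y,Q_n(y))$ directly from the injective pairing of Lemma \ref{Lemma 3} applied with the $\mathcal{R}$-classes in place of $\mathcal{F}$-classes, then lower-bound $N^s,N^u$ by the conditional measures — but Lemma \ref{Lemma 1} is an \emph{upper} bound on $N^s,N^u$. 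So the correct route is the opposite one: use Lemma \ref{Lemma 1}'s upper bounds on $N^s,N^u$ together with $N^s\cdot N^u\le(\text{const})\cdot N(n,Q_n(y))\cdot(\text{error})$ — wait, that inequality also goes the wrong way. Let me reconsider: the inequality I genuinely have is $\mu_y^s(B^s)\,\mu_y^u(B^u)\le C^2 N^s N^u\exp(-an(2r+\chi_1)(h-\epsilon))$ (inverting Lemma \ref{Lemma 1}-type bounds via \eqref{LY-(b)}, \eqref{LY-(c)}), then $N^s N^u\le C_1\widehat N^s\widehat N^u\exp(\text{error})\le C_1 N(n,Q_{n}(y))\exp(\text{error})$ where the last step is precisely the injective-pairing argument of Lemma \ref{Lemma 3} (each rectangle of $\mathcal{R}(n)$ in $Q_n(y)$ decomposes, the pairing is injective, hence $\widehat N^s\widehat N^u\le N(n,Q_n(y))$ — no: the pairing goes $\mathcal R \to \mathcal F\times\mathcal F$, giving $N_n\le\widehat N^s\widehat N^u$, the \emph{other} direction). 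I will therefore split the symmetric roles carefully: using the injective correspondence $v\mapsto(\text{pair})$ one gets $\widehat N^s(n,y,Q_n(y))\cdot\widehat N^u(n,y,Q_n(y))\ge N_n$, and combined with \eqref{P-(31}}--\eqref{P-(32)} this yields $N^s N^u\exp(14anr\epsilon+4an\chi_1\epsilon)> N_n$, i.e. $N_n< N^s N^u\exp(\text{error})$; then $\mu(Q_n(y))\le N_n\cdot C\exp(-an(2r+\chi_1)(h-\epsilon))$ from \eqref{L-2}-style bound, combined with $\mu_y^s(B^s)\mu_y^u(B^u)\ge C^{-2}N^sN^u\exp(-an(2r+\chi_1)(h+\epsilon))$ (using the lower bounds in \eqref{LY-(b)},\eqref{LY-(c)} after inverting Lemma \ref{Lemma 1}). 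Putting these three together, and using $\mu(B(y,n,4e^{-nr}))\le\mu(Q_n(y))$ from \eqref{P-(16}}? — no, $Q_n(y)\subset B(y,n,4e^{-nr})$, so $\mu(Q_n(y))\le\mu(B(y,n,4e^{-nr}))$, which gives the inequality in the direction stated. Thus the chain is: $\mu_y^s(B^s)\mu_y^u(B^u)\le C^2 N^sN^u\exp(an(2r+\chi_1)\epsilon)\cdot e^{-an(2r+\chi_1)h}\le C^3 N_n\exp(\text{error})e^{-an(2r+\chi_1)h}$ — but I need to bound $N^sN^u$ above by something involving $N_n$, which requires the pairing to be \emph{surjective onto} or at least to give $N^sN^u\lesssim N_n$; the genuine bound is $N_n\le\widehat N^s\widehat N^u$ and separately $N^s\le$(something), $N^u\le$(something)\dots The main obstacle, which I expect to occupy the bulk of the proof, is exactly this bookkeeping: correctly orienting each of the four counting inequalities (Lemmas \ref{Lemma 1}, \ref{Lemma 3}, \ref{Lemma 5}) so that they chain into the one-sided estimate of Lemma \ref{Lemma 6}, and verifying that the $h$-dependent exponentials cancel leaving the clean error exponent $an(11r+3\chi_1)\epsilon$ with constant $4C^3$. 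Everything else is a direct substitution of \eqref{LY-(a)}, \eqref{LY-(b)}, \eqref{LY-(c)}, \eqref{P-(16}} and the inclusions from property (g).
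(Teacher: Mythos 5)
You have assembled the correct outer layers of the argument: the lower bounds $N^{s}(n,y,Q_n(y))\geq \mu_{y}^{s}(B^{s}(y,e^{-nr}))(2C)^{-1}\exp(anr(h-\epsilon))$ and $N^{u}(n,y,Q_n(y))\geq \mu_{y}^{u}(\cdot)(2C)^{-1}\exp(an(r+\chi_1)(h-\epsilon))$, the upper bound $N(n,Q_n(y))\leq \mu(B(y,n,4e^{-nr}))\,C\exp(an(2r+\chi_1)(h+\epsilon))$ coming from \eqref{LY-(a)} and \eqref{P-(16)}, and the observation that the $h$-exponentials cancel. But the whole proof hinges on the middle inequality
\[
N^{s}\left(n, y, Q_{n}(y)\right) \cdot N^{u}\left(n, y, Q_{n}(y)\right) \leq N\left(n, Q_{n}(y)\right) \exp(7 a nr \epsilon+an\chi_1\epsilon),
\]
which is \eqref{P-1}, and you never establish it — you explicitly leave it as ``the main obstacle.'' The only product-type relation you extract from the proof of Lemma \ref{Lemma 3} is the injective pairing $N_n\leq \widehat N^{s}\cdot\widehat N^{u}$, and, as you correctly observe, that inequality points the wrong way and cannot simply be reversed. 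Your earlier assertion that a matching lower bound $N(n,Q_n(y))\gtrsim N^{s}N^{u}$ ``comes from \eqref{LY-(a)}'' is not justified: \eqref{LY-(a)} controls measures of cylinders, not the combinatorics of how stable and unstable counts multiply.

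The missing idea is a two-step argument that does not pass through Lemma \ref{Lemma 3} at all. First, a Fubini-type count using the column structure of $Q_n(y)$ from property (g): each rectangle counted in $\widehat N^{s}(n,y,Q_n(y))$ heads a column of $Q_n(y)$, and each such column contains at least $\inf\{N^{u}(n,z,Q_n(y)): z\in Q_n(y)\}$ rectangles of $\mathcal R(n)$ meeting $Q_n(y)$; summing over columns gives $\widehat N^{s}(n,y,Q_n(y))\cdot \inf_z N^{u}(n,z,Q_n(y))\leq N(n,Q_n(y))$. Second — and this is where Lemma \ref{Lemma 5} genuinely enters, not as a substitution of $\widehat N$ by $N$ at the base point $y$ — one applies \eqref{P-(32)} at an arbitrary $z\in\Gamma_\epsilon\cap Q_n(y)$, together with the identity $\widehat N^{u}(n,y,Q_n(y))=\widehat N^{u}(n,z,Q_n(y))$ (again from the product structure), to obtain the uniformization \eqref{P-(33)}: $N^{u}(n,y,Q_n(y))\leq \inf_z N^{u}(n,z,Q_n(y))\cdot\exp(7anr\epsilon+2an\chi_1\epsilon)$. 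Combined with $N^{s}\leq\widehat N^{s}$ this yields \eqref{P-1}, after which your cancellation bookkeeping closes the chain. Without this step the proposal has a genuine gap at its central link.
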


\begin{proof}
For any $ z \in \Gamma_{\epsilon} \cap Q_{n}(y) $ and $ n \geq n_{\epsilon} $, by \eqref{P-(32)}, we have
\begin{align}
N^{u}\left(n, y, Q_{n}(y)\right) &\leq \widehat{N}^{u}\left(n, y, Q_{n}(y)\right)=\widehat{N}^{u}\left(n, z, Q_{n}(y)\right)\nonumber\\
&<N^{u}\left(n, z, Q_{n}(y)\right) \exp(7 a n \epsilon+2an\chi_1\epsilon),\nonumber\\
N^{u}\left(n, y, Q_{n}(y)\right) &\leq \inf \left\{N^{u}\left(n, z, Q_{n}(y)\right): z \in \Gamma_{\epsilon} \cap Q_{n}(y)\right\} \exp(7 a n \epsilon+2an\chi_1\epsilon). \label{P-(33)}
\end{align}
Since $ N\left(n, Q_{n}(y)\right) $ is equal to the number of rectangles $ R \subset Q_{n}(y) $, we obtain 
\[
\widehat{N}^{s}\left(n, y, Q_{n}(y)\right) \times \inf \left\{N^{u}\left(n, z, Q_{n}(y)\right): z \in Q_{n}(y)\right\} \leq N\left(n, Q_{n}(y)\right). 
\]
For $ y \in \Gamma_{\epsilon} $ and $ n \geq n_{\epsilon} $, by using \eqref{P-(28)} and \eqref{P-(33)}, we derive
\begin{align}
&N^{s}\left(n, y, Q_{n}(y)\right) \times N^{u}\left(n, y, Q_{n}(y)\right) \leq N\left(n, Q_{n}(y)\right) \exp(7 a nr \epsilon+an\chi_1\epsilon);\label{P-1}\\
&N^{s}\left(n, y, Q_{n}(y)\right) \geq \mu_{y}^{s}\left(B^{s}\left(y, e^{-nr}\right)\right) \cdot(2 C)^{-1} \exp(a nr( h- \epsilon)); \label{P-2}\\
&N^{u}\left(n, y, Q_{n}(y)\right) \geq \mu_{y}^{u}\left(B^{u}\left(y, e^{-(r+\chi_1)n}\right)\right) \cdot(2 C)^{-1} \exp(a n(r+\chi_1) (h- \epsilon)). \label{P-3}
\end{align}
Through \eqref{LY-(a)} and \eqref{P-(16)}, we obtain
\begin{align}
N\left(n, Q_{n}(y)\right) &\leq \frac{\mu\left(Q_{n}(y)\right)}{\min \left\{\mu\left(\mathcal{P}_{anr}^{an(r+\chi_1)}(z)\right): z \in Q_{n}(y) \cap \Gamma\right\}} \nonumber\\
&\leq \mu\left(B(y,n, 4 e^{-nr})\right) \cdot C \exp( a n(2r+\chi_1) (h+\epsilon)). \label{P-4}
\end{align}
Combining \eqref{P-1}, \eqref{P-2}, \eqref{P-3} and \eqref{P-4}, we obtain the desired statement.
\end{proof}

The following lemma provides a upper bound for the measure of the $r$-neutralized Bowen ball $B(y, n+b,e^{-(n+b)r})$. 
Recall that $b$ is the integer part of $r^{-1}\log 4 +1$. 

\begin{lemma}\label{Lemma 7}
  For $ \mu $-a.e. $ y \in \mathcal{P}(x) \cap \Gamma_\epsilon$ and any $n\geq n_\epsilon$, we have
\begin{align*}
       & \mu\left(B(y, n+b,e^{-(n+b)r})\right)                                                                                                                \\
& \leq C^3C_1\mu_{y}^{s}\left(B^{s}(y,n, 4 e^{-nr})\right) \mu_{y}^{u}\left(B^{u}(y, n,4 e^{-nr})\right)  \exp(16n\epsilon+an(2r+\chi_1)\epsilon). 
\end{align*}
\end{lemma}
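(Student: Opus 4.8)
The goal is to bound $\mu\bigl(B(y,n+b,e^{-(n+b)r})\bigr)$ from above by a product of stable and unstable conditional measures of slightly larger Bowen balls. The natural strategy is to combine the counting lemmas already established: Lemma \ref{Lemma 2} turns the measure of the $r$-neutralized Bowen ball into (a bound by) the number $N(n, Q_n(y))$ of rectangles in $\mathcal{R}(n)$ meeting $Q_n(y)$ times a metric-entropy factor; Lemma \ref{Lemma 3} bounds $N(n+b, Q_{n+b}(y))$ by the product $\widehat{N}^s(n,y,Q_n(y))\cdot \widehat{N}^u(n,y,Q_n(y))$ up to a subexponential error; Lemma \ref{Lemma 1} bounds the ``rectangle'' counts $N^s$ and $N^u$ by the conditional measures $\mu_y^s(B^s(y,4e^{-nr}))$ and $\mu_y^u(B^u(y,n,4e^{-nr}))$; and Lemmas \ref{Lemma 5}/\eqref{P-(31)}/\eqref{P-(32)} let us pass from the $\widehat{N}$-counts (the $\mathcal{F}(n)$ rectangles) back to the $N$-counts (the $\mathcal{R}(n)$ rectangles) with only a $\exp(7an r\epsilon+2an\chi_1\epsilon)$ loss on a large-measure set $\Gamma_\epsilon$.

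\textbf{Steps in order.} First, apply Lemma \ref{Lemma 2} with $n$ replaced by $n+b$ to get
$\mu\bigl(B(y,n+b,e^{-(n+b)r})\bigr)\le C\,N(n+b,Q_{n+b}(y))\exp\bigl((n+b)\epsilon - a(n+b)(2r+\chi_1)(h-\epsilon)\bigr)$;
since $b=b(r)$ is a constant, the $b$-dependent terms are absorbed into constants at the end. Second, apply Lemma \ref{Lemma 3} to replace $N(n+b,Q_{n+b}(y))$ by $C_1\widehat{N}^s(n,y,Q_n(y))\widehat{N}^u(n,y,Q_n(y))\exp\bigl(n\epsilon+2an(2r+\chi_1)\epsilon\bigr)$. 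Third, on $\Gamma_\epsilon$ and for $n\ge n_\epsilon$, use \eqref{P-(31)} and \eqref{P-(32)} to bound each $\widehat{N}$ by the corresponding $N^s$, $N^u$ times $\exp(7anr\epsilon+2an\chi_1\epsilon)$. Fourth, apply Lemma \ref{Lemma 1} to bound
$N^s(n,y,Q_n(y))\le C\mu_y^s(B^s(y,4e^{-nr}))\exp(anr(h+\epsilon))$
and
$N^u(n,y,Q_n(y))\le C\mu_y^u(B^u(y,n,4e^{-nr}))\exp(a(r+\chi_1)n(h+\epsilon))$.
Fifth, multiply everything together and collect exponents: the ``main'' entropy terms are $-a(2r+\chi_1)(h-\epsilon)n$ from Lemma \ref{Lemma 2} against $+(ar(h+\epsilon)+a(r+\chi_1)(h+\epsilon))n = +a(2r+\chi_1)(h+\epsilon)n$ from Lemma \ref{Lemma 1}, and these cancel up to an $O(n\epsilon)$ remainder. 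Tallying all the $\epsilon$-linear contributions ($n\epsilon$ from Lemma \ref{Lemma 2}, $n\epsilon+2an(2r+\chi_1)\epsilon$ from Lemma \ref{Lemma 3}, $2\times(7anr\epsilon+2an\chi_1\epsilon)$ from the $\widehat N$-to-$N$ passage, and $2a(2r+\chi_1)n\epsilon$ from Lemma \ref{Lemma 1}) gives an overall factor of the claimed shape $\exp(16n\epsilon + an(2r+\chi_1)\epsilon)$ once one checks the coefficients, and the multiplicative constants $C$, $C_1$ combine into $C^3C_1$.

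\textbf{Main obstacle.} The calculation itself is routine bookkeeping; the delicate point is the exponent accounting — one must be careful that every $\epsilon$-linear term in $n$ that appears is genuinely of the form (constant)$\cdot n\epsilon$ with the constant depending only on $a$, $r$, $\chi_1$ (not on $n$ or $y$), so that after choosing $\epsilon$ small the total error is controllably subexponential, and that the coefficient bookkeeping honestly produces $16n\epsilon+an(2r+\chi_1)\epsilon$ (or something dominated by it; one may need to relabel $\epsilon$ by a fixed multiple, which is harmless). The one structural subtlety is that Lemma \ref{Lemma 3} and the $\widehat N$-bounds hold only for $\mu$-a.e. $y\in\mathcal P(x)\cap\Gamma_\epsilon$ and $n\ge n_\epsilon$, so the statement is correctly restricted to that full-measure subset; no new idea is needed beyond chaining the established lemmas.
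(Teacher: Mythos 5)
Your proposal follows exactly the paper's argument: apply Lemma \ref{Lemma 2} at time $n+b$, then Lemma \ref{Lemma 3}, then pass from $\widehat{N}^{s},\widehat{N}^{u}$ to $N^{s},N^{u}$ via \eqref{P-(31)}--\eqref{P-(32)}, and finally invoke Lemma \ref{Lemma 1}, with the $b$-dependent factors absorbed into $C_1$. The exponent bookkeeping you describe (all error terms of the form $(\text{const depending on }a,r,\chi_1)\cdot n\epsilon$, possibly after relabeling $\epsilon$ by a fixed multiple) matches the paper's own level of precision, so the proof is correct and essentially identical.
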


\begin{proof}
By Lemma \ref{Lemma 2} and Lemma \ref{Lemma 3},
for $\mu$-a.e. $ y \in \mathcal{P}(x) \cap \Gamma_\epsilon $ and $ n \geq n_\epsilon $, we have
\begin{align*}
       & \mu\left(B(y, n+b, e^{-(n+b)r})\right)\nonumber                                                                                                      \\
  \leq & C N\left(n+b, Q_{n+b}(y)\right)  \exp((n+b)\epsilon- a (n+b)(2r+\chi_1) (h- \epsilon))                                                               \\
  \leq & CC_1\widehat{N}^{s}\left(n, y, Q_{n}(y)\right)  \widehat{N}^{u}\left(n, y, Q_{n}(y)\right) \exp(-a n(2r+\chi_1) h+n\epsilon+2an(2r+\chi_1)\epsilon).
\end{align*}
Finally, through \eqref{P-(31)}, \eqref{P-(32)} and Lemma \ref{Lemma 1}, we obtain
\begin{align*}
       & \mu\left(B(y, n+b,e^{-(n+b)r})\right)                                                                                                       \\
  \leq & CC_1N^{s}\left(n, y, Q_{n}(y)\right)  N^{u}\left(n, y, Q_{n}(y)\right)\exp(-an(2r+\chi_1)h+15an(2r+\chi_1)\epsilon)                         \\
  \leq & C^3C_1\mu_{y}^{s}\left(B^{s}(y, 4 e^{-nr})\right) \mu_{y}^{u}\left(B^{u}(y, n,4 e^{-nr})\right)  \exp(16n\epsilon+an(2r+\chi_1)\epsilon).
\end{align*}
This completes the proof of the lemma.
\end{proof}

We prove Theorem \ref{BPS-theorem} by combining Lemma \ref{Lemma 6} and Lemma \ref{Lemma 7},
and choosing an appropriate constant $t$. Then
\eqref{P-(10)}, \eqref{P-(11)}, and Theorem \ref{BPS-theorem}
imply Theorem \ref{BPS-theorem-1}. 

{\bf The case of nonergodic measures: }

We will demonstrate how to adapt the argument above to provide the proof of 
Theorem \ref{BPS-theorem} 
in the case that the invariant measure $ \mu $ is not ergodic. 

For $ \mu $-almost every $ x \in M $, $ d^{s}(x) $ and $ d^{u}(x) $ exist but may vary with $x$. 
Given $ \delta>0$, there exists a measurable subset $ \Gamma^{*}\subset M $ with 
$ \mu(\Gamma^{*}) \geq 1-\delta $, such that 
for every point $ x \in \Gamma^{*} $, 
the properties (a)-(e) (with the exception of (c)) hold if $ h,d^s,d^u $ are replaced by $ h_\mu(f, x),  d^{s}(x),  d^{u}(x) $ respectively. 

Let us fix $ \epsilon>0 $ and consider the sets
\[
\Gamma(x)=\left\{y \in M:|h_\mu(f, x)-h_\mu(f, y)|<\epsilon,\left|d^{s}(x)-d^{s}(y)\right|<\epsilon,\left|d^{u}(x)-d^{u}(y)\right|<\epsilon\right\}. 
\]
There exists a countable subcovering of sets $ \left\{\Gamma^{i}\right\}_{i \in \mathbb{N}}$ 
 which covers $ \Gamma^{*} $. 

Let $ \mu^{i} $ be the conditional measure generated by $ \mu $ on $ \Gamma^{i} $.
We can apply the proof of Theorem \ref{BPS-theorem} for the ergodic case
to each measures $ \mu^{i}$, as within each $\Gamma_i$,
the entropy and dimensions are approximately constant.
This yields the desired estimates for each $\Gamma_i$.
Since $\delta$ can be chosen arbitrarily small (depending on $\epsilon$),
we extend the result to
$\mu$-almost every point in $M$.
This extension is valid due to the uniform estimates across all $\Gamma_i$.
As $\epsilon\to 0$ (and consequently $\delta\to 0$),
we can cover an arbitrarily large portion of $M$,
thus concluding the desired result for the nonergodic case.

\section{Measures maximizing $r$-neutralized entropy}\label{V-P}

Let $ f$  be a $ C^{1+\alpha} (\alpha>0)$ surface diffeomorphism and 
$ \Lambda $ a compact locally maximal hyperbolic set of $ f$  such that 
$ f|_\Lambda $ is topologically mixing. 
  We will use the method in \cite{BW03} to prove Theorem \ref{CMP}. 
Before the proof, we introduce some notions and tools. 

\subsection{Pressure function and some facts}\label{Pressure and some facts}

Given a continuous function (called potential) $\varphi: \Lambda \rightarrow \mathbb{R}$,
let $P(\varphi)$ denote its topological pressure. 
The variational principle \cite{[PW]} states that
\begin{align*}
  P(\varphi)=\sup \left\{h_\nu(f)+\int_{X} \varphi \,\rd \nu : \nu \in \mathcal{M}(f,\Lambda)\right\}.
\end{align*}
An invariant measure $\nu$ with $P(\varphi)=h_\nu(f)+\int_{\Lambda} \varphi \,\rd \nu$ is 
called an equilibrium measure for the potential $\varphi$. 
In particular, MME is the equilibrium measure for $\varphi=0$.
  For uniformly hyperbolic systems, 
the existence and the uniqueness of equilibrium measures were established for H\"older continuous potentials\cite{Bo75}.

According to \cite{Anosov}, 
the unstable and stable distributions of $(f,\Lambda)$ are H\"older continuous, 
then the functions $\phi_u: \Lambda \rightarrow \mathbb{R}$ and $\phi_s: \Lambda \rightarrow \mathbb{R}$ are also H\"older continuous, which are defined by 
\begin{align*}
  \phi_{u}(x)=\log \left\|d f | E^{u}_x\right\| \quad \text { and } \quad \phi_{s}(x)=\log \left\|d f | E^{s}_x\right\|. 
\end{align*}
For each $\nu \in \mathcal{M}(f,\Lambda)$, we introduce the following functions. 
\begin{align}\label{CMP-(8)}
 \lambda_u(\nu)=\int_{\Lambda} \phi_u \,\rd \nu , \quad \lambda_s(v)=\int_{\Lambda} \phi_s \,\rd \nu, \quad  d(\nu) = h_\nu(f)\left(\frac{1}{\lambda_u(\nu)}-\frac{1}{\lambda_s(\nu)}\right) .
\end{align}
The maps $v \mapsto \lambda_u(\nu)$ and $\nu \mapsto \lambda_s(\nu)$ are continuous 
on $\mathcal{M}(f,\Lambda)$. We define
\begin{align*}
  \lambda_u^{\min }&=\min_{\nu\in \mathcal{M}(\Lambda)} \lambda_u(\nu), \quad \lambda_u^{\max }=\max_{\nu\in \mathcal{M}(\Lambda)} \lambda_u(\nu),  \quad I_u=\left(\lambda_u^{\min }, \lambda_u^{\max }\right);\\
  \lambda_s^{\min }&=\min_{\nu\in \mathcal{M}(\Lambda)} \lambda_u(\nu), \quad \lambda_s^{\max }=\max_{\nu\in \mathcal{M}(\Lambda)} \lambda_u(\nu),   \quad I_s=\left(\lambda_s^{\min }, \lambda_s^{\max }\right) .
\end{align*}
Note that $I_u \neq \varnothing$ ($I_s \neq \varnothing$) if and only if $\phi_u$ ($\phi_s$) is not cohomologous to a constant. 
Recall that two functions $\varphi, \psi: \Lambda \rightarrow \mathbb{R}$ are said to be cohomologous
if there exists a continuous function $\eta: \Lambda \rightarrow \mathbb{R}$ such that
$\varphi-\psi=\eta-\eta \,\circ\, f$.
Futhermore, we have $P(\psi)=P(\varphi)$ if and only if $\varphi$ and $\psi$ are cohomologous.

Next, we define the function $Q: \mathbb{R}^2 \rightarrow \mathbb{R}$ by $Q(p, q)=P\left(-p \phi_u+q \phi_s\right)$. 
According to \cite{Ruelle78}, the function $Q$ is real-analytic for each $(p,q)\in \mathbb{R}^2$. 
For fixed $(p,q)$ in $\mathbb{R}^2$, we use  $\nu_{p, q} \in \mathcal{M}(f,\Lambda)$ 
to denote the unique equilibrium measure for the potential $-p \phi_u+q \phi_s$, 
which is an ergodic measure. We introduce the following notions. 
$$
  \lambda_u(p, q)=\lambda_u\left(\nu_{p, q}\right), \quad \lambda_s(p, q)=\lambda_s\left(v_{p, q}\right), \quad h(p, q)=h_{\nu_{p, q}}(f),
$$
$$
  d_u(p, q)=h(p, q) / \lambda_u(p, q), \quad d_s(p, q)=-h(p, q) / \lambda_s(p, q) .
$$
As shown in \cite{BW03}, 
functions $h, \lambda_s, \lambda_u, d_u, d_s$ are also real-analytic for each $(p,q)\in \mathbb{R}^2$.
By the variational principle of the topological pressure, we have
\begin{align*}
  Q(p,q)=h(p,q)-p\lambda_u(p,q)+q\lambda_s(p,q).
\end{align*}
There are unique nonnegative numbers $t_u$ and $t_s$ satisfying $Q(t_u,0) = Q(0,t_s) = 0$. 
 
We now list some properties of the topological pressure.  
For $\beta\in (0,1]$, 
 we denote by $C^\beta(\Lambda)$ the space of H\"older continuous functions $\varphi: \Lambda \rightarrow \mathbb{R}$ with Hölder exponent $\beta$. 
 For every $\varphi, \psi \in C^\beta(\Lambda)$ and $t \in \mathbb{R}$, we have
\begin{align}\label{B-1}
  \frac{\rd^2}{\rd t^2} P(\varphi+t \psi) \geq 0,
\end{align}
with equality if and only if $\psi$ is cohomologous to a constant, see \cite{Ruelle78} for more details.

The following propositions play a crucial role in proving Theorem \ref{CMP}. 

\begin{proposition}[\cite{BW03}]\label{CMP-Proposition 4}
  The following properties hold:
\begin{itemize}
\item[1.] For any $ p, q\in \mathbb{R}$, we have $  \lambda_u(p,q)=-\partial_p Q(p,q),\quad \lambda_s(p,q)=\partial_q Q(p,q)$.
\item[2.] If $\phi_u$ is not cohomologous to a constant and $q \in \mathbb{R}$, then:

(a) $\left\{\lambda_u(p, q): p \in \mathbb{R}\right\}=I_u$ and $\partial_p \lambda_u(p,q)<0$ for any $p\in \mathbb{R}$;

(b) $h(\cdot, 0)$ is strictly decreasing on $[0, \infty)$ and $\partial_p h(p,0)=p\partial_p\lambda_u(p,0)$;

(c) $d_u(\cdot, 0)$ is strictly increasing on $\left(-\infty, t_u\right]$ and strictly decreasing on $\left[t_u, \infty\right)$. 
More precisely, we have
\begin{align*} 
\partial_p d_u(0,0)=-\partial_p \lambda_u(0,0)\dfrac{h(0,0)}{\lambda_u(0,0)^2}>0,\,
\lim\limits_{p \rightarrow \infty} \lambda_{u}(p, q)=\lambda_{u}^{\min },\, \lim\limits_{p \rightarrow -\infty} \lambda_{u}(p, q)=\lambda_{u}^{\max }.
\end{align*}
\item[3.] If $\phi_s$ is not cohomologous to a constant and $p \in \mathbb{R}$, then 

(a)  $\left\{\lambda_s(p, q): q \in \mathbb{R}\right\}=I_s$ and $\partial_q \lambda_s(p,q)>0$ for any $q\in \mathbb{R}$;

(b) $h(0, \cdot)$ is strictly decreasing on $[0, \infty)$ and $\partial_q h(0,q)=-q\partial_q\lambda_s(0,q)$;

(c) $d_s(0, \cdot)$ is strictly increasing on $\left(-\infty, t_s\right]$, and strictly decreasing on $\left[t_s, \infty\right)$. 
More precisely, we have 
\begin{align*}
  \partial_q d_s(0,0)=\partial_q \lambda_s(0,0)\dfrac{h(0,0)}{\lambda_s(0,0)^2}>0,\,
  \lim\limits_{q \rightarrow \infty} \lambda_{s}(p, q)=\lambda_{s}^{\max},\, \lim\limits_{q \rightarrow -\infty} \lambda_{s}(p, q)=\lambda_{s}^{\min}. 
\end{align*}
\end{itemize}
\end{proposition}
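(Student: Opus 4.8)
The plan is to deduce every statement from two inputs: the differentiability of the topological pressure along real-analytic one-parameter families of Hölder potentials, and the strict convexity inequality \eqref{B-1}. Recall that for $\varphi,\psi\in C^\beta(\Lambda)$ the map $t\mapsto P(\varphi+t\psi)$ is real-analytic with $\frac{\rd}{\rd t}P(\varphi+t\psi)=\int_\Lambda\psi\,\rd\nu_{\varphi+t\psi}$, where $\nu_{\varphi+t\psi}$ is the unique equilibrium state. Applying this to $Q(p,q)=P(-p\phi_u+q\phi_s)$, first in $p$ with $q$ fixed and then in $q$ with $p$ fixed, gives $\partial_p Q(p,q)=-\int_\Lambda\phi_u\,\rd\nu_{p,q}=-\lambda_u(p,q)$ and $\partial_q Q(p,q)=\int_\Lambda\phi_s\,\rd\nu_{p,q}=\lambda_s(p,q)$, which is Part 1.

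For Part 2, fix $q$ and differentiate once more: $\partial_p\lambda_u(p,q)=-\partial_p^2 Q(p,q)=-\frac{\rd^2}{\rd p^2}P\bigl(q\phi_s+p(-\phi_u)\bigr)$. By \eqref{B-1} the second derivative of pressure is $\ge 0$, with equality only if $-\phi_u$ (equivalently $\phi_u$) is cohomologous to a constant; since we assume otherwise, $\partial_p\lambda_u(p,q)<0$ for all $p$. Hence $p\mapsto\lambda_u(p,q)$ is continuous and strictly decreasing on $\mathbb{R}$, so its image is the open interval between $\lim_{p\to+\infty}\lambda_u(p,q)$ and $\lim_{p\to-\infty}\lambda_u(p,q)$. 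To identify these limits with $\lambda_u^{\min}$ and $\lambda_u^{\max}$, I would compare, for an arbitrary fixed $\nu\in\mathcal{M}(f,\Lambda)$, the bounds $h_\nu(f)-p\lambda_u(\nu)+q\lambda_s(\nu)\le Q(p,q)=h(p,q)-p\lambda_u(p,q)+q\lambda_s(p,q)\le h_{\text{top}}(f)-p\lambda_u(p,q)+q\lambda_s(p,q)$, which force $p\bigl(\lambda_u(p,q)-\lambda_u(\nu)\bigr)$ to remain bounded above as $p\to+\infty$; choosing $\nu$ with $\lambda_u(\nu)$ arbitrarily close to $\lambda_u^{\min}$ (attained, by compactness of $\mathcal{M}(f,\Lambda)$ and continuity of $\lambda_u$) yields $\lim_{p\to+\infty}\lambda_u(p,q)=\lambda_u^{\min}$, and symmetrically the other limit; this proves 2(a), and the same argument applied to $\lambda_s$ gives 3(a). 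For 2(b), differentiate the variational identity $Q(p,0)=h(p,0)-p\lambda_u(p,0)$ and substitute $\partial_p Q(p,0)=-\lambda_u(p,0)$ from Part 1; the $\lambda_u(p,0)$ terms cancel, leaving $\partial_p h(p,0)=p\,\partial_p\lambda_u(p,0)$, which is negative for $p>0$, so $h(\cdot,0)$ is strictly decreasing on $[0,\infty)$. Part 3(b) is identical via $Q(0,q)=h(0,q)+q\lambda_s(0,q)$ and $\partial_q Q(0,q)=\lambda_s(0,q)$.

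For 2(c), differentiating $d_u(p,0)=h(p,0)/\lambda_u(p,0)$ and inserting 2(b) collapses the numerator to $\partial_p\lambda_u(p,0)\bigl(p\lambda_u(p,0)-h(p,0)\bigr)=-\partial_p\lambda_u(p,0)\,Q(p,0)$, so $\partial_p d_u(p,0)=-\partial_p\lambda_u(p,0)\,Q(p,0)/\lambda_u(p,0)^2$. Since $\phi_u>0$ we have $\lambda_u(p,0)=-\partial_p Q(p,0)>0$, hence $Q(\cdot,0)$ is strictly decreasing and, by the normalization $Q(t_u,0)=0$, positive on $(-\infty,t_u)$ and negative on $(t_u,\infty)$; together with $\partial_p\lambda_u(p,0)<0$ this makes $\partial_p d_u(p,0)>0$ on $(-\infty,t_u)$ and $<0$ on $(t_u,\infty)$, and evaluating at $p=0$ (where $Q(0,0)=h_{\text{top}}(f)=h(0,0)$) yields the displayed formula $\partial_p d_u(0,0)=-\partial_p\lambda_u(0,0)\,h(0,0)/\lambda_u(0,0)^2>0$; the limits of $\lambda_u(p,q)$ were identified above. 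Statement 3(c) is the mirror image, using $\lambda_s<0$ so that $Q(0,\cdot)$ is strictly decreasing with $Q(0,q)>0$ for $q<t_s$ and $<0$ for $q>t_s$, and the extra sign in $d_s=-h/\lambda_s$ reverses none of the conclusions. I expect the one genuinely delicate point to be the identification $\lim_{p\to\pm\infty}\lambda_u(p,q)=\lambda_u^{\min/\max}$ (and its $\lambda_s$ analogue): monotonicity and analyticity alone do not suffice there, and one must invoke the variational principle together with compactness of $\mathcal{M}(f,\Lambda)$, which is also precisely what pins the image of $\lambda_u(\cdot,q)$ to the \emph{open} interval $I_u$. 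Everything else reduces to differentiating the real-analytic functions $Q,h,\lambda_u,\lambda_s,d_u,d_s$, whose regularity is quoted from \cite{Ruelle78,BW03}, and to sign-chasing.
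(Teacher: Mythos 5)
Your proposal is correct; note that the paper itself gives no proof of this proposition but simply quotes it from \cite{BW03}, and your derivation (first and second derivatives of the pressure along the analytic family $-p\phi_u+q\phi_s$, strict convexity from \eqref{B-1}, the variational-principle bound to pin the limits $\lambda_u^{\min/\max}$, $\lambda_s^{\min/\max}$, and sign-chasing via $\partial_p d_u(p,0)=-\partial_p\lambda_u(p,0)\,Q(p,0)/\lambda_u(p,0)^2$ together with $Q(t_u,0)=0$) is exactly the standard argument of that reference. The only cosmetic imprecision is the justification ``since $\phi_u>0$'': what is actually needed (and true) is that $\lambda_u(\nu)=\int\phi_u\,\rd\nu>0$ for every invariant measure on the hyperbolic set, which follows from uniform hyperbolicity even when $\phi_u$ is not pointwise positive in the given metric.
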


\begin{proposition}[\cite{BW03}]\label{CMP-Proposition 5}
The following properties hold:
\begin{itemize}
\item[1.] For any $a \in I_u$, there exists a unique, real-analytic function 
$\gamma_u: \mathbb{R} \rightarrow \mathbb{R}$ 
such that $\lambda_u\left(\gamma_u(q), q\right)=a$  for all $q \in \mathbb{R}$;

\item[2.] For any $b \in I_s$, there exists a unique, real-analytic function 
$\gamma_s: \mathbb{R} \rightarrow \mathbb{R}$ such that 
$\lambda_s\left(p, \gamma_s(p)\right)=b$ for all $p \in \mathbb{R}$.
\end{itemize}
\end{proposition}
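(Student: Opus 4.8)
The plan is to derive both statements from the implicit function theorem, using the monotonicity established in Proposition \ref{CMP-Proposition 4}. I will only write out the argument for statement 1, since statement 2 follows by the symmetric computation with the roles of $p$ and $q$ (and of $\phi_u$ and $\phi_s$) interchanged. Fix $a\in I_u$. The idea is to solve the equation $\lambda_u(p,q)=a$ for $p$ as a function of $q$; the key point is that $\lambda_u$ is real-analytic in $(p,q)$ (since $h,\lambda_u,\lambda_s$ are real-analytic by the discussion preceding the proposition, or directly since $\lambda_u(p,q)=-\partial_pQ(p,q)$ and $Q$ is real-analytic by \cite{Ruelle78}) and $\partial_p\lambda_u(p,q)<0$ everywhere by Proposition \ref{CMP-Proposition 4}, part 2(a).

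First I would note that $a\in I_u=(\lambda_u^{\min},\lambda_u^{\max})$ together with Proposition \ref{CMP-Proposition 4}, part 2(a), namely $\{\lambda_u(p,q):p\in\mathbb{R}\}=I_u$ for each fixed $q$, guarantees that for every $q\in\mathbb{R}$ there is at least one $p$ with $\lambda_u(p,q)=a$; strict monotonicity $\partial_p\lambda_u(\cdot,q)<0$ shows this $p$ is unique. Call it $\gamma_u(q)$. This already defines the function and gives uniqueness. Next I would invoke the analytic implicit function theorem at each point $(\gamma_u(q_0),q_0)$: since $\partial_p\bigl(\lambda_u(p,q)-a\bigr)=\partial_p\lambda_u(p,q)\neq 0$ there, the zero set of $\lambda_u(p,q)-a$ is locally the graph of a real-analytic function $q\mapsto p(q)$; by the uniqueness just established this local function must coincide with $\gamma_u$ near $q_0$. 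Since $q_0$ was arbitrary, $\gamma_u$ is real-analytic on all of $\mathbb{R}$. (If one wants the derivative explicitly, differentiating $\lambda_u(\gamma_u(q),q)=a$ gives $\gamma_u'(q)=-\partial_q\lambda_u/\partial_p\lambda_u$ evaluated at $(\gamma_u(q),q)$.)

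The one genuinely substantive point — and what I expect to be the main obstacle — is making sure the surjectivity statement $\{\lambda_u(p,q):p\in\mathbb{R}\}=I_u$ holds \emph{for every fixed $q$}, not merely for $q=0$; this is exactly Proposition \ref{CMP-Proposition 4}, part 2(a), which supplies both $\partial_p\lambda_u(p,q)<0$ for all $p,q$ and the limits $\lim_{p\to\infty}\lambda_u(p,q)=\lambda_u^{\min}$, $\lim_{p\to-\infty}\lambda_u(p,q)=\lambda_u^{\max}$, so that the continuous strictly decreasing map $p\mapsto\lambda_u(p,q)$ has image exactly $(\lambda_u^{\min},\lambda_u^{\max})=I_u$. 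Granting that, everything else is a routine application of the implicit function theorem. Statement 2 is proved identically: for $b\in I_s$, Proposition \ref{CMP-Proposition 4}, part 3(a) gives $\partial_q\lambda_s(p,q)>0$ for all $p,q$ and the limiting values $\lambda_s^{\min},\lambda_s^{\max}$, so $q\mapsto\lambda_s(p,q)$ is a continuous strictly increasing surjection onto $I_s$ for each fixed $p$, yielding a unique real-analytic $\gamma_s$ with $\lambda_s(p,\gamma_s(p))=b$.
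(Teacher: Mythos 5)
Your argument is correct and is essentially the standard one: the paper states this proposition without proof as a citation to \cite{BW03}, and the proof there is exactly this combination of the surjectivity and strict monotonicity of $p\mapsto\lambda_u(p,q)$ from Proposition \ref{CMP-Proposition 4} with the analytic implicit function theorem. The only cosmetic quibble is that the limits $\lim_{p\to\pm\infty}\lambda_u(p,q)$ appear in part 2(c), not 2(a), of Proposition \ref{CMP-Proposition 4} as stated here, but you do not actually need them since 2(a) already asserts $\{\lambda_u(p,q):p\in\mathbb{R}\}=I_u$ for each fixed $q$.
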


We use $\operatorname{dim}_H Z$ to denote the Hausdorff dimension of the set $Z$, and define the Hausdorff dimension of the measure $v$ by
$$
  \operatorname{dim}_H v=\inf \left\{\operatorname{dim}_H Z: v(\Lambda \backslash Z)=0\right\}.
$$
For surface diffeomorphisms, it was proved in \cite{Y82} that $\operatorname{dim}_H \nu=d(\nu)$ where
\[
d(\nu)=h_\nu(f)\left(\frac{1}{\lambda_u(\nu)}-\frac{1}{\lambda_s(\nu)}\right). 
\]
Thus for any given $r>0$, 
if $v$ is ergodic, we have
\begin{align}\label{CMP-(9)}
h_\nu^r(f)=h_\nu(f)+r\operatorname{dim}_H \nu=h_\nu(f)\left(1+\frac{r}{\lambda_u(\nu)}-\frac{r}{\lambda_s(\nu)}\right)=h_\nu(f)+rd(\nu).
\end{align}

For $\mu\in \mathcal{M}(f,\Lambda)$, let $\mu=\int \mu_{\tau(x)} \,\rd \mu(\tau(x))$ be its ergodic decomposition. 
By observing that the entire leaf $W^u(x)$ and $W^s(x)$ are contained in the ergodic component of $x$ (see \cite[subsection 6.2]{LY-85-annals-1}),
we conclude that for $\mu$-a.e. $x\in M$,
\begin{align}\label{ergodic-dimension}
  d^{s}_\mu(x)=d^{s}_{\mu_{\tau(x)}}(x),\quad d^{u}_\mu(x)=d^{u}_{\mu_{\tau(x)}}(x), \quad h_\mu(f,x)=h_{\mu_{\tau(x)}}(f).
\end{align}
Hence Theorem \ref{BPS-theorem-1} and \eqref{ergodic-dimension} implies
\begin{align}\label{r-neutralized-decomposition}
  \int_M h_{\mu,d}^r(f,x) \rd \mu= \int_M h_{\mu_{\tau(x)}}(f)+r\operatorname{dim}_H\mu_{\tau(x)}\,\rd \mu(\tau(x)).
\end{align}
We denote by $\mathcal{M}_E$ the set of all ergodic $f$-invariant measures on $\Lambda$. Then we have
\begin{align}\label{nonergodic-ergodic}
  \sup\left\{h_{\nu,d}^r(f) : \nu\in \mathcal{M}(f,\Lambda)\right\}=\sup\left\{h_{\nu}(f)+r\operatorname{dim}_H\nu: \nu \in \mathcal{M}_E\right\}.
\end{align}
Through \eqref{nonergodic-ergodic}, we only need to consider ergodic measures in the proof of Theorem \ref{CMP}. 
This also shows the derivation of Corollary \ref{corollary} from Theorem \ref{CMP}. 
In fact, \eqref{r-neutralized-decomposition} and \eqref{nonergodic-ergodic} 
hold for any smooth systems
where the Lyapunov exponents of any invariant measures are nonzero at almost every point.

Now we are ready to finish the proof of Theorem \ref{CMP}. 

\subsection{Proof of Theorem \ref{CMP}}\label{sec:pro}

Given $r>0$, let $\left(\nu_n\right)_{n \in \mathbb{N}}$ be a sequence of ergodic measures such that
$$
  \lim _{n \rightarrow \infty}( h_{\nu_n}(f)+r\operatorname{dim}_H \nu_n)=\sup \left\{h_{\nu}+r\operatorname{dim}_H \nu: \nu \in \mathcal{M}_E\right\}
$$
Since $\mathcal{M}(\Lambda)$ is compact in the weak* topology, 
we can assume that $\left(\nu_n\right)_{n \in \mathbb{N}}$ converges to an $f$-invariant measure $m$. 
Since the map $\nu \mapsto h_\nu(f)$ is upper semi-continuous, 
and maps $\nu \mapsto \lambda_u(\nu)$ and $\nu \mapsto \lambda_s(\nu)$ are continuous for any $\nu\in \mathcal{M}(\Lambda)$, 
we conclude that 
\begin{align}
 \sup \left\{h_{\nu}+r\operatorname{dim}_H \nu: \nu \in \mathcal{M}_E\right\}&=\lim _{n \rightarrow \infty} h_{\nu_n}(f)+r\operatorname{dim}_H \nu_n \leq h_{m}(f)+rd( m).\label{CMP-(25)}
\end{align}
To prove Theorem \ref{CMP}, it is sufficient to show that there exists $\mu \in \mathcal{M}_E$ satisfying
$$
  h_{m}(f)+rd(m)=h_{\mu}(f)+r\operatorname{dim}_H \mu .
$$

At first, we consider the most trivial case, $I_s=I_u=\varnothing$, 

\begin{lemma}\label{case 1}
  If $I_s=I_u=\varnothing$, then $\nu_{0,0}$ is the unique MM$r$NE for any $r>0$.
\end{lemma}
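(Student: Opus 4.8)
The plan is to reduce the maximization of the $r$-neutralized entropy to the maximization of the metric entropy, and then to invoke Bowen's uniqueness theorem for the measure of maximal entropy.

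First I would unwind the hypothesis. As recorded just after \eqref{CMP-(8)}, $I_u=\varnothing$ (resp.\ $I_s=\varnothing$) means exactly that $\phi_u$ (resp.\ $\phi_s$) is cohomologous to a constant, so the integrals $\lambda_u(\nu)=\int_\Lambda\phi_u\,\rd\nu$ and $\lambda_s(\nu)=\int_\Lambda\phi_s\,\rd\nu$ do not depend on $\nu$; write $\lambda_u(\nu)\equiv\lambda_u>0$ and $\lambda_s(\nu)\equiv\lambda_s<0$ for all $\nu\in\mathcal{M}(f,\Lambda)$. Then the constant
\[
c_r:=1+\frac{r}{\lambda_u}-\frac{r}{\lambda_s}
\]
satisfies $c_r>1$, and by \eqref{CMP-(9)} every ergodic $\nu$ obeys $h_{\nu,d}^{r}(f)=h_\nu(f)+r\operatorname{dim}_H\nu=c_r\,h_\nu(f)$.

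Next I would take suprema. Since $c_r>0$, maximizing $h_{\nu,d}^{r}(f)$ over $\nu\in\mathcal{M}_E$ is equivalent to maximizing $h_\nu(f)$ over $\nu\in\mathcal{M}_E$; by the variational principle the latter supremum equals $h_{\text{top}}(f|_\Lambda)$ and is attained precisely at the measures of maximal entropy. Because $\Lambda$ is a compact locally maximal hyperbolic set on which $f$ is topologically mixing, Bowen's theorem \cite{Bo75}, applied to the zero potential $-0\cdot\phi_u+0\cdot\phi_s$, shows that the measure of maximal entropy is unique and coincides with $\nu_{0,0}$. Hence $\nu_{0,0}$ is the unique ergodic MM$r$NE.

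Finally I would pass from ergodic to arbitrary invariant measures. Given $\mu\in\mathcal{M}(f,\Lambda)$ with ergodic decomposition $\mu=\int\mu_{\tau(x)}\,\rd\mu(\tau(x))$, formula \eqref{r-neutralized-decomposition} and the identity of the first step give
\[
h_{\mu,d}^{r}(f)=\int c_r\,h_{\mu_{\tau(x)}}(f)\,\rd\mu(\tau(x))\le c_r\,h_{\text{top}}(f|_\Lambda),
\]
with equality if and only if $h_{\mu_{\tau(x)}}(f)=h_{\text{top}}(f|_\Lambda)$ for $\mu$-a.e.\ $\tau(x)$; by the uniqueness of the measure of maximal entropy this forces $\mu_{\tau(x)}=\nu_{0,0}$ for $\mu$-a.e.\ $\tau(x)$, i.e.\ $\mu=\nu_{0,0}$. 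I do not anticipate a real obstacle here: the result is a bookkeeping consequence of the cohomological triviality of $\phi_u$ and $\phi_s$ together with the classical uniqueness of the MME, and the only mildly delicate point, the reduction from general invariant measures to ergodic ones, is precisely what \eqref{r-neutralized-decomposition} and \eqref{nonergodic-ergodic} handle.
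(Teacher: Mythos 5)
Your proposal is correct and follows essentially the same route as the paper: use the emptiness of $I_u$ and $I_s$ to see that $\lambda_u(\nu)$ and $\lambda_s(\nu)$ are constant, so by \eqref{CMP-(9)} the $r$-neutralized entropy of an ergodic measure is a fixed positive multiple of its metric entropy, whence the unique ergodic maximizer is the unique MME $\nu_{0,0}$, and then dispose of nonergodic measures via \eqref{r-neutralized-decomposition}. The paper's proof is just a terser version of exactly this argument, so there is nothing to add.
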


\begin{proof}
Through \eqref{CMP-(9)}, the ergodic MM$r$NE must be $\nu_{0,0}$.
According to \eqref{r-neutralized-decomposition}, 
we can observe that nonergodic invariant measures do not 
achieve the supremum of the $r$-neutralized entropy.
Thus, we complete the proof of this lemma.
\end{proof}

For the case that the MME is the MM$r$NE for some $r>0$, 
see the Proposition \ref{varnothing} for more information. 

Next, we introduce the following criterion lemma. 

\begin{lemma}\label{CMP-Lemma 3}
  If $p, q \in \mathbb{R}$ with $\lambda_u(p, q)=\lambda_u(m)$ and $\lambda_s(p, q)=\lambda_s(m)$, then $m=\nu_{p, q}$.
\end{lemma}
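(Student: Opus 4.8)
The plan is to exploit the real-analyticity and strict convexity properties of the pressure function $Q(p,q) = P(-p\phi_u + q\phi_s)$ established in Proposition \ref{CMP-Proposition 4}, together with the fact that $\nu_{p,q}$ is the unique equilibrium measure for $-p\phi_u + q\phi_s$. The key observation is that equilibrium measures are characterized variationally, so $m$ and $\nu_{p,q}$ will be forced to coincide once we know that $m$ achieves the same pressure and has the same integrals $\int \phi_u\,\rd m$ and $\int \phi_s\,\rd m$ as $\nu_{p,q}$.

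First I would recall that by the variational principle for the topological pressure,
\[
Q(p,q) = P(-p\phi_u + q\phi_s) \geq h_m(f) - p\lambda_u(m) + q\lambda_s(m).
\]
Since by hypothesis $\lambda_u(p,q) = \lambda_u(m)$ and $\lambda_s(p,q) = \lambda_s(m)$, and since $Q(p,q) = h(p,q) - p\lambda_u(p,q) + q\lambda_s(p,q)$ (the equilibrium identity for $\nu_{p,q}$), the above inequality rearranges to $h(p,q) \geq h_m(f)$; that is, $h_{\nu_{p,q}}(f) \geq h_m(f)$. Next I would run the argument in the other direction: I claim $m$ must itself be an equilibrium measure for $-p\phi_u + q\phi_s$. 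Indeed, $h_m(f) - p\lambda_u(m) + q\lambda_s(m) = h_m(f) - p\lambda_u(p,q) + q\lambda_s(p,q)$, and if this were strictly less than $Q(p,q) = h(p,q) - p\lambda_u(p,q) + q\lambda_s(p,q)$ we would again just get $h_m(f) < h(p,q)$, which is not yet a contradiction. So the entropy comparison alone is not enough — I need to close the loop using where $m$ comes from.

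The point I would use is the chain \eqref{CMP-(25)}: $m$ is the weak* limit of the maximizing sequence $\nu_n$, and $h_m(f) + r d(m) \geq \sup\{h_\nu(f) + r\operatorname{dim}_H\nu : \nu \in \mathcal{M}_E\}$. Since $\nu_{p,q}$ is ergodic with $\operatorname{dim}_H\nu_{p,q} = d(\nu_{p,q})$, and since $\lambda_u(\nu_{p,q}) = \lambda_u(m)$, $\lambda_s(\nu_{p,q}) = \lambda_s(m)$ force $d(\nu_{p,q}) = h(p,q)\bigl(\tfrac{1}{\lambda_u(m)} - \tfrac{1}{\lambda_s(m)}\bigr) \geq h_m(f)\bigl(\tfrac{1}{\lambda_u(m)} - \tfrac{1}{\lambda_s(m)}\bigr) = d(m)$ (using $h(p,q)\geq h_m(f)$ from the previous paragraph and that $\tfrac{1}{\lambda_u(m)} - \tfrac{1}{\lambda_s(m)} > 0$ since $\lambda_u(m) > 0 > \lambda_s(m)$), we get $h_{\nu_{p,q}}(f) + r\operatorname{dim}_H\nu_{p,q} \geq h_m(f) + r d(m)$. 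Combined with \eqref{CMP-(25)} this forces $h_m(f) = h(p,q)$, hence $h_m(f) - p\lambda_u(m) + q\lambda_s(m) = Q(p,q)$, so $m$ is an equilibrium measure for $-p\phi_u + q\phi_s$. By the uniqueness of the equilibrium measure for a Hölder potential over a topologically mixing locally maximal hyperbolic set (Bowen, \cite{Bo75}), we conclude $m = \nu_{p,q}$.

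The main obstacle I anticipate is making sure the inequality $d(\nu_{p,q}) \geq d(m)$ is actually available at this stage of the argument — it relies on identifying $d(m)$ (defined via the formula $h_m(f)(\tfrac{1}{\lambda_u(m)} - \tfrac{1}{\lambda_s(m)})$) with a genuine supremum-type bound that $\nu_{p,q}$, being ergodic, can be compared against through \eqref{CMP-(25)}. One must be slightly careful that $d(m)$ for the possibly-nonergodic limit measure $m$ is the right quantity: this is exactly what \eqref{CMP-(25)} provides, since that chain of inequalities was derived precisely to bound the ergodic supremum by $h_m(f) + rd(m)$. Once that is in hand, the only remaining subtlety is the sign condition $\lambda_u(m) > 0 > \lambda_s(m)$, which holds because $\Lambda$ is a hyperbolic set and hence $\phi_u > 0$, $\phi_s < 0$ pointwise, making these integrals have the correct signs for every invariant measure.
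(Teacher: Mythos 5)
Your proposal is correct and follows essentially the same route as the paper: the variational principle gives $h(p,q)\geq h_m(f)$, the chain \eqref{CMP-(25)} together with \eqref{CMP-(9)} (and the positivity of $\tfrac{1}{\lambda_u(m)}-\tfrac{1}{\lambda_s(m)}$) gives the reverse inequality, and uniqueness of the equilibrium state for $-p\phi_u+q\phi_s$ closes the argument. Your detour through $d(\nu_{p,q})\geq d(m)$ is just a repackaging of the paper's direct comparison of $r$-neutralized entropies, so there is no substantive difference.
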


\begin{proof}
  By the definition of the equilibrium measure $\nu_{p,q}$, we have
  \begin{align*}
    h(p, q)+\int_{\Lambda}\left(-p \phi_u+q \phi_s\right) d \nu_{p, q} & =h(p, q)-p \lambda_u(m)+q \lambda_s(m)                         \\
                                                                       & \geq h_m(f)+\int_{\Lambda}\left(-p \phi_u+q \phi_s\right) d m. 
  \end{align*}
  Thus we obtain $h(p, q) \geq h_m(f)$ and the equality holds if and only if $\nu_{p, q}=m$.
  On the other hand, by combining \eqref{CMP-(9)} and \eqref{CMP-(25)}, we derive $h(p, q) \leq h_m(f)$. 
  Then we conclude $h(p, q)=h_m(f)$ and $m=\nu_{p, q}$.
\end{proof}

By applying Lemma \ref{CMP-Lemma 3}, 
the following two lemmas provide the proof of Theorem \ref{CMP} under specific conditions. 

\begin{lemma}\label{CMP-Lemma 1}
If $\lambda_s(m) \in I_s$, then there exist $p, q\in \mathbb{R}$
such that $\lambda_u\left(p, q\right)=\lambda_u(m)$ and $\lambda_s\left(p, q\right)=\lambda_s(m)$. 
Furthermore, we have $m=\nu_{p,q}$. 
\end{lemma}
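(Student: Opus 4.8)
The plan is to use the continuity and monotonicity properties of $\lambda_u, \lambda_s$ recorded in Propositions \ref{CMP-Proposition 4} and \ref{CMP-Proposition 5}, together with the criterion Lemma \ref{CMP-Lemma 3}, to locate a pair $(p,q)$ realizing the two Lyapunov exponents of $m$ simultaneously. First I would observe that the hypothesis $\lambda_s(m)\in I_s$ already forces $\phi_s$ to be non-cohomologous to a constant (otherwise $I_s=\varnothing$). I would then split into two cases according to whether $\phi_u$ is cohomologous to a constant.

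\emph{Case 1: $\phi_u$ is cohomologous to a constant.} Then $\lambda_u(\nu)$ is the same number $\lambda_u^0$ for every invariant $\nu$, in particular $\lambda_u(m)=\lambda_u^0=\lambda_u(p,q)$ for all $(p,q)$, so the $\lambda_u$-equation is automatic. It remains to choose $q$ with $\lambda_s(0,q)=\lambda_s(m)$; since $\lambda_s(m)\in I_s$ and, by Proposition \ref{CMP-Proposition 4}(3)(a), $\{\lambda_s(0,q):q\in\mathbb{R}\}=I_s$ with $\partial_q\lambda_s(0,q)>0$, such a $q$ exists (and is unique). Then $(p,q)=(0,q)$ works, and Lemma \ref{CMP-Lemma 3} gives $m=\nu_{0,q}$.

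\emph{Case 2: $\phi_u$ is not cohomologous to a constant.} Here the idea is to first pin down $\lambda_s$ along a curve and then adjust $\lambda_u$. Because $\lambda_s(m)\in I_s$, Proposition \ref{CMP-Proposition 5}(2) supplies a real-analytic function $\gamma_s:\mathbb{R}\to\mathbb{R}$ with $\lambda_s(p,\gamma_s(p))=\lambda_s(m)$ for every $p$. Now I would consider the function $p\mapsto\lambda_u(p,\gamma_s(p))$ and show its range contains $\lambda_u(m)$. Using Proposition \ref{CMP-Proposition 4}(2)(a), for each fixed $q$ the map $p\mapsto\lambda_u(p,q)$ is a strictly decreasing surjection onto $I_u$ with limits $\lambda_u^{\min}$ and $\lambda_u^{\max}$ at $p\to+\infty$ and $p\to-\infty$; combined with continuity of $\gamma_s$, the composed map $p\mapsto\lambda_u(p,\gamma_s(p))$ is continuous, and a limiting/monotonicity argument shows it still sweeps across all of $I_u$ (its limit values at $p\to\pm\infty$ being $\lambda_u^{\max}$ and $\lambda_u^{\min}$, since the $q$-coordinate stays bounded or is controlled along $\gamma_s$). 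Since $\lambda_u(m)\in[\lambda_u^{\min},\lambda_u^{\max}]$, the intermediate value theorem yields $p$ with $\lambda_u(p,\gamma_s(p))=\lambda_u(m)$; setting $q=\gamma_s(p)$ we get both equalities, and Lemma \ref{CMP-Lemma 3} gives $m=\nu_{p,q}$.

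The main obstacle I anticipate is Case 2: one must verify that $p\mapsto\lambda_u(p,\gamma_s(p))$ genuinely attains the value $\lambda_u(m)$, i.e.\ that restricting to the curve $q=\gamma_s(p)$ does not shrink the range of $\lambda_u$ below $[\lambda_u^{\min},\lambda_u^{\max}]$. This requires a careful analysis of the asymptotics of $\gamma_s(p)$ as $p\to\pm\infty$ and of the behavior of $\lambda_u(p,q)$ jointly in $(p,q)$, rather than the one-variable statements quoted above; the boundary case where $\lambda_u(m)$ equals $\lambda_u^{\min}$ or $\lambda_u^{\max}$ (so that $m$ would be a measure of extremal unstable exponent) may need separate handling, possibly invoking that $m$ is a limit of the maximizing sequence $\nu_n$ to rule out degeneracies.
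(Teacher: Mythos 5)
Your starting point --- restricting to the curve $q=\gamma_s(p)$ supplied by Proposition \ref{CMP-Proposition 5} and then seeking $p$ with $\lambda_u(p,\gamma_s(p))=\lambda_u(m)$ --- is the same as the paper's, and your Case 1 is fine (though the paper needs no case split). But the step you yourself flag as the ``main obstacle'' is precisely the step that is missing, and the route you sketch for it does not work. Proposition \ref{CMP-Proposition 4}(2) only gives the limits of $\lambda_u(p,q)$ as $p\to\pm\infty$ for each \emph{fixed} $q$; nothing controls $\gamma_s(p)$ as $p\to\pm\infty$, so you cannot conclude that $p\mapsto\lambda_u(p,\gamma_s(p))$ sweeps out $I_u$. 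Moreover, even if it did, its range lies in the \emph{open} interval $I_u$, so the boundary case $\lambda_u(m)\in\{\lambda_u^{\min},\lambda_u^{\max}\}$ --- not excluded by the hypothesis $\lambda_s(m)\in I_s$ alone --- would remain unresolved. This is a genuine gap: the argument as written does not produce the required $p$.

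The paper closes the gap with no asymptotics at all, by using the maximality of $m$ through the variational principle. Since $\nu_{p,\gamma_s(p)}$ is the equilibrium measure of $-p\phi_u+\gamma_s(p)\phi_s$ and $\lambda_s(p,\gamma_s(p))=\lambda_s(m)$, one has $h(p,\gamma_s(p))-p\lambda_u(p,\gamma_s(p))\ge h_m(f)-p\lambda_u(m)$ for every $p$. Taking $p=0$ gives $h(0,\gamma_s(0))\ge h_m(f)$, and then the maximality of $h_\nu(f)+r\operatorname{dim}_H\nu$ at $m$ (via \eqref{CMP-(9)} and \eqref{CMP-(25)}) forces $\lambda_u(0,\gamma_s(0))\ge\lambda_u(m)$. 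Taking $p=\kappa:=h_m(f)/\lambda_u(m)$ and rearranging the same inequality yields $h(\kappa,\gamma_s(\kappa))/\lambda_u(\kappa,\gamma_s(\kappa))\ge h_m(f)/\lambda_u(m)$, and maximality then forces $\lambda_u(\kappa,\gamma_s(\kappa))\le\lambda_u(m)$. The intermediate value theorem on the compact interval $[0,\kappa]$ produces the desired $p$, and Lemma \ref{CMP-Lemma 3} finishes the proof. Note that this argument also shows a posteriori that the boundary case you worried about cannot occur, since $\lambda_u(m)$ is sandwiched between two values of $\lambda_u$ lying in $I_u$. If you want to salvage your approach, you should replace the global ``sweeping'' claim by this local two-point estimate.
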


\begin{proof}
By Proposition \ref{CMP-Proposition 5}, there exists a unique function $\gamma_s:\mathbb{R}\to \mathbb{R}$ such that 
\begin{align}\label{gamma-s}
\lambda_s(t, \gamma_s(t))=\lambda_s(m),\quad \text{ for all } t\in \mathbb{R}. 
\end{align}
For any $p \in \mathbb{R}$, since $v_{p, \gamma_s(p)}$ is the equilibrium measure 
of the potential $-p \phi_u+\gamma_s(p) \phi_s$ and considering \eqref{gamma-s}, we have 
\begin{align}\label{CMP-(27)}
  h\left(p, \gamma_s(p)\right)-p \lambda_u\left(p, \gamma_s(p)\right) \geq h_m(f)-p \lambda_u(m). 
\end{align}
By using $\lambda_u\left(p, \gamma_s(p)\right)>0$ holds for any $p\in \mathbb{R}$, we obtain
\begin{align}\label{CMP-(28)}
  \frac{h\left(p, \gamma_s(p)\right)}{\lambda_u\left(p, \gamma_s(p)\right)}-\frac{h_m(f)}{\lambda_u(m)} \geq\left(1-\frac{\lambda_u(m)}{\lambda_u\left(p, \gamma_s(p)\right)}\right)\left(p-\frac{h_m(f)}{\lambda_u(m)}\right) .
\end{align}
Let $\kappa=\dfrac{h_m(f)}{ \lambda_u(m)}$. 
By choosing $p=\kappa$ and \eqref{CMP-(28)}, we conclude
\begin{align*}
 \frac{ h\left(\kappa, \gamma_s(\kappa)\right) }{ \lambda_u\left(\kappa, \gamma_s(\kappa)\right) }\geq \frac{h_m(f)}{ \lambda_u(m)} .
\end{align*}
If the inequality $\lambda_u\left(\kappa, \gamma_s(\kappa)\right)>\lambda_u(m)$ holds, we have $h\left(\kappa, \gamma_s(\kappa)\right)>h_m(f)$ which contradicts \eqref{CMP-(25)} and \eqref{gamma-s}. 
Thus, we obtain
\begin{align}\label{CMP-(30)}
  \lambda_u\left(\kappa, \gamma_s(\kappa)\right) \leq \lambda_u(m) .
\end{align}
On the other hand, by setting $p=0$ in \eqref{CMP-(27)},  we derive that $h\left(0, \gamma_s(0)\right) \geq h_m(f)$. 
Additionally, by using \eqref{CMP-(9)} and \eqref{CMP-(25)}, we have
\begin{align}\label{CMP-(32)}
  \lambda_u\left(0, \gamma_s(0)\right) \geq \lambda_u(m) .
\end{align}
Combining \eqref{CMP-(30)}, \eqref{CMP-(32)} and the continuity of $p \mapsto \lambda_u\left(p, \gamma_s(p)\right)$, 
 there exists $p \in[0, \kappa]$ such that $\lambda_u\left(p, \gamma_s(p)\right)=\lambda_u(m)$. 
By setting $q=\gamma_s(p)$, we finish the proof of this lemma. 
\end{proof}

Similarly, we  obtain the following lemma, of which the proof is left to the reader. 

\begin{lemma}\label{CMP-Lemma 2}
  If $\lambda_u(m) \in I_u$, then there exist $p, q\in \mathbb{R}$
  such that $\lambda_u\left(p, q\right)=\lambda_u(m)$ and $\lambda_s\left(p, q\right)=\lambda_s(m)$.
  Furthermore, we have $m=\nu_{p,q}$.
\end{lemma}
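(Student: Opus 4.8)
The plan is to run the proof of Lemma~\ref{CMP-Lemma 1} with the roles of the stable and unstable data interchanged. Since $\lambda_u(m)\in I_u$, the potential $\phi_u$ is not cohomologous to a constant, so I would first invoke Proposition~\ref{CMP-Proposition 5}(1) with $a=\lambda_u(m)$ to produce a real-analytic curve $\gamma_u:\mathbb{R}\to\mathbb{R}$ with $\lambda_u(\gamma_u(q),q)=\lambda_u(m)$ for every $q\in\mathbb{R}$. Along this curve $\nu_{\gamma_u(q),q}$ is the equilibrium state of $-\gamma_u(q)\phi_u+q\phi_s$, so comparing it against $m$ and using $\lambda_u(\gamma_u(q),q)=\lambda_u(m)$ to cancel the terms carrying $\gamma_u(q)$ yields the analogue of \eqref{CMP-(27)}, namely
\[
  h(\gamma_u(q),q)+q\,\lambda_s(\gamma_u(q),q)\ \ge\ h_m(f)+q\,\lambda_s(m),\qquad q\in\mathbb{R}.
\]

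Next I would divide this inequality by $-\lambda_s(\gamma_u(q),q)>0$ (recall that $\lambda_s<0$ on a hyperbolic set) and rearrange exactly as in the passage from \eqref{CMP-(27)} to \eqref{CMP-(28)}, obtaining, with $d_s(\nu):=-h_\nu(f)/\lambda_s(\nu)$,
\[
  d_s\left(\nu_{\gamma_u(q),q}\right)-d_s(m)\ \ge\ \left(q-d_s(m)\right)\left(1-\frac{\lambda_s(m)}{\lambda_s(\gamma_u(q),q)}\right).
\]
Setting $\kappa:=d_s(m)=-h_m(f)/\lambda_s(m)>0$ (here $h_m(f)>0$, since otherwise $h_m(f)+rd(m)=0$ would contradict \eqref{CMP-(25)}) and taking $q=\kappa$ kills the right-hand side, so $d_s\left(\nu_{\gamma_u(\kappa),\kappa}\right)\ge d_s(m)$. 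I then claim this forces $\lambda_s(\gamma_u(\kappa),\kappa)\ge\lambda_s(m)$: if not, then $d_s\left(\nu_{\gamma_u(\kappa),\kappa}\right)\ge d_s(m)$ would force $h(\gamma_u(\kappa),\kappa)>h_m(f)$, and since $\lambda_u(\gamma_u(\kappa),\kappa)=\lambda_u(m)$ this would also give $d\left(\nu_{\gamma_u(\kappa),\kappa}\right)>d(m)$, so that the $r$-neutralized entropy of the ergodic measure $\nu_{\gamma_u(\kappa),\kappa}$, which by \eqref{CMP-(9)} equals $h(\gamma_u(\kappa),\kappa)+rd\left(\nu_{\gamma_u(\kappa),\kappa}\right)$, would exceed $h_m(f)+rd(m)$, contradicting \eqref{CMP-(25)}. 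Taking $q=0$ in the first display instead gives $h(\gamma_u(0),0)\ge h_m(f)$ directly, and the mirror of the above reasoning yields $\lambda_s(\gamma_u(0),0)\le\lambda_s(m)$.

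Finally, since $q\mapsto\lambda_s(\gamma_u(q),q)$ is continuous, the intermediate value theorem produces $q_*\in[0,\kappa]$ with $\lambda_s(\gamma_u(q_*),q_*)=\lambda_s(m)$; taking $p=\gamma_u(q_*)$ and $q=q_*$ we then have $\lambda_u(p,q)=\lambda_u(m)$ and $\lambda_s(p,q)=\lambda_s(m)$, and Lemma~\ref{CMP-Lemma 3} gives $m=\nu_{p,q}$, as desired. The one point I expect to require care is the sign bookkeeping forced by $\lambda_s<0$ when turning the entropy comparison into a strict inequality for the Hausdorff dimension; apart from that, the argument is the mirror image of the proof of Lemma~\ref{CMP-Lemma 1}.
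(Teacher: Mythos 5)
Your proposal is correct and is precisely the mirror-image of the proof of Lemma~\ref{CMP-Lemma 1} that the paper intends when it states the proof is "left to the reader": swap $\gamma_s$ for $\gamma_u$ via Proposition~\ref{CMP-Proposition 5}(1), cancel the $\gamma_u(q)\lambda_u$ terms in the equilibrium inequality, and run the intermediate value argument on $q\mapsto\lambda_s(\gamma_u(q),q)$ over $[0,\kappa]$ with $\kappa=-h_m(f)/\lambda_s(m)$ before invoking Lemma~\ref{CMP-Lemma 3}. The sign bookkeeping you flag (dividing by $-\lambda_s>0$ and reversing the inequality when comparing $|\lambda_s|$'s) is handled correctly, so no changes are needed.
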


Therefore, we are left with the case where $\lambda_s(m) \notin I_s$ and $\lambda_u(m) \notin I_u$. 

\begin{lemma}\label{CMP-Lemma 7}
 If $\lambda_u(m) \notin I_u$ and $\lambda_s(m) \notin I_s$, then 
 $ \lambda_{u}(\mu)=\lambda_{u}^{\min } $ and $ \lambda_{s}(\mu)=\lambda_{s}^{\max } $. 
Moreover, there exists $v \in \mathcal{M}_E$ such that 
\[
\lambda_u(v)=\lambda_u(m),\quad \lambda_s(v)=\lambda_s(m),\quad \text{ and }\quad h_v(f)=h_m(f).
\]
\end{lemma}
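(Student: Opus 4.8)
The plan is to handle the two assertions separately. First I would establish that $\lambda_u(m)=\lambda_u^{\min}$ and $\lambda_s(m)=\lambda_s^{\max}$. Since $\lambda_u(m)\notin I_u=(\lambda_u^{\min},\lambda_u^{\max})$ and $\lambda_u(m)$ is attained by the invariant measure $m$, the only possibilities are $\lambda_u(m)=\lambda_u^{\min}$ or $\lambda_u(m)=\lambda_u^{\max}$. To rule out the latter, I would compare with the equilibrium measures $\nu_{p,0}$: by Proposition \ref{CMP-Proposition 4}(2), as $p\to-\infty$ the quantity $\lambda_u(p,0)\to\lambda_u^{\max}$, while $h(p,0)$ is strictly decreasing on $[0,\infty)$ and (by the formula $\partial_p h(p,0)=p\,\partial_p\lambda_u(p,0)$, together with $\partial_p\lambda_u<0$) strictly increasing on $(-\infty,0]$; so pushing $\lambda_u$ toward $\lambda_u^{\max}$ forces the entropy down to $h(-\infty,0)$, which is strictly smaller than $h(0,0)=h_{\rm top}(f|_\Lambda)$ unless $\phi_u$ is cohomologous to a constant (excluded since $\lambda_u(m)\notin I_u$ presupposes $I_u\neq\varnothing$). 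Using \eqref{CMP-(9)} and \eqref{CMP-(25)}, if $\lambda_u(m)=\lambda_u^{\max}$ one could increase $\lambda_u$ back toward the interior while strictly increasing both $h$ and $d$, contradicting that $m$ realizes the supremum in \eqref{CMP-(25)}; hence $\lambda_u(m)=\lambda_u^{\min}$. The symmetric argument with $\nu_{0,q}$ and Proposition \ref{CMP-Proposition 4}(3) gives $\lambda_s(m)=\lambda_s^{\max}$.

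Next I would produce the ergodic measure $v$ with the same Lyapunov exponents and entropy as $m$. The natural candidate is to take an ergodic component of $m$ of maximal entropy; more precisely, let $m=\int \mu_{\tau(x)}\,d\mu(\tau(x))$ be the ergodic decomposition, so that $h_m(f)=\int h_{\mu_{\tau(x)}}(f)\,d\mu(\tau(x))$ and $\lambda_u(m)=\int\lambda_u(\mu_{\tau(x)})\,d\mu$, $\lambda_s(m)=\int\lambda_s(\mu_{\tau(x)})\,d\mu$. Since $\lambda_u(m)=\lambda_u^{\min}$ and $\lambda_s(m)=\lambda_s^{\max}$ are the extreme values of these continuous integrands, we must have $\lambda_u(\mu_{\tau(x)})=\lambda_u^{\min}$ and $\lambda_s(\mu_{\tau(x)})=\lambda_s^{\max}$ for $\mu$-a.e.\ $\tau(x)$. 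Then every ergodic component already has the correct exponents, and by \eqref{r-neutralized-decomposition}, \eqref{nonergodic-ergodic}, \eqref{CMP-(9)} and \eqref{CMP-(25)} each component $v:=\mu_{\tau(x)}$ (for a.e.\ $\tau(x)$) satisfies $h_v(f)+rd(v)\le h_m(f)+rd(m)$ while the average of $h_{\mu_{\tau(x)}}(f)$ equals $h_m(f)$; combined with $d(v)=d(m)=h_v(f)(1/\lambda_u^{\min}-1/\lambda_s^{\max})$ being linear in the entropy with a fixed positive slope, maximality of $h_m(f)+rd(m)$ forces $h_v(f)=h_m(f)$ for $\mu$-a.e.\ component. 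Picking one such $v\in\mathcal{M}_E$ completes the construction.

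I expect the main obstacle to be the first part — ruling out $\lambda_u(m)=\lambda_u^{\max}$ cleanly. One has to be careful that $\lambda_u(m)\notin I_u$ does not by itself exclude the degenerate situation $\lambda_u^{\min}=\lambda_u^{\max}$ (i.e.\ $\phi_u$ cohomologous to a constant), which would make $I_u=\varnothing$; but that case is outside the hypothesis of this lemma (it is part of the $I_s=I_u=\varnothing$ discussion or the one-sided cases), so $I_u\neq\varnothing$ here, and the strict monotonicity and strict concavity statements from \eqref{B-1} and Proposition \ref{CMP-Proposition 4} are available. The delicate point is to convert "$m$ realizes the sup in \eqref{CMP-(25)}" into a genuine contradiction: one perturbs $m$ along the analytic family $p\mapsto\nu_{p,0}$ (or more generally along $\gamma_s$) and checks, via the derivative formulas in Proposition \ref{CMP-Proposition 4}, that moving $\lambda_u$ off the boundary value $\lambda_u^{\max}$ strictly increases $h(\cdot,0)$, hence strictly increases $h+rd$ since $d$ also increases (Proposition \ref{CMP-Proposition 4}(2c): $d_u(\cdot,0)$ is unimodal with maximum at $t_u$, and near $\lambda_u^{\max}$ we are on its increasing branch). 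The rest is bookkeeping with the ergodic decomposition, which is routine given \eqref{ergodic-dimension} and \eqref{r-neutralized-decomposition}.
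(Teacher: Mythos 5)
Your second half --- producing $v$ via the ergodic decomposition --- is correct and is essentially the paper's own argument: extremality of $\lambda_u(m)=\lambda_u^{\min}$ and $\lambda_s(m)=\lambda_s^{\max}$ forces $\tau$-a.e.\ ergodic component to attain these values, and then \eqref{CMP-(9)} with \eqref{CMP-(25)} gives $h_\nu(f)\le h_m(f)$ on those components, whence equality $\tau$-a.e.\ because the average equals $h_m(f)$.

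The first half has a genuine gap. To exclude $\lambda_u(m)=\lambda_u^{\max}$ you perturb along $p\mapsto\nu_{p,0}$ and check that $h$ and $d_u$ both improve, but the quantity to beat is $h_\nu(f)+r\,d_u(\nu)+r\,d_s(\nu)$, and your argument says nothing about $d_s(\nu_{p,0})=-h(p,0)/\lambda_s(p,0)$. In the configuration $\lambda_s(m)=\lambda_s^{\max}$ (the one that actually survives), every comparison measure has $|\lambda_s(\nu)|\ge|\lambda_s(m)|$, so $d_s(\nu_{p,0})$ may be \emph{smaller} than $d_s(m)$, and Proposition \ref{CMP-Proposition 4} gives no control on $\lambda_s(p,0)$ as $p\to-\infty$. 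Your fallback of moving along $\gamma_s$ is unavailable here: Proposition \ref{CMP-Proposition 5} only supplies $\gamma_s$ for target values $b\in I_s$, and by hypothesis $\lambda_s(m)\notin I_s$. This is precisely why the paper argues corner by corner with a comparison family adapted to the pair $(\lambda_u(m),\lambda_s(m))$: for $(\lambda_u^{\max},\lambda_s^{\min})$ the MME $\nu_{0,0}$ already beats $m$ in all three quantities; for $(\lambda_u^{\min},\lambda_s^{\min})$ one takes $\nu_{p,0}$ with $p\to+\infty$, so that $\lambda_u(p,0)\to\lambda_u(m)$ controls $d_u$ asymptotically while $\lambda_s(p,0)\ge\lambda_s^{\min}=\lambda_s(m)$ controls $d_s$ for free; for $(\lambda_u^{\max},\lambda_s^{\max})$ one must instead take $\nu_{0,q}$ with $q\to+\infty$. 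Your single sweep with $\nu_{p,0}$, $p\to-\infty$, only works when $\lambda_s(m)$ is already at $\lambda_s^{\min}$. A secondary point: the cases $I_u=\varnothing\neq I_s$ and $I_s=\varnothing\neq I_u$ are not ``outside the hypothesis'' of this lemma --- they do satisfy $\lambda_u(m)\notin I_u$ and $\lambda_s(m)\notin I_s$ and are handled inside it (the paper's Claims 1 and 2); there one must still exclude $\lambda_s(m)=\lambda_s^{\min}$ (resp.\ $\lambda_u(m)=\lambda_u^{\max}$), which the MME comparison accomplishes.
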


\begin{proof}
Firstly, we prove that only one case $\lambda_u(m)=\lambda_u^{\min }$ and $\lambda_s(m)=\lambda_s^{\max }$ can occur. 
  
  \begin{itemize}
\item{\bf Claim 1.} The case where $I_u=\varnothing$,  $I_s \neq \varnothing$ and $\lambda_s(m)=\lambda_s^{\min }$ will not occur.

By using $I_u=\varnothing$, we have $\lambda_u(0,0)=\lambda_u(m)$ where 
 $\nu_{0,0}$ is the MME with $h(0,0) \geq h_m(f)$. 
Thus \eqref{CMP-(9)} implies 
\begin{align*}
  h_{\nu_{0,0}}(f)+\left(1+\frac{r}{\lambda_u(\nu_{0,0})}-\frac{r}{\lambda_s(\nu_{0,0})}\right)
  >h_m(f)\left(1+\frac{r}{\lambda_u(m)}-\frac{r}{\lambda_s(m)}\right)
\end{align*}
which contradicts \eqref{CMP-(25)} and \eqref{CMP-(9)}. 

\item {\bf Claim 2.} The case where $I_s=\varnothing$, $I_u \neq \varnothing$ and $\lambda_u(m)=\lambda_u^{\max }$ will not occur.

Similar to the proof of {\bf Claim 1}.

Then the only remaining situation we need to consider is the case $I_u \neq \varnothing$ and $I_s \neq \varnothing$, 
  in this case $\lambda_u(m) \in \left\{\lambda_u^{\min }, \lambda_u^{\max }\right\}$ and $\lambda_s(m) \in \left\{\lambda_s^{\min }, \lambda_s^{\max }\right\}$. 
  Under this situation, we will proceed with a classified discussion.

  \item {\bf Claim 3.} The case where $\lambda_u(m)=\lambda_u^{\max }$ and $ \lambda_s(m)=\lambda_s^{\min }$ will not occur. 

According to $h(0,0) \geq h_m(f)$ and Proposition \ref{CMP-Proposition 4}, 
we have $\lambda_u(0,0)<\lambda_u(m)$ and $\lambda_s(0,0)>\lambda_s(m)$ which contradict \eqref{CMP-(9)} and \eqref{CMP-(25)}. 

\item{\bf Claim 4.} The case where $\lambda_u(m)=\lambda_u^{\min }$ and 
$ \lambda_s(m)=\lambda_s^{\min }$ will not occur.

By applying $\lambda_u(m)=\lambda_u^{\min }$ and $v_{p, 0}$ is the equilibrium measure of $-p \phi_u$, 
we obtain $\lambda_u(m)\leq \lambda_u(p,0)$ and $h(p, 0)-p \lambda_u(p, 0)\geq h_m(f)-p \lambda_u(m)$. 
Therefore we conclude that $h(p, 0)>h_m(f)$ for all $p>0$. 
By $\lambda_s(m)\leq \lambda_s(p,0)$ and $h(p, 0)>h_m(f)$ for $p>0$, we have
\begin{align}\label{claim 4-1}
  d_s(p,0)=\frac{h(p,0)}{\lambda_s(p,0)} > \frac{h_m(f)}{\lambda_s(m)}.
\end{align}
Fix $\epsilon>0$, by applying Proposition \ref{CMP-Proposition 4}, there exists a real number $p_0$ such that $|\lambda_u(p,0)-\lambda_u(m)|<\epsilon$ for any $p\geq p_0$. 
Thus for $p\geq p_0$, we have
\begin{align*}
  d_u(p,0)=\frac{h(p,0)}{\lambda_u(p,0)}>\frac{h_m(f)}{\lambda_u(m)+\epsilon}=\frac{h_m(f)}{\lambda_u(m)}-\frac{\epsilon h_m(f)}{\lambda_u(m)(\lambda_u(m)+\epsilon)}. 
\end{align*}
Since $h_m(f)$ and $\lambda_u(m)$ are finite, we conclude that 
\begin{align}\label{claim 4-2}
d_u(p,0)\geq \dfrac{h_m(f)}{\lambda_u(m)} \quad \text{ holds for sufficiently large } p. 
\end{align}
By \eqref{claim 4-1} and \eqref{claim 4-2}, for all sufficiently large $p$, we obtain that
\begin{align*}
 h_{\nu_{p, 0}}(f)+r\operatorname{dim}_H \nu_{p, 0}&=h_{\nu_{p, 0}}(f)+rd_u(p, 0)+rd_s(p, 0) \\
 &\geq h_{\nu_{p, 0}}(f)+ r\frac{h_m(f)}{\lambda_u(m)}-r\frac{h(p, 0)}{\lambda_s(p, 0)}\\
 &>h_{m}(f)+rd(m)
\end{align*}
 which contradicts \eqref{CMP-(25)}. Thus, we complete the proof of this claim.

\item {\bf Claim 5.} The case where $\lambda_u(m)=\lambda_u^{\max }$ and $\lambda_s(m)=\lambda_s^{\max }$ will not occur.

Similar to the proof of {\bf Claim 4}.
\end{itemize}
Thus we conclude, only the case where $\lambda_u(m)=\lambda_u^{\min }$ and $\lambda_s(m)=\lambda_s^{\max }$ could occur.

Next, we consider an ergodic decomposition $\tau$ of $m$, it is a probability measure on the metrizable space $\mathcal{M}(\Lambda)$ with $\tau\left(\mathcal{M}_E\right)=1$ such that for all $\varphi \in C(\Lambda, \mathbb{R})$, 
\begin{align}\label{CMP-(38)}
  \int_{\mathcal{M}} \int_{\Lambda} \varphi \,\rd \nu \,\rd \,\tau(\nu)=\int_{\Lambda} \varphi \,\rd m.
\end{align}
Since the function $\phi_u$ is H\"older continuous, we obtain
$$
  \lambda_u^{\min }=\lambda_u(m)=\int_{\mathcal{M}} \lambda_u(v) \,\rd \tau(\nu) .
$$
Since $\lambda_u(\nu) \geq \lambda_u^{\min }$ for all $\nu \in \mathcal{M}(\Lambda)$, 
there exists a subset $A_1 \subset \mathcal{M}_E$ such that 
\begin{align*}
\tau\left(A_1\right)=1, \quad \text{ and }\quad  \lambda_u(v)=\lambda_u^{\min } \quad \text{ for all } \nu \in A_1. 
\end{align*}
Similarly, there exists a subset $A_2 \subset \mathcal{M}_E$ such that 
\begin{align*}
\tau\left(A_2\right)=1, \quad \text{ and } \quad \lambda_s(v)=\lambda_s^{\max } \quad \text{ for all } \nu \in A_2.
\end{align*}
By \eqref{CMP-(9)} and \eqref{CMP-(25)}, we obtain $h_v(f) \leq h_m(f)$ for all $\nu \in A_1 \cap A_2$. 
Since $\tau\left(A_1 \cap A_2\right)=1$ and $h_m(f)=\int_{\mathcal{M}} h_\nu(f) \,\rd \tau(v)$, 
there exists $A \subset A_1 \cap A_2$ with $\tau(A)=1$ such that $h_\nu(f)=h_m(f)$ for all $\nu \in A$. 
This concludes the proof of the lemma.
\end{proof}

Next, we analyze the behavior of  measures $\mu_{r_n}$ 
as $ r_n $ approaches to $0$ or infinity. 

{\bf Case 1: $r_n\to 0$. }

Since $\nu_{0,0}$ is the MME, we immediately obtain
\begin{align*}
  h_{\nu_{0,0}}(f)+2r\geq h_{\mu_{r_n}}(f)+r_n\operatorname{dim}_H \mu_{r_n}\geq h_{\nu_{0,0}}(f)+r_n\operatorname{dim}_H \mu_{r_n}\geq h_{\nu_{0,0}}(f). 
\end{align*}
As $n\to \infty$, we derive $ \lim\limits_{n\to \infty} h_{\mu_{r_n}}(f)=h_{\nu_{0,0}}(f)$. 
Based on the upper semi-continuity of the entropy map $\mu\to h_\mu(f)$
and the uniqueness of the MME, 
we conclude that any limit point of $\mu_{r_n}$ is $\nu_{0,0}$. 
Therefore, we obtain $\lim\limits_{n\to \infty}\mu_{r_n}=\nu_{0,0}$. 

In fact, besides the entropy of measures $\mu_{r_n}$ approaching the entropy of $\nu(0,0)$,
 there are similar results for Lyapunov exponents. 
 According to \cite{Kad}, there exists a constant $c>0$ such that 
 $|\lambda_u(\nu(0,0))-\lambda_u(\mu)|\leq c|h_{\nu(0,0)}(f)-h_\mu(f)|$ 
 for any ergodic measure $\mu$. 

{\bf Case 2: $r_n\to\infty$. }

Clearly, the topological entropy $h_{\text{top}}(f)$ of $(f|_\Lambda,\Lambda)$ is finite.
We have
\begin{align*}
  \frac{h_{\text{top}}(f)}{r_n}+\sup\{\operatorname{dim}_H \nu: \nu \in \mathcal{M}(\Lambda) \}&\geq \frac{h_{\mu_{r_n}}(f)}{r_n}+\operatorname{dim}_H \mu_{r_n}\geq \sup\{\operatorname{dim}_H \nu: \nu \in \mathcal{M}(\Lambda) \}.
\end{align*}
We suppose that $\mu_{r_n}$ converges to $\mu'$ as $n\to\infty$
(or we take a convergent subsequence if necessary). 
By letting $n\to\infty$, we conclude that 
$\operatorname{dim}_H \mu'=\sup\{\operatorname{dim}_H \nu: \nu \in \mathcal{M}(\Lambda) \}$. 
Then any limit point of $\mu_{r_n}$ is MMHD for the hyperbolic sets. 

This concludes the proof of Theorem \ref{CMP}. 

Based on our previous discussion, MM$r$NE $\mu_r$ is an equilibrium measure, 
except in the case $\lambda_u(\mu_r) \notin I_u$ and $\lambda_s(\mu_r) \notin I_s$, 
as noted in Remark \ref{non-equilibrium}. 
However, in this case, 
using the method in \cite[Lemma 8]{BW03}, 
we have the following result. 

\begin{proposition}
Given $r>0$, if $ \mu$ is an ergodic measure maximizing $r$-neutralized entropy 
such that 
$\lambda_u(\mu) \notin I_u$ and $\lambda_s(\mu) \notin I_s$ then
\begin{align*}
\lim_{t\to\infty} h^r_{\nu_{t,t}}(f)=h^r_\mu(f)=\sup\left\{h_{\nu,d}^{r}(f): \nu \in \mathcal{M}(f,X)\right\}.
\end{align*}
\end{proposition}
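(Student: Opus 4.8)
The second equality in the claim, $h^r_\mu(f)=\sup\{h^r_{\nu,d}(f):\nu\in\mathcal M(f,X)\}$, is nothing but the hypothesis that $\mu$ is an MM$r$NE together with \eqref{nonergodic-ergodic}, so the substance is the limit $\lim_{t\to\infty}h^r_{\nu_{t,t}}(f)=h^r_\mu(f)$, which the plan is to obtain by a zero-temperature (ground state) argument in the spirit of \cite[Lemma 8]{BW03}. Write $\psi:=\phi_u-\phi_s$, a H\"older function that is uniformly positive on $\Lambda$ by hyperbolicity, so that $\lambda_u(\nu)>0>\lambda_s(\nu)$ for all $\nu\in\mathcal M(f,\Lambda)$ and $\nu_{t,t}$ is precisely the equilibrium state of $-t\psi$. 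The first step is to note, by Lemma \ref{CMP-Lemma 7} applied to $\mu$ itself (viewed as the weak* limit of the constant maximizing sequence $\nu_n\equiv\mu$, which is legitimate since $\lambda_u(\mu)\notin I_u$ and $\lambda_s(\mu)\notin I_s$ by hypothesis), that $\lambda_u(\mu)=\lambda_u^{\min}$ and $\lambda_s(\mu)=\lambda_s^{\max}$. Consequently $\int\psi\,\rd\mu=\lambda_u^{\min}-\lambda_s^{\max}=:\beta_{\min}$, and since $\int\psi\,\rd\nu=\lambda_u(\nu)-\lambda_s(\nu)\ge\lambda_u^{\min}-\lambda_s^{\max}$ for every $\nu$, we get $\beta_{\min}=\min_\nu\int\psi\,\rd\nu$, attained at $\mu$. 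Set also $c_r:=1+r/\lambda_u^{\min}-r/\lambda_s^{\max}>1$.

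The second step is to show that the Lyapunov exponents of $\nu_{t,t}$ converge to these extremal values. Comparing $\nu_{t,t}$ with $\mu$ in the variational principle for the potential $-t\psi$ gives $h_{\nu_{t,t}}(f)-t\int\psi\,\rd\nu_{t,t}\ge h_\mu(f)-t\beta_{\min}$, hence
\[
0\le t\Bigl(\int\psi\,\rd\nu_{t,t}-\beta_{\min}\Bigr)\le h_{\nu_{t,t}}(f)-h_\mu(f)\le h_{\text{top}}(f),
\]
where the last step uses $h_\mu(f)\ge0$. The left part of the chain already gives $h_{\nu_{t,t}}(f)\ge h_\mu(f)$ for every $t$, and dividing the right part by $t$ yields $\int\psi\,\rd\nu_{t,t}\to\beta_{\min}$ as $t\to\infty$. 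Since $\int\psi\,\rd\nu_{t,t}=\lambda_u(\nu_{t,t})-\lambda_s(\nu_{t,t})$ while $\lambda_u(\nu_{t,t})\ge\lambda_u^{\min}$ and $\lambda_s(\nu_{t,t})\le\lambda_s^{\max}$, the nonnegative quantities $\lambda_u(\nu_{t,t})-\lambda_u^{\min}$ and $\lambda_s^{\max}-\lambda_s(\nu_{t,t})$ have a sum tending to $0$, so each tends to $0$; that is, $\lambda_u(\nu_{t,t})\to\lambda_u^{\min}$ and $\lambda_s(\nu_{t,t})\to\lambda_s^{\max}$.

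The last step assembles the estimate. By \eqref{CMP-(9)} we may write $h^r_{\nu_{t,t}}(f)=h_{\nu_{t,t}}(f)\,g(t)$ with $g(t):=1+r/\lambda_u(\nu_{t,t})-r/\lambda_s(\nu_{t,t})>1$, and $g(t)\to c_r$ by the previous step. On one hand, $h^r_{\nu_{t,t}}(f)\le h^r_\mu(f)$ for all $t$ because $\mu$ is an MM$r$NE, so $\limsup_{t\to\infty}h^r_{\nu_{t,t}}(f)\le h^r_\mu(f)$. On the other hand, $h^r_{\nu_{t,t}}(f)=h_{\nu_{t,t}}(f)g(t)\ge h_\mu(f)g(t)\to c_r h_\mu(f)=h^r_\mu(f)$, so $\liminf_{t\to\infty}h^r_{\nu_{t,t}}(f)\ge h^r_\mu(f)$. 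Therefore $\lim_{t\to\infty}h^r_{\nu_{t,t}}(f)=h^r_\mu(f)$, which together with the first paragraph is the assertion. (As a byproduct one also reads off $\lim_{t\to\infty}h_{\nu_{t,t}}(f)=h_\mu(f)$, the ground-state entropy.)

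I expect the delicate point to be the opening reduction, namely verifying that the extremal-exponent regime forces the specific combination $\lambda_u(\mu)=\lambda_u^{\min}$ and $\lambda_s(\mu)=\lambda_s^{\max}$ rather than some other pairing of extremal values; this is exactly the content of Lemma \ref{CMP-Lemma 7} and the case analysis behind it, and it is precisely the property that makes $\mu$ an admissible comparison measure realizing $\beta_{\min}$ in the ground-state inequality. Everything after that is routine zero-temperature bookkeeping, in which the inequality $h^r_{\nu_{t,t}}(f)\le h^r_\mu(f)$ coming from maximality conveniently replaces any upper-semicontinuity argument for the entropy.
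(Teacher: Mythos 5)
Your proposal is correct and follows essentially the same route as the paper: apply Lemma \ref{CMP-Lemma 7} to identify $\lambda_u(\mu)=\lambda_u^{\min}$, $\lambda_s(\mu)=\lambda_s^{\max}$, compare $\nu_{t,t}$ with $\mu$ in the variational principle for $-t(\phi_u-\phi_s)$ to get $h_{\nu_{t,t}}(f)\ge h_\mu(f)$ and, after dividing by $t$, convergence of the Lyapunov exponents, and then sandwich $h^r_{\nu_{t,t}}(f)$ between $h_\mu(f)g(t)$ and $h^r_\mu(f)$ via \eqref{CMP-(9)} and maximality. The only cosmetic difference is your packaging through $\psi$ and $\beta_{\min}$.
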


\begin{proof}
We will show that as $t\to\infty$, the Lyapunov exponents of $\nu_{t,t}$ converge 
to those of $\mu$, which then implies the convergence of $r$-neutralized entropies.

 Since $ v_{t, t} $ is the equilibrium measure of $-t \phi_{u}+t \phi_{s} $, we have
\begin{align}\label{CMP-(41)}
h(t, t)-t \lambda_{u}(t, t)+t \lambda_{s}(t, t) \geq h_{\mu}(f)-t \lambda_{u}(\mu)+t \lambda_{s}(\mu) .
\end{align}
By applying Proposition \ref{CMP-Proposition 4} and Lemma \ref{CMP-Lemma 7}, we obtain
\begin{align}\label{CMP-(42)}
\lambda_{u}(t, t)>\lambda_{u}(\mu)=\lambda_{u}^{\min } \quad \text { and } \quad \lambda_{s}(t, t)<\lambda_{s}(\mu)=\lambda_{s}^{\max}. 
\end{align}
Combining \eqref{CMP-(41)} and \eqref{CMP-(42)}, we can conclude that 
$h(t, t)>h_{\mu}(f)$ for all $t \geq 0$.

Fix $ \epsilon >0 $. For any $ t> \epsilon^{-1} h_{\text {top }}(f)   $,
by using \eqref{CMP-(41)}, we derive
\begin{align*}
  \lambda_{u}(t, t)-\lambda_{s}(t, t) & \leq \frac{h(t, t)-h_{\mu}(f)}{t}+\lambda_{u}(\mu)-\lambda_{s}(\mu) \\
                                      & <\frac{\epsilon  h(t, t)}{h_{\text{top}}(f)}+\lambda_{u}(\mu)-\lambda_{s}(\mu) \leq \epsilon +\lambda_{u}(\mu)-\lambda_{s}(\mu) .
\end{align*}
This inequality, combined with \eqref{CMP-(42)}, implies that as $t\to\infty$, we have 
\begin{align}\label{pro-1}
\lim_{t\to\infty}\lambda_{u}(t, t) =\lambda_{u}(\mu), \quad 
\lim_{t\to\infty}\lambda_{s}(t, t)= \lambda_{s}(\mu). 
\end{align}
Hence, by using \eqref{CMP-(9)}, we have 
\begin{align*}
\lim_{t\to\infty} h^r_{\nu_{t,t}}(f)
&=\lim_{t\to\infty} h(t,t)\left(1+\frac{r}{\lambda_u(t,t)}-\frac{r}{\lambda_s(t,t)}\right)\\
&\geq h_\mu(f) \lim_{t\to\infty}\left(1+\frac{r}{\lambda_u(t,t)}-\frac{r}{\lambda_s(t,t)}\right)\\
&=h_\mu(f) \left(1+\frac{r}{\lambda_u(\mu)}-\frac{r}{\lambda_s(\mu)}\right)=h_\mu^r(f). 
\end{align*}

On the other hand, since $\mu$ is the measure maximizing the $r$-neutralized entropy, 
we have $h^r_{\nu_{t,t}}(f)\leq h^r_\mu(f)$ for any $t>0$.

Therefore, we conclude $\lim\limits_{t\to\infty} h^r_{\nu_{t,t}}(f) = h^r_\mu(f)$, 
which completes the proof.

\end{proof}

\subsection{Non-uniqueness of ergodic MM$r$NEs}\label{multiple subsection}

In this subsection, we construct a two-dimensional linear horseshoe 
that exhibits multiple ergodic MM$r$NEs for large $r$. 
This construction builds upon a method used in \cite{Rams}. 

Fix positive numbers $\eta_1$ and $\eta_2$ satisfying $\eta_1+\eta_2<1$. 
We define two affine maps as follows. 
\begin{align*}
f_1: &[0,\eta_1] \times [0,1] \to [0,1] \times [0,\eta_2], \quad (x,y)  \mapsto  (\eta_1^{-1}x,\eta_2 y),  \\
f_2: &[1-\eta_2,1] \times [0,1] \to [0,1] \times [1-\eta_1,1],\quad  (x,y) \mapsto (\eta_2^{-1}(x-1+\eta_2), 1-\eta_1+\eta_1 y) .
\end{align*}
It is easy to see that, any point $(x, y) \in [0,1]^2$ belongs to the domain 
of at most one map $f_i$,
and similarly, to the preimage of at most one map $f_j$. 
This allows us to define  the maps $f$ and $f^{-1}$ on a subset of $[0,1]^2$,
where $f$ is piecewise linear. Specifically, $f$ restricted to the domain of $f_1$
is exactly $f_1$, and $f$ restricted to the domain of $f_2$ is exactly $f_2$.
Via a direct calculation, we obtain
\begin{align}
  df(x, y)=\left(\begin{matrix} \eta_1^{-1} & 0 \\ 0 & \eta_2 \end{matrix}\right),\quad \text{ for any } (x,y)\in C_1,\label{LY-1} \\
  df(x, y)=\left(\begin{matrix}\eta_2^{-1} & 0 \\ 0 & \eta_1 \end{matrix}\right),\quad \text{ for any } (x,y)\in C_2.\label{LY-2}
\end{align}
Let $\Lambda$ be the set of points for which both $f^n$ and
$f^{-n}$ exist for all integers $n$. The set $\Lambda$ is a compact hyperbolic set.
We introduce sets $C_1$ and $C_2$ as follows.
\begin{align*}
  C_1 & :=\{(x,y)\in \Lambda : (x,y)\in [0,\eta_1] \times[0,1]\},   \\
  C_2 & :=\{(x,y)\in \Lambda : (x,y)\in [1-\eta_2,1] \times[0,1]\}.
\end{align*}
Note that $\Lambda=C_1\cup C_2$ and $C_1\cap C_2=\varnothing$.
This set $\Lambda$ is called a Smale horseshoe or linear horseshoe, due to the map $f$ is linear on $C_1$ and $C_2$. 
Now let us introduce some properties of this horseshoe. 

Firstly, as shown in \cite[Section 1.5]{Smale}, 
the map $(f,\Lambda)$ can be extended to a global diffeomorphism on $\mathbb{S}^2$. 
This extension allows us to directly apply the results obtained before, 
such as Theorem \ref{BPS-theorem-1}, to $(f,\Lambda)$. 

By \cite[Section 4.5]{Wen}, the set $\Lambda$ is a locally maximal hyperbolic set. 
Moreover, we have the following theorem. 

\begin{theorem}[\cite{BP-nonuniform}]
Any two-dimensional horseshoe is a hyperbolic Cantor set, 
which is topologically conjugate to the shift map on $\{1,2\}^{\mathbb{Z}}$. 
\end{theorem}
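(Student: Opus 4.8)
The plan is to construct the conjugacy explicitly as the coding (itinerary) map and then read off the topological conclusions. I would set $\Sigma=\{1,2\}^{\mathbb Z}$ with the product topology, let $\sigma:\Sigma\to\Sigma$ be the left shift, and define $\pi:\Lambda\to\Sigma$ as follows: for $x\in\Lambda$ every iterate $f^{n}(x)$ is defined, and by the observation recorded above it lies in exactly one of the disjoint pieces $C_{1},C_{2}$, so we may set $\pi(x)=(\omega_{n})_{n\in\mathbb Z}$ with $\omega_{n}=i$ iff $f^{n}(x)\in C_{i}$. From the definition one checks immediately that $\pi\circ f=\sigma\circ\pi$. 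Continuity of $\pi$ and the remaining assertions all reduce to a single geometric estimate on the pieces cut out by finitely many symbols, so the bulk of the work is to prove that estimate and then to verify that $\pi$ is a bijection.

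For a finite word $\omega_{-m}\cdots\omega_{n}$ I would consider $R(\omega_{-m}\cdots\omega_{n})=\bigcap_{j=-m}^{n}f^{-j}(C_{\omega_{j}})$, which is a nonempty compact ``curvilinear rectangle'' (nonemptiness is exactly the full-width/full-height crossing property of the horseshoe), and these sets are nested as the word is extended in either direction. Using \eqref{LY-1}--\eqref{LY-2} in the linear model (and, in the general case, the invariant cone fields together with uniform hyperbolicity), one shows the extent of $R(\omega_{-m}\cdots\omega_{n})$ in the unstable direction is at most $C\max\{\eta_{1},\eta_{2}\}^{m}$ and in the stable direction at most $C\max\{\eta_{1},\eta_{2}\}^{n}$: past symbols contract the unstable coordinate under backward iterates, future symbols contract the stable coordinate under forward iterates. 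Consequently, for any $\omega\in\Sigma$ the set $\bigcap_{m,n\ge 0}R(\omega_{-m}\cdots\omega_{n})$ is a decreasing intersection of nonempty compacta with diameters tending to $0$, hence a single point $x_{\omega}\in\Lambda$ with $\pi(x_{\omega})=\omega$; this gives surjectivity. The same estimate forces any two points with the same itinerary to lie in $R(\omega_{-m}\cdots\omega_{n})$ for all $m,n$ and hence to coincide, giving injectivity, and it also yields continuity of $\pi$ since agreement of long finite subwords pins both coordinates down to small scale.

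Finally, $\pi$ is a continuous bijection from the compact space $\Lambda$ onto the Hausdorff space $\Sigma$, hence a homeomorphism conjugating $f|_{\Lambda}$ to $\sigma$. Since $\Sigma$ with the product topology is compact, metrizable, totally disconnected and perfect (every cylinder contains infinitely many sequences), it is a Cantor set, and transporting this structure through $\pi$ shows $\Lambda$ is a Cantor set; its hyperbolicity has already been recorded and is visible from the constant derivatives in \eqref{LY-1}--\eqref{LY-2}, which expand by $\eta_{i}^{-1}>1$ in one direction and contract by $\eta_{i}<1$ in the other. The one genuine obstacle is the diameter estimate for $R(\omega_{-m}\cdots\omega_{n})$: for the piecewise-linear horseshoe it is an immediate iteration of \eqref{LY-1}--\eqref{LY-2}, but for a general two-dimensional horseshoe one must invoke the standard graph-transform/cone argument to obtain uniform contraction of the stable-like direction under $f$ and of the unstable-like direction under $f^{-1}$; once that is in place the rest is soft point-set topology.
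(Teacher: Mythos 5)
The paper does not prove this statement at all: it is quoted verbatim from \cite{BP-nonuniform} and used as a black box, so there is no in-paper argument to compare against. Your proposal is the standard coding-map proof (as in Smale's original construction and in Barreira--Pesin): define the itinerary map $\pi$, establish the Markov crossing property and a diameter estimate for the cylinders $R(\omega_{-m}\cdots\omega_n)$, deduce that $\pi$ is a continuous bijection between compact Hausdorff spaces and hence a conjugating homeomorphism, and transport the Cantor structure of $\{1,2\}^{\mathbb Z}$ back to $\Lambda$. The architecture is right and complete: surjectivity from nested nonempty compacta with vanishing diameter, injectivity and continuity from the same estimate, and the perfect/totally disconnected properties of the full shift.

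One correction to the key estimate: you have the roles of past and future symbols transposed. In the paper's model $f$ expands the horizontal (unstable) coordinate by $\eta_i^{-1}$ and contracts the vertical (stable) one by $\eta_j$, and the pieces $C_i$ are full-height vertical strips. The set of points with prescribed \emph{forward} itinerary $\omega_0\cdots\omega_n$ is $\bigcap_{j=0}^n f^{-j}(C_{\omega_j})$, a vertical strip of \emph{unstable} width $\prod_{j=0}^{n}\eta_{\omega_j}\le \max\{\eta_1,\eta_2\}^{n+1}$ (two points sharing this itinerary would otherwise have their unstable separation expanded beyond the unit square); dually, the \emph{past} symbols $\omega_{-m}\cdots\omega_{-1}$ confine the \emph{stable} coordinate to height at most $\max\{\eta_1,\eta_2\}^{m}$, since $f^{k}(C_{\omega_{-k}})$ is a thin horizontal strip. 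So the unstable extent of $R(\omega_{-m}\cdots\omega_n)$ is controlled by $n$ and the stable extent by $m$, the opposite of what you wrote. Since your conclusion only uses that the diameter tends to $0$ as both $m,n\to\infty$, the proof survives the swap unchanged, but the estimate as stated is false and should be corrected.
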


As stated in the above theorem, it follows that $f|_\Lambda$ is topological mixing. 
This property, combined with the locally maximal hyperbolic set, 
allows us to apply Theorem \ref{CMP}, 
which guarantees the existence of an ergodic MM$r$NE of
$(f,\Lambda)$ for any positive $r$. 
We refer the reader to \cite{Smale,BP-nonuniform} for more information about the linear horseshoe.

We continue with an auxiliary lemma in \cite{Rams}. 

\begin{lemma}[\cite{Rams}]\label{Rams-lemma}
  The metric entropy of ergodic invariant measure $\nu$ is not greater than 
  the metric entropy of the Bernoulli measure defined by the probability vector 
  $(\nu(C_1), \nu(C_2))$, with equality if and only if $\nu$ is Bernoulli.
\end{lemma}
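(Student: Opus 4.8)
The plan is to realize $h_\nu(f|_\Lambda)$ through a two-element generating partition and then compare it with a static partition entropy. First I would record that $\mathcal{P}:=\{C_1,C_2\}$ is a (two-sided) generating partition for $(f|_\Lambda,\Lambda)$: by the theorem quoted above, the itinerary map $\pi\colon\Lambda\to\{1,2\}^{\mathbb Z}$ defined by $\pi(x)_k=i\iff f^k(x)\in C_i$ is a topological conjugacy, in particular a bijection, so $\bigvee_{k\in\mathbb Z}f^{-k}\mathcal{P}$ separates points of $\Lambda$ and hence generates its Borel $\sigma$-algebra modulo $\nu$. By the Kolmogorov--Sinai theorem, $h_\nu(f|_\Lambda)=h_\nu(f|_\Lambda,\mathcal{P})$.

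For the inequality I would use subadditivity of the entropy of partitions. Put $p_i=\nu(C_i)$, so $p_1+p_2=1$, and let $\beta$ be the Bernoulli measure with probability vector $(p_1,p_2)$, for which $h_\beta(f|_\Lambda)=-p_1\log p_1-p_2\log p_2=H_\nu(\mathcal{P})$, the last equality using that $\nu$ is $f$-invariant. Then $H_\nu\bigl(\bigvee_{i=0}^{n-1}f^{-i}\mathcal{P}\bigr)\le\sum_{i=0}^{n-1}H_\nu(f^{-i}\mathcal{P})=nH_\nu(\mathcal{P})$; dividing by $n$ and letting $n\to\infty$ gives $h_\nu(f|_\Lambda)\le H_\nu(\mathcal{P})=h_\beta(f|_\Lambda)$, which is the asserted bound.

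For the equality case I would pass to the conditional description $h_\nu(f|_\Lambda,\mathcal{P})=\lim_{n\to\infty}H_\nu\bigl(\mathcal{P}\,\big|\,\bigvee_{i=1}^{n}f^{-i}\mathcal{P}\bigr)$; this sequence is nonincreasing and bounded above by $H_\nu(\mathcal{P})$, so $h_\nu(f|_\Lambda)=H_\nu(\mathcal{P})$ forces $H_\nu\bigl(\mathcal{P}\,\big|\,\bigvee_{i=1}^{n}f^{-i}\mathcal{P}\bigr)=H_\nu(\mathcal{P})$ for every $n$, i.e. $\mathcal{P}$ is independent of $\bigvee_{i=1}^{n}f^{-i}\mathcal{P}$ for every $n$. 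By $f$-invariance this upgrades to the statement that $f^{-j}\mathcal{P}$ is independent of $\bigvee_{i=j+1}^{j+n}f^{-i}\mathcal{P}$ for all $j\in\mathbb Z$ and $n\ge 1$, and a short induction on the number of coordinates then shows that the whole family $\{f^{-k}\mathcal{P}\}_{k\in\mathbb Z}$ is mutually independent with one-dimensional marginals $(p_1,p_2)$. By the definition of the itinerary map $\pi$, this is precisely the statement $\pi_*\nu=\beta$, i.e. $\nu$ is the Bernoulli measure with vector $(\nu(C_1),\nu(C_2))$; conversely, if $\nu$ is that Bernoulli measure then the direct computation gives $h_\nu(f|_\Lambda)=-p_1\log p_1-p_2\log p_2$, so equality holds. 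The step requiring the most care is this equality analysis --- propagating independence of $\mathcal{P}$ with each finite future block to genuine mutual independence of the bi-infinite family, and hence identifying $\nu$ as a Bernoulli measure rather than merely a measure of the same entropy.
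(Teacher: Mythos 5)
Your proof is correct. Note that the paper does not prove this lemma at all --- it is quoted from the reference [Rams] and used as a black box --- so there is no in-paper argument to compare against. Your route (the two-element itinerary partition $\{C_1,C_2\}$ is a two-sided generator, subadditivity gives $h_\nu(f)\le H_\nu(\mathcal{P})=h_\beta(f)$, and the equality case forces $H_\nu\bigl(\mathcal{P}\mid\bigvee_{i=1}^{n}f^{-i}\mathcal{P}\bigr)=H_\nu(\mathcal{P})$ for all $n$, whence independence of all cylinder coordinates and $\pi_*\nu$ equal to the product measure) is the standard and complete argument for this fact, and the delicate step you flag --- upgrading independence of $\mathcal{P}$ from each finite future block to mutual independence of the bi-infinite family via shift-invariance and induction on cylinder length --- is handled correctly.
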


Applying the above lemma, we obtain the following 

\begin{lemma}\label{Bernoulli}
For $r>0$, an ergodic $f$-invariant measure is MM$r$NE if and only if 
it is a Bernoulli measure. 
\end{lemma}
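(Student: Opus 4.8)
The plan is to exploit the special feature of a \emph{linear} horseshoe: the geometric potentials $\phi_u,\phi_s$ are locally constant, so both Lyapunov exponents of an invariant measure $\nu$ are determined by the single number $p=\nu(C_1)$. Once this is isolated, the lemma drops out of the entropy formula \eqref{CMP-(9)} together with Lemma \ref{Rams-lemma}.

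First I would record the structural input. By \eqref{LY-1} and \eqref{LY-2}, $\phi_u=\log\|df\,|\,E^u\|$ and $\phi_s=\log\|df\,|\,E^s\|$ are constant on each of $C_1,C_2$, taking the values $-\log\eta_1,\ \log\eta_2$ on $C_1$ and $-\log\eta_2,\ \log\eta_1$ on $C_2$. Hence for every $\nu\in\mathcal M(f,\Lambda)$, with $p=\nu(C_1)$,
\begin{align*}
  \lambda_u(\nu)=-p\log\eta_1-(1-p)\log\eta_2>0,\qquad \lambda_s(\nu)=p\log\eta_2+(1-p)\log\eta_1<0 ,
\end{align*}
so the quantity $g(p):=1+\tfrac{r}{\lambda_u(\nu)}-\tfrac{r}{\lambda_s(\nu)}$ is a well-defined function of $p\in[0,1]$, continuous and strictly greater than $1$ (it stays finite even at $p\in\{0,1\}$ because $-\log\eta_1,-\log\eta_2>0$).

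Next, for ergodic $\nu$ I would apply \eqref{CMP-(9)} — legitimate here since $(f,\Lambda)$ extends to a diffeomorphism of $\mathbb S^2$, so Theorem \ref{BPS-theorem-1} and the identity $\operatorname{dim}_H\nu=d(\nu)$ for surface diffeomorphisms apply — to write $h^{r}_{\nu,d}(f)=g(p)\,h_\nu(f)$. Let $\beta_p$ be the Bernoulli measure with probability vector $(p,1-p)$ and $H(p)$ its metric entropy. Since $\beta_p(C_1)=p$, the weight $g$ is the same for $\nu$ and for $\beta_p$, while Lemma \ref{Rams-lemma} gives $h_\nu(f)\le H(p)$ with equality if and only if $\nu$ is Bernoulli (hence $\nu=\beta_p$). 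Multiplying by $g(p)>0$,
\begin{align*}
  h^{r}_{\nu,d}(f)=g(p)\,h_\nu(f)\le g(p)\,H(p)=h^{r}_{\beta_p,d}(f),
\end{align*}
with equality if and only if $\nu=\beta_p$.

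Finally I would pass to suprema. Using the reduction \eqref{nonergodic-ergodic} to ergodic measures together with the last display, $\sup\{h^{r}_{\nu,d}(f):\nu\in\mathcal M(f,\Lambda)\}=\max_{p\in[0,1]}g(p)H(p)$, the maximum being attained because $g\cdot H$ is continuous on the compact interval $[0,1]$. Thus an ergodic $\nu$ is an MM$r$NE precisely when $h^{r}_{\nu,d}(f)=\max_p g(p)H(p)$; by the previous step this forces $\nu=\beta_{\nu(C_1)}$, i.e.\ $\nu$ is Bernoulli, and conversely any Bernoulli measure $\beta_{p^{*}}$ with $p^{*}$ maximizing $g(\cdot)H(\cdot)$ realizes the supremum — so the ergodic MM$r$NEs are exactly the Bernoulli measures (those $\beta_{p^*}$ achieving the maximum), which is the asserted equivalence. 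The only genuinely non-routine point is the first step, namely recognizing that $\lambda_u(\nu),\lambda_s(\nu)$, hence the multiplicative weight $g$, depend on $\nu$ only through $\nu(C_1)$; after that the proof is bookkeeping with \eqref{CMP-(9)} and Lemma \ref{Rams-lemma}. A minor technical check is that $g$ remains positive and finite at the endpoints $p\in\{0,1\}$, where the only ergodic measures are the two corner fixed points, themselves the degenerate Bernoulli measures of zero entropy.
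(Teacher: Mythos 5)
Your proposal is correct and follows essentially the same route as the paper: both arguments rest on the observation that the Lyapunov exponents of an invariant measure of the linear horseshoe depend only on $\nu(C_1)$, so that Lemma \ref{Rams-lemma} combined with the formula \eqref{CMP-(9)} yields $h^{r}_{\nu,d}(f)\le h^{r}_{\beta_{\nu(C_1)},d}(f)$ with equality exactly for Bernoulli measures. Your closing caveat — that the ``if'' direction should be read as ``the maximizers are found among the Bernoulli measures'' (only those $\beta_{p}$ attaining $\max_p g(p)H(p)$ are MM$r$NE) — is a fair reading of the statement, and is exactly how the lemma is used in the proof of Theorem \ref{multiple}.
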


\begin{proof}
Firstly, for a fixed ergodic $f$-invariant measure $\mu$, 
we compute the Lyapunov exponents of the measure $\mu$. 
Let $\chi_{C_1}$ and $\chi_{C_2}$ be the characteristic functions of $C_1$ and $C_2$ respectively.  
The Birkhoff ergodic theorem implies that 
for $\mu$-a.e. $x\in \Lambda$, 
\begin{align}\label{LY-3}
\lim_{n\to \infty}\frac{1}{n}\sum_{i=0}^{n-1} \chi_{C_1}(f^i x)=\mu(C_1), 
\quad \lim_{n\to \infty}\frac{1}{n}\sum_{i=0}^{n-1} \chi_{C_2}(f^i x)=\mu(C_2). 
\end{align}
From \eqref{LY-1}, \eqref{LY-2}, \eqref{LY-3} and the definition of Lyapunov exponents (see \eqref{LY-0}), 
we can derive that 
the Lyapunov exponents of the ergodic $f$-invariant measure $\mu$ are 
\begin{align}\label{horseshoe Lyapunov exponents}
  \lambda_1(\mu) =-\mu\left(C_1\right) \log \eta_1-\mu\left(C_2\right) \log \eta_2,  \quad   \lambda_2(\mu)  =\mu\left(C_1\right) \log \eta_{2}+\mu\left(C_2\right) \log \eta_{1}.
\end{align}

Given $r>0$, let $\nu$ be an ergodic MM$r$NE. Let $\mu_{\nu(C_1)}$ be the Bernoulli measure defined by the probability vector $(\nu(C_1), \nu(C_2))$.
Clearly, from \eqref{horseshoe Lyapunov exponents}, $\nu$ and $\mu_{\nu(C_1)}$ have the same Lyapunov exponents. 
By Lemma \ref{Rams-lemma}, 
we have $h_\nu(f)\leq h_{\mu_{\nu(C_1)}}(f)$. 
Consequently,
\begin{align*}
  h_\nu^r(f)&=h_\nu(f)\left(1+\frac{r}{\lambda_1(\nu)}-\frac{r}{\lambda_2(\nu)}\right)\\
  &\leq h_{\mu_{\nu(C_1)}}(f)\left(1+\frac{r}{\lambda_1(\mu_{\nu(C_1)})}-\frac{r}{\lambda_2(\mu_{\nu(C_1)})}\right)=h^r_{\mu_{\nu(C_1)}}(f), 
\end{align*}
where equality holds if and only if $\nu=\mu_{\nu(C_1)}$. 
Hence if $\nu$ is a MM$r$NE, 
we can conclude that $\nu = \mu_{\nu(C_1)}$. 
Recall that $\mu_{\nu(C_1)}$ is a Bernoulli measure, 
thus $\nu$ is indeed a Bernoulli measure. 
This completes the proof of this lemma.
\end{proof}

Lemma \ref{Bernoulli} significantly simplifies our study of ergodic MM$r$NEs
by focusing only on Bernoulli measures. 
With the help of Lemma \ref{Bernoulli}, we are ready to obtain the  main result in this subsection. 

\begin{theorem}\label{multiple}
  There exists a two-dimensional linear hourseshoe with multiple ergodic measures maximizing $r$-neutralized entropy for any $r\geq 3$.
\end{theorem}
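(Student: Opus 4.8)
The plan is to use Lemma \ref{Bernoulli} to collapse the problem onto a single one–parameter family of Bernoulli measures, and then to tune the parameters $\eta_1,\eta_2$ of the horseshoe so that the $r$–neutralized entropy, seen as a function of that parameter, has two global maxima for every $r\ge 3$.

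First I would fix $\eta_1<\eta_2$, set $a:=-\log\eta_1>b:=-\log\eta_2>0$, and for $t\in[0,1]$ let $\mu_t$ be the Bernoulli measure with $\mu_t(C_1)=t$; by the conjugacy with the full $2$–shift $h_{\mu_t}(f)=h(t):=-t\log t-(1-t)\log(1-t)$, and by \eqref{horseshoe Lyapunov exponents} the exponents of $\mu_t$ are $\lambda_1(t):=ta+(1-t)b>0$ and $\lambda_2(t):=-\bigl(tb+(1-t)a\bigr)<0$, both affine in $t$. By \eqref{CMP-(9)},
\[
\Phi_r(t):=h^r_{\mu_t}(f)=h(t)\Bigl(1+\frac{r}{\lambda_1(t)}-\frac{r}{\lambda_2(t)}\Bigr),
\]
which is continuous on $[0,1]$, smooth on $(0,1)$, vanishes at $t=0,1$, and is positive on $(0,1)$. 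By Theorem \ref{CMP} the horseshoe carries an ergodic MM$r$NE, and by Lemma \ref{Bernoulli} it is Bernoulli; hence the ergodic MM$r$NEs are exactly the $\mu_{t_0}$ with $t_0$ a global maximiser of $\Phi_r$, and distinct maximisers give distinct measures. So it is enough to find admissible $\eta_1,\eta_2$ (i.e.\ with $\eta_1+\eta_2<1$) such that $\Phi_r$ has at least two maximisers for every $r\ge 3$.

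Next I would record the symmetry $\Phi_r(t)=\Phi_r(1-t)$, which holds because $h(1-t)=h(t)$, $\lambda_1(1-t)=-\lambda_2(t)$ and $\lambda_2(1-t)=-\lambda_1(t)$. Since the maximum of $\Phi_r$ on $[0,1]$ is attained in the interior, it suffices to make $t=\tfrac12$ a strict local \emph{minimum} of $\Phi_r$ for all $r\ge 3$: then the maximum sits at a symmetric pair $t_0\ne 1-t_0$, and $\mu_{t_0},\mu_{1-t_0}$ are two distinct ergodic MM$r$NEs. With $L:=\tfrac12(a+b)=-\tfrac12\log(\eta_1\eta_2)$ and $\delta:=a-b=\log(\eta_2/\eta_1)$ one has $\lambda_1(\tfrac12)=-\lambda_2(\tfrac12)=L$, $\lambda_1'=\lambda_2'=\delta$, $\lambda_i''=0$; combining this with $h(\tfrac12)=\log 2$, $h'(\tfrac12)=0$, $h''(\tfrac12)=-4$, a direct differentiation gives $\Phi_r'(\tfrac12)=0$ and
\[
\Phi_r''\!\left(\tfrac12\right)=-4+r\cdot\frac{4\delta^2\log 2-8L^2}{L^3}.
\]
As the right–hand side is linear and increasing in $r$, it is positive for all $r\ge 3$ provided $\dfrac{L^3}{\delta^2\log 2-2L^2}<3$.

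It then remains to verify that this last inequality is compatible with $\eta_1+\eta_2<1$ — and this is the step I expect to be the main obstacle, since making $\delta^2\log 2>2L^2$ already forces $\delta/L=2(a-b)/(a+b)$ close to its supremum $2$, i.e.\ $\eta_1\ll\eta_2$, which via $\eta_1+\eta_2<1$ pushes $L=-\tfrac12\log(\eta_1\eta_2)$ up and competes against the sharper bound needed for $r=3$. One checks the admissible window is nonempty: e.g.\ $\eta_1=\tfrac1{40}$, $\eta_2=\tfrac{24}{25}$ gives $\eta_1+\eta_2=\tfrac{197}{200}<1$, $L\approx1.865$, $\delta\approx3.648$, and $\dfrac{L^3}{\delta^2\log 2-2L^2}\approx 2.86<3$. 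For this horseshoe and every $r\ge 3$ one has $\Phi_r''(\tfrac12)>0$, so $\tfrac12$ is a strict local minimum of $\Phi_r$, the maximum of $\Phi_r$ on $[0,1]$ is attained at some $t_0\in(0,1)\setminus\{\tfrac12\}$ and, by symmetry, also at $1-t_0\ne t_0$, yielding two distinct ergodic measures maximising the $r$–neutralized entropy. (Equivalently, one may argue via the rescaled limit $\Phi_r(t)/r\to\operatorname{dim}_H\mu_t=h(t)\bigl(\lambda_1(t)^{-1}-\lambda_2(t)^{-1}\bigr)$ as $r\to\infty$, where the requirement becomes that the Hausdorff–dimension profile $t\mapsto\operatorname{dim}_H\mu_t$ have a local minimum at $\tfrac12$ with second derivative exceeding $\tfrac43$.)
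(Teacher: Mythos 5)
Your proposal is correct and follows essentially the same route as the paper: reduce to Bernoulli measures via Lemma \ref{Bernoulli}, exploit the symmetry $h^r(p)=h^r(1-p)$, and show by the same second-derivative computation that $p=\tfrac12$ is a strict local minimum of $h^r$ for all $r\ge 3$, forcing a symmetric pair of maximisers. The only difference is the choice of parameters ($\eta_1=\tfrac1{40}$, $\eta_2=\tfrac{24}{25}$ versus the paper's $\eta_1=0.9703$, $\eta_2=\eta_1^{117}$), and your formula $\Phi_r''(\tfrac12)=-4+r\bigl(4\delta^2\log 2-8L^2\bigr)/L^3$ agrees with the paper's expression.
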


\begin{proof}
 By Lemma \ref{Bernoulli}, we only need to consider the Bernoulli measures of $(f,\Lambda)$. 
  A Bernoulli measure $\mu$ is uniquely determined by
  a probability vector $ (p, 1-p) $, where $\mu(C_1)=p$, $\mu(C_2)=1-p$, and $p\in [0,1]$
  (note that $0\log 0=0$). 
  To emphasize the dependence on $p$, we use measure $\mu_{p}$ to represent measure $\mu$.
  For simplicity, we will use the following notations,
  \begin{align*}
    h(p)         & :=h(\mu_{p})=-p \log p-(1-p)\log (1-p),                                                    \\
    \lambda_1(p) & :=\lambda_1(\mu_{p})=-p \log \eta_1-(1-p)\log \eta_2,                                      \\
    \lambda_2(p) & := \lambda_2(\mu_{p})=p \log \eta_2+(1-p)\log \eta_1 ,                                    \\
    h^r(p)       & :=h^r_{\mu_{p}}(f)=h(p)+r\left(\frac{h(p)}{\lambda_1(p)}-\frac{h(p)}{\lambda_2(p)}\right).
  \end{align*}
Here, the first formula is exactly the well known metric entropy formula, the second one and third one are due to \eqref{horseshoe Lyapunov exponents}, and the last one is by Theorem \ref{BPS-theorem-1}. 

We would like to understand when $h^r(p)$ is a locally maximal value (thus the corresponding measure is potentially a MM$r$NE.).

By direct calculations, we have
  \begin{align*}
    \frac{\rd}{\rd p} h(p)                                         & =-\log p+\log (1-p), \\
     \frac{\rd}{\rd p} \lambda_1(p)&=\frac{\rd}{\rd p} \lambda_2(p)=-\log \eta_1+\log \eta_2,   \\
  \frac{\rd}{\rd p} \left(\frac{h}{\lambda_1}\right)(p)          & =\frac{-\log (1-p) \log \eta_{1}+\log p \log \eta_{2}}{\left(p \log \eta_{1}+\left(1-p\right) \log \eta_{2}\right)^{2}},               \\ 
    \frac{\rd}{\rd p} \left(-\frac{h}{\lambda_2}\right)(p)         & =\frac{-\log (1-p) \log \eta_{2}+\log p \log \eta_{1}}{\left(p \log \eta_{2}+\left(1-p\right) \log \eta_{1}\right)^{2}}. 
   \end{align*}
  The second derivatives are
   \begin{align*}
    \frac{\rd^2}{\rd p^2} h(p)&=-\frac{1}{p}-\frac{1}{1-p} ,\quad \quad \frac{\rd^2}{\rd p^2} \lambda_1(p)=\frac{\rd^2}{\rd p^2} \lambda_2(p)=0,\\
    \frac{\rd^{2}}{\rd p^{2}} \left(\frac{h}{\lambda_1}\right)(p)  & =  \frac{\left((1-p)^{-1} \log \eta_{1}+p^{-1} \log \eta_{2}\right)\left(p \log \eta_{1}+\left(1-p\right) \log \eta_{2}\right)}{\left(p \log \eta_{1}+\left(1-p\right) \log \eta_{2}\right)^{3}}                                                          \\
                                                                   & +\frac{2\left(\log \eta_{1}-\log \eta_{2}\right)\left(\log \left(1-p\right) \log \eta_{1}-\log p \log \eta_{2}\right)}{\left(p \log \eta_{1}+\left(1-p\right) \log \eta_{2}\right)^{3}} ,                                                                 \\
    \frac{\rd^{2}}{\rd p^{2}} \left(-\frac{h}{\lambda_2}\right)(p) & =  \frac{\left((1-p)^{-1} \log \eta_{2}+p^{-1} \log \eta_{1}\right)\left(p \log \eta_{2}+\left(1-p\right) \log \eta_{1}\right)}{\left(p \log \eta_{2}+\left(1-p\right) \log \eta_{1}\right)^{3}}                                                          \\
                                                                   & +\frac{2\left(\log \eta_{2}-\log \eta_{1}\right)\left(\log \left(1-p\right) \log \eta_{2}-\log p \log \eta_{1}\right)}{\left(p \log \eta_{2}+\left(1-p\right) \log \eta_{1}\right)^{3}}.
  \end{align*}
  Hence, we obtain
  \begin{align*}
    \frac{\rd}{\rd p} h^r(\frac{1}{2})         & = \frac{\rd}{\rd p} h(\frac{1}{2})+r\left(\frac{\rd}{\rd p} \left(\frac{h}{\lambda_1}\right)(\frac{1}{2})+\frac{\rd}{\rd p} \left(-\frac{h}{\lambda_2}\right)(\frac{1}{2})  \right)  =0,           \\
    \frac{\rd^{2}}{\rd p^{2}} h^r(\frac{1}{2}) & =\frac{\rd^{2}}{\rd p^{2}} h(\frac{1}{2})+r\left( \frac{\rd^2}{\rd p^2} \left(\frac{h}{\lambda_1}\right)(\frac{1}{2})+\frac{\rd^2}{\rd p^2} \left(-\frac{h}{\lambda_2}\right)(\frac{1}{2}) \right) \\
                                               & =-4+16r\frac{\left( \log \eta_{1}+ \log \eta_{2}\right)^2-2\log 2 \left(\log \eta_{1}-\log \eta_{2}\right)^2}{\left(\log \eta_{1}+ \log \eta_{2}\right)^{3}}.
  \end{align*}
  
Set $\eta_1=0.9703$ and $\eta_2=\eta_1^{117}$. By calculation, they satisfy the initial assumption $\eta_1+\eta_2<1$, and  for $r\geq 3$, 
  \begin{align*}
    \frac{\rd}{\rd p} h^r(\frac{1}{2})=0,\quad \frac{\rd^{2}}{\rd p^{2}} h^r(\frac{1}{2})>0.
  \end{align*}
  This implies that $p=\frac{1}{2}$ is not a local maximum of $h^r(p)$.
  Now notice that, for any $p\in [0,1]$ we have $h^r(p)=h^r(1-p)$, which implies $\mu_{1-p}$ is a MM$r$NE if $\mu_p$ is.
  As a result, $f$ has at least two MM$r$NEs, completing the proof.
\end{proof}

\begin{remark}
The specific values $\eta_1=0.9703$ and $\eta_2=\eta_1^{117}$ provide a lower bound for $r$. 
While these parameters could be optimized to yield a smaller lower bound,
 this will not lead to a significant improvement. 
\end{remark}


\subsection{Rigidity results}\label{futher discussion}

In this subsection, we prove some rigidity results for MM$r$NE. 

For convenience, we denote the measure of maximal entropy as the measure maximizing $0$-neutralized 
entropy (MM$0$NE) 
and the measure of maximal Hausdorff dimension as the measure maximizing$\infty$-neutralized entropy (MM$\infty$NE). 

\begin{theorem}\label{rigidity results}
  Let $ f$  be a $ C^{1+\alpha} $ surface diffeomorphism and $ \Lambda $ a compact locally maximal hyperbolic set of $ f$  such that $ f|_\Lambda $ is topologically mixing. 
Given $0\leq r_0<r_1\leq \infty$, 
if the ergodic measure $\mu$ maximizing $r_1$-neutralized entropy is also the measure maximizing $r_0$-neutralized entropy,
then $\mu$ is the measure maximizing $r$-neutralized entropy for any $r_0<r<r_1$. 
\end{theorem}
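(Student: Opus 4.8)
The plan is to exploit that, for each fixed ergodic measure $\nu$, the $r$-neutralized entropy $h^r_{\nu,d}(f)$ is an \emph{affine} function of the parameter $r$; consequently $r\mapsto\sup_\nu h^r_{\nu,d}(f)$ is a supremum of affine functions, hence convex, and the rigidity statement reduces to the elementary fact that a convex function which agrees with one of its affine minorants at two points must agree with it throughout the interval between them.

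First I would fix notation. By \eqref{CMP-(9)} (equivalently, by Theorem~\ref{BPS-theorem-1} together with $\operatorname{dim}_H\nu=d(\nu)$ for ergodic hyperbolic measures), for every $\nu\in\mathcal{M}_E$ and every $r>0$ one has $h^r_{\nu,d}(f)=h_\nu(f)+r\,d(\nu)$, where neither $h_\nu(f)$ nor $d(\nu)=h_\nu(f)(\lambda_u(\nu)^{-1}-\lambda_s(\nu)^{-1})$ depends on $r$. Using \eqref{nonergodic-ergodic}, set $S(r):=\sup\{h_\nu(f)+r\,d(\nu):\nu\in\mathcal{M}_E\}=\sup\{h^r_{\nu,d}(f):\nu\in\mathcal{M}(f,\Lambda)\}$; it is finite and convex in $r$. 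With the conventions preceding the statement, $\mu$ being MM$r_0$NE means $h_\mu(f)+r_0\,d(\mu)=S(r_0)$ (reading the right-hand side as $h_{\text{top}}(f)$ when $r_0=0$), and $\mu$ being MM$r_1$NE means $h_\mu(f)+r_1\,d(\mu)=S(r_1)$ when $r_1<\infty$, while for $r_1=\infty$ it means $d(\mu)=\operatorname{dim}_H\mu\ge\operatorname{dim}_H\nu\ge d(\nu)$ for every $\nu\in\mathcal{M}_E$ (here we use that $\mu$ is ergodic, so $\operatorname{dim}_H\mu=d(\mu)$).

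Next I would fix $r\in(r_0,r_1)$ and prove $h_\mu(f)+r\,d(\mu)\ge h_\nu(f)+r\,d(\nu)$ for all $\nu\in\mathcal{M}_E$. If $r_1<\infty$, write $r=(1-t)r_0+t\,r_1$ with $t=(r-r_0)/(r_1-r_0)\in(0,1)$; since $r\mapsto h_\nu(f)+r\,d(\nu)$ and $r\mapsto h_\mu(f)+r\,d(\mu)$ are affine,
\begin{align*}
h_\nu(f)+r\,d(\nu)&=(1-t)(h_\nu(f)+r_0\,d(\nu))+t\,(h_\nu(f)+r_1\,d(\nu))\\
&\le (1-t)\,S(r_0)+t\,S(r_1)=(1-t)(h_\mu(f)+r_0\,d(\mu))+t\,(h_\mu(f)+r_1\,d(\mu))\\
&=h_\mu(f)+r\,d(\mu),
\end{align*}
where the inequality uses $h_\nu(f)+r_i\,d(\nu)\le S(r_i)$ and the subsequent equality uses that $\mu$ attains both $S(r_0)$ and $S(r_1)$ (the case $r_0=0$ being covered by reading $h_\nu(f)+0\cdot d(\nu)=h_\nu(f)$). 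If $r_1=\infty$, the same conclusion follows even more directly from $h_\nu(f)+r\,d(\nu)=(h_\nu(f)+r_0\,d(\nu))+(r-r_0)\,d(\nu)$ together with $h_\nu(f)+r_0\,d(\nu)\le h_\mu(f)+r_0\,d(\mu)$ and $d(\nu)\le d(\mu)$. Taking the supremum over $\nu\in\mathcal{M}_E$ gives $S(r)\le h_\mu(f)+r\,d(\mu)$; since the reverse inequality is trivial and $h_\mu(f)+r\,d(\mu)=h^r_{\mu,d}(f)$ (as $\mu$ is ergodic), we conclude $h^r_{\mu,d}(f)=S(r)$, i.e.\ $\mu$ is MM$r$NE.

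I do not expect a genuine obstacle here: all the analytic substance is already contained in Theorem~\ref{BPS-theorem-1} and in the reduction \eqref{nonergodic-ergodic} to ergodic measures, so what remains is a one-line convexity argument together with the minor bookkeeping needed to absorb the two boundary cases $r_0=0$ (MME) and $r_1=\infty$ (MMHD). Equivalently, the statement asserts that for any given ergodic $\mu$ the set $\{r\in[0,\infty]:\mu\text{ is MM}r\text{NE}\}$ is an interval.
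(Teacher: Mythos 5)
Your proposal is correct and follows essentially the same route as the paper: both reduce to ergodic measures via \eqref{nonergodic-ergodic}, use that $r\mapsto h_\nu(f)+r\operatorname{dim}_H\nu$ is affine for each ergodic $\nu$, and conclude by writing the intermediate $r$ as a convex combination $r=(1-t)r_0+t r_1$ and comparing with the suprema at the endpoints. Your explicit treatment of the boundary cases $r_0=0$ and $r_1=\infty$ matches what the paper dispatches with "the proofs of other cases are entirely analogous."
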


\begin{proof}
Firstly, we prove the case of $0<r_0<r_1<\infty$. 
According to \eqref{nonergodic-ergodic}, we have
\begin{align*}
  h_{\mu}(f)+r_0\operatorname{dim}_H\mu & = \sup\left\{h_{\nu}(f)+r_0\operatorname{dim}_H\nu: \nu \in \mathcal{M}_E\right\},\nonumber                   \\
  h_{\mu}(f)+r_1\operatorname{dim}_H\mu & = \sup\left\{h_{\nu}(f)+r_1\operatorname{dim}_H\nu: \nu \in \mathcal{M}_E\right\}. 
\end{align*}
Fix $r\in (r_0, r_1)$, there exists a constant $\kappa\in (0,1)$ such that $r=\kappa r_0+(1-\kappa) r_1$. Then
\begin{align*}
&h_{\mu}(f)+r\operatorname{dim}_H\mu\\
=&\kappa (h_{\mu}(f)+r_0\operatorname{dim}_H\mu)+(1-\kappa)(h_{\mu}(f)+r_1\operatorname{dim}_H\mu)\\
=&\kappa \sup\left\{h_{\nu}(f)+r_0\operatorname{dim}_H\nu: \nu \in \mathcal{M}_E\right\}+(1-\kappa) \sup\left\{h_{\nu}(f)+r_1\operatorname{dim}_H\nu: \nu \in \mathcal{M}_E\right\}\\
\geq & \sup\left\{h_{\nu}(f)+r\operatorname{dim}_H\nu: \nu \in \mathcal{M}_E\right\}. 
\end{align*}
Hence the proof of this case is complete. Clearly, we have 
\begin{align*}
&\text{If $\mu$ is the MME}, \quad \text{then } h_\mu(f)=\sup\left\{h_{\nu}(f): \nu \in \mathcal{M}_E\right\},\\
&\text{If $\mu$ is the ergodic MMHD}, \quad \text{then } \operatorname{dim}_H\mu=\sup\left\{\operatorname{dim}_H\nu: \nu \in \mathcal{M}_E\right\}.
\end{align*}
The proofs of other cases are entirely analogous.
\end{proof}

Lemma \ref{case 1} states that the MME $\nu_{0,0}$ is the unique MM$r$NE for any positive $r$ 
if $I_s=I_u=\varnothing$. 
The following theorem studies the converse statement. 

\begin{theorem}\label{varnothing}
  Let $ f$  be a $ C^{1+\alpha} $ area-preserving surface diffeomorphism and $ \Lambda $ a compact locally maximal hyperbolic set of $ f$  such that $ f|_\Lambda $ is topologically mixing. 
  If the measure of maximal entropy $\nu$ is the measure maximizing $r$-neutralized entropy for some $0<r\leq \infty$, 
  then the following dichotomy holds: 
  \begin{itemize}
 \item[(1)] $\phi_u$ and $\phi_s$ are cohomologous to constants, 
 \item[(2)] Neither $\phi_u$ nor $\phi_s$ is cohomologous to a constant but $-\phi_u+\alpha\phi_s$ is cohomologous to a constant, 
 where $\alpha=\lambda_u(0,0)^2\lambda_s(0,0)^{-2}$. 
  \end{itemize}
  \end{theorem}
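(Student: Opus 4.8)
The plan is to realize the measure of maximal entropy as the equilibrium state $\nu_{0,0}$ of the zero potential and to perturb it inside the real-analytic two-parameter family $\{\nu_{p,q}\}_{(p,q)\in\mathbb{R}^{2}}$ of equilibrium states of $-p\phi_{u}+q\phi_{s}$, using that $h,\lambda_{u},\lambda_{s}$ (hence $Q$) depend real-analytically on $(p,q)$. We may assume $h_{\mathrm{top}}(f|_{\Lambda})>0$, since otherwise $\Lambda$ is a periodic orbit, every H\"older function on it is cohomologous to a constant, and alternative (1) holds; by hyperbolicity $\lambda_{u}(0,0)>0>\lambda_{s}(0,0)$. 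For $0<r<\infty$, the hypothesis combined with \eqref{CMP-(9)} and \eqref{nonergodic-ergodic} says that the analytic function
\[
G(p,q):=h(p,q)\Bigl(1+\frac{r}{\lambda_{u}(p,q)}-\frac{r}{\lambda_{s}(p,q)}\Bigr)=h_{\nu_{p,q}}^{r}(f)
\]
attains its global maximum at $(0,0)$; for $r=\infty$ one argues identically with $D(p,q):=h(p,q)\bigl(\lambda_{u}(p,q)^{-1}-\lambda_{s}(p,q)^{-1}\bigr)=\operatorname{dim}_{H}\nu_{p,q}$. In either case $\partial_{p}G(0,0)=\partial_{q}G(0,0)=0$.

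I would then evaluate the two first-order identities. Set $a=\partial_{pp}^{2}Q(0,0)$, $b=\partial_{qq}^{2}Q(0,0)$, $c=\partial_{pq}^{2}Q(0,0)$, $u=\lambda_{u}(0,0)^{2}$ and $s=\lambda_{s}(0,0)^{2}$. From $h=Q+p\lambda_{u}-q\lambda_{s}$ and Proposition \ref{CMP-Proposition 4}(1) (so $\lambda_{u}=-\partial_{p}Q$ and $\lambda_{s}=\partial_{q}Q$) one gets $\partial_{p}h(0,0)=\partial_{q}h(0,0)=0$ together with
\[
\partial_{p}\lambda_{u}(0,0)=-a,\qquad \partial_{q}\lambda_{u}(0,0)=-c,\qquad \partial_{p}\lambda_{s}(0,0)=c,\qquad \partial_{q}\lambda_{s}(0,0)=b .
\]
Substituting into $\nabla G(0,0)=0$ and cancelling the positive factor $h(0,0)\,r$ (resp. $h(0,0)$) leaves
\[
\frac{a}{u}+\frac{c}{s}=0 ,\qquad \frac{c}{u}+\frac{b}{s}=0 .
\]
Eliminating $c$ gives $c=-as/u$ and $b=as^{2}/u^{2}$, hence $ab-c^{2}=0$; that is, $\mathrm{Hess}\,Q(0,0)$ is singular.

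The conclusion now comes from \eqref{B-1}. Taking $\varphi=0$ and $\psi=-x\phi_{u}+y\phi_{s}$ there, the quadratic form $(x,y)\mapsto (x,y)\,\mathrm{Hess}\,Q(0,0)\,(x,y)^{\top}=\frac{\rd^{2}}{\rd t^{2}}P\!\bigl(t(-x\phi_{u}+y\phi_{s})\bigr)\big|_{t=0}$ is positive semi-definite and vanishes on $(x,y)$ exactly when $-x\phi_{u}+y\phi_{s}$ is cohomologous to a constant; equivalently, $(x,y)\in\ker\mathrm{Hess}\,Q(0,0)$ iff the corresponding combination of $\phi_{u},\phi_{s}$ is a coboundary plus a constant (this is the equality case of the Cauchy--Schwarz inequality for this form). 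If $a=0$, the two equations above force $c=b=0$, so $\mathrm{Hess}\,Q(0,0)=0$ and both $\phi_{u}$ and $\phi_{s}$ are cohomologous to constants --- alternative (1). If $a\neq0$, then $b=as^{2}/u^{2}\neq0$, so neither $\phi_{u}$ nor $\phi_{s}$ is cohomologous to a constant, while $(-c,a)$ spans $\ker\mathrm{Hess}\,Q(0,0)$ (as $ab=c^{2}$); thus $c\phi_{u}+a\phi_{s}$ is cohomologous to a constant, and dividing by $a$ and using $c/a=-s/u$ so is $-\phi_{u}+(u/s)\phi_{s}$. Since $u/s=\lambda_{u}(0,0)^{2}\lambda_{s}(0,0)^{-2}=\alpha$, this is alternative (2).

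The main obstacle is the middle step: converting ``$\nu_{0,0}$ maximizes $r$-neutralized entropy'' into the two first-order identities while keeping the signs straight (the opposite signs in $\lambda_{u}=-\partial_{p}Q$ and $\lambda_{s}=+\partial_{q}Q$ are what make the two conditions collapse to the single relation $ab=c^{2}$); once that relation is secured, the dichotomy drops out of the equality case in \eqref{B-1}. It is also worth noting that the area-preserving hypothesis only simplifies matters: there $\phi_{u}+\phi_{s}$ is cohomologous to $\log|\det Df|\equiv 0$ (the angle function $\log|\sin\angle(E^{u},E^{s})|$ being H\"older on $\Lambda$), so $c=-a$ and $b=a$, and then the first equation $a/u+c/s=0$ forces $a(1/u+1/s)=0$, i.e. $a=0$; hence in the area-preserving setting only alternative (1) actually occurs, sharpening the conclusion to: $\phi_{u}$ and $\phi_{s}$ are cohomologous to constants.
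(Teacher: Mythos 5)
Your proof is correct and follows essentially the same route as the paper's: identify the MME with $\nu_{0,0}$, impose the first-order conditions $\partial_p G(0,0)=\partial_q G(0,0)=0$ on the real-analytic family $\{\nu_{p,q}\}$ (which, after the sign bookkeeping $\partial_p\lambda_u=-\partial_{pp}Q$, $\partial_q\lambda_s=\partial_{qq}Q$, are exactly the paper's condition \eqref{condition A}), and then read off the dichotomy from the equality case of \eqref{B-1}; your kernel/determinant formulation $ab=c^2$ merely packages the paper's separate treatment of the subcases $I_u=\varnothing$ or $I_s=\varnothing$ into one computation. The only slip is in your closing aside: since $Q(p,q)=P(-(p+q)\phi_u)$ when $\phi_u+\phi_s$ is a coboundary, the correct relations are $c=a$ and $b=a$ (not $c=-a$), which is in fact what your displayed consequence $a(1/u+1/s)=0$ uses; with that fix the sharpening (only alternative (1) occurs in the area-preserving case) is valid and is precisely what the paper exploits in Step 2 of Theorem \ref{Anosov-infty}.
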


\begin{proof}
At first, we consider the case $0<r<\infty$. 

If $I_s=\varnothing$ and $I_u\neq \varnothing$, 
 $\lambda_s(p,q)$ is a constant, then we have $\partial_p d_s(0,0)=\dfrac{\partial_p h(0,0)}{\lambda_s(0,0)}$. 
Setting $R_r(p,q)=h(p,q)+r d_u(p, q)+r d_s(p, q)$, 
by using Proposition \ref{CMP-Proposition 4}, we obtain 
$\partial_{p}\left(R_{r}(0,0)\right)=r \partial_{p} d_{u}(0, 0)>0 $. 
Therefore $\nu_{0,0}$ is not the MM$r$NE. 

Similarly, the situation $I_u=\varnothing$ and $I_s\neq \varnothing$ will not occur.

 For the case $I_s\neq\varnothing$ and $I_u\neq \varnothing$, we have
\begin{align*}
\partial_p d_s(p,0)=-\partial_p \left(\frac{h(p,0)}{\lambda_s(p,0)}\right)=-\frac{\partial_p h(p,0)\cdot \lambda_s(p,0)-h(p,0)\cdot \partial_p \lambda_s(p,0)}{\lambda_s(p,0)^2}
\end{align*}
Through the inequality above and Proposition \ref{CMP-Proposition 4}, we can derive that
\begin{align*}
  \partial_{p}(R_r(0,0))=rh(0,0)\left(-\frac{\partial_p \lambda_u(0,0)}{\lambda_u(0,0)^2} + \frac{\partial_p \lambda_s(0,0)}{\lambda_s(0,0)^2}\right)=0 \Leftrightarrow  \frac{\partial_{p} \lambda_{u}(0, 0)}{\lambda_{u}(0, 0)^{2}} & =\frac{\partial_{p} \lambda_{s}(0, 0)}{\lambda_{s}(0, 0)^{2}}.
\end{align*}
Similarly, we could obtain
\begin{align*}
  \partial_{q}(R_r(0,0))=rh(0,0)\left(-\frac{\partial_q \lambda_u(0,0)}{\lambda_u(0,0)^2} + \frac{\partial_q \lambda_s(0,0)}{\lambda_s(0,0)^2}\right)=0 \Leftrightarrow  \frac{\partial_{q} \lambda_{u}(0, 0)}{\lambda_{u}(0, 0)^{2}} & =\frac{\partial_{q} \lambda_{s}(0, 0)}{\lambda_{s}(0, 0)^{2}}.
\end{align*}
Then $\partial_{p}(R_r(0,0))=0$ and $\partial_{q}(R_r(0,0))=0$ if and only if
\begin{align}\label{condition A}
  \frac{\partial_{p}\partial_{p} Q(0, 0)}{\lambda_{u}(0, 0)^{4}}=-\frac{\partial_{p} \partial_{q}Q(0, 0)}{\lambda_{u}(0, 0)^{2}\lambda_{s}(0, 0)^{2}}=\frac{\partial_{q} \partial_{q}Q(0, 0)}{\lambda_{s}(0, 0)^{4}}
\end{align}
For the sake of convenience, we set $a_0=\lambda_s(0,0)^2$, $b_0=\lambda_u(0,0)^2$. 
By applying \eqref{B-1}, \eqref{condition A} and Proposition \ref{CMP-Proposition 4}, we conclude that 
$\dfrac{a_0}{\lambda_s(0,0)^2}=\dfrac{b_0}{\lambda_u(0,0)^2}$ if and only if 
\begin{align*}                                                     
     \left.\frac{\rd^{2}}{\rd t^{2}} P\left(-t a_{0} \phi_{u}+t b_{0} \phi_{s}\right)\right|_{t=0} =&\left.\frac{\rd^{2}}{\rd t^{2}} Q\left(t a_{0} \phi_{u}, t b_{0} \phi_{s}\right)\right|_{t=0}     \\
  = & a_0^2\partial_{p}\partial_{p} Q(0, 0)+2a_0b_0\partial_{p}\partial_{q} Q(0, 0)+b_0^2\partial_{q}\partial_{q} Q(0, 0)= 0.
\end{align*} 
Therefore the function $-\phi_{u}+ \alpha\phi_{s}$ is cohomologous to a constant where $\alpha=b_0 a_0^{-1}>0$. 
Thus the proof of the case $0<r<\infty$ is complete. 

Finally, we turn to the case $r=\infty$. 
Since $\partial_ph(0,0)=\partial_qh(0,0)=0$, 
the proof of this case follows if we refine $R_\infty(p,q)=d_u(p,q)+d_s(p,q)$ and Repeat the argument above. 
Hence we complete the proof. 
\end{proof}

We are now ready to prove the following  rigidity result. 

\begin{theorem}\label{Anosov-infty}
  Let $f:\mathbb{T}^2\to \mathbb{T}^2$ be a $C^\infty$ area-preserving Anosov diffeomorphism,
 then the following statements are equivalent to each other.
 \begin{itemize}
\item[(1)]  the measure of maximal entropy is the measure maximizing $r$-neutralized entropy for some $r>0$; 
\item[(2)]  the measure of maximal entropy is the measure of maximal Hausdorff dimension; 
\item[(3)]  the ergodic measure of maximal Hausdorff dimension is the measure maximizing $r$-neutralized entropy for some $r>0$; 
\item[(4)]$f$ is $C^\infty$ conjugate to a total automorphism.
\end{itemize}
\end{theorem}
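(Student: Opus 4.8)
The plan is to reduce all four conditions to a single algebraic condition on the unstable Jacobian, namely that $\phi_u$ (equivalently $\phi_s$) is cohomologous to a constant, and then close the loop. Two structural facts about an area-preserving Anosov diffeomorphism of $\mathbb T^2$ will be used throughout. First, since $\log|\det Df|\equiv 0$ and the angle between $E^u$ and $E^s$ contributes only a Hölder coboundary, $\phi_u+\phi_s$ is a coboundary; hence $\lambda_s(\nu)=-\lambda_u(\nu)$ for every $\nu\in\mathcal M(f,\mathbb T^2)$, and $\phi_u$ is cohomologous to a constant iff $\phi_s$ is, iff $I_u=I_s=\varnothing$. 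Second, the normalized Haar measure $m$ is the unique SRB measure, is ergodic (indeed Bernoulli), has $\dim_H m=2$, and coincides with $\nu_{1,0}$ and with $\nu_{0,1}$ (the latter being the SRB measure of $f^{-1}$); moreover, by the smooth Liv\v sic regularity theory of de la Llave--Marco--Mori\-y\'on, $f$ is $C^\infty$ conjugate to a toral automorphism precisely when $\phi_u$ is cohomologous to a constant. Granting this last input, condition (4) is equivalent to $I_u=I_s=\varnothing$, and the scheme is: $(4)\Rightarrow\{(1),(2),(3)\}$, $(2)\Rightarrow(1)$, $(1)\Rightarrow(4)$, $(3)\Rightarrow(4)$.

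For $(4)\Rightarrow\{(1),(2),(3)\}$: from $I_u=I_s=\varnothing$ every invariant measure has the same exponents $\lambda_u=-\lambda_s$, so $\dim_H\nu=h_\nu(f)(1/\lambda_u-1/\lambda_s)$ is a fixed positive multiple of $h_\nu(f)$; thus $\nu_{0,0}=m$ is simultaneously the (unique, ergodic) MME and MMHD, giving (2), and by Lemma \ref{case 1} it is the unique MM$r$NE for every $r>0$, giving (1) and (3). The implication $(2)\Rightarrow(1)$ is immediate: a measure maximizing both $h_\nu(f)$ and $\dim_H\nu$ maximizes $h_\nu(f)+r\dim_H\nu$ for all $r>0$.

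For $(1)\Rightarrow(4)$ I would invoke Theorem \ref{varnothing}: since $\lambda_s(0,0)=-\lambda_u(0,0)$, the constant $\alpha=\lambda_u(0,0)^2\lambda_s(0,0)^{-2}$ equals $1$, so alternative (2) of that dichotomy would make both $-\phi_u+\phi_s$ and $\phi_u+\phi_s$ cohomologous to constants, hence $\phi_s$ cohomologous to a constant, which contradicts alternative (2)'s own hypothesis; so alternative (1) holds, i.e. (4). For $(3)\Rightarrow(4)$: the ergodic MMHD must be $m$ (any measure of dimension $2$ has absolutely continuous conditionals on both foliations by Ledrappier--Young, hence is $m$). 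Assume $m$ is an MM$r$NE and, for contradiction, that $\phi_u$ is not cohomologous to a constant; then $\partial_p\lambda_u<0$, $\partial_q\lambda_s>0$ everywhere, and since $\lambda_u(m)=\lambda_u(1,0)$, $\lambda_s(m)=\lambda_s(0,1)$, strict monotonicity together with the limiting values forces $\lambda_u(m)\in I_u$ and $\lambda_s(m)\in I_s$; thus $m$ is in the equilibrium regime and $(1,0)$ is an interior critical point of $R_r(p,q)=h(p,q)+r(d_u(p,q)+d_s(p,q))$. But $d_u(p,q)=h(p,q)/\lambda_u(p,q)\le 1$ by Ruelle's inequality, with equality only at the SRB measure, i.e. only at $(1,0)$, and likewise $d_s(p,q)=-h(p,q)/\lambda_s(p,q)\le 1$ with equality only at $(0,1)=(1,0)$; by real-analyticity $\nabla(d_u+d_s)(1,0)=0$, so $\nabla h(1,0)=0$. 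Since $\partial_p h(p,0)=p\,\partial_p\lambda_u(p,0)$ (Proposition \ref{CMP-Proposition 4}), this gives $\partial_p\lambda_u(1,0)=0$, contradicting $\partial_p\lambda_u<0$. Hence $\phi_u$ is cohomologous to a constant, i.e. (4).

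The hard part will be the step $(3)\Rightarrow(4)$: one must first nail down the dictionary $m=\nu_{1,0}=\nu_{0,1}$ and the coboundary identity for $\phi_u+\phi_s$, then rule out the non-equilibrium boundary behaviour of $m$ via the strict monotonicity in Proposition \ref{CMP-Proposition 4}, and finally observe the slightly subtle point that the maximal-dimension measure $m$ automatically makes $d_u+d_s$ critical at $(1,0)$, so that the content of ``$m$ is an MM$r$NE'' collapses to ``$(1,0)$ is a critical point of $h$''. The remaining genuinely external ingredient is the smooth-rigidity theorem identifying the vanishing of the cohomology class of $\phi_u$ with $C^\infty$ conjugacy to a linear model; it is deep but standard, and I would simply cite it.
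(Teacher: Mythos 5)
Your proof is correct and follows essentially the same route as the paper: reduction of all four conditions to the cohomological triviality of $\phi_u$ and $\phi_s$ combined with the smooth rigidity theorem of \cite{MM}, Theorem \ref{varnothing} for $(1)\Rightarrow(4)$, identification of the ergodic MMHD with the volume/SRB measure $\nu_{1,0}$ via Ledrappier--Young, and the critical-point computation $\nabla h(1,0)=\nabla R_r(1,0)-r\nabla(d_u+d_s)(1,0)=0$ played against $\partial_p h(1,0)=\partial_p\lambda_u(1,0)<0$ for $(3)\Rightarrow(4)$. The only (harmless) deviations are that you route $(2)$ through $(2)\Rightarrow(1)\Rightarrow(4)$ rather than applying Theorem \ref{varnothing} directly at $r=\infty$, and that you spell out why area-preservation forces $\alpha=1$ and collapses the dichotomy of Theorem \ref{varnothing} to its first alternative --- a detail the paper leaves implicit.
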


\begin{proof}
We will prove the theorem in three steps.

{\bf Step 1: (4) $\Rightarrow$ (1),(2), and (3). }
If (4) holds, the Lyapunov exponents of all invariant measures are the same, 
and the invariant measure is the MMHD if and only if it is the MME. 
By Lemma \ref{case 1} and Theorem \ref{rigidity results}, 
we establish (4) $\Rightarrow$ (1),(2), and (3).

{\bf Step 2: (1) $\Rightarrow$ (4) and (2) $\Rightarrow$ (4).}
Theorem \ref{varnothing} and the assumption that $f$ is area-preserving imply that 
$\phi_u$ and $\phi_s$ are respectively cohomologous to constants. 
Consequently, the Lyapunov exponents of all periodic orbits are the same. 
According to \cite[Theorem 1]{MM}, we derive (1) $\Rightarrow$ (4) and (2) $\Rightarrow$ (4).

To complete the proof, it remains to show (3) $\Rightarrow$ (4). 
On the contrary, without loss of generality, we suppose that $\phi_u$ is not cohomologous to a constant. 

{\bf Step 3: (3) $\Rightarrow$ (4). }
Let $\mu$ be the measure satisfying the condition (3). 
The area-preserving property of $f$ implies that
 $\mu$ is absolutely continuous with respect to the volume. 
Since all Anosov diffeomorphisms on two-dimensional manifolds are transitive \cite{Ne}, 
we conclude that $\mu$ is the equilibrium measure of the potential $-\phi_u(x)$ 
by \cite[Corollary 4.13]{Bo75}, i.e., $\mu=\nu(1,0)$. 
From condition in (3), we derive 
\begin{align*}
\partial_p h(1,0)=\partial_p \left(h(1,0)+rd_u(1,0)+rd_s(1,0) \right)-r\partial_p \left(d_u(1,0)+d_s(1,0) \right)=0-0=0. 
\end{align*}
However, Proposition \ref{CMP-Proposition 4} shows that 
$\partial_p h(1,0)=\partial_p\lambda_u(1,0)<0$.
This contradicts to the fact that $\partial_p h(1,0)=0$. 
 Hence, we obtain that $\phi_u$ is cohomologous to a constant. 
 Similarly, $\phi_s$ is cohomologous to a constant. 
 By \cite[Theorem 1]{MM}, we obtain (3) $\Rightarrow$ (4). 

Therefore, we finish the proof of this theorem. 
\end{proof}

\appendices

\section*{Appendix}
\section{Further results on $r$-neutralized entropy}\label{dimension theory}

  In this section, we focus on the variational principles related to 
  the $r$-neutralized local entropy. 
We develop a new variational principle for Borel probability measures. 
Afterwards, we propose a conjecture for invariant measures and prove the conjecture for
topological Markov shifts and linear Anosov automorphisms.

Let $(X,d)$ be a compact metric space and $f:X\to X$ a continuous map. 
For $ r>0 $, $ N \in \mathbb{N}$, $s \in \mathbb{R} $ and a nonempty subset $ Z \subset X $, we define
\[
  M_{N, r, d}^{s}(Z):=\inf \sum_{i \in I} e^{-n_{i} s}
\]
where the infimum is taken over all finite or countable covers $ \left\{B\left(x_{i}, n_i, e^{-n_{i} r}\right)\right\}_{i \in I} $ of
$ Z$  with $ n_{i} \geq N$ and $x_{i} \in X $ for each $i\in I$. 
Obviously, the limit $M_{r}^{s}(Z)=\lim\limits_{N \rightarrow \infty} M_{N, r,d}^{s}(Z)$ exists, 
as $M_{N, r,d}^{s}(Z)$ is increasing with respect to $N$.
The quantity $ M_{r,d}^{s}(Z) $ exhibits a critical behavior with respect to the parameter $s$, 
transitioning from $\infty$ to $0$ at a certain value. 
We define the {\bf $r$-neutralized Bowen topological entropy} as
\[
  h^{B}_{r,d}(T,Z):=\inf \left\{s: M_{r,d}^{s}(Z)=0\right\}=\sup \left\{s: M_{ r,d}^{s}(Z)=\infty\right\}.
\]

Motivated by the Frostman's lemma, we prove the following lemma. 

\begin{lemma}\label{Billingsley type}
  Let $(X,d)$ be a compact metric space and $f:X\to X$ a continuous map. 
  Let $\mu$ be a Borel probability measure on $X$ and $E$ a Borel subset of $X$ with $\mu(E)>0$. 
  Given $r>0$ and $0<s<\infty$, 
  if $\underline{h}_{\mu, d}^r(x)\geq s$ for all $x\in E$, then $h^{B}_{r,d}(T,E)\geq s$.
\end{lemma}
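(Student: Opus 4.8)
The plan is to prove this by a standard mass-distribution (Frostman-type) argument. Fix $s' < s$; it suffices to show $M^{s'}_{r,d}(E) > 0$, since then $h^B_{r,d}(f,E) \geq s'$ and letting $s' \to s$ gives the claim. The hypothesis $\underline{h}^r_{\mu,d}(x) \geq s$ for all $x \in E$ means that for each $x \in E$,
\[
\varliminf_{n\to\infty} -\frac{1}{n}\log\mu\big(B(x,n,e^{-nr})\big) \geq s,
\]
so for every $x \in E$ there is $N(x) \in \mathbb{N}$ such that $\mu(B(x,n,e^{-nr})) \leq e^{-ns'}$ for all $n \geq N(x)$.

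First I would stratify $E$ by the threshold $N(x)$: set $E_k := \{x \in E : N(x) \leq k\}$, so that $E_k \uparrow E$ and hence $\mu(E_k) \to \mu(E) > 0$. Fix $k$ large enough that $\mu(E_k) > 0$. Now take any finite or countable cover $\{B(x_i,n_i,e^{-n_i r})\}_{i\in I}$ of $E$ (hence of $E_k$) with $n_i \geq k$. Throw away the balls that do not meet $E_k$; for each remaining $i$ pick $y_i \in B(x_i,n_i,e^{-n_i r}) \cap E_k$. The key geometric point is that the $r$-neutralized Bowen ball $B(x_i,n_i,e^{-n_i r})$ is contained in a comparable ball centered at $y_i$: since $d_{n_i}^f(x_i,\cdot) < e^{-n_i r}$ on the ball and $d_{n_i}^f(x_i,y_i) < e^{-n_i r}$, the triangle inequality gives $B(x_i,n_i,e^{-n_i r}) \subset B(y_i,n_i,2e^{-n_i r}) \subset B(y_i, n_i, e^{-n_i(r-\epsilon)})$ once $n_i$ is large (or, more cleanly, $B(y_i,n_i,2e^{-n_i r})$, at the cost of adjusting constants — I would absorb the factor $2$ by noting $2e^{-n_i r} \le e^{-n_i r'}$ for a slightly smaller $r' < r$ and $n_i$ large, or just carry it through). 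Because $y_i \in E_k$ and $n_i \geq k \geq N(y_i)$, we get $\mu(B(x_i,n_i,e^{-n_i r})) \leq \mu(B(y_i,n_i,2e^{-n_ir})) \le C\,e^{-n_i s'}$ for a constant $C$ independent of $i$.

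Then summing over $i$ and using that the balls cover $E_k$,
\[
\mu(E_k) \;\leq\; \sum_{i\in I} \mu\big(B(x_i,n_i,e^{-n_i r})\big) \;\leq\; C\sum_{i\in I} e^{-n_i s'},
\]
so $\sum_{i\in I} e^{-n_i s'} \geq C^{-1}\mu(E_k) > 0$. Taking the infimum over all such covers with $n_i \geq N$ gives $M^{s'}_{N,r,d}(E) \geq C^{-1}\mu(E_k)$ for every $N \geq k$, hence $M^{s'}_{r,d}(E) \geq C^{-1}\mu(E_k) > 0$. Therefore $h^B_{r,d}(f,E) \geq s'$, and since $s' < s$ was arbitrary, $h^B_{r,d}(f,E) \geq s$.

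The only subtlety — and the step I would be most careful about — is the comparison of the off-center Bowen ball $B(x_i,n_i,e^{-n_ir})$ with a ball centered at the point $y_i \in E_k$ where the measure estimate is available; this is where the constant $C$ (or the small loss $r \rightsquigarrow r'$) enters. Since $M^{s'}_{r,d}$ and $M^{s'}_{r',d}$ have the same critical exponent behavior up to the $\epsilon$-room already present in choosing $s' < s$, this causes no real difficulty: one runs the argument with $r' < r$ close to $r$, obtains $h^B_{r',d}(f,E) \geq s'$, and concludes using monotonicity of the covers in $r$ together with the arbitrariness of $s'$. Everything else is a routine exhaustion-and-summation argument, so no genuine obstacle remains.
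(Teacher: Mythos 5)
Your argument is correct and is essentially the paper's own proof: stratify $E$ into the sets where the bound $\mu(B(x,n,e^{-nr}))\le e^{-ns'}$ holds for all $n$ past a fixed threshold, pick a stratum $E_k$ of positive measure, and bound $\sum_i e^{-n_i s'}$ from below by the total $\mu$-measure of the covering balls meeting $E_k$. The only difference is that you explicitly address the off-center issue (covering balls need not be centered in $E_k$), which the paper glosses over; the clean way to produce your constant $C$ is the index shift $B(y_i,n_i,2e^{-n_ir})\subset B\left(y_i,\,n_i-b,\,e^{-(n_i-b)r}\right)$ with $b=\lceil r^{-1}\log 2\rceil$, giving $C=e^{bs'}$ for covers with $n_i\ge k+b$, whereas the detour through $r'<r$ is unnecessary and, as literally sketched, runs into the wrong-direction containment (the hypothesis controls balls of radius $e^{-nr}$, and $B(y,n,2e^{-nr'})$ contains rather than is contained in $B(y,n,e^{-nr})$).
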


\begin{proof}
 Given $ \epsilon>0 $, for each $ N \geq 1 $, we define
  \[
    E_{N}:=\left\{x \in E: -\frac{1}{n}\log \mu\left(B(x,n, e^{-nr})\right)>s-\epsilon \quad \text { for all } n \geq N \right\} .
  \]
  Note that $\{E_N\}_{N\geq 1}$ forms an increasing sequence of sets.
  Clearly, there exists a positive integer $ N_{0}$ such that  $\mu\left(E_{N_{0}}\right)>0$.
  Then we have
  \begin{align}\label{Billingsley-(2)}
    \mu\left(B(x,n, e^{-nr})\right) \leq \mathrm{e}^{-(s-\epsilon) n}, \quad \text { for all } x \in E_N,  \text { and } n \geq N_0 .
  \end{align}
  Fix $N\geq N_0$. We consider an arbitrary cover 
  $\mathcal{F}=\left\{B(y_{i}, n_{i}, e^{-n_{i}r})\right\}_{i}$ of $E_{N_0}$ such that
  \[
    E_N \cap B\left(y_{i},n_{i}, e^{-n_{i}r}\right) \neq \varnothing, \quad n_{i} \geq N \geq N_0 \quad \text { for all } i \geq 1.
  \]
This choice ensures that each ball in the cover contains at least one point from $E_N$.

By applying \eqref{Billingsley-(2)}, we obtain
  \[
    \sum_{i \geq 1} \mathrm{e}^{-(s-\epsilon) n_{i}} \geq \sum_{i \geq 1} \mu\left(B(y_{i}, n_{i}, e^{-n_{i}r})\right) \geq \mu\left(E_{N_0}\right)>0.
  \]
  Therefore, we derive $ M_{N_0,r,d}^{s-\epsilon}\left(E_{N_0}\right) \geq \mu\left(E_{N_0}\right)>0  $ for all $ N\geq N_0 $, thus
  \[
    M_{r,d}^{s-\epsilon}\left(E_{N_0}\right)=\lim _{N \rightarrow \infty} M_{N,r,d}^{s-\epsilon}\left(E_{N_0}\right) \geq \mu\left(E_{N_0}\right)>0,
  \]
  which shows $h^{B}_{r,d}(T,E_{N_0}) \geq s-\epsilon$. 
Noting that $E_{N_0}$ is a subset of $E$,  
we conclude that
$  h^{B}_{r,d}(T,E) \geq h^{B}_{r,d}(T,E_{N_0}) \geq s-\epsilon$. 
By letting $\epsilon \rightarrow 0$, we complete the proof.
\end{proof}

It is worth mentioning that we can establish the following variational principle.

\begin{theorem}\label{variational principle}
  Let $(X,d)$ be a compact metric space, $f:X\to X$ a continuous self-map, 
  and $\mathcal{M}(X)$ the Borel probability measures on $X$. 
  Given $r>0$, then
  \[
    h_{\text {r,d}}^{B}(f)=\sup \left\{\underline{h}_{\mu,d}^r(f): \mu \in \mathcal{M}(X)\right\}.
  \]
\end{theorem}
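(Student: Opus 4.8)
The statement is a Feng--Huang type variational principle, so I would prove the two inequalities separately, the first being elementary and the second requiring the dimension‑theoretic machinery. Throughout I write $h^{B}_{r,d}(f,X)$ for $h^{B}_{r,d}(f)$ (and $h^{B}_{r,d}(f,Z)$ for subsets $Z$), and I read $\underline h^{r}_{\mu,d}(f)=\int_{X}\underline h^{r}_{\mu,d}(f,x)\,\rd\mu$ for an arbitrary Borel probability measure $\mu$, exactly as for invariant measures.

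\emph{The inequality $h^{B}_{r,d}(f,X)\ge\sup\{\underline h^{r}_{\mu,d}(f):\mu\in\mathcal M(X)\}$.} Fix $\mu\in\mathcal M(X)$ and put $a:=\underline h^{r}_{\mu,d}(f)$. If $a=0$ there is nothing to prove, since for any $s<0$ every finite or countable cover of $X$ by $r$‑neutralized Bowen balls of order $\ge N$ has $\sum_i e^{-n_i s}\ge e^{N|s|}$, so $M^{s}_{r,d}(X)=\infty$ and hence $h^{B}_{r,d}(f,X)\ge0$. If $a>0$, then for each $0<\epsilon<a$ the set $E_{\epsilon}:=\{x\in X:\underline h^{r}_{\mu,d}(f,x)>a-\epsilon\}$ has $\mu(E_{\epsilon})>0$, because the $\mu$‑essential supremum of a measurable function is always at least its $\mu$‑mean. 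Applying Lemma~\ref{Billingsley type} with $s=a-\epsilon$ and $E=E_{\epsilon}$ gives $h^{B}_{r,d}(f,X)\ge h^{B}_{r,d}(f,E_{\epsilon})\ge a-\epsilon$; letting $\epsilon\to0$ and then taking the supremum over $\mu$ finishes this direction.

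\emph{The inequality $h^{B}_{r,d}(f,X)\le\sup\{\underline h^{r}_{\mu,d}(f):\mu\in\mathcal M(X)\}$.} Fix $s$ with $0<s<h^{B}_{r,d}(f,X)$; it suffices to produce $\mu\in\mathcal M(X)$ with $\underline h^{r}_{\mu,d}(f)\ge s$. Following the strategy of Feng--Huang, I would first introduce the weighted Carathéodory quantity $\mathcal W^{t}_{N,r,d}(Z)$, the infimum of $\sum_{B\in\mathcal B}c_{B}e^{-n_{B}t}$ over countable families $\mathcal B=\{B(x_{B},n_{B},e^{-n_{B}r})\}$ with $n_{B}\ge N$ and weights $c_{B}\in(0,1]$ satisfying $\sum_{\{B:\,x\in B\}}c_{B}\ge1$ for all $x\in Z$, together with its critical exponent $h^{W}_{r,d}(f,Z)$. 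Since ordinary covers are special weighted covers, $h^{W}_{r,d}(f,Z)\le h^{B}_{r,d}(f,Z)$; the reverse inequality comes from a Vitali‑type covering lemma for $r$‑neutralized Bowen balls, using that whenever $B(x,n,e^{-nr})\cap B(y,m,e^{-mr})\neq\varnothing$ with $n\le m$ one has $B(y,m,e^{-mr})\subset B(x,n,3e^{-nr})\subset B(x,n-k,e^{-(n-k)r})$ for $k=\lceil r^{-1}\log5\rceil$, so the enlargement factor is absorbed by shifting the order by a fixed constant $k$. This yields $h^{W}_{r,d}(f,X)=h^{B}_{r,d}(f,X)$, and in particular $\mathcal W^{s}_{N,r,d}(X)\ge\delta>0$ for every $N$.

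Finally, the positivity $\mathcal W^{s}_{N,r,d}(X)\ge\delta>0$ is a combinatorial statement about the incidence of order‑$\ge N$ $r$‑neutralized Bowen balls with the points of $X$, and by a min‑cut/max‑flow argument (equivalently an iterated mass‑distribution construction as in Feng--Huang) one builds, for each large $N$, a finitely supported probability measure $\mu_{N}$ with $\mu_{N}(B(x,n,e^{-nr}))\le C\,e^{-ns}$ for all $x\in X$ and all $n$ in a suitable range; a weak\* limit $\mu$ of the $\mu_{N}$ then satisfies $\mu(B(x,n,e^{-nr}))\le C\,e^{-ns}$ for all $x\in X$ and all sufficiently large $n$, uniformly in $x$. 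Consequently $\underline h^{r}_{\mu,d}(f,x)=\varliminf_{n\to\infty}-\tfrac1n\log\mu(B(x,n,e^{-nr}))\ge s$ for every $x$, whence $\underline h^{r}_{\mu,d}(f)\ge s$; taking the supremum over $\mu$ and letting $s\uparrow h^{B}_{r,d}(f,X)$ completes the proof. The main obstacle is precisely this last construction — converting the covering obstruction $\mathcal W^{s}_{N,r,d}(X)>0$ into a genuine measure with the pointwise decay $\mu(B(x,n,e^{-nr}))\lesssim e^{-ns}$ — which requires both the Vitali‑type covering lemma with its order‑shift bookkeeping and a careful check that the mass bound survives passage to the weak\* limit.
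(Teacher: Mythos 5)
Your proposal is correct and follows exactly the route the paper takes: the lower bound is obtained from Lemma \ref{Billingsley type} precisely as you do it, and for the upper bound the paper simply asserts that the Feng--Huang argument of \cite{[FH12]} applies verbatim once Bowen balls are replaced by $r$-neutralized Bowen balls, omitting all details. Your sketch supplies the one genuinely new point in that adaptation --- absorbing the Vitali enlargement factor by shifting the order $n$ down by a fixed constant $k\approx r^{-1}\log 5$, which only changes the weighted sums by the bounded factor $e^{ks}$ and so leaves the critical exponent untouched --- and is otherwise the standard weighted-cover and Frostman-lemma machinery.
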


As for the proof, the argument of \cite{[FH12]} can be directly applied here, if the Bowen balls are replaced by $r$-neutralized Bowen balls.
Therefore, we omit the detailed proof of the above theorem. 
A similar result for neutralized local entropy can be found in \cite{[YCZ23]}. 

The above variational principle is established for the Borel probability measures. 
We conjecture that a similar variational principle should exist for invariant measures. 
Before stating our conjecture, 
we introduce the following notion.

Let $(X,d)$ be a compact metric space and $f:X\to X$ a continuous self-map. 
Given $r>0$ and $n\in \mathbb{Z}$, 
a set $ E \subset X $ is said to be $ (n, e^{-nr}) $-spanning if $ X \subset \bigcup_{x \in E} B(x, n, e^{-nr}) $.
Let $ S_{d}(f, n, e^{-nr}) $ be the minimal cardinality of an $ (n,e^{-nr})  $-spanning set.
For $r>0$, the {\bf $r$-neutralized topological entropy} $h_{d}^r(f)$ and
  {\bf lower $r$-neutralized topological entropy} $\underline{h}^r_{d}(f)$ are defined by
\begin{align*}
  h_{d}^r(f):=\varlimsup_{n \rightarrow \infty} \frac{1}{n} \log S_{d}(f, n, e^{-nr}),\quad  \underline{h}^r_{d}(f):=\varliminf\limits_{n \rightarrow \infty}\frac{1}{n}\log S_{d}(f, n, e^{-nr}).
\end{align*}

Motivated by the classical variational principle, we propose the following conjecture.
\begin{conjecture}\label{con-conj1}
  If $f:X\to X$ is a homeomorphism of a compact metric space $(X,d)$, then for any $r>0$, we have
  \begin{align*}
    h_{d}^r(f)=\sup\{h_{\nu, d}^r(f):\mu\in \mathcal{M}(f,X)\}, \quad \underline{h}_{d}^r(f)=\sup\{\underline{h}_{\nu, d}^r(f):\mu\in \mathcal{M}(f,X)\}.
  \end{align*}
\end{conjecture}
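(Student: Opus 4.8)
The plan is to establish each of the two claimed equalities by proving its two inequalities separately, and to locate precisely the step at which the proof of Theorem~\ref{variational principle} fails to descend from $\mathcal M(X)$ to the subspace $\mathcal M(f,X)$ of invariant measures. For the inequality ``$\ge$'' in the second equality, observe that a minimal $(n,e^{-nr})$-spanning set provides, for every $n\ge N$, a cover of $X$ by $S_d(f,n,e^{-nr})$ sets of the form $B(x_i,n,e^{-nr})$; choosing $n$ along a subsequence realising $\underline h^{r}_{d}(f)$ gives $M_{r,d}^{s}(X)=0$ for every $s>\underline h^{r}_{d}(f)$, hence $h^{B}_{r,d}(f,X)\le\underline h^{r}_{d}(f)$, and since $\mathcal M(f,X)\subset\mathcal M(X)$ Theorem~\ref{variational principle} yields
\[
\sup_{\nu\in\mathcal M(f,X)}\underline h_{\nu,d}^{r}(f)\ \le\ \sup_{\mu\in\mathcal M(X)}\underline h_{\mu,d}^{r}(f)\ =\ h^{B}_{r,d}(f,X)\ \le\ \underline h_{d}^{r}(f)\ \le\ h_{d}^{r}(f).
\]
The inequality ``$\ge$'' in the first equality, $h_{d}^{r}(f)\ge\sup_{\nu}h_{\nu,d}^{r}(f)$, does not follow from this (as $\underline h_{\nu,d}^{r}(f)\le h_{\nu,d}^{r}(f)$), but it can be obtained directly: for invariant $\nu$ and $\beta>\varlimsup_{n}\tfrac1n\log S_d(f,n,2e^{-nr})$ one has $\nu\{x:\nu(B(x,n,2e^{-nr}))<e^{-n\beta}\}\le S_d(f,n,2e^{-nr})e^{-n\beta}$, which is summable in $n$, so by Borel--Cantelli $\varlimsup_{n}-\tfrac1n\log\nu(B(x,n,2e^{-nr}))\le\beta$ for $\nu$-a.e. $x$; combining this with the inclusion $B(x,n+c,2e^{-(n+c)r})\subset B(x,n,e^{-nr})$ for $c=\lceil(\log2)/r\rceil$ and with $S_d(f,n,2e^{-nr})\le S_d(f,n,e^{-nr})$ gives $h_{\nu,d}^{r}(f,x)\le h_{d}^{r}(f)$ $\nu$-a.e., hence $h_{\nu,d}^{r}(f)\le h_{d}^{r}(f)$.

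The reverse inequalities $h_{d}^{r}(f)\le\sup_{\nu}h_{\nu,d}^{r}(f)$ and $\underline h_{d}^{r}(f)\le\sup_{\nu}\underline h_{\nu,d}^{r}(f)$ are the substantive content. The natural attempt is a Misiurewicz-type construction: take $(n,e^{-nr})$-separated sets $E_n$ of almost maximal cardinality (along a subsequence for the $\varlimsup$ statement, for all $n$ for the $\varliminf$ statement), form $\sigma_n=|E_n|^{-1}\sum_{x\in E_n}\delta_x$ and $\mu_n=\tfrac1n\sum_{j=0}^{n-1}f^{j}_{*}\sigma_n$, pass to a weak$^{*}$ limit point $\mu$, and try to show $h_{\mu,d}^{r}(f)\ge\varlimsup_{n}\tfrac1n\log|E_n|$. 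Separation gives $\sigma_n(B(x,n,\tfrac12 e^{-nr}))=|E_n|^{-1}$ for $x\in E_n$, and for the (open) $r$-neutralized Bowen balls one has the semicontinuity $\mu(B)\le\varliminf_{k}\mu_{n_k}(B)$, which is the favourable direction for a lower bound on $-\tfrac1n\log\mu(B(\cdot,n,e^{-nr}))$. The \textbf{main obstacle} --- and the reason the statement is posed as a conjecture --- is that, unlike the metric entropy, $h_{\mu,d}^{r}(f)=\int\varlimsup_{n}-\tfrac1n\log\mu(B(x,n,e^{-nr}))\,\rd\mu$ admits no finite-generating-partition representation and is not upper semicontinuous in $\mu$: the averaging over $f^{j}$ in $\mu_n$, the passage $n_k\to\infty$, and the scale $e^{-nr}$ that already defines the local entropy are entangled, so the two limiting procedures cannot be decoupled by the usual Kolmogorov--Sinai and upper-semicontinuity machinery. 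A proof in full generality would seem to require a construction tailored to the $e^{-nr}$ windows, e.g.\ via the specification property or a large-deviations estimate.

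In the two cases where the conjecture can be verified one bypasses the Misiurewicz step by explicit computation. For a topological Markov shift with its standard metric, $B(x,n,e^{-nr})$ coincides, up to bounded constants, with the symmetric cylinder on the coordinates $-m,\dots,n+m-1$ with $m\asymp nr/\log2$, so $S_d(f,n,e^{-nr})\asymp e^{(1+2r/\log2)\,n\,h_{\text{top}}(f)}$ and hence $h_{d}^{r}(f)=\underline h_{d}^{r}(f)=(1+2r/\log2)\,h_{\text{top}}(f)$; by the Shannon--McMillan--Breiman theorem applied to the forward and backward processes, $h_{\nu,d}^{r}(f,x)=(1+2r/\log2)\,h_\nu(f,x)$ $\nu$-a.e.\ for every invariant $\nu$, so $\sup_{\nu}h_{\nu,d}^{r}(f)=\sup_{\nu}\underline h_{\nu,d}^{r}(f)=(1+2r/\log2)\,h_{\text{top}}(f)$ as well, which proves both equalities. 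For a linear Anosov automorphism $f$ of $\mathbb{T}^{d}$, homogeneity shows that $B(x,n,e^{-nr})$ has Lebesgue measure comparable to $e^{-n(h_{\text{top}}(f)+rd)}$ with uniformly bounded multiplicity of overlap, so $h_{d}^{r}(f)=\underline h_{d}^{r}(f)=h_{\text{top}}(f)+rd$; on the other hand Theorem~\ref{BPS-theorem-1} gives $h_{\nu,d}^{r}(f)=h_\nu(f)+r\operatorname{dim}_H\nu\le h_{\text{top}}(f)+rd$ for ergodic $\nu$, with equality for the Haar measure (the MME, of full Hausdorff dimension), and the nonergodic case follows by ergodic decomposition. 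Thus both equalities of Conjecture~\ref{con-conj1} hold for these two classes.
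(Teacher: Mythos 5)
The statement you are addressing is a \emph{conjecture}: the paper does not prove it in general, but only supports it by verifying it for topological Markov shifts and for linear Anosov automorphisms of the torus (the two propositions in the Appendix). You correctly treat it as such, and your verification of the two example classes is in substance the paper's: for a Markov shift you identify $B(x,n,e^{-nr})$ with a symmetric cylinder of length $n+2M_n$ (you fix $\theta=1/2$, whence your factor $1+2r/\log 2$ versus the paper's $1-2r/\log\theta$), and for a linear Anosov map you combine a volume count of the neutralized Bowen balls with Theorem \ref{BPS-theorem-1}; the only organisational difference is that the paper obtains the lower bound $\underline{h}^r_d(T)\geq h^r_{\nu,d}(T)$ through the $r$-neutralized Katok entropy (Lemma \ref{K-lemma}), whereas you get $\underline{h}^r_d(T)\geq h_{\mathrm{top}}(T)+rn$ directly from the Lebesgue-volume estimate --- both are valid. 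What you add beyond the paper is a correct general proof of the one-sided inequalities $h^r_d(f)\geq\sup_\nu h^r_{\nu,d}(f)$ and $\underline{h}^r_d(f)\geq\sup_\nu\underline{h}^r_{\nu,d}(f)$: the second by combining Theorem \ref{variational principle} with the observation that spanning sets give $h^B_{r,d}(f,X)\leq\underline{h}^r_d(f)$, the first by a Borel--Cantelli covering argument (note a harmless slip there: covering by the balls $B(y,n,e^{-nr})$ of a minimal $(n,e^{-nr})$-spanning set, whose doubles contain $B(x,n,2e^{-nr})$, bounds the bad set by $S_d(f,n,e^{-nr})e^{-n\beta}$, not $S_d(f,n,2e^{-nr})e^{-n\beta}$; the conclusion is unaffected). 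Your diagnosis of why the Misiurewicz construction does not yield the reverse inequalities --- the scale $e^{-nr}$ is tied to the same $n$ that drives the weak* limit, and $\mu\mapsto h^r_{\mu,d}(f)$ has no generating-partition representation or semicontinuity --- is an accurate account of why the statement remains open; neither you nor the paper proves the conjecture in full generality, and nothing in your write-up claims otherwise.
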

We will present two examples of topological Markov chains and linear Anosov automorphisms below 
to illustrate why we believe the aforementioned conjecture holds.

At first, we recall some notions of topological Markov chains.

Let $\mathcal{G}$ be a directed graph with a finite collection of vertices $\mathcal{V}$ such that
every vertex has at least one edge coming in and at least one edge coming out. The topological Markov shift associated to $\mathcal{G}$ is the set
$$
  \Sigma=\Sigma(\mathcal{G}):=\{(v_i)_{i\in\mathbb{Z}}\in\mathcal{V}^{\mathbb Z}:v_i\to v_{i+1}\textrm{ for all }i\}.
$$
Fix $ \theta \in(0,1) $, we equip $\Sigma$ with the metric $ d_\theta(x, y)=\theta^{N(x, y)} $,
where $ N(x, y) $ is the distinguished time of $ x $ and $ y $,
$ N(x, y)=\min \left\{|n|: x_{n} \neq y_{n}, n \in \mathbb{Z}\right\}$.
The left shift map $\sigma:\Sigma\to \Sigma$, 
defined by $\sigma[(v_i)_{i\in\mathbb{Z}}]=(v_{i+1})_{i\in\mathbb{Z}}$, 
is a homeomorphism on $\Sigma$.
Since $\mathcal{G}$ is finite, $\Sigma$ is compact.

\begin{proposition}\label{Symbolic-proposition}
  For the topological Markov shift $(\Sigma(\mathcal{G}),d_\theta,\sigma)$ and a $\sigma$-invariant measure $\mu$, we have
  \begin{align*}
    h_{\mu,d_\theta}^r(\sigma)= \underline{h}_{\mu,d_\theta}^r(\sigma)=(1-\frac{2r}{\log \theta})h_\mu(\sigma), \\
    h_{d_\theta}^r(\sigma)= \underline{h}_{d_\theta}^r(\sigma)=(1-\frac{2r}{\log \theta})h_{\text{top}}(\sigma).
  \end{align*}
\end{proposition}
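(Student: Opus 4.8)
The plan is to compute, for each fixed $\sigma$-invariant measure $\mu$, the exact relationship between the $r$-neutralized Bowen ball and the ordinary cylinder sets in the metric $d_\theta$, and then reduce everything to the Shannon--McMillan--Breiman theorem. The crucial observation is that the metric $d_\theta$ is ultrametric and the dynamical metric $d_n^\sigma$ has an explicit combinatorial description: for $x,y\in\Sigma$ one has $d_n^\sigma(x,y)<\theta^k$ if and only if $x_i=y_i$ for all $-k< i< n+k$ (up to the usual $\pm 1$ rounding issue already flagged in the paper), i.e.\ a $d_n^\sigma$-ball is just a longer cylinder. Writing $e^{-nr}=\theta^{m}$ with $m=\lfloor -nr/\log\theta\rfloor$ (recall $\log\theta<0$, so $m>0$), the $r$-neutralized Bowen ball $B(x,n,e^{-nr})$ coincides, up to bounded additive error in the length, with the cylinder $[x_{-m},\dots,x_{n+m-1}]$, which has $n+2m\approx n(1-\tfrac{2r}{\log\theta})$ coordinates.

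First I would make the cylinder identification precise: fix $x$, $n$, set $m=m(n,r)$ as above, and show $C_{-m}^{n+m}(x)\subset B(x,n,e^{-nr})\subset C_{-m'}^{n+m'}(x)$ for $m',m$ differing from $-nr/\log\theta$ by at most a constant independent of $n$. Then apply SMB: for $\mu$-a.e.\ $x$,
\[
  -\frac1n\log\mu\bigl(C_{-m}^{n+m}(x)\bigr)=\frac{n+2m}{n}\cdot\Bigl(-\frac{1}{n+2m}\log\mu(C_{-m}^{n+m}(x))\Bigr)\longrightarrow \Bigl(1-\frac{2r}{\log\theta}\Bigr)h_\mu(\sigma),
\]
since $\frac{n+2m}{n}\to 1-\frac{2r}{\log\theta}$ and the bracketed term tends to $h_\mu(\sigma)$ (the partition into $1$-cylinders is generating and finite). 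Because both the $\limsup$ and $\liminf$ defining $h_{\mu,d_\theta}^r(\sigma,x)$ and $\underline h_{\mu,d_\theta}^r(\sigma,x)$ are squeezed between two such cylinder expressions with the same limit, they coincide and equal $(1-\tfrac{2r}{\log\theta})h_\mu(\sigma)$ pointwise a.e.; integrating gives the first displayed equality. For the topological statement I would argue identically with spanning sets: an $(n,e^{-nr})$-spanning set corresponds to a set of cylinders of length $\approx n+2m$ covering $\Sigma$, so $\frac1n\log S_{d_\theta}(\sigma,n,e^{-nr})=\frac{n+2m}{n}\cdot\frac{1}{n+2m}\log(\#\text{such cylinders})\to(1-\tfrac{2r}{\log\theta})h_{\mathrm{top}}(\sigma)$, using that $\frac1N\log(\text{number of admissible words of length }N)\to h_{\mathrm{top}}(\sigma)$ for a topological Markov shift.

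The only genuinely delicate point is bookkeeping the rounding: $d_n^\sigma$-balls of radius exactly $e^{-nr}$ versus $\theta^m$, and the $\pm1$ shifts in which coordinates a $d_n^\sigma$-ball controls; these contribute only an $O(1)$ additive change to the word length $n+2m$, hence a negligible multiplicative factor $1+O(1/n)$ after dividing by $n$, so they do not affect the limit. I do not expect any real obstacle here beyond writing the inclusions carefully; the paper has already announced (in the displayed convention about "$anr$ and $bnr$") that such integer-part issues are to be absorbed, and the same convention applies. One should also note that the result confirms Remark \ref{metric}, since the constant $1-\tfrac{2r}{\log\theta}$ depends on $\theta$, i.e.\ on the metric.
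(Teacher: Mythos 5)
Your proposal is correct and follows essentially the same route as the paper: both identify the $r$-neutralized Bowen ball in $d_\theta$ with a cylinder (equivalently, a shifted ordinary Bowen ball) on $\approx n(1-\tfrac{2r}{\log\theta})$ coordinates and then pass to the limit via the a.e.\ convergence of $-\tfrac{1}{N}\log\mu$ of cylinders (the paper phrases this through Brin--Katok with $\epsilon\to 0$, you through two-sided SMB for the generating partition, which is the same computation). The rounding issues you flag are exactly the ones the paper absorbs into its integer-part convention, so there is no gap.
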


\begin{proof}
  Fix $\epsilon>0$. By setting $N:=\left[\dfrac{\log \epsilon}{\log \theta}\right]$ and 
  $M_n:=\left[-\dfrac{rn}{\log \theta}\right]$, we obtain
  \begin{align*}
    B(x,n,\epsilon) & =\{y\in X: x_i=y_i,\, -N\leq i <N+n \};      \\
    B(x,n,e^{-nr})  & =\{y\in X: x_i=y_i,\, -M_n \leq i <M_n+n \}.
  \end{align*}
  Thus we obtain $f^{M_n-N}B(x,n+2M_n-2N,\epsilon)=B(x,n, e^{-rn})$. Then for $\mu$-a.e. $x\in X$,
  \begin{align*}
    h^r_{\mu,d_\theta}(\sigma,x) & =\varlimsup_{n\to\infty}-\frac{\log \mu(B(x,n,e^{-nr}))}{n}=\varlimsup_{n\to\infty}-\frac{\log \mu(B(x,n+2M_n-2N,\epsilon))}{n} \\
                                 & =\varlimsup_{n\to\infty}-\frac{\log \mu(B(x,n, \epsilon))}{n}\left(1-\frac{2r}{\log \theta}\right).
  \end{align*}
  By letting $\epsilon\to 0$ and integrating over $\mu$, 
  we obtain $ h^r_{\mu,d_\theta}(\sigma) =\left(1-\dfrac{2r}{\log \theta}\right)h_\mu(\sigma)$. 
  Similarly, we could derive 
  \begin{align*}
  \underline{h}^r_{\mu,d_\theta}(\sigma)=\left(1-\dfrac{2r}{\log \theta}\right)h_\mu(\sigma)\quad \text{ and }\quad h_{d_\theta}^r(\sigma)= \underline{h}_{d_\theta}^r(\sigma)=\left(1-\dfrac{2r}{\log \theta}\right)h_{\text{top}}(\sigma). 
  \end{align*}
  Thus the proof is complete.
\end{proof}

By applying the classical variational principle, we conclude that the conjecture holds for 
topological Markov shifts. 
Additionally, we derive that the
$r$-neutralized entropy depends on the metrics of the systems in general cases, 
as previously mentioned. This dependence on the metric is a key difference 
from classical entropy theory. 

Before giving the next example, we introduce the following definitions. 

Let $(X,d)$ be a compact metric space and $f:X\to X$ a homeomorphism. 
Motivated by \cite{[K-IHES]},
given $ r>0$, $n\in \mathbb{N}$ and $0<\delta\leq 1 $, 
we use $ S_d(n, e^{-nr}, \delta) $ to denote
the minimal number of $r$-neutralized Bowen balls which cover the set of measure more than or equal to $1-\delta $.
The {\bf $r$-neutralized Katok entropy} $h^{K,r}_{\mu,d}(f) $ and
the {\bf lower $r$-neutralized Katok entropy} $\underline{h}^{K,r}_{\mu,d}(f)$ with respect to $f$-invariant measure $\mu$ are defined by
\[
  h^{K,r}_{\mu,d}(f)= \lim_{\delta \to 0}\varlimsup_{n \rightarrow \infty} \frac{\log S_d(n, e^{-nr}, \delta)}{n}, \quad
  \underline{h}^{K,r}_{\mu,d}(f)= \lim_{\delta \to 0}\varliminf_{n \rightarrow \infty} \frac{\log S_d(n, e^{-nr}, \delta)}{n}.
\]
Obviously, we have the lemma as follows. 
\begin{lemma}\label{K-lemma}
  Let $(X,d)$ be a compact metric space and $f:X\to X$ a homeomorphism.
  For any ergodic $f$-invariant measure $\mu$, we have
  \[
    h_{\mu,d}^{r}(f)\geq h_{\mu,d}^{K,r}(f)\geq \underline{h}_{\mu,d}^{K,r}(f) \geq\underline{h}_{\mu,d}^{r}(f).
  \]
\end{lemma}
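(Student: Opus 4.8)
The plan is to prove the three displayed inequalities one at a time. The middle one, $h^{K,r}_{\mu,d}(f)\ge\underline h^{K,r}_{\mu,d}(f)$, is immediate: for every fixed $\delta>0$ one has $\varlimsup_{n\to\infty}\tfrac1n\log S_d(n,e^{-nr},\delta)\ge\varliminf_{n\to\infty}\tfrac1n\log S_d(n,e^{-nr},\delta)$, and since $S_d(n,e^{-nr},\delta)$ is non-decreasing as $\delta\downarrow0$, letting $\delta\to0$ the two sides converge (monotonically) to $h^{K,r}_{\mu,d}(f)$ and $\underline h^{K,r}_{\mu,d}(f)$, giving the claim. For the two outer inequalities I would first invoke ergodicity: the functions $h^r_{\mu,d}(f,\cdot)$ and $\underline h^r_{\mu,d}(f,\cdot)$ are $f$-invariant, hence $\mu$-a.e. equal to the constants $H:=h^r_{\mu,d}(f)$ and $\underline H:=\underline h^r_{\mu,d}(f)$ (we may assume $H,\underline H<\infty$, otherwise the relevant inequality is trivial), so for $\mu$-a.e. $x$,
\[
\varlimsup_{n\to\infty}\Big(-\tfrac1n\log\mu(B(x,n,e^{-nr}))\Big)=H,\qquad \varliminf_{n\to\infty}\Big(-\tfrac1n\log\mu(B(x,n,e^{-nr}))\Big)=\underline H .
\]
The one technical nuisance is that $S_d(n,e^{-nr},\delta)$ is defined via balls of radius exactly $e^{-nr}$, while the identities above must be applied at perturbed radii. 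I record once, with $m:=\lceil(\log 2)/r\rceil$, that $e^{-(n+m)r}\le\tfrac{1}{2}e^{-nr}$, that $2e^{-nr}\le e^{-(n-m)r}$, and that $B(x,n,\rho)\subset B(x,n',\rho)$ whenever $n'\le n$; thus a bounded multiplicative change of radius becomes a bounded additive shift of the time index, which washes out after dividing by $n$.

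For $h^r_{\mu,d}(f)\ge h^{K,r}_{\mu,d}(f)$: fix $\epsilon,\delta>0$ and put $A_N:=\{x:\mu(B(x,n,e^{-nr}))>e^{-n(H+\epsilon)}\text{ for all }n\ge N\}$. The first identity gives $\mu(\bigcup_N A_N)=1$ and $A_N\uparrow$, so fix $N_0$ with $\mu(A_{N_0})>1-\delta$. Given $n$ with $n+m\ge N_0$, choose a maximal family $\{x_1,\dots,x_k\}\subset A_{N_0}$ for which the balls $B(x_i,n,e^{-(n+m)r})$ are pairwise disjoint. Maximality forces $A_{N_0}\subset\bigcup_i B(x_i,n,e^{-nr})$: if two of these small balls meet, the corresponding $d_n^f$-distance is $<2e^{-(n+m)r}\le e^{-nr}$. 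Hence these $k$ balls cover a set of $\mu$-measure $>1-\delta$ and $S_d(n,e^{-nr},\delta)\le k$. On the other hand $\mu(B(x_i,n,e^{-(n+m)r}))\ge\mu(B(x_i,n+m,e^{-(n+m)r}))>e^{-(n+m)(H+\epsilon)}$ because $x_i\in A_{N_0}$, so disjointness gives $1\ge\sum_i\mu(B(x_i,n,e^{-(n+m)r}))>k\,e^{-(n+m)(H+\epsilon)}$, i.e. $k<e^{(n+m)(H+\epsilon)}$. Therefore $\tfrac1n\log S_d(n,e^{-nr},\delta)<\tfrac{n+m}{n}(H+\epsilon)$ for all large $n$, so $\varlimsup_n\tfrac1n\log S_d(n,e^{-nr},\delta)\le H+\epsilon$; sending $\delta\to0$ and then $\epsilon\to0$ yields $h^{K,r}_{\mu,d}(f)\le H$.

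For $\underline h^{K,r}_{\mu,d}(f)\ge\underline h^r_{\mu,d}(f)$: fix $\epsilon>0$ and $\delta\in(0,\tfrac{2}{3})$, and put $B_N:=\{x:\mu(B(x,n,e^{-nr}))<e^{-n(\underline H-\epsilon)}\text{ for all }n\ge N\}$; the second identity gives $\mu(\bigcup_N B_N)=1$, so fix $N_0$ with $\mu(B_{N_0})>1-\tfrac{\delta}{2}$. Let $\{B(x_i,n,e^{-nr})\}_{i=1}^k$ with $k=S_d(n,e^{-nr},\delta)$ cover a set $Z$ with $\mu(Z)\ge1-\delta$; then $\mu(Z\cap B_{N_0})>1-\tfrac{3}{2}\delta>0$. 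For each $i$ whose ball meets $B_{N_0}$, choose $y_i\in B(x_i,n,e^{-nr})\cap B_{N_0}$; the triangle inequality for $d_n^f$ gives
\[
B(x_i,n,e^{-nr})\subset B(y_i,n,2e^{-nr})\subset B(y_i,n,e^{-(n-m)r})\subset B(y_i,n-m,e^{-(n-m)r}),
\]
which has measure $<e^{-(n-m)(\underline H-\epsilon)}$ once $n-m\ge N_0$. Since $Z\cap B_{N_0}$ is covered by the (at most $k$) such balls, $1-\tfrac{3}{2}\delta<k\,e^{-(n-m)(\underline H-\epsilon)}$, i.e. $S_d(n,e^{-nr},\delta)>(1-\tfrac{3}{2}\delta)e^{(n-m)(\underline H-\epsilon)}$ for all large $n$. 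Hence $\varliminf_n\tfrac1n\log S_d(n,e^{-nr},\delta)\ge\underline H-\epsilon$; sending $\delta\to0$ and then $\epsilon\to0$ gives $\underline h^{K,r}_{\mu,d}(f)\ge\underline H$, completing the argument.

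I expect the main obstacle to be exactly this radius mismatch: the covers counted by $S_d(n,e^{-nr},\delta)$ must all use the single radius $e^{-nr}$, whereas the separated/spanning-set comparison underlying Katok's formula naturally produces balls of radius $\tfrac{1}{2}e^{-nr}$ or $2e^{-nr}$, and in the $r$-neutralized setting one cannot absorb a factor of $2$ into ``$r$'' without changing the scale. The remedy is to absorb it into the fixed time-shift $m=\lceil(\log 2)/r\rceil$, using $B(x,n,\rho)\subset B(x,n',\rho)$ for $n'\le n$; after normalising by $1/n$ this shift is negligible. Everything else is the standard density-set argument from the proof of Katok's entropy formula, transcribed for $r$-neutralized Bowen balls.
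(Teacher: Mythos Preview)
Your proof is correct. The paper itself provides no proof of this lemma: it simply introduces it with the word ``Obviously'' and moves on, so there is nothing to compare your argument against. What you have written is precisely the standard density--set argument behind Katok's entropy formula, adapted to $r$-neutralized Bowen balls, and your handling of the radius mismatch via the fixed time shift $m=\lceil(\log 2)/r\rceil$ is exactly the right device here.
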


We are now ready to prove the following proposition. 

\begin{proposition}
The Conjecture \ref{con-conj1} holds for linear Anosov diffeomorphisms on torus. 
\end{proposition}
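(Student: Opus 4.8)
The plan is to show that all four quantities appearing in Conjecture~\ref{con-conj1} are equal, and in fact equal to $h_{\text{top}}(A)+rm$, where $A\colon\mathbb{T}^m\to\mathbb{T}^m$ denotes the (linear, hyperbolic) automorphism on the flat torus, $m=\operatorname{dim}\mathbb{T}^m$, and $\mathbb{R}^m=E^u\oplus E^s$ is its constant hyperbolic splitting. Write $\operatorname{Leb}$ for the Haar measure; since $A$ is linear, $\operatorname{Leb}$ is the measure of maximal entropy of $A$, it is ergodic, $h_{\operatorname{Leb}}(A)=h_{\text{top}}(A)=\log|\det(A|_{E^u})|$, and $\operatorname{dim}_H\operatorname{Leb}=m$. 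Moreover, as $DA$ is constant, \emph{every} $A$-invariant measure is hyperbolic with the same Lyapunov exponents, so Theorem~\ref{BPS-theorem-1} applies to all of them.

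\emph{Step 1 (the measure side).} By \eqref{nonergodic-ergodic}, which the paper notes is valid whenever every invariant measure has nonzero Lyapunov exponents almost everywhere (as here), one has $\sup\{h^r_{\nu,d}(A):\nu\in\mathcal M(A,\mathbb{T}^m)\}=\sup\{h_\nu(A)+r\operatorname{dim}_H\nu:\nu\in\mathcal M_E\}$; since $h_\nu(A)\le h_{\text{top}}(A)$ and $\operatorname{dim}_H\nu\le m$ for every $\nu$, both equalities being attained by $\operatorname{Leb}$, this supremum equals $h_{\text{top}}(A)+rm$. Because $\underline h^r_{\nu,d}(A)=h^r_{\nu,d}(A)$ for every (hyperbolic) invariant $\nu$ by Theorem~\ref{BPS-theorem-1}, the supremum of $\underline h^r_{\nu,d}(A)$ over $\mathcal M(A,\mathbb{T}^m)$ is $h_{\text{top}}(A)+rm$ as well.

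\emph{Step 2 (upper bound for the topological quantities).} Since $A$ is linear and the flat metric on $\mathbb{T}^m$ is translation invariant, $B(x,n,e^{-nr})=x+\bigcap_{i=0}^{n-1}A^{-i}B(0,e^{-nr})$, the preimages being taken on $\mathbb{T}^m$. This intersection contains — and, because of wrapping, may strictly contain — the ``local'' piece obtained by intersecting the single ellipsoids through $0$; in a norm on $\mathbb{R}^m$ adapted to $E^u\oplus E^s$ (equivalent to the Riemannian metric up to a uniform constant) this local piece is comparable to the product of an $(n{-}1)$-fold $A$-contracted unstable $e^{-nr}$-ball with a stable $e^{-nr}$-ball, hence has diameter below $e^{-nr}$ and Lebesgue measure $\gtrsim e^{-(n-1)h_{\text{top}}(A)}(e^{-nr})^m$, uniformly in $x$. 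Now take a maximal $(n,e^{-nr})$-separated set $E$ for $d_n^f$; it is $(n,e^{-nr})$-spanning, and the balls $\{B(x,n,e^{-nr}/2)\}_{x\in E}$ are pairwise disjoint, so
\[
S_d(f,n,e^{-nr})\ \le\ \#E\ \le\ \frac{1}{\min_{x}\operatorname{Leb}\big(B(x,n,e^{-nr}/2)\big)}\ \le\ C\,e^{\,n(h_{\text{top}}(A)+rm)} .
\]
Therefore $h^r_d(A)\le h_{\text{top}}(A)+rm$ and $\underline h^r_d(A)\le h_{\text{top}}(A)+rm$.

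\emph{Step 3 (lower bound via Katok entropy, and conclusion).} A cover of all of $\mathbb{T}^m$ by $r$-neutralized Bowen balls in particular covers a set of measure at least $1-\delta$ for every $\delta>0$, so $S_d(n,e^{-nr},\delta)\le S_d(f,n,e^{-nr})$ and hence $\underline h^{K,r}_{\operatorname{Leb},d}(A)\le\underline h^r_d(A)$. Combining this with Lemma~\ref{K-lemma} and with Theorem~\ref{BPS-theorem-1} applied to the hyperbolic ergodic measure $\operatorname{Leb}$ gives $\underline h^r_d(A)\ge\underline h^{K,r}_{\operatorname{Leb},d}(A)\ge\underline h^r_{\operatorname{Leb},d}(A)=h_{\operatorname{Leb}}(A)+r\operatorname{dim}_H\operatorname{Leb}=h_{\text{top}}(A)+rm$, and a fortiori $h^r_d(A)\ge\underline h^r_d(A)\ge h_{\text{top}}(A)+rm$. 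Steps~1--3 together show that all four quantities equal $h_{\text{top}}(A)+rm$, which is Conjecture~\ref{con-conj1} for $A$.

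The routine parts are Steps~1 and 3; the only delicate point, and the one to get right, is the volume bound in Step~2. One must work at a scale below the expansivity constant so that the local Bowen ball computed in a chart does not self-overlap on the torus, and use only the one-sided inclusion $B(x,n,e^{-nr})\supseteq(\text{local Bowen ball})$. A direct \emph{lower} bound on $S_d(f,n,e^{-nr})$ would instead require controlling the full set $B(x,n,e^{-nr})$, which can genuinely acquire extra components wrapping around the torus in the stable direction once $n$ is large; this is exactly why the lower bound is routed through Lemma~\ref{K-lemma} rather than through a volume count.
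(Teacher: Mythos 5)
Your proposal is correct and follows essentially the same route as the paper: Theorem \ref{BPS-theorem-1} together with \eqref{nonergodic-ergodic} identifies the measure-theoretic supremum as $h_{\mathrm{top}}+rm$ (attained by Haar measure), a volume count on half-radius neutralized Bowen balls around a maximal separated set gives the upper bound $h^r_d\le h_{\mathrm{top}}+rm$, and the lower bound is routed through Lemma \ref{K-lemma} via the $r$-neutralized Katok entropy. Your explicit remark about using only the one-sided inclusion of the local Bowen ball is a sound (and implicit in the paper) precaution, but otherwise the two arguments coincide.
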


\begin{proof}
Given $n\in\mathbb{N}$, let $T:(\mathbb{T}^n,d)\to(\mathbb{T}^n,d)$ be a linear Anosov diffeomorphisms
  and $m$ is the normalized Haar measure on $\mathbb{T}^n$. 
  The Lyapunov exponents for any $T$-invariant measures are the same, 
  we use $\lambda_1,\cdots, \lambda_n$ to denote them. 
Let $\mu$ denote the MME of $(T, \mathbb{T}^n)$. 
By $h_\mu(T)=h_{\text{top}}(T)=\sum_{i=1}^n\max\{0,\log|\lambda_i|\}$, 
we have $\operatorname{dim}_H m=n$. Then 
\begin{align*}
h_{\mu,d}^r(T)=h_{\mu}(T)+r\operatorname{dim}_H \mu=\sum_{i=1}^n\max\{0,\log|\lambda_i|\}+rn. 
\end{align*}
We use $\operatorname{Vol}(B)$ to represent the volume of the set $B\subset \mathbb{T}^n$. 
Fix $\epsilon>0$, there are constants $C>0$ and $N_0\in \mathbb{N}$ such that for any $x\in \mathbb{T}^n$ and $k\geq N_0$, 
\begin{align*}
\operatorname{Vol}\left(B(x,k,\frac{1}{2}e^{-kr})\right)=C^{\pm 1}\exp(-k\sum_{i=1}^n\max\{0,\log|\lambda_i|\}-knr\pm kn\epsilon). 
\end{align*}
Obviously, 
 there exists a $(k,e^{-kr})$-spanning set $E\subset \mathbb{T}^2$ 
such that for any $x,y\in E$ with $x\neq y$, $y\notin B\left(x,k,\frac{1}{3}e^{-kr}\right)$. 
Therefore we conclude that 
$$S_d(T,k,e^{-kr})\leq C\exp\left(k\sum_{i=1}^n\max\{0,\log|\lambda_i|\}+knr+kn\epsilon\right).$$ 
By letting $\epsilon\to 0$, we derive
$h_d^r(T)\leq \sum\limits_{i=1}^n\max\{0,\log|\lambda_i|\}+nr= h_{\mu,d}^r(T)$. 

On the other hand, 
by Theorem \ref{BPS-theorem-1} and Lemma \ref{K-lemma},
for any ergodic $T$-invariant measure $\nu$, we have
\begin{align*}
  h_d^r(T)\geq \underline{h}_d^r(T)\geq \underline{h}^{K,r}_{\nu,d}(T)=h_{\nu,d}^{r}(T)=\underline{h}_{\nu,d}^{r}(f).
\end{align*}
According to \eqref{nonergodic-ergodic} and the inequality above, we derive
\begin{align*}
  h_{d}^r(T)\geq \underline{h}_{d}^r(T)\geq \sup\{h_{\nu, d}^r(T):\mu\in \mathcal{M}_e(f,X)\}=\sup\{h_{\nu, d}^r(T):\mu\in \mathcal{M}(f,X)\},
\end{align*}
where $\mathcal{M}_e(T,X)$ is the set of ergodic measures in $\mathcal{M}(T,X)$. 

Based on the previous discussion, we complete the proof of the proposition. 
\end{proof}

These results provide evidence in support of our conjecture and suggest that
$r$-neutralized entropy exhibits complex properties.

\end{document}